	\definecolor{Blue}{HTML}{3d25b9}
	\definecolor{Red}{HTML}{c00054} 
	\definecolor{Green}{HTML}{358028}
	\titlespacing{\section}{0pt}{12pt}{0pt}
	\titlespacing{\subsection}{0pt}{6pt}{0pt}
\setlist{topsep=0pt,itemsep=0pt}
\newcommand{\dd}{\mathrm{d}}
\newcommand{\Aut}[1]{\mathrm{Aut}(#1)}
\newcommand{\EE}{\mathcal{E}}
\newcommand{\Res}{\mathop{\mathrm{Res}}}
\newcommand{\HH}{DH}
\newcommand{\HHall}{\mathbb{H}}
	\crefname{equation}{equation}{equations}
\newcommand\blfootnote[1]{
	\begingroup
	\renewcommand\thefootnote{}\footnote{#1}
	\addtocounter{footnote}{-1}
	\endgroup
}
\theoremstyle{plain}
	\newtheorem{theorem}{Theorem}
	\newtheorem{proposition}[theorem]{Proposition}
	\newtheorem{corollary}[theorem]{Corollary}
	\newtheorem{lemma}[theorem]{Lemma}
	\numberwithin{theorem}{section}
\theoremstyle{definition}
	\newtheorem{definition}[theorem]{Definition}
\theoremstyle{remark}
	\newtheorem {remark }{Remark}
	\newtheorem{remark}[theorem]{Remark}
\begin{document}

{\large \bfseries Double Hurwitz numbers: polynomiality, topological recursion and intersection theory}

{\bfseries Ga\"{e}tan Borot\footnote{\label{A1}Max Planck Institut f\"ur Mathematik, Vivatsgasse 7, 53111 Bonn, Germany.}, Norman Do\footnote{\label{A2}School of Mathematics, Monash University, VIC 3800, Australia.}, Maksim Karev\footnote{St. Petersburg Department of the Steklov Mathematical Institute, Fontanka 27, St Petersburg 191023, Russia.}, Danilo Lewa\'{n}ski\textsuperscript{\normalfont\ref{A1},}\footnote{Universit\'e Paris-Saclay, CNRS, CEA, Institut de physique th\'eorique (IPhT), 91191 Gif-sur-Yvette, France.}\textsuperscript{,}\footnote{Institut des Hautes \'Etudes Scientifiques, le Bois-Marie, 35 route de Chartres, 91440 Bures-sur-Yvette, France.}, Ellena Moskovsky\textsuperscript{\normalfont\ref{A2}}}

\emph{Abstract.} Double Hurwitz numbers enumerate branched covers of $\mathbb{CP}^1$ with prescribed ramification over two points and simple ramification elsewhere. In contrast to the single case, their underlying geometry is not well understood. In previous work by the second- and third-named authors, the double Hurwitz numbers were conjectured to satisfy a polynomiality structure and to be governed by the topological recursion, analogous to existing results concerning single Hurwitz numbers. In this paper, we resolve these conjectures by a careful analysis of the semi-infinite wedge representation for double Hurwitz numbers. We prove an ELSV-like formula for double Hurwitz numbers, by deforming the Johnson--Pandharipande--Tseng formula for orbifold Hurwitz numbers and using properties of the topological recursion under variation of spectral curves. In the course of this analysis, we unveil certain vanishing properties of the Chiodo classes.
 
\blfootnote{\par\vspace{-10pt} 
\emph{2010 Mathematics Subject Classification:} 05A15, 14H30, 14N10, 51P05, 81R10.} 


\hrule

~
 
\tableofcontents

~

\hrule

~

\section{Introduction} \label{sec:intro}

\subsection{Review of Hurwitz numbers}
\setcounter{footnote}{0}
Hurwitz theory is concerned with enumerations of branched covers of complex curves with specified ramification. Despite the fact that such problems have been studied since the late nineteenth century~\cite{hur1891}, the field has seen an explosion of activity over the past two decades. Much of this has been motivated by the rich tapestry of mathematical structures found in the study of single Hurwitz numbers, which can be treated as a paradigm for other problems in the enumerative geometry of curves.

 The {\em single Hurwitz number} $H_{g,n}(\mu_1, \ldots, \mu_n)$ is the weighted enumeration of connected genus $g$ branched covers of smooth complex curves $f: (\Sigma; p_1, \ldots, p_n) \to (\mathbb{CP}^1; \infty)$ such that\footnote{The notation here indicates that $p_1, \ldots, p_n$ are points on $\Sigma$ that map to $\infty \in \mathbb{CP}^1$ and $[p]$ is the divisor supported on $p$.}
\begin{itemize}
\item $f^{-1}([\infty]) = \mu_1[p_1] + \cdots + \mu_n [p_n]$; and
\item all other branch points are simple and occur over prescribed points of $\mathbb{CP}^1$.
\end{itemize}
The weight of such a branched cover is
\[
\frac{1}{m!} \cdot \frac{1}{|\Aut{f}|},
\]
where $m = 2g - 2 + n + \sum_{i = 1}^n \mu_i$ is the number of simple branch points, as determined by the Riemann--Hurwitz formula, and $\Aut{f}$ is the group of automorphisms $\phi: (\Sigma; p_1, \ldots, p_n) \to (\Sigma; p_1, \ldots, p_n)$ such that $f \circ \phi = f$. In this convention, the preimages of $\infty$ are labelled while the simple branch points are unlabelled. It is worth remarking that the normalisation adopted here may not agree with others in the literature, but lends itself well to the generalisation to double Hurwitz numbers that we later consider.

It was observed by Goulden, Jackson and Vainshtein that the single Hurwitz numbers satisfy the following structural property~\cite{gou-jac-vai00}.
\begin{enumerate}
\item[(I)] {\em Polynomiality.} For $2g-2+n > 0$, there exists a symmetric polynomial $P_{g,n}$ of degree $3g-3+n$ such that
\[
H_{g,n}(\mu_1, \ldots, \mu_n) = \prod_{i=1}^n \frac{\mu_i^{\mu_i}}{\mu_i!} P_{g,n}(\mu_1, \ldots, \mu_n).
\]
\end{enumerate}

The first proof of this result required the following seminal result of Ekedahl, Lando, Shapiro and Vainshtein, relating single Hurwitz numbers to intersection theory on the moduli spaces of curves~\cite{eke-lan-sha-vai01}.
\begin{enumerate}
\item[(II)] {\em ELSV formula.} For $2g-2+n > 0$, the single Hurwitz numbers satisfy
\begin{equation} \label{eq:ELSV}
H_{g,n}(\mu_1, \ldots, \mu_n) = \prod_{i=1}^n \frac{\mu_i^{\mu_i}}{\mu_i!} \int_{\overline{\mathcal M}_{g,n}} \frac{c(\Lambda^{\vee})}{\prod_{i=1}^n (1 - \mu_i \psi_i)},
\end{equation}
where $\Lambda^{\vee} \to \overline{\mathcal M}_{g,n}$ is the dual Hodge bundle.
\end{enumerate}

Subsequently, Bouchard and Mari\~{n}o conjectured~\cite{bou-mar08} that single Hurwitz numbers are governed by the topological recursion of Chekhov, Eynard and Orantin~\cite{che-eyn06,EORev}; the first complete proof of this conjecture was given by Eynard, Mulase and Safnuk~\cite{eyn-mul-saf11}. The conjecture arose via the analysis of the large framing limit in the $\mathbb{C}^3$ case of the remodelling conjecture of~\cite{bou-kle-mar-pas09} proved in~\cite{eyn-ora15,fan-liu-zon19}, which relates Gromov--Witten invariants of toric Calabi--Yau threefolds to the topological recursion.

\begin{enumerate}
\item[(III)] {\em Topological recursion.} When applied to the spectral curve $(\mathbb{C}^*,x,y,\omega_{0,2})$ with
\[ 
x(z) = \ln z - z,\qquad y(z) = z,\qquad \omega_{0,2}(z_1, z_2) = \frac{\mathrm{d}z_1\otimes \mathrm{d}z_2}{(z_1-z_2)^2},
\]
the topological recursion produces correlation differentials $\omega_{g,n}$ indexed by $g \geq 0$ and $n \geq 1$, which have the following all-order series expansion when $z_i \rightarrow 0$, which we indicate by the symbol $\sim$ as follows.
\[
\omega_{g,n}(z_1, \ldots, z_n) \sim \sum_{\mu_1, \ldots, \mu_n \geq 1} H_{g,n}(\mu_1, \ldots, \mu_n) \bigotimes_{i=1}^n \mathrm{d}(e^{\mu_ix(z_i)}).
\]
\end{enumerate}

The generalisation and analogue of the results (I)-(II)-(III) above to other Hurwitz problems has been a major avenue of research over the last decade. As of the time of writing, this program has been carried out in the following cases.
\begin{itemize}
\item[$\bullet$] \textit{Weakly monotone}: polynomiality in~\cite{Novakpoly0,Novakpoly}, topological recursion in~\cite{do-dye-mat17, kra-pop-sha19}, ELSV-like formula in~\cite{ale-lew-sha16}.
\item[$\bullet$] \textit{Strictly monotone}: polynomiality in~\cite{nor13,dum-mul-saf-sor13,kra-lew-sha16}, ELSV-like formula still unknown, topological recursion in~\cite{E1MM,nor13,dum-mul-saf-sor13,Ebook}.
\item[$\bullet$] \textit{Orbifold}: polynomiality and ELSV-like formula in~\cite{joh-pan-tse11, dun-lew-pop-sha15}, topological recursion in~\cite{do-lei-nor16,bou-her-liu-mul14, dun-lew-pop-sha15}.
\item[$\bullet$] \textit{Orbifold weakly monotone}: polynomiality in~\cite{kra-lew-sha16}, ELSV-like formula still unknown, topological recursion conjectured in~\cite{do-kar-orbmon} and proved in~\cite{kra-pop-sha19}.
\item[$\bullet$] \textit{Orbifold strictly monotone}: polynomiality in~\cite{kra-lew-sha16}, ELSV-like formula partially deduced in~\cite{BGFsimple}, topological recursion proved in~\cite{dun-ora-pop-sha14}.
\item[$\bullet$] \textit{Orbifold spin}: polynomiality in~\cite{kra-lew-pop-sha19}, topological recursion and ELSV-like formula conjectured in~\cite{Zvo06,SSZ15,kra-lew-pop-sha19}, proved to be equivalent in~\cite{SSZ15,kra-lew-pop-sha19}, and finally proved in~\cite{bor-kra-lew-pop-sha20,dun-kra-pop-sha19-2}.

\end{itemize}
The present work gives analogues of the results (I)-(II)-(III) for double Hurwitz numbers, which resolves conjectures put forward in the previous work of the second- and third-named authors~\cite{do-kar18}.

\subsection{Double Hurwitz numbers}
\label{sec:dhdef}
Double Hurwitz numbers enumerate branched covers of $\mathbb{CP}^1$ with specified ramification profiles over two points --- fixed to be $0$ and $\infty$ --- and simple branching elsewhere. The prescribed data then comprises an integer corresponding to the genus and two partitions corresponding to the ramification profiles over $0$ and $\infty$. This enumeration problem has been previously studied, and a wealth of structures ubiquitous in enumerative geometry have been unveiled: Toda integrable hierarchy~\cite{oko00}, tropical geometry description~\cite{TropH}, relation to lattice point enumeration, piecewise polynomiality and wall-crossing~\cite{gou-jac-vak05,Shadrin2,CJM11}. Connections to intersection theory on certain moduli spaces and the geometry of the double ramification cycle have also been proposed, but are much less understood at present~\cite{gou-jac-vak05,Cavalierireview}.

For our purposes, it is advantageous to collect double Hurwitz numbers according to their genus and ramification profile over $\infty$ only, while recording the ramification profile over $0$ via a set of weights. Furthermore, we only consider branched covers whose ramification orders over $0 \in \mathbb{CP}^1$ are bounded by a fixed positive integer $d$. By this assumption, we avoid issues of convergence, particularly since we require analytic data when dealing with the topological recursion. Thus, let us fix here and throughout the paper a positive integer $d$, a parameter $s \in \mathbb{C}$ that keeps track of the number of simple branch points, and a set of weights $q_1, \ldots, q_d \in \mathbb{C}$. We store these weights as coefficients of the polynomial
\[
Q(z) := \sum_{j = 1}^{d} q_{j} z^{j}.
\]
We assume that $q_ds \neq 0$ throughout.

\begin{definition}
\label{doubledef} The {\em double Hurwitz number} $\HH_{g,n}(\mu_1, \ldots, \mu_n)$ is the weighted enumeration of connected genus $g$ branched covers $f: (\Sigma; p_1, \ldots, p_n) \to (\mathbb{CP}^1; \infty)$ such that
\begin{itemize}
\item $f^{-1}([\infty]) = \mu_1[p_1] + \cdots + \mu_n [p_n]$;
\item the order of any ramification point above $0 \in \mathbb{CP}^1$ is at most $d$; and
\item all other branch points are simple and occur at prescribed points of $\mathbb{CP}^1$.
\end{itemize}
The weight of such a branched cover is
\[
 \frac{s^m}{m!} \cdot \frac{\vec{q}_{\lambda}}{|\Aut{f}|},
\]
where the ramification profile over $0 \in \mathbb{CP}^1$ is given by the partition $\lambda = (\lambda_1, \ldots, \lambda_\ell)$ of size $\sum_{i = 1}^n \mu_i$, $m = 2g - 2 + n + \ell$ is the number of simple branch points as determined by the Riemann--Hurwitz formula, and
\[
\vec{q}_{\lambda} = q_{\lambda_1}\cdots q_{\lambda_{\ell(\lambda)}}.
\]
\end{definition}

\begin{remark}
We recover the so-called $d$-orbifold Hurwitz numbers by setting $q_1 = \cdots = q_{d - 1} = 0$ and $q_d = s = 1$, while $d = 1$ corresponds to the case of single Hurwitz numbers. Observe that the parameter $s$ is redundant in the sense that the dependence of $DH_{g,n}(\mu_1, \ldots, \mu_n)$ on $s$ can be recovered from the case $s = 1$.
\end{remark}

Alternatively, Hurwitz numbers can be approached via representation-theoretic methods. Every representation $\rho: \pi_1(\mathbb{CP}^1 \setminus B) \to \mathfrak{S}_N$ gives rise to a unique possibly disconnected degree $N$ branched cover of $\mathbb{CP}^1$ with branching only over the set $B$. This correspondence between representations and covers is surjective due to the Riemann existence theorem, and two monodromy representations correspond to the same cover if and only if they differ by a conjugation. Moreover, the subgroup of $\mathfrak{S}_N$ that fixes a monodromy representation is isomorphic to the group of automorphisms of the corresponding cover. Monodromy representations in turn correspond to factorisations in $\mathfrak{S}_N$, whose enumeration may be expressed in terms of symmetric group characters. This yields formulas for double Hurwitz numbers as inner products in the semi-infinite wedge space~\cite{oko00}, which we review in~\Cref{sec:DHviainfinitewedge} and constitute the starting point of our study.

\subsection{Main results on double Hurwitz numbers}

Our main results are the following analogues for double Hurwitz numbers of the polynomiality structure, the topological recursion and the ELSV formula reviewed above for single Hurwitz numbers, which we obtain in this logical order.

\begin{theorem}[Polynomiality, Conjecture 24 in~\cite{do-kar18}] \label{thm:poly}
For $2g-2+n > 0$, there exist
\[
C_{g,n} \big(\begin{smallmatrix} j_1 & \cdots & j_n \\ m_1 & \cdots & m_n \end{smallmatrix}\big) \in \mathbb{C}(q_1, \ldots, q_d,s),
\]
which vanish for all but finitely many values of $(j_i,m_i)_{i = 1}^n$, such that
\[
\HH_{g,n}(\mu_1, \ldots, \mu_n) = \sum_{\substack{1 \leq j_1, \ldots, j_n \leq d \\ m_1, \ldots, m_n \geq 0}} C_{g,n} \big(\begin{smallmatrix} j_1 & \cdots & j_n \\ m_1 & \cdots & m_n \end{smallmatrix}\big)\prod_{i=1}^n A_{\mu_i,j_i}\,\mu_i^{m_i},
\]
where
\[
A_{\mu,j} := j \sum_{\lambda \vdash \mu-j} \frac{(\mu s)^{\ell(\lambda)}}{|\Aut{\lambda}|} \, \vec{q}_{\lambda} = j\,[z^{\mu - j}] \,e^{\mu s Q(z)}.
\]
Here, the summation is over partitions $\lambda$ of size $\mu - j$. We use $\ell(\lambda)$ to denote the number of parts of $\lambda$ and $\Aut{\lambda}$ to denote the set of permutations leaving the tuple $(\lambda_1, \lambda_2, \ldots, \lambda_{\ell(\lambda)})$ invariant.
\end{theorem}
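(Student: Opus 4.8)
The plan is to begin from the semi-infinite wedge formula for $\HH_{g,n}$ recalled in \Cref{sec:DHviainfinitewedge}, which presents the connected double Hurwitz number as a vacuum expectation of Okounkov's operators $\EE_r(z)$. Writing $\varsigma(z) = 2\sinh(z/2)$, these satisfy $[\EE_a(z),\EE_b(w)] = \varsigma(aw-bz)\,\EE_{a+b}(z+w)$, with $\alpha_r = \EE_r(0)$ and energy operator $\mathcal{F}_2$ obeying $e^{s\mathcal{F}_2}\EE_r(z)e^{-s\mathcal{F}_2} = \EE_r(z+rs)$. In this language the ramification over $\infty$ is created by insertions $\EE_{\mu_i}(z_i)$ (the auxiliary $z_i$ grading by genus), the simple branchings by the evolution $e^{s\mathcal{F}_2}$, and the $\vec q$-weighted ramification over $0$ by the single operator $\mathcal{Q} := \exp\!\big(\sum_{j=1}^d \tfrac{q_j}{j}\alpha_{-j}\big)$, so that, up to the normalisation $\prod_i \mu_i^{-1}$ and extraction of the genus-$g$ part,
\[
\HH_{g,n}(\mu_1,\ldots,\mu_n)\ \longleftrightarrow\ \big\langle 0\big|\,\textstyle\prod_{i=1}^n\EE_{\mu_i}(z_i)\;e^{s\mathcal{F}_2}\,\mathcal{Q}\,\big|0\big\rangle^{\!\circ}.
\]

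First I would push $e^{s\mathcal{F}_2}$ to the left through the insertions using the conjugation rule together with $\langle 0|e^{s\mathcal{F}_2}=\langle 0|$, replacing each $\EE_{\mu_i}(z_i)$ by $\EE_{\mu_i}(z_i+\mu_i s)$ and eliminating the evolution. This is the step that injects the combination $z_i+\mu_i s$, and hence all of the $\mu_i$-dependence beyond that recorded by the subscripts, into the $\varsigma$-factors produced next. I would then commute the $\EE_{\mu_i}(z_i+\mu_i s)$ to the right, past one another and into $\mathcal{Q}|0\rangle$, by repeated use of the commutation relation: absorbing a part $k$ from $\mathcal{Q}$ contributes $\varsigma\!\big(k(z_i+\mu_i s)\big)\tfrac{q_k}{k}$ and lowers the subscript by $k$. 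The combinatorial picture that emerges is that the $i$-th insertion absorbs a partition $\lambda^{(i)}\vdash\mu_i-j_i$ of parts of $\mathcal{Q}$, is thereby reduced to $\EE_{j_i}(z_i+\mu_i s)$ with residual index $j_i\in\{1,\ldots,d\}$, and the leftover operators $\EE_{j_i}$ cross-contract among themselves to furnish the connected, genus-$g$ structure. Summing over $\lambda^{(i)}$ at the leading order of the hyperbolic factors, where $\varsigma(x)\to x$, produces $[\zeta^{\mu_i-j_i}]\,e^{\mu_i s\,Q(\zeta)} = A_{\mu_i,j_i}/j_i$, the factor $j_i$ being supplied by the commutation normalisation; this is precisely the origin of $A_{\mu_i,j_i}$.

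The heart of the argument, and the step I expect to be the main obstacle, is to show that the dependence on each $\mu_i$ surviving after $A_{\mu_i,j_i}$ has been extracted is polynomial. All residual $\mu_i$-dependence enters through the factors $\varsigma\!\big(k(z_i+\mu_i s)\big)$ and the prefactors $\varsigma(z_i+\mu_i s)^{-1}$; I would expand each via $\varsigma(x)=x\big(1+\sum_{\ell\ge1}c_\ell x^{2\ell}\big)$ and reorganise the sum so that $A_{\mu_i,j_i}$ is factored out first, with every correction beyond leading order carrying an even power of $z_i+\mu_i s$. Setting $z_i\to 0$ to read off genus $g$ bounds the total $\varsigma$-expansion order in terms of $g$, so that only finitely many $m_i$ survive and $\sum_i m_i$ is controlled; the corrections then become genuine polynomials in $\mu_i$ with coefficients in $\mathbb{C}(q_1,\ldots,q_d,s)$, and collecting the coefficient of $\prod_i A_{\mu_i,j_i}\mu_i^{m_i}$ defines $C_{g,n}\big(\begin{smallmatrix}j_1&\cdots&j_n\\ m_1&\cdots&m_n\end{smallmatrix}\big)$, establishing simultaneously its rationality and the asserted finiteness.

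The delicate point throughout is the mixing of $z_i$ and $\mu_i$ inside $\varsigma\!\big(k(z_i+\mu_i s)\big)$: I must verify that, once the non-polynomial block $A_{\mu_i,j_i}$ — which alone carries the coefficient extraction $[\zeta^{\mu_i-j_i}]$ — has been split off, no further $[\zeta^{\mu_i}]$-type dependence on $\mu_i$ can be generated by the cross-contractions of the residual operators. This requires exploiting the precise compatibility between the coefficient extraction and the shift $z_i\mapsto z_i+\mu_i s$, together with the fact that the cross-contractions only ever involve the bounded residual indices $j_i$ rather than the full $\mu_i$. Establishing this cleanly is what turns the schematic factorisation into the exact statement of the theorem.
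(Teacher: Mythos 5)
There is a genuine gap, and it sits exactly where you locate your ``delicate point'' --- flagging it is not resolving it. Your central structural claim, that commuting each insertion through $\mathcal{Q}$ makes it absorb a partition $\lambda^{(i)} \vdash \mu_i - j_i$ with residual index $j_i \in \{1,\ldots,d\}$, so that the leading order of the $\varsigma$-factors directly produces $A_{\mu_i,j_i}$, is false as a consequence of the operator algebra alone. In the vacuum expectation of \Cref{thm:DHinIW2}, energy conservation together with the annihilation properties of the (co)vacuum forces the partition attached to the extremal insertion to have size $\mu_1 + a$ with $a \geq 0$ --- at least $\mu_1$, not $\mu_1 - j_1$ --- and nothing bounds residual indices by $d$; the residual operators cross-contract with unconstrained indices. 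The reduction from sums over $\lambda \vdash \mu_1 + a$ to sums over $\lambda \vdash \mu_1 - r$ with $r \in \{1,\ldots,d\}$ (the very shape of $A_{\mu,j}$) is a separate combinatorial identity, the peeling lemma (\Cref{lem:peel}), i.e.\ the coefficient-extracted form of $z\partial_z e^{\mu s Q(z)} = \big(\sum_j jq_j\partial_{q_j}\big)e^{\mu s Q(z)}$. Crucially, each application of peeling introduces rational prefactors $\mu_1/\big(\mu_1 - r + \sum_l p_l\big)$, so what comes out is a priori only a rational function of $\mu_1$, with possible simple poles at negative integers and a double pole at $\mu_1 = 0$. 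The heart of the paper's proof is showing that these poles cancel: the residues at $\mu_1 = -b$ localize via $[\alpha_b,\alpha_{-\mu_k}] = b\,\delta_{b,\mu_k}$ (\Cref{lem:residueofBop}) and disappear precisely in the passage from disconnected to connected correlators (\Cref{lem:poleatzero} and the inclusion--exclusion bookkeeping in \Cref{thm:polyDH}), with the $n=2$ case requiring an additional homogeneity/eigenfunction argument to exclude spurious poles. Your route never meets these denominators only because it never performs the peeling --- but then it also never reaches the $A_{\mu,j}$ structure it claims to extract.

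Two further omissions would still block the argument even if the factorisation were repaired. First, the $\mathcal{S}$-corrections beyond leading order do not simply multiply $A_{\mu,j}$ by polynomials in $\mu_i$: extracting $[s^{2a}]$ from $\prod_k \mathcal{S}(\mu\lambda_k s)$ weights the partition sums by symmetric functions $G_a(\lambda)$ of the parts, and showing that sums of the form $\sum_{\lambda \vdash \mu - r} \frac{\vec{q}_{\lambda}(\mu s)^{\ell(\lambda)}\mu^{c}}{|\Aut{\lambda}|}G_a(\lambda)$ still lie in the span of the $A_{\mu,j}\,\mu^m$ is the content of \Cref{lem:coeffofS}, proved via the operators $\hat{\mathfrak{p}}_m = \sum_j j^m q_j\partial_{q_j}$ and the filtered module structure of $W(\mu)$ (\Cref{lem:dmodq}); this is not automatic. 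Second, even granting polynomiality in $\mu_1$ for fixed $\mu_2,\ldots,\mu_n$, the simultaneous product structure in all variables requires the symmetrisation argument of \Cref{lem:symmetry}, which rests on the triangular basis $(\psi_k)$ of \Cref{newblem}; your proposal is silent on this step. Finally, a conceptual slip: the genus grading comes from powers of $s$ (via $m = 2g-2+n+\ell$), not from auxiliary variables $z_i$ in $\EE_{\mu_i}(z_i)$ --- the correct insertions are $\alpha_{-\mu_i}$, which conjugation by $e^{s\mathcal{F}_2}$ turns into $\EE_{-\mu_i}(\mu_i s)$, and ``setting $z_i \to 0$ to read off genus $g$'' is not a meaningful operation in this formalism.
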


This property does not seem to be implied by the piecewise polynomiality of double Hurwitz numbers known by~\cite{gou-jac-vak05,CJM11}: the latter states that for fixed $g,n,\ell$, the coefficient of $q_{\lambda_1}\cdots q_{\lambda_\ell}$ in $DH_{g,n}(\mu_1,\ldots,\mu_n)$ is a piecewise polynomial in $\lambda_1,\ldots,\lambda_\ell,\mu_1,\ldots,\mu_n$, while for us $\ell = \ell(\lambda)$ is not fixed and can be arbitrarily large.

\begin{theorem}[Topological recursion] \label{thm:TR}
When applied to the spectral curve $(\mathbb{C}^*,x,y,\omega_{0,2})$ with
\begin{equation}
\label{spdouble} x(z) = \ln z - sQ(z),\qquad y(z) = Q(z),\qquad \omega_{0,2}(z_1, z_2) = \frac{\mathrm{d}z_1\otimes\mathrm{d}z_2}{(z_1-z_2)^2},
\end{equation}
the topological recursion produces correlation differentials $\omega_{g,n}$ indexed by $g \geq 0$ and $n \geq 1$, which have the following all-order series expansion when $z_i \rightarrow 0$:
\begin{equation}
\label{omefgngun}\omega_{g,n}(z_1, \ldots, z_n) - \delta_{g,0}\delta_{n,2}\,\frac{{\rm d}x(z_1)\otimes {\rm d}x(z_2)}{(x(z_1) - x(z_2))^2} \sim \sum_{\mu_1, \ldots, \mu_n \geq 1} \HH_{g,n}(\mu_1, \ldots, \mu_n) \bigotimes_{i=1}^n {\rm d}(e^{\mu_i x(z_i)}).
\end{equation}
\end{theorem}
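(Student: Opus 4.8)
The plan is to take the correlation differentials to be the generating series on the right-hand side of \eqref{omefgngun} and to verify that they are produced by the topological recursion via the loop-equation characterisation. For $2g-2+n>0$ I set $\omega_{g,n} = \sum_{\mu_1,\dots,\mu_n\ge 1}\HH_{g,n}(\mu_1,\dots,\mu_n)\bigotimes_i \mathrm{d}(e^{\mu_i x(z_i)})$, and I take $\omega_{0,1}=y\,\mathrm{d}x$ and $\omega_{0,2}$ to be the Bergman kernel; symmetry in $z_1,\dots,z_n$ is inherited from that of $\HH_{g,n}$. The first step is to control the analytic structure of these series. By \Cref{thm:poly} each $\omega_{g,n}$ is a finite $\mathbb{C}(q_1,\dots,q_d,s)$-linear combination of products of the single building block $\Phi_{j,m}(z):=\sum_{\mu\ge 1}A_{\mu,j}\,\mu^{m}\,e^{\mu x(z)}$, one factor per variable, so it suffices to understand $\Phi_{j,m}$. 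Since $\mu\,e^{\mu x}=\tfrac{\mathrm{d}}{\mathrm{d}x}e^{\mu x}$ we have $\Phi_{j,m}=(\tfrac{\mathrm{d}}{\mathrm{d}x})^m\Phi_{j,0}$ with $\tfrac{\mathrm{d}}{\mathrm{d}x}=\tfrac{z}{1-szQ'(z)}\tfrac{\mathrm{d}}{\mathrm{d}z}$, and writing the coefficient $A_{\mu,j}=j\,[w^{\mu-j}]e^{\mu sQ(w)}$ as a contour integral and summing the resulting geometric series gives the closed form
\[
\Phi_{j,0}(z) = j\oint \frac{\mathrm{d}w}{2\pi i}\,\frac{w^{j-1}\,z\,e^{s(Q(w)-Q(z))}}{w - z\,e^{s(Q(w)-Q(z))}}.
\]
Its singularities are governed by $w-z\,e^{s(Q(w)-Q(z))}=0$, that is $e^{x(w)}=e^{x(z)}$, so $\Phi_{j,0}$ is regular at the ramification points while the operators $\tfrac{\mathrm{d}}{\mathrm{d}x}$ introduce poles exactly at the zeros of $1-szQ'(z)$, i.e.\ at the zeros of $\mathrm{d}x$ (the $d$ ramification points), with no residues.

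Next I would discharge the unstable data. Matching $\omega_{0,1}=y\,\mathrm{d}x=Q(z)\,\mathrm{d}(\ln z-sQ(z))$ with the $\HH_{0,1}$ generating series is the disk computation, carried out directly from the closed form above. Matching $\omega_{0,2}$ requires isolating the discrepancy between the Bergman kernel and the naive cylinder series $\sum_{\mu_1,\mu_2}\HH_{0,2}(\mu_1,\mu_2)\,\mathrm{d}(e^{\mu_1 x(z_1)})\otimes\mathrm{d}(e^{\mu_2 x(z_2)})$, and checking that this discrepancy is precisely the subtracted term $\tfrac{\mathrm{d}x(z_1)\otimes\mathrm{d}x(z_2)}{(x(z_1)-x(z_2))^2}$ appearing in \eqref{omefgngun}. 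These are finite, if slightly delicate, computations.

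With the correct initial data and the polar behaviour above, the topological recursion is equivalent to the linear and quadratic loop equations at the ramification points. The linear loop equation---that $\omega_{g,n}(z,\cdot)+\omega_{g,n}(\sigma(z),\cdot)$ is holomorphic at each ramification point, $\sigma$ being the local Galois involution of $x$---follows from the Galois symmetry already visible in the closed form, since $e^{x(\sigma(z))}=e^{x(z)}$. The quadratic loop equation is the genuine obstacle: it asserts the holomorphicity at each ramification point of a specific quadratic expression in the $\omega_{g,n}$, and proving it amounts to showing that the combinatorial recursion satisfied by $\HH_{g,n}$---the cut-and-join relation, or equivalently the commutation relations of the relevant operators in the semi-infinite wedge---reorganises into this local quadratic identity. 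Establishing this for a general weight polynomial $Q$, rather than the single monomial $Q(z)=z^d$ of the orbifold case where topological recursion is already known, is where the bulk of the work lies and where I expect the main difficulty.

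As a cross-check, and a possible alternative to a bare-hands verification of the quadratic loop equation, one can anchor at the orbifold spectral curve $x=\ln z-z^d$, $y=z^d$---where topological recursion holds---and propagate in the deformation parameters $q_1,\dots,q_d$, comparing the Eynard--Orantin variation of the correlators under a change of spectral curve with the $q_j$-derivatives of the double Hurwitz generating series computed in the wedge, and checking that no spurious poles appear away from the ramification points as $Q$ is turned on. In either approach the crux is the same: matching, order by order, the wedge insertion operators with the local structure of the spectral curve near its ramification points.
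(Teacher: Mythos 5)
Your architecture coincides with the paper's: \Cref{thm:poly} provides the analytic continuation of the generating series to rational functions with poles only at the zeroes of $\mathrm{d}x$ satisfying the linear loop equations (the paper's $\hat{W}$-space analysis), the unstable cases $(0,1)$ and $(0,2)$ are matched directly, and the stable correlators are then to be identified with the topological recursion output via the linear and quadratic loop equations and their uniqueness property. The genuine gap sits exactly where you place it: you never derive the quadratic loop equations, and naming cut-and-join as the mechanism is a plan, not a proof. In the paper this step is not redone but imported from the prior work \cite{do-kar18}, as summarised in \Cref{subsec:equivTRpoly}: conditional on polynomiality, one symmetrises Zhu's cut-and-join equation with respect to the local Galois involution $\sigma_{\alpha}$ at each zero of $\mathrm{d}x$ and proves, by induction on $2g-2+n$ with repeated use of the linear loop equations, that the quadratic loop equations hold; the results of \cite{bor-eyn-ora15,BSblob} then force the generating differentials to coincide with the recursion's $\omega_{g,n}$. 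Since that derivation is precisely the ``bulk of the work'' you defer, your proposal is incomplete at its decisive step. Your fallback --- deforming from the orbifold curve and matching Eynard--Orantin variations against $q_j$-derivatives computed in the wedge --- is not how the paper proves \Cref{thm:TR}: the deformation of \Cref{sec:flow} is used only \emph{after} the recursion is established, to extract the ELSV-like formula, and running it in the opposite direction would require proving that the wedge generating functions satisfy the same $x$-fixed flow equation, which you do not set up.

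There are also three concrete defects in the parts you do argue. First, your claim that $\Phi_{j,0}$ is regular at the ramification points is false: summing the residue at $w=z$ in your contour formula gives $\Phi_{j,0}(z) = jz^{j}/(1 - szQ'(z))$, which already has simple poles (with non-vanishing residue in the coordinate $z$, when the zeroes are simple) at every zero of $1-szQ'(z)$; the correct statement, proved in \Cref{lem:basisV}, is that the principal part at each $\alpha$ is odd in the local coordinate $w_{\alpha}$ with $x = w_{\alpha}^2 + x(\alpha)$. Second, and relatedly, the linear loop equation does not follow from the formal symmetry $e^{x(\sigma_{\alpha}(z))} = e^{x(z)}$ of the series, because the series converges only near $z=0$ and carries no information at $\alpha$; one needs that $\partial_{x}$ acts as $(2w_{\alpha})^{-1}\partial_{w_{\alpha}}$ on local Laurent expansions, hence preserves parity, applied to the manifestly regular even part of $z^{j}$ --- again the content of \Cref{lem:basisV}. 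Third, you tacitly assume the zeroes of $\mathrm{d}x$ are simple; for general $(s,q_1,\ldots,q_d)$ they need not be, the symmetrisation argument breaks down at a non-simple zero, and the paper removes this restriction by the Bouchard--Eynard continuity argument of \Cref{cor:higher}, an ingredient entirely absent from your plan.
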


The definition of the topological recursion will be recalled in~\Cref{subsec:TR}. \Cref{thm:TR} is a generalisation of the $d$-orbifold case~\cite{do-lei-nor16,bou-her-liu-mul14}, which is recovered by setting $Q(z) = z^d$. The $(g,n) = (0,1)$ and $(0,2)$ cases were previously known --- see~\cite[Propositions 8 and 10]{do-kar18}. We give in~\Cref{app02} a new proof of the $(0,2)$ case, relying only on the semi-infinite wedge expression for double Hurwitz numbers.

We take Theorem~\ref{thm:TR} together with the $d$-orbifold ELSV formula of \cite{joh-pan-tse11,lew-pop-sha-zvo17} as the starting point to obtain an ELSV-like formula for double Hurwitz numbers. Both of these formulas involve the Chiodo classes $\Omega^{[d]}_{g;a_1,\ldots,a_k} \in H^*(\overline{\mathcal{M}}_{g,n}; \mathbb{Q})$ indexed by $a_1,\ldots,a_k \in \{0, 1, \ldots,d - 1\}$, coming from the moduli space of $d$-spin curves, whose definition and properties are briefly reviewed in~\Cref{Chiodorev}.\footnote{We could have instead treated the arguments of $\Omega^{[d]}_{g;a_1,\ldots,a_k}$ as elements of $\mathbb Z/d\mathbb Z$. However, one would then lose the direct connection with Chiodo classes, for which periodicity modulo $d$ does not hold in general.}

\begin{theorem}[ELSV-like formula] \label{thm:ELSV}
For $d \geq 2$, $2g - 2 + n > 0$ and $\mu_1, \ldots, \mu_n > 0$, we have
\begin{equation}
\label{DHDHDH0}\begin{split} 
DH_{g,n}(\mu_1,\ldots,\mu_n) & = \sum_{\lambda \vdash |\mu|} (ds)^{2g - 2 + n + \ell(\lambda)}\,\vec{q}_{\lambda} \prod_{i = 1}^n \frac{(\mu_i/d)^{\lfloor \mu_i/d \rfloor}}{\lfloor \mu_i/d \rfloor !} \\ 
& \quad \times \Bigg(\sum_{k = 0}^{\ell(\lambda')} \frac{(-1)^{\ell(\lambda') - k}}{k!}
 \sum_{
 \substack{\boldsymbol{\rho} \in (\mathscr{\tilde{P}}_{d - 1})^{k} \\ \sqcup_{\kappa} \rho^{(\kappa)} = \check{\lambda'}}
 }
\prod_{\kappa=1}^k \frac{\big[\frac{d - |\rho^{(\kappa)}|}{d}\big]_{\ell(\rho^{(\kappa)}) - 1}}{|{\rm Aut}(\rho^{(\kappa)})|} 
\int_{\overline{\mathcal{M}}_{g,n + k}} \frac{\Omega^{[d]}_{g;-\overline{\mu},d - |\boldsymbol{\rho}|}}{\prod_{i = 1}^n \big(1 - \frac{\mu_i}{d}\psi_i\big)} \Bigg), 
\end{split}  
\end{equation} 
where we adopt the following notation.
\begin{itemize}
\item The outermost sum ranges over partitions $\lambda$ of size $|\mu| = \sum_{i = 1}^n \mu_i$, whose parts are at most $d$. This latter condition is in agreement with $q_{j} = 0$ for $j > d$. 
\item For a partition $\lambda$ with parts $\lambda_1, \ldots, \lambda_\ell$, define the (possibly empty) partition $\lambda'$ by removing from $\lambda$ all parts equal to $d$, and define the partition $\check{\lambda'}$ with parts $d-\lambda_1, \ldots, d-\lambda_\ell$, where we remove any parts equal to 0.
\item The set $\tilde{\mathscr{P}}_{d - 1}$ consists of all partitions $\rho$ whose size $|\rho|$ is at most $d - 1$. 
\item We use the notation $[x]_{\ell} = x(x + 1)\cdots (x + \ell - 1)$.
\item The innermost sum ranges over $k$-tuples $\boldsymbol{\rho} = (\rho^{(1)},\ldots,\rho^{(k)})$ with $\rho^{(\kappa)} \in \tilde{\mathscr{P}}_{d - 1}$ and whose concatenation $\sqcup_{\kappa} \rho^{(\kappa)}$ is $\check{\lambda'}$.
\item For $\boldsymbol{\rho} \in (\mathscr{\tilde{P}}_{d - 1})^{k}$, denote by $d - |\boldsymbol{\rho}|$ the $k$-tuple $(d - |\rho^{(1)}|, \ldots, d - |\rho^{(k)}|)$.
\item We use the shorthand $-\overline{\mu} = (-\overline{\mu}_1,\ldots,-\overline{\mu}_n)$, where $-\overline{\mu}_i$ is the unique integer in $\{0, 1, \ldots,d - 1\}$ such that $-\mu_i = -\overline{\mu}_i \pmod{d}$.
\item By convention, if $\lambda' = \emptyset$ the sum over $k$ reduces to a single term corresponding to $k = 0$, which is simply equal to the integral over $\overline{\mathcal M}_{g,n}$; on the other hand, for $\lambda' \neq \emptyset$ this sum ranges over $k \in \{1, 2, \ldots,\ell(\lambda')\}$.
\end{itemize}
\end{theorem}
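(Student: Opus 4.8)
The plan is to take \Cref{thm:TR} as given and transport the orbifold ELSV formula along a deformation of the spectral curve. By \Cref{thm:TR}, the numbers $\HH_{g,n}(\mu_1,\ldots,\mu_n)$ are the expansion coefficients of the correlation differentials $\omega_{g,n}$ attached to the curve \eqref{spdouble}, and the $d$-orbifold specialisation $Q(z) = z^d$ recovers precisely the curve whose $\omega_{g,n}$ are computed, via the Johnson--Pandharipande--Tseng formula, as integrals of Chiodo classes. The crucial observation I would exploit is that every member of the family \eqref{spdouble} satisfies $x(z) + s\,y(z) = \ln z$, so that varying the weights $q_1,\ldots,q_d$ deforms the pair $(x,y)$ only through $y = Q(z)$, with $x = \ln z - s\,y$ determined accordingly. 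Thus the general curve sits in a multi-parameter family over the fixed orbifold curve, and I would obtain $\omega_{g,n}$ for general $Q$ by flowing from $Q(z) = z^d$ while tracking the induced change of the intersection-theoretic representation.

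First I would set up the variation. Under an infinitesimal change $\delta Q$ of the $y$-differential (with $\delta x = -s\,\delta Q$ enforced), the deformation theory of the topological recursion expresses $\delta\omega_{g,n}$ as an insertion of $\omega_{g,n+1}$ integrated against a kernel built from $\delta Q$; equivalently, in the Eynard/DOSS representation this modifies the universal class multiplying the $\psi$-class kernels. Expanding $\delta Q$ in the local coordinate near the ramification locus and identifying the "times" dual to the monomials $z^j$, I would read off that switching on such a time creates one extra marked point carrying a Chiodo twist governed by the deficit $d - j$. The factors $\tfrac{1}{1 - (\mu_i/d)\psi_i}$ at the original marked points and the prefactor $\prod_i (\mu_i/d)^{\lfloor \mu_i/d\rfloor}/\lfloor \mu_i/d\rfloor!$ depend only on the local structure of $e^{x}$ as $z \to 0$, which the deformation of $y$ leaves untouched, so these are inherited verbatim from the orbifold case, as are the residues $-\overline{\mu}_i \bmod d$.

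Next I would integrate the flow. Since the deformation is implemented by exponentiating these one-point insertions, the finite passage from $z^d$ to a general $Q$ should produce a sum over the number $k$ of added points with a factor $1/k!$ and an inclusion--exclusion sign, matching $\sum_k \tfrac{(-1)^{\ell(\lambda') - k}}{k!}$ in \eqref{DHDHDH0}. Distributing the deficits of $\lambda$ among the added points is exactly the data of a tuple $\boldsymbol{\rho} \in (\tilde{\mathscr{P}}_{d-1})^k$ with $\sqcup_\kappa \rho^{(\kappa)} = \check{\lambda'}$, the twist on the $\kappa$-th point being $d - |\rho^{(\kappa)}|$; the weight $[\tfrac{d - |\rho^{(\kappa)}|}{d}]_{\ell(\rho^{(\kappa)}) - 1}/|\Aut{\rho^{(\kappa)}}|$ of each group should emerge from the multiplicity with which the monomials of $Q$ contribute together with the symmetry normalisation of repeated insertions. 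The outer sum over $\lambda \vdash |\mu|$ weighted by $\vec{q}_\lambda$ records the ramification profile over $0$, the power $(ds)^{2g - 2 + n + \ell(\lambda)}$ accounts for the number of simple branch points via Riemann--Hurwitz, and parts of $\lambda$ equal to $d$ carry zero deficit and hence create no insertion, explaining the passage to $\lambda'$ and $\check{\lambda'}$.

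The main obstacle will be the exact bookkeeping of this exponentiation: proving that the iterated deformation truncates and reassembles into the stated finite double sum with the precise Pochhammer weights and twists, rather than an a priori infinite or differently-weighted expression. This is where the vanishing properties of Chiodo classes established earlier are essential --- they force each $\rho^{(\kappa)}$ to have size at most $d - 1$, since otherwise $\Omega^{[d]}_{g;-\overline{\mu},d - |\boldsymbol{\rho}|}$ vanishes, and they cap the number of insertions at $\ell(\lambda')$, guaranteeing finiteness and pinning down the ranges of summation. I would finally verify the degree count, checking that the cohomological degree of $\Omega^{[d]}_{g;-\overline{\mu},d - |\boldsymbol{\rho}|}$ together with the $\psi$-powers saturates $\dim \MM_{g,n+k}$, so that only finitely many monomials in the $\mu_i$ survive, in agreement with \Cref{thm:poly}.
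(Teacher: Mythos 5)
Your strategy coincides with the paper's own route: start from the orbifold ELSV formula in the form \eqref{eq:orbifoldelsv}, embed the double Hurwitz spectral curve in the family $\mathscr{S}^t$ with $x_t$ and $y_t$ tied together, and flow from $t=0$ to $t=1$ using the Eynard--Orantin variation formula \eqref{vareq}. However, the step you defer as ``the main obstacle'' --- the exact bookkeeping of the iterated variation --- is the entire mathematical content of the proof, and your proposed resolution via ``vanishing properties of Chiodo classes established earlier'' is both circular and incorrect in substance. It is circular because \Cref{thm:vanishC} is not available beforehand: in the paper it is \emph{deduced from} this very deformation computation, by observing that the raw output \eqref{thefinalf} contains monomials with negative powers of $q_d$ (the extended partitions $\overline{\mathscr{P}}_d$) whose coefficients must vanish because $DH_{g,n}$ is a polynomial in $q_1,\ldots,q_d$; your outline omits this consistency step entirely, yet it is exactly what simultaneously proves the vanishing theorem and reduces the outer sum to genuine partitions. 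It is incorrect in substance because the truncations you attribute to Chiodo-class vanishing have a different origin: $\Omega^{[d]}_{g;a_1,\ldots,a_n}$ is set to zero only when $\sum_i a_i \not\equiv 0 \pmod{d}$, and the individual ``unstable'' Chiodo integrals appearing in \Cref{thm:vanishC} are generically nonzero --- only a specific linear combination vanishes. In the actual proof, the bound $|\rho^{(\kappa)}| \leq d-1$ arises because the variation is implemented by residues at $z = \infty$ against $f^{(l)}(z)$, whose growing part involves only $z^{d-j}$ with $j \in \{1,\ldots,d-1\}$ (\Cref{Qlemma}); the bound $k \leq \ell(\lambda')$ comes from the concatenation constraint $\sqcup_{\kappa}\rho^{(\kappa)} = \check{\lambda'}$ with nonempty $\rho^{(\kappa)}$; and the Taylor series in $t$ truncates because $\mathcal{Q}^{(l)}_j = 0$ for $l > j$, not because any class vanishes.

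Concretely, two computations are missing from your outline and cannot be bypassed. First, since the $t$-derivatives must be taken at fixed $x_t$, higher derivatives also hit the insertion $f(w)$, so one must control the Taylor coefficients of a composed function: the paper does this via the chain-rule expansion \eqref{onelll}, a symmetrisation identity yielding the normalisation $\frac{1}{k!\prod_{\kappa}(l_{\kappa}+1)!}$, and a Lagrange-inversion computation of $Z_t(z)$ defined by $x_0(z) = x_t(Z_t(z))$, which is precisely where the Pochhammer factors $\big[1 - j/d\big]_{l}$ and the automorphism-weighted partition sums of \Cref{Qlemma} come from --- they do not ``emerge from the multiplicity'' without this analysis. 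Second, \Cref{Tlemma} shows that the residues $T^{(l)}_{a,m}$ of the orbifold basis elements $\hat{\phi}^{a,[d]}_m$ against $\mathrm{d}f^{(l)}$ vanish for all $m \geq 1$, which is the reason the newly created marked points in \eqref{DHDHDH0} carry no $\psi$-classes. Without these two inputs your argument predicts the correct shape of the answer but cannot produce the stated weights, and with the circular appeal to \Cref{thm:vanishC} removed it has no justification for the finiteness of the expansion or the precise ranges of summation.
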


The term $k = \ell(\lambda')$ should be considered as the first term in the sum, and is equal to
\begin{equation} \label{ksumsimp}
\frac{1}{|\Aut{\lambda'}|} \int_{\overline{\mathcal{M}}_{g,n + \ell(\lambda')}} \frac{\Omega^{[d]}_{g;-\overline{\mu}, \lambda'}}{\prod_{i = 1}^n \big(1 - \frac{\mu_i}{d}\psi_i\big)}.
\end{equation}

Johnson, Pandharipande and Tseng have computed $[s^{2g - 2 + n + \ell(\lambda)}\vec{q}_{\lambda}]\,DH_{g,n}(\mu_1, \ldots, \mu_n)$ in \cite{joh-pan-tse11} under the ``boundedness condition''
\begin{equation} \label{boundeddef}
\max_{i \neq j} {(\lambda'_i + \lambda'_j)} \leq d,
\end{equation}
and their formula agrees with \eqref{ksumsimp} after pushing forward to $\overline{\mathcal{M}}_{g,n + \ell(\lambda')}$ using \cite[Section 5]{lew-pop-sha-zvo17}. Indeed, for us boundedness imposes that $\ell(\rho^{(\kappa)}) = 1$ since $|\rho^{(\kappa)}| \leq d - 1$, so only \eqref{ksumsimp} contributes to the sum over $k$. When boundedness is not satisfied, we find a linear combination of Chiodo integrals instead of a single Chiodo integral. Theorem~\ref{thm:ELSV} can therefore be seen as an unconditional generalisation of the results of \cite{joh-pan-tse11}.

Observe that for $b \in \{1,\ldots,d - 1\}^{k}$, the coefficient of the integral of $\Omega^{[d]}_{g;-\overline{\mu},b}$ in the parenthesised expression of \eqref{DHDHDH0} is
\[
\frac{(-1)^{\ell(\lambda') - k}}{k!} 
\sum_{
\substack{
\boldsymbol{\rho} \in (\mathscr{\tilde{P}}_{d - 1})^{k} \\ \sqcup_{\kappa} \rho^{(\kappa)} = \check{\lambda'}
\\
 |\boldsymbol{\rho}| = d - b
 }
} 
\prod_{\kappa=1}^k \frac{\big[\frac{b_\kappa}{d}\big]_{\ell(\rho^{(\kappa)}) - 1}}{|\Aut{\rho^{(\kappa)}}|}. 
\]
It would be interesting to compute it in closed form.

\subsection{Consequences for Chiodo classes}

In the course of our analysis, we in fact find terms on the right-hand side of \eqref{DHDHDH0}, which have negative powers of $q_d$. As double Hurwitz numbers are polynomials in $q_d$, this leads to the following vanishing properties of Chiodo classes.

\begin{theorem}[Vanishing] \label{thm:vanishC}
Let $g \geq 0$ and $n,\ell \geq 1$. For any integers $d \geq 2$, any partitions $\mu = (\mu_1,\ldots,\mu_n)$ and $\eta = (\eta_1,\ldots,\eta_{\ell})$ such that $\eta_1 \leq d - 1$, we have
\[
|\mu| + |\eta| < d\ell \quad \Longrightarrow \quad \sum_{k = 1}^{\ell} \frac{(-1)^{\ell - k}}{k!} 
\!\!\!
\sum_{
\substack{
\boldsymbol{\rho} \in (\mathscr{\tilde{P}}_{d - 1})^{k} \\ \sqcup_{\kappa} \rho^{(\kappa)} = \eta
}
}
\prod_{\kappa = 1}^{k} \frac{\big[\frac{d - |\rho^{(\kappa)}|}{d}\big]_{\ell(\rho^{(\kappa)}) - 1}}{|\Aut{\rho^{(\kappa)}}|} \int_{\overline{\mathcal{M}}_{g,n + k}} \frac{\Omega^{[d]}_{g;-\overline{\mu},d - |\boldsymbol{\rho}|}}{\prod_{i = 1}^n \big(1 - \frac{\mu_i}{d}\psi_i\big)} = 0.
\]
Here, the sum ranges over tuples $\boldsymbol{\rho} = (\rho^{(1)},\ldots,\rho^{(k)})$ of partitions whose concatenation is $\eta$ and such that $|\rho^{(\kappa)}| \leq d - 1$ for any $\kappa \in \{1,\ldots,k\}$.
\end{theorem}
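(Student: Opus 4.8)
The plan is to interpret the bracketed sum in \eqref{DHDHDH0} as a function of an \emph{unrestricted} partition and then to force its vanishing from the fact that $\HH_{g,n}$ depends polynomially on $q_d$. Write $\Phi_{g}(\mu;\eta)$ for the quantity on the left-hand side of the statement, namely the sum over $k$ and over $\boldsymbol{\rho}\in(\tilde{\mathscr{P}}_{d-1})^{k}$ with $\sqcup_\kappa\rho^{(\kappa)}=\eta$ of the corresponding product of combinatorial weights times the Chiodo integral. This $\Phi_{g}(\mu;\eta)$ makes sense for an arbitrary partition $\eta$ with parts at most $d-1$, it is exactly the parenthesised factor occurring in \eqref{DHDHDH0} in the special case $\eta=\check{\lambda'}$, and---crucially---it is a rational number depending only on $g,d,\mu,\eta$, carrying no dependence whatsoever on the weights $q_1,\dots,q_d$ or on $s$.

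First I would re-index \eqref{DHDHDH0}. Replacing the partition $\lambda\vdash|\mu|$ (parts $\le d$) by the partition $\eta=\check{\lambda'}$ (parts $\le d-1$), and recording the number of parts of $\lambda$ equal to $d$, one has $|\lambda'|=d\ell(\eta)-|\eta|$, so that this number is
\[
p(\eta)\;=\;\frac{|\mu|+|\eta|}{d}-\ell(\eta),
\]
while $\vec q_\lambda=q_d^{\,p(\eta)}\,\vec q_{\check\eta}$ (writing $\check\eta$ for the partition with parts $d-\eta_i$) and $(ds)^{2g-2+n+\ell(\lambda)}=(ds)^{2g-2+n}(ds)^{\ell(\eta)+p(\eta)}$. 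The hypothesis $|\mu|+|\eta|<d\ell$ is precisely $p(\eta)<0$. The analysis establishing \Cref{thm:ELSV}---deforming the Johnson--Pandharipande--Tseng formula and tracking the topological recursion as the spectral curve \eqref{spdouble} is varied---produces $\HH_{g,n}(\mu)$ not merely as the sum over $\lambda\vdash|\mu|$ but as the sum over \emph{all} partitions $\eta$ with parts $\le d-1$,
\[
\HH_{g,n}(\mu)=(ds)^{2g-2+n}\prod_{i=1}^n\frac{(\mu_i/d)^{\lfloor\mu_i/d\rfloor}}{\lfloor\mu_i/d\rfloor!}\sum_{\eta}(ds)^{\ell(\eta)+p(\eta)}\,\vec q_{\check\eta}\,q_d^{\,p(\eta)}\,\Phi_{g}(\mu;\eta),
\]
the location of the ramification points at $z_\ast^d\sim(dsq_d)^{-1}$ being responsible for the powers $q_d^{\,p(\eta)}$, some of which are negative. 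I read this as an identity of formal series in $q_1,\dots,q_{d-1}$ with coefficients that are Laurent polynomials in $q_d$ and polynomials in $s$; for each fixed total degree in $q_1,\dots,q_{d-1}$ only finitely many $\eta$ contribute, so it is well defined.

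The extraction is then immediate. The assignment $\eta\mapsto\vec q_{\check\eta}=\prod_i q_{d-\eta_i}$ is injective on partitions with parts in $\{1,\dots,d-1\}$, since it recovers the multiset of parts; hence the coefficient of the monomial $\prod_i q_{d-\eta_i}$, regarded in the variables $q_1,\dots,q_{d-1}$, is the single term $(ds)^{2g-2+n+\ell(\eta)+p(\eta)}\prod_i\tfrac{(\mu_i/d)^{\lfloor\mu_i/d\rfloor}}{\lfloor\mu_i/d\rfloor!}\,q_d^{\,p(\eta)}\,\Phi_{g}(\mu;\eta)$. On the other hand $\HH_{g,n}(\mu)$ is a finite sum of monomials $\vec q_\nu$ with non-negative exponents (directly from \Cref{doubledef}, or from \Cref{thm:poly}), so each of its coefficients is an honest polynomial in $q_d$. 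Comparing, the coefficient attached to $\vec q_{\check\eta}$ can contain no negative power of $q_d$; since $ds\neq0$ this forces $\Phi_{g}(\mu;\eta)=0$ whenever $p(\eta)<0$, which is the assertion. (If $|\mu|+|\eta|\not\equiv0\pmod d$ then $p(\eta)\notin\mathbb{Z}$; in that range the $d$-spin divisibility constraint on the Chiodo classes reviewed in \Cref{Chiodorev} makes every $\Omega^{[d]}_{g;-\overline\mu,d-|\boldsymbol\rho|}$ vanish, so $\Phi_g(\mu;\eta)=0$ trivially and it suffices to treat $p(\eta)\in\mathbb{Z}_{<0}$.)

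The main obstacle is the middle step: justifying that the semi-infinite-wedge and spectral-curve-deformation computation genuinely delivers $\HH_{g,n}(\mu)$ in the unrestricted form above, with the coefficient of $q_d^{\,p(\eta)}$ equal to $\Phi_{g}(\mu;\eta)$ for \emph{every} $\eta$ rather than only for the size-bounded partitions governed by \eqref{boundeddef}, and checking that no terms beyond $\Phi_{g}(\mu;\eta)$ are generated. This is exactly where the variation-of-spectral-curve argument behind \Cref{thm:ELSV} does its work. By contrast, the coefficient separation via the injectivity of $\eta\mapsto\vec q_{\check\eta}$, together with the appeal to polynomiality in $q_d$, is routine once the unrestricted formula is in hand.
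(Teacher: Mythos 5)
Your proposal is correct and takes essentially the same route as the paper: the paper's proof (end of \Cref{sec:flow}) derives precisely your ``unrestricted'' formula --- see \eqref{thefinalf}, rewritten as a sum over extended partitions with possibly negative $q_d$-exponent $p_d = (|\mu|+|\eta|)/d - \ell(\eta)$ --- from the Eynard--Orantin variation of the spectral curve, and then concludes the vanishing exactly as you do, from the fact that $\HH_{g,n}$ is a polynomial in $q_1,\ldots,q_d$ so the coefficients of negative powers of $q_d$ must be zero. Your auxiliary remarks (injectivity of $\eta \mapsto \vec{q}_{\check{\eta}}$ separating the monomials, and the trivial vanishing of the Chiodo integrals when $p(\eta)\notin\mathbb{Z}$) correspond to the paper's observations that distinct extended partitions yield distinct monomials and that $\Omega^{[d]}_{g;-\overline{\mu},d-|\boldsymbol{\rho}|}$ vanishes unless $|\mu|+|\eta| \equiv 0 \pmod d$, and the ``main obstacle'' you flag is exactly the content of the deformation computation carried out there.
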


It is remarkable that the vanishing relation is independent of the genus $g$. Examples of this formula are described in Appendix~\ref{appC}.

The assumption on $\lambda$ in this result is equivalent to the negativity condition in the work of Johnson, Pandharipande and Tseng~\cite{joh-pan-tse11}. When $\lambda$ also satisfies the boundedness assumption, our result specialises to the vanishing found in \cite{joh-pan-tse11}. To our knowledge, all other cases are new. It is possible that they may be deduced via the algebro-geometric methods of \cite{joh-pan-tse11}, but that would require a more systematic study of the combinatorics of degenerations of orbifold covers of $\mathbb{P}^1$. Our proofs of Theorem~\ref{thm:vanishC} and the closely related Theorem~\ref{thm:ELSV} ignore such geometric considerations and the intricate combinatorics in fact results from Taylor expansions of composed functions, starting from the known ELSV formula for orbifold Hurwitz numbers.

This vanishing can be interpreted by saying that for given $\mu$, a partition $\eta$ is {\em stable} if $|\mu| + |\eta| = dK$ for $K \geq \ell(\eta)$, and {\em unstable} if $K \leq \ell(\eta) - 1$. Using Theorem~\ref{thm:vanishC} iteratively, one can compute any unstable Chiodo integral
\begin{equation}
\label{vanishD}\int_{\overline{\mathcal{M}}_{g,n + \ell(\eta)}} \frac{\Omega^{[d]}_{g;-\overline{\mu},d - \eta}}{\prod_{i = 1}^n \big(1 - \frac{\mu_i}{d}\psi_i\big)} 
\end{equation}
as a linear combination of stable Chiodo integrals. The partitions $\tilde{\eta}$ involved in these integrals are the ones obtained from $\eta$ by merging enough rows so that they satisfy the stability condition $K \geq \ell(\tilde{\eta})$ while keeping $\tilde{\eta}_j \leq d - 1$ for all $j$. More precisely, if $\eta$ is unstable, a single application of Theorem~\ref{thm:vanishC} expresses \eqref{vanishD} as sum of over $k \in \{1,\ldots,\ell(\eta) - 1\}$ of Chiodo integrals over $\overline{\mathcal{M}}_{g,n + k}$. The terms $k > K$ are unstable so one can repeatedly use Theorem~\ref{thm:vanishC} to equate \eqref{vanishD} with a sum
\begin{equation}
\label{equate} \sum_{k = 1}^{K} \frac{1}{k!} \sum_{\substack{\boldsymbol{\rho} \in (\tilde{\mathscr{P}}_{d - 1})^{k} \\ \sqcup_{\kappa} \rho^{(\kappa)} = \eta}} B_{\boldsymbol{\rho}} \int_{\overline{\mathcal{M}}_{g,n + k}} \frac{\Omega^{[d]}_{g;-\overline{\mu},d - |\boldsymbol{\rho}|}}{\prod_{i = 1}^n \big(1 - \frac{\mu_i}{d}\psi_i\big)}
\end{equation}
for some rational coefficients $B_{\boldsymbol{\rho}}$ independent of $g$ --- see \eqref{eq:59} for an example.

For a given $\lambda,\mu \in \mathscr{P}$ such that $|\lambda| = |\mu|$, ~\cref{thm:ELSV} allows us to express the double Hurwitz number enumerating genus $g$ branched covers of $\mathbb{CP}^1$ with ramification profile $\mu$ over $\infty$ and $\lambda$ over $0$ via Chiodo integrals, whenever $d > \lambda_1$. As these double Hurwitz numbers do not depend on the choice of $d$, we deduce identities between $\Omega^{[d']}$ and $\Omega^{[d]}$ for $d' \geq d \geq 2$.

\begin{corollary}[Stability in $d$] \label{thm:identityd}
Let $g \geq 0$ and $n,\ell \geq 1$. For any partitions $\mu = (\mu_1,\ldots,\mu_n)$ and $\lambda = (\lambda_1,\ldots,\lambda_{\ell})$ such that $|\lambda| = |\mu|$, the following quantity is independent of the choice of an integer $d > \lambda_1$.
\[
d^{2g - 2 + n + \ell} \prod_{i = 1}^n \frac{(\mu_i/d)^{\lfloor \mu_i/d \rfloor}}{\lfloor \mu_i/d \rfloor !}\Bigg(\sum_{k = 1}^{\ell} \frac{(-1)^{\ell - k}}{k!} \sum_{
\substack{
\boldsymbol{\rho} \in (\mathscr{\tilde{P}}_{d - 1})^{k} \\ \sqcup_{\kappa} \rho^{(\kappa)} = \check{\lambda}
}
}
 \prod_{\kappa = 1}^{k} \frac{\big[\frac{d - |\rho^{(\kappa)}|}{d}\big]_{\ell(\rho^{(\kappa)}) - 1}}{|{\rm Aut}(\rho^{(\kappa)})|} \int_{\overline{\mathcal{M}}_{g,n + k}} \frac{\Omega^{[d]}_{g;-\overline{\mu},d - |\boldsymbol{\rho}|}}{\prod_{i = 1}^n \big(1 - \frac{\mu_i}{d}\psi_i\big)} \Bigg)
\]
\end{corollary}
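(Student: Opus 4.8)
The plan is to recognise the bracketed quantity in the corollary as a single monomial coefficient in the polynomial $\HH_{g,n}(\mu_1,\ldots,\mu_n)$ produced by \Cref{thm:ELSV}, and then to observe that this coefficient is an intrinsic Hurwitz enumeration that does not see the bound $d$. First I would apply \eqref{DHDHDH0} with a bound $d > \lambda_1$; note that $d > \lambda_1 \geq 1$ already forces $d \geq 2$, so the hypotheses of \Cref{thm:ELSV} hold. Under $d > \lambda_1$ no part of $\lambda$ equals $d$, hence $\lambda' = \lambda$, $\ell(\lambda') = \ell$, and every part $d - \lambda_i$ of $\check{\lambda}$ lies in $\{1,\ldots,d-1\}$, so that $\check{\lambda'} = \check{\lambda}$ has exactly $\ell$ parts and the $k=0$ term is absent (the inner sum ranging over $k \in \{1,\ldots,\ell\}$). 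Consequently the parenthesised expression attached to the partition $\lambda$ on the right-hand side of \eqref{DHDHDH0} coincides term for term with the bracketed sum in the corollary.

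Comparing prefactors, the $\lambda$-summand of \eqref{DHDHDH0} carries $(ds)^{2g-2+n+\ell}\prod_i \frac{(\mu_i/d)^{\lfloor \mu_i/d\rfloor}}{\lfloor \mu_i/d\rfloor!}$, which equals $s^{2g-2+n+\ell}$ times the prefactor displayed in the corollary. Since the parenthesised expression involves no $s$, the whole $\lambda$-summand is $s^{2g-2+n+\ell}\,\vec{q}_{\lambda}$ times the corollary quantity. The extraction of this single summand is legitimate because distinct partitions give distinct monomials $\vec{q}_{\lambda} = q_{\lambda_1}\cdots q_{\lambda_\ell}$ (the monomial records the multiplicity of each $q_j$, hence the entire multiset of parts), and for fixed $\lambda$ the accompanying power of $s$ is the single value $2g-2+n+\ell(\lambda)$. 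Thus no two summands of \eqref{DHDHDH0} contribute to the monomial $s^{2g-2+n+\ell}\vec{q}_{\lambda}$, and we conclude that the corollary quantity equals the well-defined coefficient $[\,s^{2g-2+n+\ell}\,\vec{q}_{\lambda}\,]\,\HH_{g,n}(\mu)$.

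Finally I would invoke the enumerative meaning of this coefficient. By \Cref{doubledef}, $[\,s^{2g-2+n+\ell(\lambda)}\,\vec{q}_{\lambda}\,]\,\HH_{g,n}(\mu)$ is precisely the weighted count, with weight $1/(m!\,|\Aut{f}|)$, of connected genus-$g$ covers $f$ with $f^{-1}([\infty]) = \sum_i \mu_i[p_i]$, ramification profile $\lambda$ over $0$, and simple branching elsewhere. This count refers to $d$ only through the upper bound it imposes on ramification orders over $0$; as long as $d > \lambda_1$ that bound accommodates the profile $\lambda$ and leaves the enumeration unchanged. Hence the coefficient, and therefore the bracketed expression of the corollary, takes the same value for every $d > \lambda_1$, which is exactly the asserted stability. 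The only step requiring genuine care is the combinatorial matching of the first paragraph—confirming $\lambda' = \lambda$, that the range of $k$ and the shape of $\check{\lambda}$ agree, and that the prefactors differ only by the expected power of $s$. There is no analytic or geometric obstacle: the statement is a direct corollary of \Cref{thm:ELSV} together with the manifest $d$-independence of the underlying double Hurwitz enumeration.
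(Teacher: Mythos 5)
Your proposal is correct and follows the same route as the paper: the corollary is obtained there by applying \Cref{thm:ELSV} with $d > \lambda_1$ (so that $\lambda' = \lambda$ and the parenthesised sums match) and extracting the coefficient of $s^{2g-2+n+\ell}\,\vec{q}_{\lambda}$, which by \Cref{doubledef} is the fixed-profile double Hurwitz count and hence manifestly independent of the bound $d$ once $d > \lambda_1$. Your write-up merely makes explicit the bookkeeping (uniqueness of the monomial, the range of $k$, the prefactor comparison) that the paper leaves implicit.
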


When $\lambda$ satisfies the boundedness condition of \Cref{boundeddef}, the parenthesised expression above reduces to the single term \eqref{ksumsimp} and this corollary could have been deduced from \cite{joh-pan-tse11}.

Swapping the role of $(\lambda,\mu)$ merely exchanges the roles of $0$ and $\infty$ in the branched cover, so it does not change the corresponding double Hurwitz numbers. Invoking this symmetry, we obtain another identity, which now requires $d > \max(\mu_1, \lambda_1)$.

\begin{corollary}[Duality]
Let $g \geq 0$ and $n,\ell \geq 1$. For any partitions $\mu = (\mu_1,\ldots,\mu_n)$ and $\lambda = (\lambda_1,\ldots,\lambda_{\ell})$ such that $|\lambda| = |\mu|$, and for any integer $d > \max(\mu_1,\lambda_1)$, we have the identity
\begin{equation*}
\begin{split}
& \quad d^{2g - 2 + n + \ell} \Bigg(\sum_{k = 1}^{\ell} \frac{(-1)^{\ell - k}}{k!} \sum_{\substack{\boldsymbol{\rho} \in (\mathscr{\tilde{P}}_{d - 1})^{k} \\ \sqcup_{\kappa} \rho^{(\kappa)} = \check{\lambda}}} \prod_{\kappa = 1}^{k} \frac{\big[\frac{d - |\rho^{(\kappa)}|}{d}\big]_{\ell(\rho^{(\kappa)}) - 1}}{|{\rm Aut}(\rho^{(\kappa)})|} \int_{\overline{\mathcal{M}}_{g,n + k}} \frac{\Omega^{[d]}_{g;d - \mu,d - |\boldsymbol{\rho}|}}{\prod_{i = 1}^{n} \big(1 - \frac{\mu_i}{d}\psi_i\big)}\Bigg) \\
& = d^{2g - 2 + n + \ell} \Bigg(\sum_{k = 1}^{n} \frac{(-1)^{n - k}}{k!} \sum_{\substack{\boldsymbol{\rho} \in (\mathscr{\tilde{P}}_{d - 1})^{k} \\ \sqcup_{\kappa} \rho^{(\kappa)} = \check{\mu}}} \prod_{\kappa = 1}^{k} \frac{\big[\frac{d - |\rho^{(\kappa)}|}{d}\big]_{\ell(\rho^{(\kappa)}) - 1}}{|{\rm Aut}(\rho^{(\kappa)})|}  \int_{\overline{\mathcal{M}}_{g,\ell + k}} \frac{\Omega^{[d]}_{g;d-\lambda,d - |\boldsymbol{\rho}|}}{\prod_{j = 1}^{\ell} \big(1 - \frac{\lambda_j}{d}\psi_i\big)}\Bigg).
\end{split} 
\end{equation*}
Moreover, these expressions are independent of the choice of integer $d > \max(\mu_1,\lambda_1)$.
\end{corollary}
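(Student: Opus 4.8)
The plan is to combine the geometric symmetry exchanging the two special branch points $0$ and $\infty$ with the ELSV-type formula of \Cref{thm:ELSV}. First I would make the symmetry precise: the involution $z \mapsto 1/z$ of $\mathbb{CP}^1$ carries a connected branched cover with ramification profile $\mu$ over $\infty$ and $\lambda$ over $0$ to a cover with profile $\lambda$ over $\infty$ and $\mu$ over $0$, preserving the genus, the number and simplicity of the remaining branch points, and the automorphism group. Consequently the coefficient $[s^{2g-2+n+\ell}\,\vec q_{\lambda}]\,\HH_{g,n}(\mu_1,\ldots,\mu_n)$ and the coefficient $[s^{2g-2+n+\ell}\,\vec q_{\mu}]\,\HH_{g,\ell}(\lambda_1,\ldots,\lambda_{\ell})$ enumerate the same underlying set of covers. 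The subtle point is the labelling convention of \Cref{doubledef}, in which the preimages over $\infty$ are labelled while those over $0$ are recorded through the weights $\vec q$; passing between the labelled-over-$\infty$ count and the fully symmetric enumeration of covers introduces factors $|\Aut{\mu}|$ and $|\Aut{\lambda}|$ that have to be tracked on the two sides.

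Next I would read off both coefficients from \Cref{thm:ELSV}. The hypothesis $d > \max(\mu_1,\lambda_1)$ is exactly what trivialises the auxiliary data: every part of $\mu$ and of $\lambda$ then lies in $\{1,\ldots,d-1\}$, so that (i) the prefactors $\prod_i (\mu_i/d)^{\lfloor \mu_i/d\rfloor}/\lfloor \mu_i/d\rfloor!$ and $\prod_j (\lambda_j/d)^{\lfloor \lambda_j/d\rfloor}/\lfloor \lambda_j/d\rfloor!$ are both equal to $1$; (ii) neither partition has a part equal to $d$, whence $\lambda' = \lambda$ and $\mu' = \mu$, and therefore $\check{\lambda'} = \check\lambda$ and $\check{\mu'} = \check\mu$; and (iii) $-\overline{\mu}_i = d - \mu_i$ and $-\overline{\lambda}_j = d - \lambda_j$. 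Substituting these simplifications, the expression extracted from $\HH_{g,n}(\mu)$ becomes exactly the parenthesised Chiodo sum on the left-hand side of the claimed identity, multiplied by $d^{2g-2+n+\ell}$, while the expression extracted from $\HH_{g,\ell}(\lambda)$ becomes its right-hand side (here the roles of $n$ and $\ell$, of $\mu$ and $\lambda$, and of $\check\lambda$ and $\check\mu$ are interchanged, and the exponent $2g-2+n+\ell$ is manifestly symmetric).

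Finally I would equate the two via the symmetry of the first step and deduce the asserted $d$-independence from \Cref{thm:identityd}, since each side is the coefficient extraction of an enumerative quantity that does not vary with $d$ once $d$ exceeds all parts. The step I expect to be the main obstacle is precisely the automorphism and labelling bookkeeping flagged above: one must relate the labelled-over-$\infty$ convention to the symmetric count on each side, using a groupoid-cardinality argument to compare the automorphism group fixing the $\infty$-labels with the full automorphism group, and then verify that the resulting factors combine consistently with the common prefactor $d^{2g-2+n+\ell}$. Matching these normalisations carefully is the delicate part of the argument; once it is settled, the ELSV substitution and the $z \mapsto 1/z$ symmetry immediately yield the identity and its $d$-independence.
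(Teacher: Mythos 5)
You take the same route as the paper: the paper's entire proof of this corollary is the single observation that swapping $\lambda$ and $\mu$ exchanges $0$ and $\infty$, combined with coefficient extraction from \Cref{thm:ELSV}, and your simplifications (i)--(iii) in the regime $d>\max(\mu_1,\lambda_1)$ are exactly right. The genuine gap is the step you flag and then postpone: the labelling/automorphism bookkeeping is not a normalisation that ``combines consistently'' --- carried out, it produces \emph{unequal} factors on the two sides. In the convention of \Cref{doubledef} the $\infty$-fibre is labelled while the $0$-fibre is recorded by the monomial $\vec{q}_{\lambda}$, and a cover with unordered profiles $(\lambda,\mu)$ admits exactly $|\Aut{\mu}|$ admissible labellings of its $\infty$-fibre; hence $[\vec{q}_{\lambda}\,s^{2g-2+n+\ell}]\,\HH_{g,n}(\mu_1,\ldots,\mu_n)=|\Aut{\mu}|\,h_g(\lambda,\mu)$, where $h_g(\lambda,\mu)$ is the symmetric unlabelled count. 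Equivalently, starting from \Cref{thm:DHinIW1} and using $\alpha_k^{*}=\alpha_{-k}$ together with the self-adjointness of $\mathcal{F}_2$, one gets $\langle \alpha_{\lambda}e^{s\mathcal{F}_2}\alpha_{-\mu}\rangle=\langle \alpha_{\mu}e^{s\mathcal{F}_2}\alpha_{-\lambda}\rangle$, which translates into $|\Aut{\lambda}|\,[\vec{q}_{\lambda}s^{m}]\,\HH_{g,n}(\mu)=|\Aut{\mu}|\,[\vec{q}_{\mu}s^{m}]\,\HH_{g,\ell}(\lambda)$ with $m=2g-2+n+\ell$. So the $0\leftrightarrow\infty$ symmetry yields $\mathrm{LHS}/|\Aut{\mu}|=\mathrm{RHS}/|\Aut{\lambda}|$, not $\mathrm{LHS}=\mathrm{RHS}$.

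A concrete instance: take $g=1$, $\mu=(2)$, $\lambda=(1,1)$, $d=3$. The left-hand side reduces to $\frac{27}{2}\int_{\overline{\mathcal M}_{1,3}}\Omega^{[3]}_{1;1,1,1}\big/\big(1-\frac{2}{3}\psi_1\big)=[q_1^2s^3]\,\HH_{1,1}(2)=\frac{1}{12}$ (the first entry of the paper's Appendix B table), whereas the right-hand side reduces to $27\int_{\overline{\mathcal M}_{1,3}}\Omega^{[3]}_{1;2,2,2}\big/\big((1-\frac{\psi_1}{3})(1-\frac{\psi_2}{3})\big)=[q_2s^3]\,\HH_{1,2}(1,1)=\frac{1}{6}$, as one computes directly from $\langle \alpha_2 e^{s\mathcal{F}_2}\alpha_{-1}^2\rangle=e^{s}-e^{-s}$; the two values satisfy $2\cdot\frac{1}{12}=1\cdot\frac{1}{6}$, i.e.\ the $\Aut{}$-corrected identity. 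So the ``delicate part'' you isolated is the whole content of the statement, and settling it does not confirm the displayed identity but corrects it by the factors $|\Aut{\lambda}|$ and $|\Aut{\mu}|$; in fairness, the paper's own one-sentence justification elides exactly this point, so your proposal reproduces the paper's argument, gap included. Once the automorphism factors are inserted, the rest of your plan (the $z\mapsto 1/z$ symmetry, substitution into \Cref{thm:ELSV}, and $d$-independence via \Cref{thm:identityd}) goes through as you describe.
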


When $\lambda$ and $\mu$ satisfy the boundedness condition, the parenthesised expressions again reduce to single terms and the identity could have been deduced from \cite{joh-pan-tse11}.

\subsection{Remarks}

One of the main difficulties in the present work lies in the proof of~\Cref{thm:poly}, in which we deduce the polynomiality structure for double Hurwitz numbers. The method we employ is based on a careful analysis of the semi-infinite wedge expression for double Hurwitz numbers. These methods are developed in~\cite{dun-lew-pop-sha15, kra-lew-sha16} and have been successfully applied to many Hurwitz problems in the past. However, for double Hurwitz numbers, the analysis is much more intricate and we demonstrate how to carry it through.

Once~\Cref{thm:poly} is established, it is relatively straightforward to show (\cref{LemmaV}) that, for $2g - 2 + n > 0$, the generating series
\[
\sum_{\mu_1,\ldots,\mu_n \geq 1} DH_{g,n}(\mu_1,\ldots,\mu_n) \prod_{i = 1}^n e^{\mu_i x(z_i)}, 
\]
where $e^{x(z_i)} = z_ie^{-sQ(z_i)}$, has an analytic continuation to rational functions of $z_1,\ldots,z_n$ that satisfy the so-called ``linear loop equations''. Working with~\Cref{thm:poly} as a starting point, the proof of~\Cref{thm:TR} was completed in the previous work of the second- and third-named authors~\cite{do-kar18}. This idea of this proof consists in symmetrising the cut-and-join equations of~\cite{zhu12} with respect to the local Galois involution near the zeroes of ${\rm d}x$. This strategy has already been applied successfully to, for example, single Hurwitz numbers in~\cite{eyn-mul-saf11}, and $d$-orbifold Hurwitz numbers in~\cite{do-lei-nor16,bou-her-liu-mul14}. We add in~\Cref{Scontinuity} a continuity argument to waive the technical assumption --- initially needed in this approach --- that the zeroes of ${\rm d}x$ are simple.

It is interesting to note that Alexandrov, Chapuy, Eynard and Harnad have considered in~\cite{ACEH2,ACEH3} more general, ``weighted'' double Hurwitz problems, where the ramifications over $0$ and $\infty$ are encoded as in~\Cref{sec:dhdef}, but various types of branching are allowed elsewhere with a weight encoded by a formal series $G(z)$. The double Hurwitz numbers in~\Cref{doubledef} correspond to $G(z) = e^{sz}$ in their notation. They prove in~\cite{ACEH3} the topological recursion in the case $G$ is polynomial, thus leaving our~\Cref{thm:TR} out of reach. This shortcoming seems difficult to avoid in their method, which is based on the theory of reconstruction of WKB expansions by the topological recursion originating in~\cite{BEdet,BBEpq} and applied to the ODEs derived in~\cite{ACEH2}, which is finite order when $G$ is polynomial.

One remarkable feature of the topological recursion is that its correlation differentials can always be expressed via intersection theory on moduli spaces of curves~\cite{eyn14}. However, when applied to the spectral curve \eqref{spdouble} governing double Hurwitz numbers, the expression taken directly from~\cite{eyn14} looks rather complicated. We take an alternative route to obtain~\Cref{thm:ELSV}. For $d$-orbifold Hurwitz numbers, the computation has already been carried out in~\cite{lew-pop-sha-zvo17} and we make use of an ELSV-like formula involving Chiodo classes and equivalent to the Johnson--Pandharipande--Tseng formula of~\cite{joh-pan-tse11}. Using the properties of the topological recursion under variations of the spectral curve~\cite{eyn-ora07}, we then flow to non-zero values of $q_1,\ldots,q_{d - 1}$ to obtain~\Cref{thm:ELSV}. As this should be carried out keeping $x(z)$ fixed along the flow, this introduces further complications, responsible for the sums over $k$. This method is general and could be applied to obtain ELSV-like formulas for other types of double Hurwitz problems.

At present, we are inclined to think that~\Cref{thm:ELSV} is not the most ``elegant'' ELSV formula that should exist. First, it would be desirable to convert it to an explicit formula involving only intersection theory on $\overline{\mathcal M}_{g,n}$; we cannot currently carry this out as the behaviour of the Chiodo class for arbitrary indices under pushforward along the morphism forgetting marked points does not seem to be known. Second, an ``elegant'' ELSV formula should directly imply the polynomiality structure, and it is currently unclear to us whether~\Cref{thm:ELSV} logically implies \Cref{thm:poly}. Goulden, Jackson and Vakil have conjectured another ELSV-like formula, relating $\HH_{g,1}$ to intersection theory on the universal Picard stack $\overline{{\rm Pic}}_{g,n}$ of certain classes that remain to be determined~\cite{gou-jac-vak05}. This conjecture is discussed in~\cite{do-lew19}. An even more vague extension of this conjecture to all double Hurwitz numbers has been proposed in~\cite{Cavalierireview}. We can only imagine that the common pushforwards of all such formulas to $\overline{\mathcal{M}}_{g,n}$ would agree.

\subsection{Organisation of the paper}

In~\Cref{sec:background}, we briefly review the topological recursion of Chekhov, Eynard and Orantin. For the spectral curve \eqref{spdouble}, we study in detail a vector space of rational $1$-forms in the variable $z$ to which the correlation differentials belong, and give its isomorphic description in terms of sequences of coefficients for their expansion near $z = 0$. This exhibits the type of polynomial structure which we seek to establish for double Hurwitz numbers, and allows us to prove the topological recursion,~\Cref{thm:TR}, conditionally on~\Cref{thm:poly} by invoking~\cite{do-kar18}. 

\Cref{sec:DHviainfinitewedge} starts from a well-known expression of double Hurwitz numbers as vacuum expectation in the semi-infinite wedge (\Cref{thm:DHinIW1}) and rewrites it in a fashion convenient for the forthcoming analysis (\Cref{thm:DHinIW2}). \Cref{sec:poly} is devoted to proving the polynomiality structure,~\Cref{thm:poly}, within this framework.

In~\Cref{sec:ELSV}, we introduce the Chiodo classes and review the $d$-orbifold ELSV formula. We then show how it can be deformed to prove~\Cref{thm:ELSV}, and obtain the vanishing~\Cref{thm:vanishC} by expressing the consistency of our computations.

The paper concludes with appendices that provide a new proof of the previously known formula for the generating series for $\HH_{0,2}(\mu_1,\mu_2)$, and examples of our intersection-theoretic results tested numerically with the help of the SageMath package \textsc{admcycles}~\cite{admcycles}.

\subsection{Notations}

The symmetric group on the set $\{1, 2, \ldots, N\}$ is denoted $\mathfrak{S}_{N}$. The set of all integer partitions $\lambda_1 \geq \cdots \geq \lambda_{\ell} > 0$ is denoted $\mathscr{P}$ and it includes by convention the empty partition. For a partition $\lambda$, we denote by $\ell(\lambda)$ the number of parts, $|\lambda| = \sum_{i = 1}^{\ell(\lambda)} \lambda_i$ the size, and ${\rm Aut}(\lambda)$ the set of permutations leaving the tuple $(\lambda_1,\ldots,\lambda_{\ell})$ invariant. Another notation for a partition $\lambda$ is $(1^{p_1}2^{p_2}\cdots)$, which indicates that it contains $p_i$ parts of length $i$, for each positive integer $i$. In that case, we have
\[
\ell(\lambda) = \sum_{i \geq 1} p_i,\qquad |\lambda| = \sum_{i \geq 1} ip_i,\qquad |{\rm Aut}(\lambda)| = \prod_{i \geq 1} p_i!\,.
\]
If $d \geq 2$, we let $\mathscr{P}_{d - 1}$ be the set of partitions such that $\lambda_1 \leq d - 1$. If $\lambda = (1^{p_1}2^{p_2}\cdots (d - 1)^{p_{d - 1}}) \in \mathscr{P}_{d - 1}$, its complement is $\check{\lambda} = (1^{p_{d - 1}}2^{p_{d - 2}}\dots (d - 1)^{p_{1}})$. The complement operation is an involution on $\mathscr{P}_{d - 1}$, such that
\[
\ell(\check{\lambda}) = \ell(\lambda),\qquad |\check{\lambda}| = d\ell(\lambda) - |\lambda|,\qquad |{\rm Aut}(\check{\lambda})| = |{\rm Aut}(\lambda)|.
\]
We let $\tilde{\mathscr{P}}_{d - 1}$ be the set of partitions of size at most $d - 1$.

We write $\lambda \vdash N$ to say that $\lambda$ is a partition of size $N$. If $I$ is a finite set, we also write $M \vdash I$ to say that $M$ is a set partition of $I$; that is, a set $\{M_{1},\ldots,M_{m}\}$ of pairwise disjoint non-empty subsets $M_i \subseteq I$ whose union is $I$.

Let $\mathscr{R}_{N} = \mathbb{Q}[x_1,\ldots,x_N]^{\mathfrak{S}_{N}}$ be the ring of symmetric functions in $N$ variables, graded by homogeneous degree. We will work with its projective limit $\mathscr{R} = \varprojlim \mathscr{R}_N$ with respect to the restriction morphisms $\mathscr{R}_{N + 1} \rightarrow \mathscr{R}_{N}$ that set $x_{N + 1}$ to zero. By construction, there exists a specialisation map which is a graded ring homeomorphism ${\rm ev}_{N} \colon \mathscr{R} \rightarrow \mathscr{R}_N$. As a graded ring, $\mathscr{R}$ is generated by homogeneous elements $\mathfrak{p}_{k}$ of degree $k$ and indexed by $k \geq 1$, whose specialisation are the power sums
\[
{\rm ev}_{N}(\mathfrak{p}_{k})(x_1,\ldots,x_N) = \sum_{i = 1}^N x_i^{k}.
\]
We may drop ${\rm ev}_N$ from the notation when the context is clear. If $\lambda$ is a partition, then we set $\mathfrak{p}_{\lambda} = \prod_{i = 1}^{\ell(\lambda)} \mathfrak{p}_{\lambda_i}$.

If $x \in \mathbb{Q}$, $\lceil x \rceil$ is the unique integer such that $\lceil x \rceil - 1 < x \leq \lceil x \rceil$, while $\lfloor x \rfloor$ is the unique integer such that $\lfloor x \rfloor \leq x < \lfloor x \rfloor + 1$.

If $f$ is a formal Laurent series in a variable $z$, the coefficient of $z^{j}$ in $f$ is denoted $[z^{j}]\,f$.

If $l$ is an integer and $x \in \mathbb{Q}$, we write $[x]_{l} = x(x + 1)\cdots (x + l - 1)$.

\subsection{Acknowledgements.} G.B. benefits from the support of the Max-Planck-Gesellschaft. N.D. was supported by the Australian Research Council grant DP180103891. D.L. has been supported by the Max-Planck-Gesellschaft, by the ERC Synergy grant ``ReNewQuantum'' at IPhT Paris and at IH\'{E}S Paris, France, and by the Robert Bartnik Visiting Fellowship at Monash University, Melbourne, Australia. E.M. was supported by an Australian Government Research Training Program (RTP) Scholarship. M.K is grateful to F. Petrov and P. Zatitskii for fruitful conversations. We thank Johannes Schmitt for his help with the SageMath package \textsc{admcycles} and for his efforts adapting it to include the possibility of intersecting Chiodo classes of arbitrary parameters. This allowed us to test and correct previous versions of Theorems~\ref{thm:ELSV} and \ref{thm:vanishC}. We thank Alessandro Giacchetto for independent numerical checks of integrals of Chiodo classes via topological recursion.

\section{Topological recursion and preliminaries}

\label{sec:background}

\subsection{Definition and basic properties} \label{subsec:TR}

The topological recursion of Chekhov, Eynard and Orantin~\cite{che-eyn06,EORev} emerged from the theory of matrix models and has since then been shown to govern a widespread collection of problems in enumerative geometry in a universal way. Here we spell out its definition in a restricted setting which is sufficient for the statement of~\Cref{thm:TR}; for a more detailed review, see~\cite{EynardICM}.

\textbf{Input.} A \emph{spectral curve} is the data of two meromorphic functions $x$ and $y$ on a smooth complex curve $\mathscr{C}$, together with a bidifferential $\omega_{0,2}$ on $\mathscr{C}^2$ whose only singularity is a double pole on the diagonal with biresidue $1$. We shall assume that ${\rm d}x$ has finitely many zeroes, which are simple and disjoint from the zeroes and the poles of ${\rm d}y$. We denote by $\mathfrak{a}$ the set of zeroes of ${\rm d}x$.

Topological recursion takes as input a spectral curve and outputs a family of multidifferentials $\omega_{g,n}$ on $\mathscr{C}^n$, indexed by integers $g \geq 0$ and $n \geq 1$ and which are symmetric under permutation of the factors of $\mathscr{C}^n$. We call them {\em correlation differentials}, and more precisely, for $2g - 2 + n > 0$,
\[
\omega_{g,n} \in H^0(\mathscr{C}^n,K_{\mathscr{C}}(*\mathfrak{a})^{\boxtimes n}\big)^{\mathfrak{S}_{n}},
\]
where $K_{\mathscr{C}}(*\mathfrak{a})$ is the sheaf of meromorphic $1$-forms with poles of arbitrary order at $\mathfrak{a}$.

\textbf{Base cases.} We set $\omega_{0,1}(z_1) = y(z_1) \dd x(z_1)$, while $\omega_{0,2}(z_1, z_2)$ is specified by the initial data.

\textbf{Recursion kernel.} For $\alpha \in \mathfrak{a}$, we define $\sigma_{\alpha}$ to be the non-trivial holomorphic involution defined locally near $\alpha$ such that $x \circ \sigma_{\alpha} = x$. The recursion kernel is defined to be
\[
K_{\alpha}(z_0,z) := \frac{1}{2}\,\frac{\int_{\sigma_{\alpha}(z)}^{z} \omega_{0,2}(z_0,\cdot)}{[y(z) - y(\sigma_{\alpha}(z))] \, {\rm d}x(z)}.
\]
It is a $1$-form in the variable $z_0 \in \mathscr{C}$ and the inverse of a $1$-form in the variable $z$ near $\alpha$.

\textbf{Recursion formula.} The correlation differentials are defined by induction on $2g - 2 + n > 0$ by the formula
\begin{equation}
 \label{eq:TRrecursion}
\begin{split}
\omega_{g,n}(z_1, \vec{z}_I) = \sum_{\alpha \in \mathfrak{a}} \Res_{z=\alpha} K_{\alpha}(z_1, z) \bigg( \omega_{g-1, n+1}(z, \sigma_{\alpha}(z), \vec{z}_I) + \sum_{\substack{h + h' = g \\ J \sqcup J' = I}}^{\circ} \omega_{h, |J|+1}(z, \vec{z}_J)\otimes \omega_{h', |J'|+1}(\sigma_{\alpha}(z), \vec{z}_{J'}) \bigg).
	\end{split}
\end{equation}
Here, we take $I = \{2,\ldots,n\}$ and use the notation $\vec{z}_{S} := \{z_i\}_{i \in S}$. The symbol ${\circ}$ over the inner summation means that we exclude all terms involving $\omega_{0,1}$ from the sum. Although it is not obvious from \eqref{eq:TRrecursion}, the $\omega_{g,n}$ defined in this way are indeed invariant under permutations of $z_1,\ldots,z_n$~\cite{eyn-ora07}. 

\textbf{Linear loop equation.} The $\omega_{g,n}(z_1,\ldots,z_n)$ defined by \eqref{eq:TRrecursion} provide the unique solution of a set of linear and quadratic loop equations~\cite{bor-eyn-ora15,BSblob}. In the present work, we need only deal with the linear loop equations. They state that for $2g - 2 + n > 0$ and each $\alpha \in \mathfrak{a}$, 
\[
\omega_{g,n}(z,\vec{z}_I) + \omega_{g,n}(\sigma_{\alpha}(z),\vec{z}_I)
\]
is regular at $z = \alpha$.

\subsection{Higher order zeroes for ${\rm d}x$ and continuity}
\label{Scontinuity}

In~\cite{BEthink}, Bouchard and Eynard give the appropriate (more complicated) definition of the correlation differentials when the zeroes of ${\rm d}x$ are not assumed to be simple. In the case of simple zeroes, it reduces to the definition \eqref{eq:TRrecursion}. They also prove a continuity result, which we only need in a restricted setting and can state in the following way.
\begin{theorem}~\cite[Section 3.5]{BEthink} \label{thm:cont}
Let $\mathcal{T} \subset \mathbb{C}$ be an open subset and $(\mathscr{S}_{t})_{t \in \mathcal{T}}$ be a continuous family of spectral curves with $\mathscr{C}_{t} = \mathbb{C}^*$ and such that $y_t$ (resp. ${\rm d}x_t$) extends to a rational function (resp. $1$-form). Denoting $\omega_{g,n}^{t}$ the correlation differentials for $\mathscr{S}_{t}$, for $2g - 2 + n > 0$ the function
\[
(t,z_1,\ldots,z_n) \mapsto \frac{\omega_{g,n}^{\mathscr{S}_{t}}(z_1,\ldots,z_n)}{{\rm d}x_t(z_1) \otimes \cdots \otimes {\rm d}x_t(z_n)}
\]
is continuous in the domain $\mathcal{T} \times (\mathbb{C}^* \setminus \mathfrak{a})^{n}$.
\end{theorem}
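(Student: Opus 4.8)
The plan is to argue by induction on $2g-2+n$, recasting the recursion as an integral over $t$-independent contours, so that continuity reduces to the continuity of parameter-dependent contour integrals. Fix a point $(t_0, z_1^0, \ldots, z_n^0)$ in the domain $\mathcal{T} \times (\mathbb{C}^* \setminus \mathfrak{a})^n$. For the base cases, $\omega_{0,1}^{t}(z_1)/\dd x_t(z_1) = y_t(z_1)$ is continuous by hypothesis, while on $\mathbb{C}^*$ the normalised bidifferential is forced to be $\dd z_1 \otimes \dd z_2/(z_1 - z_2)^2$, independent of $t$, so dividing by $\dd x_t(z_1) \otimes \dd x_t(z_2)$ gives a function continuous away from $\mathfrak{a}$. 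For the inductive step I would assume that every normalised $\omega_{h,m}^{t}$ with $2h - 2 + m < 2g - 2 + n$ is continuous on $\mathcal{T} \times (\mathbb{C}^* \setminus \mathfrak{a})^m$.

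Next I would fix the contours. The zeroes of $\dd x_{t_0}$ are finitely many points $a_1, \ldots, a_N \in \mathbb{C}^*$, possibly of higher order. Around each $a_j$ I would choose a small closed disk $D_j$, taking them pairwise disjoint, disjoint from $\{z_1^0, \ldots, z_n^0\}$, and small enough that $x_{t_0}|_{D_j}$ is a branched cover of degree $r_j$, where $r_j - 1$ is the order of vanishing of $\dd x_{t_0}$ at $a_j$, ramified only over $a_j$. Since $\dd x_t$ and $x_t$ vary continuously with $t$, the roots of $\dd x_t$ move continuously, so there is a neighbourhood $U \ni t_0$ such that for all $t \in U$ the zeroes of $\dd x_t$ lie in $\bigsqcup_j D_j$ with total order $r_j - 1$ inside each $D_j$, the map $x_t|_{D_j}$ stays a degree-$r_j$ branched cover, and $\partial D_j$ avoids the zeroes of $\dd x_t$ and maps to a loop avoiding the critical values. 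Shrinking $U$ and the $D_j$, I can also keep $z_1$ (near $z_1^0$) and the fixed points $z_i^0$ outside every $D_j$.

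The heart of the argument is a uniform contour-integral representation. The Bouchard--Eynard definition writes $\omega_{g,n}^{t} = \sum_j \frac{1}{2\pi i} \oint_{\partial D_j} \mathcal{K}_{g,n}^{t}(z; z_1, \vec{z}_I)$, where the integrand is built from the recursion kernel and a symmetric combination, over the local sheets $z^{(0)} = z, z^{(1)}, \ldots, z^{(r_j - 1)}$ of $x_t$ inside $D_j$, of the lower-order correlation differentials; this reduces to \eqref{eq:TRrecursion} when the zeroes in $D_j$ are simple and is the higher-order expression otherwise. The decisive point is that this representation is uniform in $t \in U$: the residues are taken on the fixed contours $\partial D_j$, and the integral over a contour enclosing a whole cluster of zeroes does not care whether the cluster consists of one higher-order zero or several simple ones. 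On $\partial D_j$ we have $\dd x_t \neq 0$, so the sheets $z^{(i)}$ depend continuously on $(t, z)$, are single-valued once assembled into the symmetric combination, and sit at non-critical points bounded away from $\mathfrak{a}$; hence the integrand is jointly continuous on $U \times \partial D_j$ by the inductive hypothesis together with continuity of $x_t$, $y_t$ and $\omega_{0,2}$. A contour integral over a fixed contour of a parameter-continuous integrand is continuous in the parameter, so each $\omega_{g,n}^{t}(z_1, \vec{z}_I)$ is continuous, and dividing by $\bigotimes_i \dd x_t(z_i)$ — continuous and nonvanishing since the $z_i$ avoid $\mathfrak{a}$ — completes the induction.

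The hard part will be justifying the uniform contour-integral representation: although the closed form of the Bouchard--Eynard integrand changes as zeroes of $\dd x_t$ collide, one must check that it is genuinely expressed through symmetric functions of the local sheets and the kernel that extend continuously across the collision locus once restricted to the fixed contour $\partial D_j$. In other words, the potential discontinuity in the definition is absorbed by passing to a contour enclosing an entire cluster, so that only the total contribution inside each $D_j$ enters; making this precise requires unwinding the Bouchard--Eynard formula far enough to see that no individual ramification point is singled out, which is exactly what guarantees continuity through the non-generic values of $t$ where $\dd x_t$ acquires a multiple zero.
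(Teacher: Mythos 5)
First, a point of calibration: the paper offers no proof of \Cref{thm:cont} at all --- it is imported directly from \cite[Section 3.5]{BEthink} --- so your proposal has to be measured against Bouchard and Eynard's own argument. Your overall strategy (induction on $2g-2+n$, fixed contours $\partial D_j$ enclosing whole clusters of zeroes, continuity of contour integrals with jointly continuous integrands) is indeed the shape of their proof, and the peripheral steps you supply are sound: the base cases, the persistence under small perturbation of $t$ of the degree-$r_j$ covering structure of $x_t|_{D_j}$, the uniform separation of the sheets $z^{(i)}$ from the zeroes of ${\rm d}x_t$ (since $x_t(\partial D_j)$ avoids the critical values), and the fact that only symmetric functions of the sheets, which are single-valued and continuous, enter the integrand.

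However, the step you defer to the last paragraph is the entire theorem, and as stated your ``decisive point'' fails for the naive integrand. For $t$ off the collision locus, $\omega_{g,n}^{t}$ is defined by \eqref{eq:TRrecursion}: the quantity you would integrate over $\partial D_j$ is a sum over the simple zeroes $\alpha$ inside $D_j$ of local kernels $K_{\alpha}$ built from the two-sheet involutions $\sigma_{\alpha}$, whose denominators $y_t(z)-y_t(\sigma_{\alpha}(z))$ tend to zero and whose involutions cease to exist as zeroes merge; the individual residues diverge as $t$ approaches the collision locus, so this integrand is \emph{not} continuous on $U \times \partial D_j$, and fixing the contour does not by itself absorb the discontinuity --- a contour integral ``does not care'' about the configuration inside only once the integrand on the contour is itself a continuous family. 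What rescues the argument in \cite{BEthink} is a prior algebraic identity, valid already when all zeroes are simple: the local recursion \eqref{eq:TRrecursion} is equivalent to a global recursion whose kernel is a symmetric sum over all non-empty subsets of the full fiber $x_t^{-1}(x_t(z))$ within the cluster, with denominators of the form $\prod_i \big(y_t(z)-y_t(z^{(i)})\big)$. Only after this rewriting is the integrand on $\partial D_j$ a symmetric function of the sheets that extends continuously through the collision; establishing the identity requires showing that the divergent parts cancel among the subset terms, which is a genuine computation (the content of ``think globally, compute locally''), not a contour-deformation generality. Without it, your induction assumes exactly the uniform representation it is meant to yield, so the argument is circular at its crux.
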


\begin{corollary}
\label{cor:higher} If~\Cref{thm:TR} holds under the stronger condition that $1 - szQ'(z)$ has simple roots, then it holds in full generality provided we use the Bouchard--Eynard topological recursion~\cite{BEthink} to define the correlation differentials.
\end{corollary}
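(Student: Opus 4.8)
The plan is to realise the spectral curve \eqref{spdouble} with a higher-order zero of $\mathrm{d}x$ as the limit of a continuous family of curves whose $\mathrm{d}x$ has only simple zeroes, and to transport the expansion \eqref{omefgngun} through the limit by means of the continuity statement \Cref{thm:cont}. Throughout I fix $(g,n)$ with $2g-2+n>0$, so that the $\delta_{g,0}\delta_{n,2}$ term in \eqref{omefgngun} is absent and $\omega_{0,1},\omega_{0,2}$ (which are base data, independent of the chosen definition of the recursion) play no role.

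First I would construct the deforming family. Write $\mathbf p=(q_1,\dots,q_d,s)$ for the parameters, so that $\mathrm{d}x=\frac{1-szQ'(z)}{z}\,\mathrm{d}z$ and the set $\mathfrak a$ of zeroes of $\mathrm{d}x$ consists of the roots of the degree-$d$ polynomial $1-szQ'(z)=1-s\sum_{j=1}^d jq_jz^j$. Its discriminant is a nonzero polynomial in $\mathbf p$ (generic parameters give simple roots), so the locus $Z$ on which $\mathrm{d}x$ has a multiple root is a proper algebraic subvariety. Given a base point $\mathbf p_0$ with $q_d^0s_0\neq 0$ — possibly lying in $Z$ — I would choose a complex-linear arc $t\mapsto\mathbf p(t)$ with $\mathbf p(0)=\mathbf p_0$ not contained in $Z$, and restrict $t$ to a small disc $\mathcal T$ about $0$ on which $\mathbf p(t)\notin Z$ for $t\neq 0$ and $q_d(t)s(t)\neq 0$ throughout. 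The associated spectral curves $\mathscr S_t$ have $\mathscr C_t=\mathbb C^*$, rational $y_t=Q_t$ and rational $\mathrm{d}x_t$, and depend continuously on $t$; moreover any zero of $\mathrm dx_t$ is automatically disjoint from the zeroes and poles of $\mathrm dy_t=Q_t'\,\mathrm dz$, since $1-szQ_t'(z)=1\neq 0$ wherever $Q_t'(z)=0$. For $t\neq 0$ the zeroes of $\mathrm dx_t$ are simple, so \eqref{omefgngun} holds for $\mathscr S_t$ by hypothesis, while for every $t$ (including $t=0$) the Bouchard--Eynard differentials $\omega_{g,n}^{\mathscr S_t}$ are meromorphic with poles only at $\mathfrak a_t$, hence regular near $z_i=0$.

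Next I would extract the expansion coefficients as contour integrals. In the local coordinate $w_i=e^{x_t(z_i)}=z_ie^{-s_tQ_t(z_i)}$ near $z_i=0$, the statement \eqref{omefgngun} is equivalent to the family of identities
\[
DH^{(t)}_{g,n}(\mu_1,\dots,\mu_n)=\prod_{i=1}^n\frac{1}{\mu_i}\,\frac{1}{2\pi\mathrm i}\oint_{\gamma_i}\big(z_ie^{-s_tQ_t(z_i)}\big)^{-\mu_i}\;\omega_{g,n}^{\mathscr S_t}(z_1,\dots,z_n),
\]
where each $\gamma_i$ is a small positively oriented loop about $z_i=0$; for $t\neq 0$ this holds by hypothesis, and for $t=0$ the right-hand side simply computes the Taylor coefficients of $\omega_{g,n}^{\mathscr S_0}$ in the coordinates $w_i$. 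The crucial point is that the zeroes $\mathfrak a_t$ stay a uniform distance away from $z=0$ for $t$ near $0$ (as $1-szQ_t'(z)=1$ at $z=0$), so I may fix a single radius $\epsilon$ and take $\gamma_i=\{|z_i|=\epsilon\}$ lying in $\mathbb C^*\setminus\mathfrak a_t$ for all $t\in\mathcal T$. On the compact torus $\prod_i\gamma_i$, \Cref{thm:cont} gives that $\omega_{g,n}^{\mathscr S_t}=\big(\omega_{g,n}^{\mathscr S_t}/\bigotimes_i\mathrm dx_t(z_i)\big)\,\bigotimes_i\mathrm dx_t(z_i)$ is jointly continuous in $(t,\vec z)$, hence converges uniformly as $t\to 0$; the kernels $(z_ie^{-s_tQ_t(z_i)})^{-\mu_i}$ likewise converge uniformly. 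Passing to the limit under the integral then yields $\lim_{t\to 0}DH^{(t)}_{g,n}(\vec\mu)=[\text{Taylor coefficients of }\omega_{g,n}^{\mathscr S_0}]$.

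Finally, by \Cref{doubledef} the double Hurwitz numbers $DH_{g,n}(\vec\mu)$ are finite sums of monomials in $\mathbf p$ (only the finitely many partitions $\lambda\vdash|\mu|$ with parts at most $d$ contribute), hence polynomial and in particular continuous in $\mathbf p$; thus $\lim_{t\to 0}DH^{(t)}_{g,n}(\vec\mu)=DH^{(0)}_{g,n}(\vec\mu)$. Comparing the two evaluations of the limit shows that $\omega_{g,n}^{\mathscr S_0}$ has precisely the expansion \eqref{omefgngun} with the double Hurwitz numbers at $\mathbf p_0$, which is \Cref{thm:TR} in the multiple-zero case. I expect the main obstacle to be the legitimacy of interchanging the $z_i\to 0$ coefficient extraction with the $t\to 0$ limit: this is exactly what forces the contour reformulation and the uniform choice of $\gamma_i$ away from the colliding zeroes $\mathfrak a_t$, and it is precisely here that \Cref{thm:cont} does the essential work.
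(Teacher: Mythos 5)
Your proof is correct and takes essentially the same route as the paper's (much terser) argument, which likewise combines the polynomiality of $DH_{g,n}$ in $(s,q_1,\ldots,q_d)$, the equivalence of \Cref{thm:TR} with a residue formula extracting coefficients at $z_i = 0$, and \Cref{thm:cont} to pass to the limit in the Bouchard--Eynard correlation differentials. Your additional details --- the explicit arc avoiding the discriminant locus, the uniform choice of contours away from the colliding zeroes of $\mathrm{d}x_t$, and the uniform convergence on the compact torus --- merely flesh out what the paper leaves implicit.
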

\begin{proof} The definition of $DH_{g,n}(\mu_1,\ldots,\mu_n)$ immediately shows that, for each $g,n,\mu_1,\ldots,\mu_n$, it is a polynomial (in particular, a continuous function) of the parameters $s,q_1,\ldots,q_d$, whose value can be reached by taking a limit of parameters for which $1 - szQ'(z)$ has simple roots. ~\Cref{thm:TR} for the latter case is equivalent to
\[
DH_{g,n}(\mu_1,\ldots,\mu_n) = \Res_{z_1 = 0} \cdots \Res_{z_n = 0} \omega_{g,n}(z_1,\ldots,z_n) \prod_{i = 1}^n \frac{e^{-\mu_i x(z_i)}}{\mu_i},
\]
and~\Cref{thm:cont} implies that the right-hand side is compatible with taking limits in the $\omega_{g,n}$ defined by the Bouchard--Eynard topological recursion.
\end{proof}

\subsection{A vector space of rational functions}
\label{defnugfngu}
In this section, we focus on the spectral curve given by \eqref{spdouble}. As the linear loop equations are satisfied by the correlation differentials $\omega_{g,n}$, it is useful to study in more detail their space of solution. This leads us to introduce the following vector space over $\mathbb{K} := \mathbb{C}(s,q_1,\ldots,q_d)$.

\begin{definition} \label{defn:V(z)}
Let $\hat{W}(z)$ be the $\mathbb{K}$-vector space consisting of rational functions $\Phi(z)$ such that
\begin{itemize}[topsep=0pt]
\item $\Phi(z)$ has poles only when $z \in \mathfrak{a}$; and
\item for each $\alpha \in \mathfrak{a}$, $\Phi(z) + \Phi(\sigma_{\alpha}(z))$ is regular at $z = \alpha$.
\end{itemize}
\end{definition}

The basic properties of the correlation differentials (linear loop equations and symmetry in their $n$ variables) imply that, for $2g - 2 + n > 0$,
\begin{equation}
\label{primHurw} \int_{0}^{z_1}\cdots \int_{0}^{z_n} \omega_{g,n} \in \bigotimes_{i = 1}^n \hat{W}(z_i).
\end{equation}

We shall describe a convenient basis for $\hat{W}(z)$. For this purpose we define a family of rational functions $\hat{\phi}^j_{m}(z)$ indexed by $j \in \{1,\ldots,d\}$ and an integer $m \geq -1$, by induction on $m$. We set $\hat{\phi}^j_{-1}(z) = z^j$ and use the notation $\partial_z = \frac{\partial}{\partial z}$ to define, for all $m \geq -1$,
\[
\hat{\phi}^j_{m+1}(z) = \frac{z}{1 - szQ'(z)} \partial_{z}\hat{\phi}_m^j(z) = \partial_{x(z)} \hat{\phi}_m^j(z).
\]

\begin{lemma} \label{lem:basisV} 
The vector space $\hat{W}(z)$ admits the linear basis $\hat{\phi}^j_m(z)$ indexed by $j \in \{1,\ldots,d\}$ and $m \geq 0$. It has the structure of a $\mathbb{K}[\partial_{x(z)}]$-module and as such it is generated by $(\hat{\phi}^j_0(z))_{j = 1}^{d}$.
\end{lemma}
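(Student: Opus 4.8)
The plan is to establish three facts in turn: that $\partial_{x(z)}$ preserves $\hat{W}(z)$ (which yields the module structure), that each $\hat{\phi}^j_0$ lies in $\hat{W}(z)$ (so that every $\hat{\phi}^j_m$ does), and finally that the $\hat{\phi}^j_m$ form a basis, which I would extract from a single local computation at the ramification points. Throughout I work under the assumption that $\mathrm{d}x$ has simple zeroes, i.e.\ that $1 - szQ'(z)$ has $d$ distinct roots $\mathfrak{a} = \{\alpha_1,\dots,\alpha_d\} \subset \mathbb{C}^*$; the general case follows by the continuity built into \Cref{Scontinuity}.

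First I would show $\partial_{x(z)} \colon \hat{W}(z) \to \hat{W}(z)$. For the pole condition, write $\partial_{x(z)} = \frac{z}{1 - szQ'(z)}\partial_z$: differentiation creates no new poles and the prefactor has poles only along $\mathfrak{a}$ (with a zero at $z = 0$), so $\partial_{x(z)}\Phi$ has poles only on $\mathfrak{a}$, while regularity at $0$ and $\infty$ is a direct leading-order check. For anti-invariance, the key point is that $\partial_{x(z)}$ commutes with $\sigma_\alpha$, since $x \circ \sigma_\alpha = x$ forces $\sigma_\alpha^*\,\mathrm{d}x = \mathrm{d}x$; hence $\partial_{x(z)}\Phi(z) + \partial_{x(z)}\Phi(\sigma_\alpha(z)) = \partial_{x(z)}\big(\Phi(z) + \Phi(\sigma_\alpha(z))\big)$. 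As $\Phi(z) + \Phi(\sigma_\alpha(z))$ is $\sigma_\alpha$-invariant and regular at $\alpha$, it is a power series in $x - x(\alpha)$, and applying $\partial_{x(z)}$ keeps it regular. The same argument applied to $z^j$ (which is regular at every $\alpha$) gives $\hat{\phi}^j_0 = \partial_{x(z)}(z^j) \in \hat{W}(z)$, and then $\hat{\phi}^j_m = \partial_{x(z)}^m \hat{\phi}^j_0 \in \hat{W}(z)$ for all $m \geq 0$. Granting the basis property, generation as a $\mathbb{K}[\partial_{x(z)}]$-module by $(\hat{\phi}^j_0)_{j=1}^d$ is then immediate from $\hat{\phi}^j_m = \partial_{x(z)}^m \hat{\phi}^j_0$.

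The heart of the proof is a local computation at each $\alpha \in \mathfrak{a}$. I would use the coordinate $\zeta$ with $\zeta^2 = x - x(\alpha)$ and $\sigma_\alpha \colon \zeta \mapsto -\zeta$, so that $\partial_{x(z)} = \frac{1}{2\zeta}\partial_\zeta$ and hence $\partial_{x(z)}(\zeta^k) = \tfrac{k}{2}\zeta^{k-2}$. Expanding $z^j = \alpha^j + j\alpha^{j-1}\kappa_\alpha\,\zeta + O(\zeta^2)$ with $\kappa_\alpha = \frac{\dd z}{\dd \zeta}(\alpha) \neq 0$, only the linear term survives as the most singular contribution after $m+1$ derivatives: the constant is annihilated, and each $\zeta^k$ with $k \geq 2$ produces a pole of order at most $2m$. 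Thus $\hat{\phi}^j_m$ has leading polar part $c_m\, j\alpha^{j-1}\kappa_\alpha\,\zeta^{-(2m+1)}$ at $\alpha$, where $c_m = 2^{-(m+1)}\prod_{i=0}^{m}(1 - 2i) \neq 0$ is independent of $j$ and $\alpha$; in particular $\hat{\phi}^j_m$ has a pole of order exactly $2m + 1$ in $\zeta$ at every ramification point.

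With this in hand, both independence and spanning reduce to invertibility of the matrix $\big(j\alpha^{j-1}\big)_{\alpha \in \mathfrak{a},\,1 \leq j \leq d}$, whose determinant is $d!$ times the Vandermonde $\prod_{r < s}(\alpha_s - \alpha_r) \neq 0$ as the $\alpha$ are distinct. For independence, in a vanishing combination let $M$ be the top level present; reading off the coefficient of $\zeta^{-(2M+1)}$ at each $\alpha$ gives a homogeneous system with this matrix, forcing all level-$M$ coefficients to vanish. For spanning, given $\Phi \in \hat{W}(z)$ of maximal pole order $2M + 1$, I solve the same invertible system for $(c_j)$ so that $\Phi - \sum_j c_j \hat{\phi}^j_M$ has vanishing $\zeta^{-(2M+1)}$ part at every $\alpha$; the difference still lies in $\hat{W}(z)$, so anti-invariance forces its pole orders to be odd, hence at most $2M - 1$, and I induct downward. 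The point demanding care — and the main obstacle — is the base case: the iteration terminates at a function regular on all of $\mathbb{P}^1$, i.e.\ a constant. Every $\hat{\phi}^j_m$ vanishes at $z = 0$ (there $\hat{\phi}^j_m \sim j^{m+1} z^j$), so the span consists of functions vanishing at $z = 0$; consistently, the primitives in \eqref{primHurw} to which the lemma is applied are normalised by $\int_0^z$ and so vanish there, which makes the terminal constant equal to $0$ and closes the induction. Beyond this bookkeeping, the only genuine input is the Vandermonde invertibility, which crucially relies on the simplicity of the zeroes of $\mathrm{d}x$.
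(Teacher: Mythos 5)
Your argument is correct and is essentially the paper's own: you use the same local coordinate ($\zeta$ with $\zeta^2 = x - x(\alpha)$, where the paper writes $w_\alpha$), the same parity argument to show that $\partial_{x(z)}$ preserves $\hat{W}(z)$, and you reduce both independence and spanning to principal parts at the $d$ simple zeroes of $\mathrm{d}x$, where your explicit computation of the leading polar coefficients $c_m\, j\alpha^{j-1}\kappa_\alpha\,\zeta^{-(2m+1)}$ and the invertibility of the matrix $\big(j\alpha^{j-1}\big)$ via the Vandermonde determinant is precisely the verification the paper compresses into ``it is easy to check that $L_-(\hat{\phi}^j_m)$ gives a basis'', while your downward induction on pole order plays the role of the paper's Liouville argument together with the explicit residue formula inverting $L_-$. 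Your care with the terminal constant is warranted rather than mere bookkeeping --- the paper's Liouville step tacitly works with elements normalised to vanish at $z = 0$ (as every $\hat{\phi}^j_m$ does) --- whereas your opening appeal to \Cref{Scontinuity} for non-simple zeroes is superfluous, since the involutions $\sigma_\alpha$, and hence the space $\hat{W}(z)$ itself, are only defined under the standing simple-zero assumption in which the lemma is stated.
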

\begin{corollary} 
\label{cor:decomw} For each $(g,n)$ such that $2g - 2 + n > 0$, there exists $C_{g,n}\big(\begin{smallmatrix} j_1 & \cdots & j_n \\ m_1 & \cdots & m_n\end{smallmatrix}\big) \in \mathbb{K}$ indexed by $j_i \in \{1,\ldots,d\}$ and non-negative integers $m_i$, which vanish for all but finitely many $(j_i,m_i)_{i = 1}^n$, and satisfying
\[
\omega_{g,n}(z_1,\ldots,z_n) = \sum_{\substack{1 \leq j_1,\ldots,j_n \leq d \\ m_1,\ldots,m_n \geq 0}} C_{g,n}\big(\begin{smallmatrix} j_1 & \cdots & j_n \\ m_1 & \cdots & m_n \end{smallmatrix}\big) \bigotimes_{i = 1}^n {\rm d}\hat{\phi}_{m_i}^{j_i}(z_i).
\]
\end{corollary}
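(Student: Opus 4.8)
The plan is to reduce the statement entirely to \Cref{lem:basisV} together with the membership \eqref{primHurw}, so that the corollary becomes a formal consequence of expanding an element of a tensor product in a product basis and then differentiating. First I would introduce the primitive
\[
\Phi_{g,n}(z_1,\ldots,z_n) := \int_{0}^{z_1}\cdots\int_{0}^{z_n}\omega_{g,n},
\]
which by \eqref{primHurw} belongs to $\bigotimes_{i=1}^n \hat{W}(z_i)$, and whose total differential recovers the correlation differential, namely $\bigotimes_{i=1}^n {\rm d}_{z_i}\,\Phi_{g,n} = \omega_{g,n}$ by the fundamental theorem of calculus applied in each variable.

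Next I would invoke \Cref{lem:basisV}, which provides the $\mathbb{K}$-linear basis $(\hat{\phi}^j_m)_{1 \leq j \leq d,\, m \geq 0}$ of $\hat{W}(z)$. Since a basis of each factor yields a basis of a tensor product, the products $\bigotimes_{i=1}^n \hat{\phi}^{j_i}_{m_i}(z_i)$ form a $\mathbb{K}$-basis of $\bigotimes_{i=1}^n \hat{W}(z_i)$. By definition of the algebraic tensor product, any of its elements --- in particular $\Phi_{g,n}$ --- is a \emph{finite} $\mathbb{K}$-linear combination of these products. This is exactly the assertion that there exist coefficients $C_{g,n}\big(\begin{smallmatrix} j_1 & \cdots & j_n \\ m_1 & \cdots & m_n\end{smallmatrix}\big) \in \mathbb{K}$, vanishing for all but finitely many $(j_i,m_i)_{i=1}^n$, such that
\[
\Phi_{g,n}(z_1,\ldots,z_n) = \sum_{\substack{1 \leq j_1,\ldots,j_n \leq d \\ m_1,\ldots,m_n \geq 0}} C_{g,n}\big(\begin{smallmatrix} j_1 & \cdots & j_n \\ m_1 & \cdots & m_n\end{smallmatrix}\big) \prod_{i=1}^n \hat{\phi}^{j_i}_{m_i}(z_i).
\]

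Finally I would apply $\bigotimes_{i=1}^n {\rm d}_{z_i}$ to both sides. Since differentiation acts factor by factor and is $\mathbb{K}$-linear, and since the sum is finite, termwise differentiation is justified: the left-hand side becomes $\omega_{g,n}$ and the right-hand side becomes the claimed expansion in the $\bigotimes_{i=1}^n {\rm d}\hat{\phi}^{j_i}_{m_i}(z_i)$ with the very same coefficients. This yields the corollary.

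I do not expect a genuine obstacle at this stage: all the substantive work has been carried out upstream, in \Cref{lem:basisV} (establishing that the $\hat{\phi}^j_m$ form a basis of $\hat{W}(z)$) and in the membership \eqref{primHurw} (which encodes the linear loop equations and the symmetry of the $\omega_{g,n}$ in their $n$ variables). The only points requiring any care are the finiteness of the expansion, which is automatic from the definition of the tensor product, and the compatibility of the chosen primitive with differentiation, which holds by construction of $\Phi_{g,n}$. The closest thing to a subtlety is therefore purely bookkeeping, namely that the indexing $m_i \geq 0$ matches the basis range of \Cref{lem:basisV} exactly, so that no boundary term $\hat{\phi}^j_{-1}(z) = z^j$ enters the expansion of the primitive.
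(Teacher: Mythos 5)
Your proposal is correct and matches the paper's own argument: the paper likewise deduces \Cref{cor:decomw} by expanding the primitive $\int_0^{z_1}\cdots\int_0^{z_n}\omega_{g,n}$, which lies in $\bigotimes_{i=1}^n \hat{W}(z_i)$ by \eqref{primHurw}, in the basis of \Cref{lem:basisV} and then applying $\bigotimes_{i=1}^n {\rm d}_{z_i}$. Your additional remarks on finiteness of the expansion and on the index range $m_i \geq 0$ are accurate but, as you anticipate, purely bookkeeping.
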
 
\begin{proof}[Proof of~\Cref{lem:basisV,cor:decomw}]
For each $\alpha \in \mathfrak{a}$, we define a local coordinate $w_{\alpha}$ near $\alpha$ such that $x = w_{\alpha}^2 + x(\alpha)$. The local involution $\sigma_{\alpha}$ is therefore realised by $w_{\alpha} \mapsto -w_{\alpha}$. We observe that for $\hat{\phi}^j_{-1}(z)  = z^j $, its even part $\hat{\phi}^j_{-1}(z) + \hat{\phi}^j_{-1}(\sigma_{\alpha}(z))$ is regular at $z=\alpha$, and that $\hat{\phi}^j_{0}(z)$ remains finite when $z \rightarrow \infty$. Given that $\hat{\phi}_{m}^j(z) = \partial_{x(z)}^{m}\hat{\phi}_0^j(z)$, showing that $\hat{\phi}_{m}^j(z) \in \hat{W}(z)$ for $j \in \{1,\ldots,d\}$ and $m \geq 0$ reduces to showing that $\partial_{x(z)}$ preserves $\hat{W}(z)$.

The Laurent expansion of a rational function $\Phi(z)$ at $z = \alpha$ in the local coordinate $w_{\alpha}$ gives an element $L_{\alpha}(\Phi) \in \mathbb{K}((w_{\alpha}))$. Elements of $\hat{W}(z)$ are characterised by the property that they only have poles at $\mathfrak{a}$ and for each $\alpha \in \mathfrak{a}$, the even part of $L_{\alpha}(\Phi)$ has only non-negative powers. The action of $\partial_{x}$ is realised in this space of Laurent series as $(2w_{\alpha})^{-1}\partial_{w_{\alpha}}$, and it preserves the parity of any monomial in $w_{\alpha}$. Since applying $\partial_{x(z)}$ does not introduce poles outside of $\mathfrak{a}$, we deduce that $\partial_{x(z)}\hat{W}(z) \subseteq \hat{W}(z)$. Thus, $\hat{\phi}^j_m(z) \in \hat{W}(z)$ for $j \in \{1,\ldots,d\}$ and $m \geq 0$.

Let us now consider the linear map
\[
L_-\,\colon\,\hat{W}(z) \longrightarrow \bigoplus_{\alpha \in \mathfrak{a}} w_{\alpha}^{-1}\mathbb{K}[w_{\alpha}^{-2}],
\] 
that associates $\Phi \in \hat{W}(z)$ to the principal part of $L_{\alpha}(\Phi)$; that is, keeping only the monomials of negative degree. It is easy to check that $L_-(\hat{\phi}^j_{m})$ indexed by $j \in \{1,\ldots,d\}$ and $m \geq 0$ gives a basis of $\bigoplus_{\alpha} w_{\alpha}^{-1}\mathbb{K}[w_{\alpha}^{-2}]$. Liouville's theorem implies that $L_-$ is injective and the formula (where $z$ is outside the contour of integration)
\[
\Phi(z) = \sum_{\alpha \in \mathfrak{a}} \Res_{\tilde{z} = \alpha} \frac{{\rm d}\tilde{z}}{z - \tilde{z}}\,L_{-}(\Phi)(w_{\alpha}(\tilde{z}))
\]
provides an inverse to $L_-$, showing that $L_-$ is an isomorphism. Therefore, $\hat{\phi}^j_{m}(z)$ with the given range of indices forms a basis of $\hat{W}(z)$.~\Cref{cor:decomw} then comes from the application of $\otimes_{i = 1}^n {\rm d}_{z_i}$ to the decomposition of the left-hand side of \eqref{primHurw} in this basis. 
\end{proof} 

It is convenient to introduce the following filtration on the vector space $\hat W(z)$.
\begin{definition}
\label{def:filtration}
The subspace $\hat W_f(z) \subset \hat{W}(z)$ for $f \in \mathbb Z_{\ge 0}$ is spanned by the functions $\hat \phi^j_m$ for $j \in \{1,\ldots,d\}$, and $m \in \{0,\ldots, f\}$.
\end{definition}
Clearly $\partial_x \hat W_f(z) \subset \hat W_{f+1}(z)$ for any $f\in \mathbb Z_{\ge 0}$.

\subsection{Isomorphic description as a vector space of sequences}

Double Hurwitz numbers are stored in the power series expansion of the correlation differentials near $z_i \rightarrow 0$ in the variable $X(z_i) := e^{x(z_i)}$. It is therefore useful to study such power series expansions for elements of $\hat{W}(z)$. Up to the application of $\partial_{x(z)} = X(z)\, \partial_{X(z)}$, it is sufficient to do it for $\hat{\phi}_{-1}^j(z) = z^j$.

\begin{lemma}
\label{lem:expmoins1} We have the following all-order series expansion when $z \rightarrow 0$:
\[
\hat{\phi}_{-1}^j(z) \sim \sum_{\mu \geq 1} \phi_{-1}^{j}(\mu)\,e^{\mu x(z)},\qquad \phi_{-1}^{j}(\mu) := \frac{j}{\mu} \sum_{\lambda \vdash \mu - j} \frac{(\mu s)^{\ell(\lambda)}}{|{\rm Aut}(\lambda)|}\,\vec{q}_{\lambda}.
\]
\end{lemma}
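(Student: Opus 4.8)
The plan is to recognise that the statement is precisely a Lagrange--Bürmann inversion in disguise, followed by a routine expansion of an exponential into a sum over partitions. The key observation is that, since $Q(0) = 0$, the variable in which we expand is
\[
X(z) := e^{x(z)} = z\,e^{-sQ(z)},
\]
which satisfies $X(z) \to 0$ and $X(z)/z \to 1$ as $z \to 0$. Hence $X(z)$ is a formal power series in $z$ of the shape $z + O(z^2)$, and is therefore invertible: there is a unique formal power series $z = z(X) = X + O(X^2)$ such that the implicit relation
\[
z = X\,e^{sQ(z)}
\]
holds. Writing $\hat{\phi}_{-1}^j(z) = z^j$ as a power series in $X$ and reading off the coefficient of $e^{\mu x(z)} = X^{\mu}$ is exactly what the lemma asks for.

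First I would set up the Lagrange--Bürmann formula in the form appropriate to the relation $z = X\,\varphi(z)$ with $\varphi(z) = e^{sQ(z)}$ (note $\varphi(0) = 1 \neq 0$). For any formal power series $H$, the inversion formula gives
\[
[X^{\mu}]\,H(z(X)) = \frac{1}{\mu}\,[z^{\mu - 1}]\,H'(z)\,\varphi(z)^{\mu}.
\]
Applying this with $H(z) = z^j$, so that $H'(z) = j\,z^{j-1}$ and $\varphi(z)^{\mu} = e^{\mu s Q(z)}$, yields directly
\[
\phi_{-1}^{j}(\mu) = [X^{\mu}]\,z^j = \frac{j}{\mu}\,[z^{\mu - 1}]\,z^{j - 1}\,e^{\mu s Q(z)} = \frac{j}{\mu}\,[z^{\mu - j}]\,e^{\mu s Q(z)}.
\]
This already matches the second expression for $A_{\mu,j}$ quoted in \Cref{thm:poly}, up to the factor $\mu^{-1}$.

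The remaining step is to convert the coefficient extraction into the stated sum over partitions. Here I would factor the exponential as a product over the variables,
\[
e^{\mu s Q(z)} = \prod_{j = 1}^{d} e^{\mu s q_j z^j} = \prod_{j = 1}^{d} \sum_{p_j \geq 0} \frac{(\mu s q_j)^{p_j}}{p_j!}\,z^{j p_j},
\]
and observe that selecting the coefficient of $z^{\mu - j}$ amounts to summing over all tuples $(p_j)_{j}$ with $\sum_j j\,p_j = \mu - j$. Each such tuple is in bijection with a partition $\lambda = (1^{p_1} 2^{p_2} \cdots) \vdash \mu - j$, for which $\ell(\lambda) = \sum_j p_j$, $|\Aut{\lambda}| = \prod_j p_j!$, and $\vec{q}_{\lambda} = \prod_j q_j^{p_j}$. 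Substituting these dictionary entries turns the coefficient into $\sum_{\lambda \vdash \mu - j} (\mu s)^{\ell(\lambda)}\,\vec{q}_{\lambda}/|\Aut{\lambda}|$, which gives the claimed formula for $\phi_{-1}^j(\mu)$.

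I do not expect a serious obstacle: the only points requiring care are fixing the correct normalisation convention in the Lagrange--Bürmann formula (it is easy to drop the factor of $\mu^{-1}$ or to invert the role of $z$ and $X$) and emphasising that the expansion is understood in the all-order (formal power series) sense indicated by the symbol $\sim$, so that no analytic convergence argument is needed. A brief remark that the inversion $z = z(X)$ exists precisely because $Q(0) = 0$ forces $X(z) = z + O(z^2)$ should suffice to make the argument self-contained.
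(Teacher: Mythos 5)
Your proof is correct and essentially identical to the paper's: the paper also computes $\phi_{-1}^j(\mu)$ by Lagrange inversion, writing it as $\Res_{z=0} \frac{{\rm d}X(z)}{X(z)^{\mu+1}}\,z^j$ and integrating by parts to reach $\frac{j}{\mu}\,[z^{\mu-j}]\,e^{\mu s Q(z)}$, then expands the exponential and passes to partitions via the same tuple-to-partition dictionary with the factor $|\Aut{\lambda}| = \prod_j p_j!$. The only cosmetic difference is that you invoke the Lagrange--B\"urmann formula in its standard form $[X^{\mu}]\,H(z(X)) = \frac{1}{\mu}[z^{\mu-1}]\,H'(z)\varphi(z)^{\mu}$, whereas the paper derives the same identity inline by an explicit residue manipulation.
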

\begin{proof} The existence of the coefficients $\phi_{-1}^{j}(\mu)$ for which we have an expansion of this form is clear since $X(z) = z + O(z^2)$. We compute by Lagrange inversion
\begin{equation*}
\begin{split} 
\phi_{-1}^{j}(\mu) & = \Res_{z = 0} \frac{{\rm d}X(z)}{X(z)^{\mu + 1}}\,z^j = \frac{j}{\mu}\,\Res_{z = 0} X(z)^{-\mu}\,z^{j - 1}\,{\rm d}z \\
& = \frac{j}{\mu}\,\Res_{z = 0} z^{j - 1 - \mu} e^{\mu s Q(z)}\,{\rm d}z = \frac{j}{\mu}\,[z^{\mu - j}] \bigg(\sum_{p_1,\ldots,p_d \geq 0} \prod_{j = 1}^{d} \frac{(\mu q_j s)^{p_j}\,z^{jp_j}}{p_j!}\bigg) \\
& = \frac{j}{\mu}\,[z^{\mu - j}] \sum_{\substack{\lambda \in \mathscr{P} \\ \lambda_1 \leq d}} \frac{(\mu s)^{\ell(\lambda)}}{|{\rm Aut}(\lambda)|}\,\vec{q}_{\lambda}\,z^{\lambda_1 + \cdots + \lambda_{\ell(\lambda)}}.
\end{split}
\end{equation*}
Here, the second equality is obtained via integration by parts. In the last line, we have mapped the tuple $(p_1,\ldots,p_d)$ to the partition $\lambda = (1^{p_1}2^{p_2}\cdots d^{p_d})$, but must divide by $|{\rm Aut}(\lambda)|$, which is the number of tuples realising a given partition. The restriction that $\lambda_1 \leq d$ can be waived with the convention that $q_j = 0$ for $j > d$ and we obtain the claim.
\end{proof}

This leads us to introduce the following $\mathbb{K}$-vector space of sequences which are obtained by series expansion of elements of $\hat{W}(z)$.

\begin{definition} \label{defn:V(mu)}
Define the vector space $W(\mu) \subset \mathbb{K}^{\mathbb{Z}_{> 0}}$ to be the $\mathbb{K}$-span, over $j \in \{1,\ldots,d\}$ and $m \geq 0$, of the sequences $(\phi^j_m(\mu))_{\mu > 0}$ defined by
\[
\phi^j_{m}(\mu) = j \sum_{\lambda \vdash \mu - j} \frac{(\mu s)^{\ell(\lambda)}\,\mu^{m}}{|{\rm Aut}(\lambda)|}\,\vec{q}_{\lambda}.
\]
\end{definition}

By successive applications of $\partial_{x} = X\partial_{X}$ to~\Cref{lem:expmoins1}, we find for any $j \in \{1,\ldots,d\}$ and $m \geq 0$, the all-order series expansion when $z \rightarrow 0$ is
\begin{equation}
\label{theexpan}
\hat{\phi}^j_{m}(z) \sim \sum_{\mu \geq 1} \phi_m^j(\mu) {\,e^{\mu x(z)}}.
\end{equation}

\begin{lemma}\label{LemmaV}
The linear map sending $\Phi \in \hat{W}(z)$ to the sequence of coefficients $(\mu \mapsto [X^{\mu}]\,\Phi(z)) \in W(\mu)$ is an isomorphism.
\end{lemma}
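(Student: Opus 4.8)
The claim is that the linear map $\Psi : \hat{W}(z) \to W(\mu)$ sending $\Phi$ to its expansion coefficients $(\mu \mapsto [X^\mu]\,\Phi(z))$ is an isomorphism. My plan is to prove this by exhibiting $\Psi$ as a map that sends the known basis of $\hat{W}(z)$ to the defining spanning set of $W(\mu)$, and then arguing bijectivity at the level of these generators.

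**First step: identify the action on the basis.**

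First I would invoke \Cref{lem:basisV}, which tells us that $\hat{W}(z)$ has the linear basis $\{\hat{\phi}^j_m(z)\}$ indexed by $j \in \{1,\ldots,d\}$ and $m \geq 0$. The crucial input is then the expansion \eqref{theexpan}, which was derived just above by repeatedly applying $\partial_x = X\partial_X$ to \Cref{lem:expmoins1}. This identity states precisely that
\[
\hat{\phi}^j_m(z) \sim \sum_{\mu \geq 1} \phi^j_m(\mu)\, e^{\mu x(z)},
\]
so that $[X^\mu]\,\hat{\phi}^j_m(z) = \phi^j_m(\mu)$, since $e^{\mu x(z)} = X^\mu$. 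In other words, $\Psi$ maps the basis element $\hat{\phi}^j_m$ to the sequence $(\phi^j_m(\mu))_{\mu > 0}$. But by \Cref{defn:V(mu)}, these sequences are exactly the defining spanning set of $W(\mu)$, so $\Psi$ is surjective by construction.

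**Second step: injectivity via linear independence of the target sequences.**

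It remains to verify injectivity, which amounts to showing that the sequences $(\phi^j_m(\mu))_{\mu > 0}$ are linearly independent over $\mathbb{K}$, for then $\Psi$ carries a basis to a basis. The expansion map from a rational function (holomorphic at $z=0$ after the change of variable, with $X(z) = z + O(z^2)$) to its power series in $X$ is injective: a nonzero element $\Phi \in \hat{W}(z)$ has a nonzero all-order expansion, since $X$ is a local coordinate at $z=0$ and a nonvanishing function cannot have all expansion coefficients zero. Thus $\Psi$ is injective directly, without separately proving independence in the target. I would phrase this cleanly by noting that $z \mapsto X(z)$ is a formal change of variable (its inverse exists as a formal power series because $X'(0) \neq 0$), so the coefficient-extraction map is an isomorphism onto its image; combined with surjectivity onto the spanning set, both $\{\hat{\phi}^j_m\}$ and $\{(\phi^j_m(\mu))_{\mu}\}$ are bases and $\Psi$ is the induced isomorphism.

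**Anticipated obstacle.**

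The main subtlety is not the algebra but the analytic meaning of the ``all-order series expansion'' denoted $\sim$: elements of $\hat{W}(z)$ are genuine rational functions with poles at $\mathfrak{a}$, and their expansion in $X$ near $z = 0$ is only asymptotic, not convergent, so I must be careful that the coefficient-extraction map is well-defined on all of $\hat{W}(z)$ and not merely on the span of the $\hat{\phi}^j_m$. I would address this by observing that every $\Phi \in \hat{W}(z)$ is a finite $\mathbb{K}$-linear combination of basis elements $\hat{\phi}^j_m$ (finiteness from \Cref{lem:basisV} together with the finitely-many-nonzero-coefficients statement of \Cref{cor:decomw}), so its expansion is the corresponding finite combination of the expansions \eqref{theexpan}, and the coefficient $[X^\mu]\,\Phi$ is well-defined term by term. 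This reduces the whole statement to the behaviour on basis elements already established, and the isomorphism follows.
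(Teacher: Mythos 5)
Your proof is correct and takes essentially the same route as the paper, whose entire argument is: equation \eqref{theexpan} shows the image is $W(\mu)$, and injectivity holds because elements of $\hat{W}(z)$ are analytic functions of $X(z)$ near $z=0$ (i.e.\ $X(z) = z + O(z^2)$ is a local coordinate there). Your anticipated obstacle about the expansion being ``only asymptotic'' is in fact vacuous --- since $0 \notin \mathfrak{a}$, every $\Phi \in \hat{W}(z)$ is regular at $z = 0$ and its series in $X$ is a genuinely convergent Taylor expansion --- but your fallback via finite linear combinations of the basis $\hat{\phi}^j_m$ is harmless and reaches the same conclusion.
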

\begin{proof} Equation~\eqref{theexpan} shows that the image of this map is indeed $W(\mu)$. Since elements of $\hat{W}(z)$ are analytic functions of $X(z)$ near $z = 0$, this map is injective.
\end{proof}

The vector space $W(\mu)$ inherits the filtration from $\hat W(z)$. Define $W_f(\mu)$ for $f\in \mathbb Z_{\ge 0}$ to be the span of the sequences $\phi_m^j$ for $j \in \{1,\ldots,d\}$ and $m \in \{0,\ldots,f\}$. 

\begin{corollary}
\label{cor:polyeq} The polynomiality structure,~\Cref{thm:poly}, is equivalent to the statement that for $2g - 2 + n > 0$, the formal series
\[
\sum_{\mu_1,\ldots,\mu_n \geq 1} DH_{g,n}(\mu_1,\ldots,\mu_n) \prod_{i = 1}^n e^{\mu_i x(z_i)}
\]
is the all-order series expansion at $z_i \rightarrow 0$ of a rational function belonging to $\otimes_{i = 1}^n \hat{W}(z_i)$. \hfill $\blacksquare$
\end{corollary}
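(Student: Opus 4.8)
The plan is to reduce the equivalence to the isomorphism of~\Cref{LemmaV}, once one observes that~\Cref{thm:poly} is nothing but the assertion that the array $(\mu_1,\ldots,\mu_n) \mapsto \HH_{g,n}(\mu_1,\ldots,\mu_n)$ lies in the $n$-fold algebraic tensor product $\bigotimes_{i=1}^n W(\mu_i)$. The first step is therefore to record the elementary identity $\phi^j_m(\mu) = \mu^m A_{\mu,j}$, which follows at once by comparing~\Cref{defn:V(mu)} with the definition of $A_{\mu,j}$ in~\Cref{thm:poly}. With this identification, and writing $\prod_i A_{\mu_i,j_i}\mu_i^{m_i} = \prod_i \phi^{j_i}_{m_i}(\mu_i)$, the polynomiality formula of~\Cref{thm:poly} becomes
\[
\HH_{g,n}(\mu_1,\ldots,\mu_n) = \sum_{\substack{1 \leq j_1,\ldots,j_n \leq d \\ m_1,\ldots,m_n \geq 0}} C_{g,n}\big(\begin{smallmatrix} j_1 & \cdots & j_n \\ m_1 & \cdots & m_n \end{smallmatrix}\big)\,\prod_{i=1}^n \phi^{j_i}_{m_i}(\mu_i)
\]
for finitely many non-zero coefficients. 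Since by~\Cref{defn:V(mu)} the sequences $\phi^j_m$ span $W(\mu)$, this is precisely the condition that $\HH_{g,n}$ belong to $\bigotimes_{i=1}^n W(\mu_i)$.

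The second step is to transport this condition through~\Cref{LemmaV}, which identifies $\hat{W}(z) \xrightarrow{\sim} W(\mu)$ via $\Phi \mapsto \big(\mu \mapsto [X^\mu]\,\Phi(z)\big)$ and, by~\eqref{theexpan}, carries the basis vector $\hat{\phi}^j_m$ to $\phi^j_m$. Taking the $n$-fold tensor power produces an isomorphism $\bigotimes_{i=1}^n \hat{W}(z_i) \xrightarrow{\sim} \bigotimes_{i=1}^n W(\mu_i)$ sending $\bigotimes_i \hat{\phi}^{j_i}_{m_i}$ to $\bigotimes_i \phi^{j_i}_{m_i}$. For any $\Phi \in \bigotimes_{i=1}^n \hat{W}(z_i)$, its image under this map is exactly the coefficient array read off from its all-order expansion at $z_i \to 0$,
\[
\Phi(z_1,\ldots,z_n) \sim \sum_{\mu_1,\ldots,\mu_n \geq 1} \Big([{\textstyle\prod_i} X^{\mu_i}]\,\Phi\Big) \prod_{i=1}^n e^{\mu_i x(z_i)}.
\]
Hence the generating series of $\HH_{g,n}$ is the expansion of some rational $\Phi \in \bigotimes_i \hat{W}(z_i)$ if and only if the array $(\HH_{g,n}(\mu_1,\ldots,\mu_n))$ is the image of $\Phi$, that is, if and only if it lies in $\bigotimes_i W(\mu_i)$. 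Combined with the first step, this yields the asserted equivalence.

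I do not expect a genuine obstacle here: the entire content is packaged in~\Cref{LemmaV}, whose injectivity guarantees that a rational function in $\bigotimes_i \hat{W}(z_i)$ is uniquely determined by its expansion, so no uniqueness issue arises. The only point requiring mild care is the bookkeeping for algebraic tensor products---verifying that finite linear combinations on one side correspond to finite linear combinations on the other---which is formal once one knows that $\hat{W}(z) \cong W(\mu)$ is basis-preserving.
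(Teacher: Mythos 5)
Your proof is correct and is precisely the argument the paper leaves implicit (the corollary is stated without proof, as an immediate consequence of~\Cref{defn:V(mu)}, \eqref{theexpan} and~\Cref{LemmaV}): the identification $\phi^j_m(\mu) = A_{\mu,j}\,\mu^{m}$ recasts~\Cref{thm:poly} as membership of $\HH_{g,n}$ in $\bigotimes_{i=1}^n W(\mu_i)$, and the tensor power of the basis-preserving isomorphism $\hat{W}(z) \cong W(\mu)$ transports this to the stated expansion property. Your closing remark on injectivity and finiteness of the linear combinations correctly accounts for the only points of care.
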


\begin{corollary}
\label{cor:TRimplypoly}~\Cref{thm:TR} (Topological recursion) implies~\Cref{thm:poly} (Polynomiality).
\end{corollary}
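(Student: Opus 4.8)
The plan is to deduce the polynomiality structure directly from the expansion asserted by \Cref{thm:TR}, using the equivalent reformulation of \Cref{thm:poly} provided by \Cref{cor:polyeq} together with the fact \eqref{primHurw} that primitives of the correlation differentials land in $\bigotimes_{i=1}^n \hat{W}(z_i)$. First I would assume \Cref{thm:TR} and fix $(g,n)$ with $2g-2+n>0$. In this stable range the correction term $\delta_{g,0}\delta_{n,2}$ in \eqref{omefgngun} vanishes, so the correlation differential produced by the topological recursion satisfies the clean all-order expansion
\[
\omega_{g,n}(z_1,\ldots,z_n) \sim \sum_{\mu_1,\ldots,\mu_n \geq 1} DH_{g,n}(\mu_1,\ldots,\mu_n) \bigotimes_{i=1}^n {\rm d}\big(e^{\mu_i x(z_i)}\big)
\]
as each $z_i \to 0$.

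Next I would invoke the basic properties of the topological recursion recalled in \Cref{subsec:TR}: the $\omega_{g,n}$ are symmetric and satisfy the linear loop equations, and for $2g-2+n>0$ they have poles only along $\mathfrak{a}$. By \eqref{primHurw} this guarantees that the primitive $\int_{0}^{z_1}\cdots\int_{0}^{z_n}\omega_{g,n}$ belongs to $\bigotimes_{i=1}^n \hat{W}(z_i)$. The key computation is then to identify the expansion of this primitive: integrating the displayed expansion termwise, each factor contributes $\int_{0}^{z_i}{\rm d}(e^{\mu_i x(z_i)}) = e^{\mu_i x(z_i)} - \lim_{w\to 0} e^{\mu_i x(w)}$, and since $e^{x(z)} = z\,e^{-sQ(z)} = z + O(z^2)$ vanishes at $z=0$ while $\mu_i \geq 1$, the boundary term is zero. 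This yields
\[
\int_{0}^{z_1}\cdots\int_{0}^{z_n}\omega_{g,n} \sim \sum_{\mu_1,\ldots,\mu_n \geq 1} DH_{g,n}(\mu_1,\ldots,\mu_n) \prod_{i=1}^n e^{\mu_i x(z_i)},
\]
exhibiting the generating series of \Cref{cor:polyeq} as the all-order expansion at $z_i \to 0$ of an element of $\bigotimes_{i=1}^n \hat{W}(z_i)$. Applying \Cref{cor:polyeq}, which states precisely that this property is equivalent to the polynomiality structure of \Cref{thm:poly}, closes the argument.

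The hard part will not be any deep input — all the structural work has already been done in \Cref{lem:basisV}, \Cref{cor:decomw} and \Cref{LemmaV} — but rather the bookkeeping surrounding the termwise integration. Specifically, one must check that the primitive appearing in \eqref{primHurw} is indeed the one vanishing as each $z_i \to 0$, so that the base point of integration matches the vanishing of $e^{\mu_i x(z)}$ at the origin and no integration constants obstruct the identification with the double Hurwitz generating series. Once this mild subtlety is handled, no further analysis is required, and the implication follows immediately from the equivalence of \Cref{cor:polyeq}.
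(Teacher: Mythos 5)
Your proof is correct and follows essentially the same route as the paper: the linear loop equations and the pole structure at $\mathfrak{a}$ place $\omega_{g,n}$ (equivalently its primitive, via \eqref{primHurw}) in the space built from $\hat{W}(z_i)$, and the expansion of \Cref{thm:TR} is then matched against \Cref{thm:poly}. The paper phrases the conclusion by comparing \Cref{cor:decomw} with the statement of \Cref{thm:poly}, while you route through \Cref{cor:polyeq} and the termwise integration with vanishing boundary terms at $z=0$ --- a purely cosmetic repackaging of the same argument, with your base-point check being a correct (and slightly more explicit) treatment of a detail the paper leaves implicit.
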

\begin{proof}
Since for $2g - 2 + n > 0$ the correlation differentials $\omega_{g,n}(z_1,\ldots,z_n)$ satisfy the linear loop equations with respect to each variable (by symmetry) and have only poles at $z_i \in \mathfrak{a}$, we have $\omega_{g,n} \in \otimes_{i = 1}^n {\rm d}\hat{W}(z_i)$. We conclude by comparing the result of~\Cref{cor:decomw} and the statement of~\Cref{thm:TR} with the statement of~\Cref{thm:poly}.
\end{proof}

It will also be useful to consider the action of differentiating with respect to the parameters $q_j$.
\begin{lemma}
\label{lem:dmodq} The vector space $W(\mu) \cong \hat{W}(z)$ has the structure of a $\mathbb{C}[q_1\partial_{q_1},\ldots,q_d\partial_{q_d}]$-module. For any $j \in \{1,\ldots,d\}$ and $f \geq 0$, we have $q_j \partial_{q_j} W_f(\mu) \subset W_{f+1}(\mu)$.
\end{lemma}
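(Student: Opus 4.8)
The plan is to work entirely with the explicit description of $W(\mu)$ and to reduce everything to the action of $q_j\partial_{q_j}$ on the generators $\phi^{j'}_m$. Since $q_j\partial_{q_j}$ is a $\mathbb{C}$-linear derivation of $\mathbb{K}=\mathbb{C}(s,q_1,\ldots,q_d)$ and the sequences $\phi^{j'}_m$ ($1\le j'\le d$, $m\ge 0$) span $W(\mu)$ over $\mathbb{K}$, for any $\Psi=\sum_{j',m}c_{j',m}\phi^{j'}_m\in W_f(\mu)$ (a finite sum with $m\le f$ and $c_{j',m}\in\mathbb{K}$) the Leibniz rule gives $q_j\partial_{q_j}\Psi=\sum(q_j\partial_{q_j}c_{j',m})\phi^{j'}_m+\sum c_{j',m}(q_j\partial_{q_j}\phi^{j'}_m)$. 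The first sum lies in $W_f(\mu)\subset W_{f+1}(\mu)$, since it merely re-weights the same generators. Hence the whole statement reduces to showing $q_j\partial_{q_j}\phi^{j'}_m\in W_{m+1}(\mu)$ for each $j,j'\in\{1,\ldots,d\}$ and $m\ge 0$. The module structure then follows, because the Euler operators $q_1\partial_{q_1},\ldots,q_d\partial_{q_d}$ pairwise commute (they are simultaneously diagonal on monomials in the $q_i$) and each preserves $W(\mu)=\bigcup_f W_f(\mu)$.

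The core is a direct computation. Using $\phi^{j'}_m(\mu)=j'\mu^m[z^{\mu-j'}]e^{\mu sQ(z)}$, which is the form of \Cref{lem:expmoins1} after applying $\partial_{x}^{\,m+1}$, together with $q_j\partial_{q_j}e^{\mu sQ(z)}=\mu sq_jz^je^{\mu sQ(z)}$ and extraction of the coefficient of $z^{\mu-j'-j}$, one obtains
\[
q_j\partial_{q_j}\phi^{j'}_m(\mu)=j'sq_j\,\mu^{m+1}\,[z^{\mu-(j+j')}]e^{\mu sQ(z)}=\frac{j'sq_j}{j+j'}\,\phi^{\,j+j'}_{m+1}(\mu),
\]
where for any integer $a\ge 1$ we set $\phi^{a}_{p}(\mu):=a\,\mu^{p}\,[z^{\mu-a}]e^{\mu sQ(z)}$, extending \Cref{defn:V(mu)} to $a>d$. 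If $j+j'\le d$ then $\phi^{j+j'}_{m+1}$ is literally a generator of $W_{m+1}(\mu)$ and we are done. The genuinely new point is the range $d<j+j'\le 2d$, for which $\phi^{j+j'}_{m+1}$ is not a priori one of the spanning sequences.

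To handle this I would prove the following sublemma: for $d<a\le 2d$ and $p\ge 1$, the sequence $\phi^a_p$ lies in $W_p(\mu)$. Via the isomorphism $W(\mu)\cong\hat W(z)$ of \Cref{LemmaV}, $\phi^a_p$ corresponds to $\hat\phi^a_p(z)=\partial_{x}^{\,p+1}z^a$. This is a rational function whose poles in $\mathbb{C}^*$ lie only on $\mathfrak a$ (the zeroes of $1-szQ'(z)$, which are simple over $\mathbb{K}$), and its even part at each $\alpha\in\mathfrak a$ is regular: in the coordinate $w_\alpha$ the operator $\partial_{x}=(2w_\alpha)^{-1}\partial_{w_\alpha}$ preserves parity and annihilates constants, so differentiating the regular function $z^a$ can never create negative even powers, exactly as in the proof of \Cref{lem:basisV}. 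Crucially, $\hat\phi^a_p$ is also regular at $z=\infty$: each application of $\partial_{x}$ lowers the order of growth at infinity by $d$, so $\hat\phi^a_p$ grows like $z^{a-(p+1)d}$, and $a-(p+1)d\le 2d-2d=0$ when $a\le 2d$ and $p\ge 1$. Thus $\hat\phi^a_p\in\hat W(z)$. Finally, $\hat\phi^a_0=\tfrac{az^a}{1-szQ'(z)}$ has only simple poles on $\mathfrak a$, so after $p$ further applications of $\partial_{x}$ the pole order at each $\alpha$ is at most $2p+1$; comparing with $L_-(\hat W_p)=\bigoplus_\alpha\mathrm{span}\{w_\alpha^{-1},\ldots,w_\alpha^{-(2p+1)}\}$ and using that $L_-$ is an isomorphism yields $\hat\phi^a_p\in\hat W_p(z)$, i.e. $\phi^a_p\in W_p(\mu)$. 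Applying this with $a=j+j'\le 2d$ and $p=m+1\ge 1$ completes the reduction.

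The main obstacle is precisely this last point, and it is what forces the filtration to jump by exactly one. When $j+j'>d$ the candidate $\hat\phi^{j+j'}_{m}$ at the same level $m$ generally has a pole at $z=\infty$ (its degree there is $(j+j')-(m+1)d$, which can be positive for small $m$) and so does not belong to $\hat W(z)$ at all; only one further application of $\partial_{x}$ — that is, passage to level $m+1$ — tames the growth at infinity to order $\le 0$ while keeping the pole order on $\mathfrak a$ within the bound defining $\hat W_{m+1}(z)$. Arranging these two competing constraints (growth at infinity and pole order on $\mathfrak a$, both moved by $\partial_{x}$ but in opposite directions) to hold simultaneously at level $m+1$, and not before, is the crux of the argument, and it explains why $q_j\partial_{q_j}$ genuinely raises the filtration degree.
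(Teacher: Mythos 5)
Your proof is correct, but it takes a genuinely different route from the paper's. The paper works on the function side throughout: since the isomorphism of \Cref{LemmaV} identifies entrywise $q$-differentiation of sequences with the derivation $\vartheta_j$ taken at fixed $x(z)$, it computes $\vartheta_j f = q_j\partial_{q_j}\big|_z f + \frac{sq_jz^{j+1}}{1-szQ'(z)}\,\partial_z f$, obtains $\vartheta_j \hat{\phi}^{k}_{-1} = \frac{ksq_jz^{j+k}}{1-szQ'(z)}$, and then performs Euclidean division of $z^{j+k}$ by $1-szQ'(z)$ to decompose this explicitly as a $\mathbb{K}$-combination of the $\hat{\phi}^{l}_{-1}$ and $\hat{\phi}^{l}_{0}$ with $l \leq d$, concluding by commuting $\vartheta_j$ with $\partial_x$. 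You instead differentiate directly on the sequence side --- where $q_j\partial_{q_j}$ is literal and no fixed-$x$ correction term is needed --- arriving at the one-term identity $q_j\partial_{q_j}\phi^{j'}_m=\frac{j'sq_j}{j+j'}\,\phi^{j+j'}_{m+1}$ with an extended generator; this is precisely the paper's intermediate identity $\vartheta_j \hat{\phi}^{j'}_{-1}=\frac{j'sq_j}{j+j'}\,\hat{\phi}^{j+j'}_{0}$ transported by $\partial_x^{m+1}$, but where the paper then divides by $1-szQ'(z)$ to land in the standard basis, you replace the division with an intrinsic membership criterion: $\hat{\phi}^{a}_{p}\in\hat{W}(z)$ by the parity argument at $\mathfrak{a}$ together with the degree count $a-(p+1)d\le 0$ at infinity, and $\hat{\phi}^{a}_{p}\in\hat{W}_{p}(z)$ because its pole order at each $\alpha$ is at most $2p+1$ while, by injectivity of $L_-$ and a dimension count, $\hat{W}_p(z)$ consists exactly of the elements of $\hat{W}(z)$ whose principal parts have order at most $2p+1$. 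The paper's division buys explicit coefficients in the basis and never needs an analytic characterization of the filtration; your route buys a cleaner closed-form for the derivative and, as a by-product, a useful intrinsic description of $\hat{W}_f(z)$ by pole orders, at the cost of the sublemma. Two small precisions: in your identification of $L_-(\hat{W}_p)$ the span must be over the odd powers $w_\alpha^{-1},w_\alpha^{-3},\ldots,w_\alpha^{-(2p+1)}$ only --- otherwise the dimension count $(p+1)d$, with $|\mathfrak{a}|=d$, fails --- though your parity remark makes clear this is what you intend; and the simplicity over $\mathbb{K}$ of the zeroes of $1-szQ'(z)$ that you invoke is indeed the standing assumption under which $\hat{W}(z)$ and \Cref{lem:basisV} are set up, so no extra hypothesis is being smuggled in.
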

\begin{proof}
For $j \in \{1,\ldots,d\}$, let us consider the operator $\vartheta_{j}$ on $\mathbb{K}(z)$ consisting in applying the derivation $q_j\partial_{q_j}$ while $x(z)$ remains fixed. Note that $\partial_{x}$ commutes with $\vartheta_{1},\ldots,\vartheta_{d}$. For $f \in \mathbb{K}(z)$, we have
\[
\vartheta_{j} f(z) = q_j\frac{\partial}{\partial q_j}\bigg|_{z}\,f(z) + \frac{sq_jz^{j + 1}}{1 - szQ'(z)}\,\partial_{z}f(z).
\]
In particular, for $k \in \{1,\ldots,d\}$, we compute
\[
\vartheta_{j} \hat{\phi}^k_{-1}(z) = \vartheta_{j} z^{k} = \frac{ksq_jz^{j + k}}{1 - szQ'(z)}.
\]
Recalling that, for $l \in \{1,\ldots,d\}$,
\[
\hat{\phi}^l_{0}(z) = \frac{lz^{l}}{1 - szQ'(z)},
\]
we can perform the Euclidean division $z^{j + k } = A_{j + k}(z)(1 - szQ'(z)) + B_{j + k}(z)$ in $\mathbb{K}[z]$, and decompose
\[
A_{j + k}(z) = \sum_{l = 1}^{j + k - d} A_{j + k,l} z^{l },\qquad B_{j + k}(z) = \sum_{l = 2}^{d} l\,B_{j + k,l} z^{l }.
\]
We obtain
\[
\vartheta_{j} \hat{\phi}^{k}_{-1} = ksq_j \bigg[ \sum_{l = 1}^{j + k - d} A_{j + k,l} \hat{\phi}^{l}_{-1} + \sum_{l = 2}^{d} B_{j + k,l}\,\hat{\phi}^l_{0}\bigg].
\]
Since $\vartheta_{j}$ commutes with $\partial_{x}$, this relation implies that for any $m \geq 0$, $\vartheta_j \hat{\phi}^{k}_{m}$ is a linear combination of $\hat{\phi}^{l}_{m}$ and $\hat{\phi}^{l}_{m + 1}$ with $l \in \{1,\ldots,d\}$. Hence $\vartheta_{j}$ leaves $\hat{W}(z)$ stable and is compatible with the filtration in the announced way.
\end{proof}

\subsection{Equivalence between polynomiality and topological recursion} \label{subsec:equivTRpoly}

In this section, we rely on the previous work of the second- and third-named authors~\cite{do-kar18} to prove the following lemma.
\begin{lemma}
\Cref{thm:poly} (Polynomiality) implies~\Cref{thm:TR} (Topological recursion). 
\end{lemma}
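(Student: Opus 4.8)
The plan is to reduce \Cref{thm:TR} to \Cref{thm:poly} by invoking the strategy already developed in \cite{do-kar18}, filling only the logical gap that their argument assumed the polynomiality structure as a hypothesis. First I would recall that, by \Cref{cor:polyeq}, \Cref{thm:poly} is precisely the statement that for each $(g,n)$ with $2g-2+n>0$ the generating series $\sum_{\mu_1,\ldots,\mu_n\geq 1} DH_{g,n}(\mu_1,\ldots,\mu_n)\prod_{i=1}^n e^{\mu_i x(z_i)}$ is the all-order expansion at $z_i\to 0$ of a rational function lying in $\otimes_{i=1}^n \hat{W}(z_i)$. Thus, granting \Cref{thm:poly}, we may define rational $n$-differentials $\omega_{g,n}^{\mathrm{DH}} := \bigotimes_{i=1}^n {\rm d}_{z_i}$ applied to this rational function; by construction $\omega_{g,n}^{\mathrm{DH}} \in \otimes_{i=1}^n {\rm d}\hat{W}(z_i)$, it has poles only at $\mathfrak{a}$, and it satisfies the linear loop equations in each variable. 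The goal is then to show that these $\omega_{g,n}^{\mathrm{DH}}$ coincide with the topological recursion output $\omega_{g,n}$ of the spectral curve \eqref{spdouble}.

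The key step is the cut-and-join analysis of \cite{do-kar18}. I would recall that the double Hurwitz numbers satisfy the cut-and-join equation of \cite{zhu12}, which on the generating-series side translates into a relation among the $\omega_{g,n}^{\mathrm{DH}}$. Symmetrising this relation with respect to the local Galois involutions $\sigma_\alpha$ near the simple zeroes of ${\rm d}x$ turns the cut-and-join equation into exactly the topological recursion formula \eqref{eq:TRrecursion}, provided one knows \emph{a priori} that the $\omega_{g,n}^{\mathrm{DH}}$ are rational with the correct pole and loop-equation structure — which is what polynomiality supplies through \Cref{cor:polyeq}. The base cases $(g,n)=(0,1)$ and $(0,2)$ are already handled in \cite[Propositions 8 and 10]{do-kar18} (with the $(0,2)$ case reproved in \Cref{app02}), and the recursion then determines all higher $\omega_{g,n}$ uniquely. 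Since both $\omega_{g,n}$ and $\omega_{g,n}^{\mathrm{DH}}$ satisfy the same recursion with the same base cases, they agree, establishing \eqref{omefgngun}.

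The final ingredient is removing the simplicity assumption on the zeroes of ${\rm d}x$. The cut-and-join symmetrisation of \cite{do-kar18} requires the zeroes of ${\rm d}x = (1 - szQ'(z)){\rm d}z/z$, equivalently the roots of $1 - szQ'(z)$, to be simple. I would invoke \Cref{cor:higher}: because $DH_{g,n}(\mu_1,\ldots,\mu_n)$ is polynomial in $s,q_1,\ldots,q_d$ and the locus where $1 - szQ'(z)$ has a simple root is dense, and because \Cref{thm:cont} guarantees the Bouchard--Eynard correlation differentials vary continuously, the identity \eqref{omefgngun} extends by continuity from the simple-root case to full generality.

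I expect the main obstacle to be purely expository rather than mathematical: the genuine content — the cut-and-join symmetrisation producing \eqref{eq:TRrecursion} — lives in \cite{do-kar18}, so the work here is to verify carefully that their hypothesis is exactly our \Cref{thm:poly} (via \Cref{cor:polyeq}) and that no circularity arises, i.e. that \cite{do-kar18} does not implicitly use topological recursion to derive polynomiality. A secondary technical point is ensuring that the base cases and the continuity argument of \Cref{cor:higher} interface cleanly with the rationality statement coming from \Cref{thm:poly}, so that the spaces $\hat{W}(z_i)$ are the correct target throughout the flow in the parameters.
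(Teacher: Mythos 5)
Your proposal is correct and follows essentially the same route as the paper: \Cref{cor:polyeq} converts polynomiality into rational multidifferentials with poles only at $\mathfrak{a}$ satisfying the linear loop equations, the cut-and-join symmetrisation of \cite{do-kar18} (with the base cases $(g,n)=(0,1)$ and $(0,2)$) then establishes the topological recursion when the zeroes of ${\rm d}x$ are simple, and \Cref{cor:higher} removes the simplicity assumption by the continuity result of \cite{BEthink}. The only cosmetic difference is that the paper phrases the outcome of the symmetrisation as the quadratic loop equations and identifies $\tilde{\omega}_{g,n}$ with $\omega_{g,n}$ via the uniqueness results of \cite{bor-eyn-ora15,BSblob}, rather than as a direct derivation of the recursion formula \eqref{eq:TRrecursion} together with an induction on matching base cases.
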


Assuming the statement of~\Cref{thm:poly}, for $2g - 2 + n > 0$,~\Cref{cor:polyeq} implies the existence of rational multidifferentials $\tilde{\omega}_{g,n}(z_1,\ldots,z_n)$ with the following properties:
\begin{itemize}
\item[$\bullet$] their only poles are located at $z_i \in \mathfrak{a}$; 
\item[$\bullet$] they satisfy the linear loop equations; that is, for each $\alpha \in \mathfrak{a}$ and setting $I = \{2,\ldots,n\}$, $\tilde{\omega}_{g,n}(z,\vec{z}_I) + \tilde{\omega}_{g,n}(\sigma_{\alpha}(z),\vec{z}_I)$ is regular at $z = \alpha$; and
\item[$\bullet$] they admit the following all-order series expansion when $z_i \rightarrow 0$:
\begin{equation}
\label{seriesomtilde} \tilde{\omega}_{g,n}(z_1,\ldots,z_n) \sim \sum_{\mu_1,\ldots,\mu_n \geq 1} DH_{g,n}(\mu_1,\ldots,\mu_n)\,\bigotimes_{i = 1}^n {\rm d}(e^{\mu_i x(z_i)}).
\end{equation}
\end{itemize}
 
The work of~\cite{do-kar18} shows that this information is sufficient to derive the topological recursion theroem,~\Cref{thm:TR}. Let us summarise their strategy.
The story starts with the cut-and-join equation for double Hurwitz numbers which originally appears in~\cite{zhu12} and follows from the semi-infinite wedge representation of these numbers (reviewed in~\Cref{Svacc}). The cut-and-join allows a direct computation of the generating series of double Hurwitz numbers for $(g,n) = (0,1)$ and $(0,2)$, the outcome being that \eqref{seriesomtilde} remains valid for $(g,n) = (0,1)$ and $(0,2)$ with
\[
\tilde{\omega}_{0,1}(z) := y(z) \, {\rm d}x(z) = \omega_{0,1}(z),\qquad \tilde{\omega}_{0,2}(z_1,z_2) := \omega_{0,2}(z_1,z_2) - \frac{{\rm d}x(z_1) \otimes {\rm d}x(z_2)}{(x(z_1) - x(z_2))^2},
\]
where we recall that $\omega_{0,2}(z_1,z_2) = {\rm d}z_1 \otimes {\rm d}z_2/(z_1 - z_2)^2$. For $2g - 2 + n > 0$, the cut-and-join equations imply quadratic functional relations for the multidifferentials $\tilde{\omega}_{g,n}(z_1,\ldots,z_n)$, which cannot be directly solved and where all variables $(z_i)_{i = 1}^n$ play a symmetric role. When the zeroes $\alpha$ of ${\rm d}x$ are simple, one can project this functional equation to the even part for the involution $\sigma_{\alpha}$ with respect to $z_1$. The repeated use of linear loop equations then shows by induction on $2g - 2 + n > 0$ that $\tilde{\omega}_{g,n}$ satisfies the quadratic loop equations (in which $z_1$ plays a special role). The results of~\cite{bor-eyn-ora15,BSblob} in turn imply that $\tilde{\omega}_{g,n}$ is computed by the topological recursion for the spectral curve announced in \eqref{spdouble}; that is, it coincides with $\omega_{g,n}$ and~\Cref{thm:TR} holds.~\Cref{cor:higher} then waives the restriction on the simplicity of the zeroes of ${\rm d}x$ and implies~\Cref{thm:TR} in full generality.
 \subsection{From one to many variables}\label{sec:sym}

At the end of the proof of polynomiality in~\Cref{sec:poly}, we need the following algebraic lemma.

\begin{lemma} \label{lem:symmetry}
Let $F(\mu_1, \mu_2, \ldots, \mu_n)$ be a symmetric function of positive integers $\mu_1, \mu_2, \ldots, \mu_n$. If there exists $f\in \mathbb Z_{\ge 0}$ such that for any $\mu_2,\ldots,\mu_n\in \mathbb Z_{>0}$, we have $F(\mu_1, \mu_2, \ldots, \mu_n) \in W_f(\mu_1)$, then
\[
F(\mu_1, \mu_2, \ldots, \mu_n) \in \bigotimes_{i = 1}^n W(\mu_i).
\]
\end{lemma}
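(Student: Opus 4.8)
The plan is to prove the statement by induction on $n$, peeling off one variable at a time and exploiting the fact that $W_f(\mu)$ is a \emph{fixed finite-dimensional} $\mathbb{K}$-space, so that the coordinates of any element can be recovered by evaluation at finitely many points. The base case $n = 1$ is immediate: the hypothesis directly gives $F(\mu_1) \in W_f(\mu_1) \subset W(\mu_1)$.

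For the inductive step I would fix the space $V := W_f(\mu_1)$, of dimension $N = d(f+1)$, with its basis $\{\phi^{j}_{m}\}_{1 \le j \le d,\, 0 \le m \le f}$. These sequences are linearly independent over $\mathbb{K}$, being part of the basis of $W(\mu)$ from~\Cref{defn:V(mu)}; hence the evaluation map $\psi \mapsto (\psi(v_1),\ldots,\psi(v_N))$ is injective, and one may choose $v_1, \ldots, v_N \in \mathbb{Z}_{>0}$ for which the matrix $\big(\phi^{j}_{m}(v_k)\big)$ is invertible. Inverting it produces constants $M_{(j,m),k} \in \mathbb{K}$, depending only on $f$ and the $v_k$, such that the $\phi^{j}_{m}$-coordinate of any $\psi \in V$ equals $\sum_k M_{(j,m),k}\,\psi(v_k)$. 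Applying this to $\psi = F(\,\cdot\,,\mu_2,\ldots,\mu_n) \in V$ yields
\[
F(\mu_1, \ldots, \mu_n) = \sum_{\substack{1 \le j \le d \\ 0 \le m \le f}} c_{j,m}(\mu_2, \ldots, \mu_n)\,\phi^{j}_{m}(\mu_1), \qquad c_{j,m}(\mu_2, \ldots, \mu_n) = \sum_{k = 1}^N M_{(j,m),k}\, F(v_k, \mu_2, \ldots, \mu_n).
\]

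The crux is then to show each coefficient $c_{j,m}$, as a function of $(\mu_2,\ldots,\mu_n)$, lies in $\bigotimes_{i=2}^n W(\mu_i)$. I would observe that each $F(v_k,\,\cdot\,,\ldots,\cdot)$ is a symmetric function of the $n-1$ variables $\mu_2,\ldots,\mu_n$, and that by the symmetry of $F$ the original hypothesis transfers \emph{with the same bound} $f$: for any choice of the remaining variables, $F(v_k,\mu_2,\ldots,\mu_n)$ lies in $W_f(\mu_2)$ as a function of $\mu_2$. Thus $F(v_k,\,\cdot\,,\ldots,\cdot)$ satisfies the hypotheses of the lemma in $n-1$ variables, and the induction hypothesis gives $F(v_k,\,\cdot\,,\ldots,\cdot) \in \bigotimes_{i=2}^n W(\mu_i)$. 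Being a $\mathbb{K}$-linear combination of these, $c_{j,m}$ lies in the same space. Substituting back, $F$ becomes a finite sum of products $\phi^{j}_{m}(\mu_1)\cdot\prod_{i=2}^n \psi_i(\mu_i)$ with $\phi^{j}_{m} \in W(\mu_1)$ and $\psi_i \in W(\mu_i)$, so $F \in \bigotimes_{i=1}^n W(\mu_i)$.

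The main obstacle is the linear-algebraic heart of the argument: realizing the coordinate functionals on $W_f(\mu_1)$ as evaluation at finitely many fixed points $v_k$ with coefficients $M_{(j,m),k}$ that are \emph{independent} of $\mu_2,\ldots,\mu_n$. This uniformity is exactly what the single global bound $f$ in the hypothesis provides; without it the target space $V$ would vary with $\mu_2,\ldots,\mu_n$ and the coefficient extraction could not be fed cleanly into the induction.
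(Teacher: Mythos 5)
Your proof is correct, and it rests on the same mechanism as the paper's: expand $F(\cdot,\mu_2,\ldots,\mu_n)$ in a fixed finite basis of $W_f$, recover the coefficients by evaluating $\mu_1$ at finitely many fixed positive integers, and use the symmetry of $F$ together with the uniform bound $f$ to feed those evaluations into the next variable. The implementations differ in one genuine respect. The paper first builds an auxiliary basis $(\psi_k)_{k \geq 1}$ of $W(\mu)$ with the interpolation property $\psi_k(\mu) = \delta_{k,\mu}$ for $1 \leq \mu \leq k$ (\Cref{newblem}); this makes the sample points explicit ($\mu_1 \in \{1,\ldots,fd\}$) and turns the coefficient extraction into a triangular, hence uniquely solvable, linear system, after which the argument is ``repeated'' in $\mu_3,\ldots,\mu_n$. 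You bypass \Cref{newblem} entirely: working directly with the basis $\{\phi^j_m\}$ of \Cref{defn:V(mu)}, you invoke the soft linear-algebra fact that $N$ linearly independent $\mathbb{K}$-valued functions on $\mathbb{Z}_{>0}$ admit points $v_1,\ldots,v_N$ with invertible evaluation matrix. (Your phrasing ``hence the evaluation map \ldots is injective'' puts this slightly backwards --- linear independence in $\mathbb{K}^{\mathbb{Z}_{>0}}$ is the hypothesis, and the existence of good points follows because the point evaluations span the dual of the span of the $\phi^j_m$; the fact is standard and true, but deserves this one-line justification.) Your route buys a shorter argument that does not need the nontrivial construction of the triangular basis, and your organisation of the iteration as an honest induction on $n$, with the explicit observation that the hypothesis transfers to the next variable with the \emph{same} bound $f$, is tidier than the paper's informal repetition; what you lose is explicitness of the sample points, which the triangular basis provides for free. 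Incidentally, your dimension count $N = d(f+1)$ is the one consistent with \Cref{def:filtration}, while the paper's proof works with $fd$; this off-by-one is immaterial to either argument.
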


Before presenting the proof of~\Cref{lem:symmetry}, we describe another useful basis for the vector space $W(\mu)$.
\begin{lemma}
\label{newblem}
The vector space $\hat{W}(z)$ admits a basis $(\hat{\psi}_k)_{k \geq 1}$ such that for any $k \geq 1$, we have $\hat{\psi}_k(z) = z^{k}+ O(z^{k + 1})$ when $z \rightarrow 0$.
Equivalently, the vector space $W(\mu)$ admits a basis $(\psi_k)_{k \geq 1}$ such that $\psi_{k}(\mu) = \delta_{k,\mu}$ for any $1 \leq \mu \leq k$. Moreover, for $f \in \mathbb{Z}_{\geq 0}$ the subspace $\hat W_f(z)$ is spanned by the functions $\hat \psi_k$ for $k \in \{1,\ldots,fd\}$. 
\end{lemma}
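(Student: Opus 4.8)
The plan is to transport the problem to the sequence side $W(\mu)$ via the isomorphism of \Cref{LemmaV}, and to build the basis by a triangularity argument with respect to the order of vanishing. First I record the key translation: since $X(z)=e^{x(z)}=z\,e^{-sQ(z)}=z+O(z^2)$ is an invertible formal power series, the order of vanishing of $\Phi\in\hat W(z)$ at $z=0$ equals $\min\{\mu\ge 1:[X^\mu]\Phi\neq 0\}$, with matching leading coefficients. Hence, under \Cref{LemmaV}, a function $\hat\psi_k(z)=z^k+O(z^{k+1})$ corresponds exactly to a sequence with $\psi_k(\mu)=\delta_{k,\mu}$ for $1\le\mu\le k$, so the two formulations of the lemma are equivalent. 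Writing $c^j(\mu):=[z^{\mu-j}]e^{\mu sQ(z)}=\sum_{\lambda\vdash\mu-j}\frac{(\mu s)^{\ell(\lambda)}}{|\Aut{\lambda}|}\vec q_\lambda$, \Cref{defn:V(mu)} reads $\phi^j_m(\mu)=j\,\mu^m c^j(\mu)$, and I record the triangularity $c^j(\mu)=0$ for $\mu<j$ together with $c^j(j)=1$.

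The whole statement reduces to a single claim: for every $f\ge 0$ the truncation map $T_D\colon\hat W_f(z)\to\mathbb K^{D}$, $\Phi\mapsto([X^\mu]\Phi)_{\mu=1}^{D}$ with $D:=\dim_{\mathbb K}\hat W_f(z)=(f+1)d$ (by \Cref{lem:basisV,def:filtration}), is an isomorphism. Granting this, for each $k\le D$ the preimage of the standard basis vector $e_k$ is a function $\hat\psi_k\in\hat W_f(z)$ with $[X^\mu]\hat\psi_k=\delta_{k,\mu}$ for all $\mu\le D$, in particular $\hat\psi_k=z^k+O(z^{k+1})$; fixing the minimal admissible level $f=\lceil k/d\rceil-1$ determines $\hat\psi_k\in\hat W_{\lceil k/d\rceil-1}(z)$ unambiguously. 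The $\hat\psi_k$ have pairwise distinct valuations, hence are linearly independent; and since $\hat\psi_1,\dots,\hat\psi_{D}$ all lie in $\hat W_f(z)$ (because $\lceil k/d\rceil-1\le f\iff k\le(f+1)d$) and number $D=\dim\hat W_f(z)$, they form a basis of $\hat W_f(z)$. This gives the filtration statement, with $k$ ranging up to $\dim_{\mathbb K}\hat W_f(z)$ as prescribed by \Cref{def:filtration}, and, letting $f\to\infty$, the global basis property.

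It remains to prove that $T_D|_{\hat W_f}$ is an isomorphism, equivalently (both spaces having dimension $D$) that the generalized Vandermonde matrix $M=\big(j\,\mu^m c^j(\mu)\big)$, with rows $\mu\in\{1,\dots,D\}$ and columns $(j,m)\in\{1,\dots,d\}\times\{0,\dots,f\}$, is invertible over $\mathbb K$. The base case $f=0$ is immediate: $M=\big(j\,c^j(\mu)\big)_{\mu,j}$ is lower triangular in $(\mu,j)$ by the triangularity of $c^j$, with nonzero diagonal $j\,c^j(j)=j$. For the inductive step I would use the splitting $\hat W_f(z)=\hat W_0(z)\oplus\partial_x\hat W_{f-1}(z)$ (from $\hat\phi^j_{m}=\partial_x\hat\phi^j_{m-1}$) together with the identity $T_N\circ\partial_x=\mathrm{diag}(1,\dots,N)\circ T_N$, valid because $\partial_x$ acts as multiplication by $\mu$ on $W(\mu)$: assuming $T_{fd}|_{\hat W_{f-1}}$ is an isomorphism, the image $T_D(\partial_x\hat W_{f-1})$ is the graph of a linear map over the first $fd$ coordinates, so the step reduces to the nonsingularity of the residual $d\times d$ matrix formed by the last $d$ coordinates of $T_D(\hat\phi^1_0),\dots,T_D(\hat\phi^d_0)$ modulo that graph.

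The main obstacle is precisely the nonvanishing of this multi-weight Vandermonde determinant. A naive single-variable leading-term analysis does not suffice: although each $c^j(\mu)$ has $q_1$-degree $\mu-j$, the top-$q_1$-degree part of $\det M$ is itself a $q$-free determinant that can vanish — already for $(d,f)=(2,1)$ one finds that the element of $\hat W_1(z)$ of maximal valuation $4$ has $[X^4]$-coefficient proportional to $s\,q_2^2/q_1$, so reaching the top valuation genuinely requires the subleading weights $q_2,\dots,q_d$. The argument must therefore exploit the full polynomial dependence of $c^j(\mu)=[z^{\mu-j}]e^{\mu sQ(z)}$ on all of $q_1,\dots,q_d$. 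I expect to complete it by combining the triangular base case with the raising operators $\vartheta_j=q_j\partial_{q_j}$ of \Cref{lem:dmodq}, which send a seed $\phi^r_0$ to an element of valuation $r+j$ while increasing the filtration level by at most one, and thus generate elements realizing each prescribed valuation within the sharp filtration level, forcing $\det M\neq 0$.
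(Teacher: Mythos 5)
Your reduction is correct as far as it goes: via \Cref{LemmaV} the statement is indeed equivalent to the injectivity of the truncation map $T_D$ on $\hat W_f(z)$ with $D = (f+1)d$ (your dimension count is the one consistent with \Cref{def:filtration} --- the bound ``$fd$'' in the statement is off by one relative to that definition, since the proof spans $\hat W_0(z)$ by $\hat\psi_1,\ldots,\hat\psi_d$ and $\hat W_1(z)$ by $\hat\psi_1,\ldots,\hat\psi_{2d}$), the base case $f=0$ is genuinely triangular, and the identity $T_N\circ\partial_x = \mathrm{diag}(1,\ldots,N)\circ T_N$ with the graph reduction is sound. The genuine gap is the final nonsingularity: the residual $d\times d$ determinant you isolate \emph{is} the content of the lemma, and the completion you sketch via the operators $\vartheta_j$ of \Cref{lem:dmodq} fails. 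A single application of $\vartheta_j$ to $\hat\phi_0^r$ does raise the valuation by $j$, with leading term $rs(r+j)\,q_j\,z^{r+j}$; but iterated applications do not raise it additively, because the Euler part $q_j\partial_{q_j}\big|_z$ differentiates the $q$-dependent leading coefficients created by the earlier applications. Concretely,
\[
\vartheta_j\big(\vartheta_j \hat\phi_0^{r}\big)(z) = rs(r+j)\,q_j\,z^{r+j} + O(z^{r+j+1}),
\]
so $\vartheta_j^2\hat\phi_0^r$ still has valuation $r+j$, not $r+2j$: already at level $f=2$ you cannot reach valuations beyond $2d$ this way, and restoring the valuation by subtracting previously constructed lower-valuation elements presupposes exactly the nondegeneracy you are trying to prove, so the scheme is circular.

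The paper closes the gap by an idea absent from your proposal: it leaves $\hat W(z)$ and defines $\hat\psi_k(z) = \partial_{x(z)}^{\lceil k/d\rceil}\big(z^k/k^{\lceil k/d\rceil}\big)$, starting from $z^k$, which is \emph{not} in $\hat W(z)$ for $k>d$. Two elementary facts then do all the work: since $q_d \neq 0$, the polynomial $1-szQ'(z)$ has degree exactly $d$, so each application of $\partial_{x(z)} = \frac{z}{1-szQ'(z)}\partial_z$ lowers the pole order at $z=\infty$ by $d$, and after exactly $\lceil k/d\rceil$ applications the function becomes regular at infinity and lands in $\hat W(z)$ (the oddness of the principal parts at $\mathfrak a$ is inherited from $z^k$); and $\partial_{x(z)}$ preserves the order of vanishing at $z=0$ because $z/(1-szQ'(z)) = z + O(z^2)$, whence $\hat\psi_k(z) = z^k + O(z^{k+1})$. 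This manufactures, for every $k$, an element of valuation exactly $k$ at the correct filtration level; linear independence is then immediate from the distinct valuations, and spanning follows from Euclidean division by $1-szQ'(z)$ together with the dimension count --- precisely the invertibility of your $T_D$, obtained without evaluating any multi-weight Vandermonde determinant. The moral is that the operators you allowed yourself ($\partial_x$ restricted to $\hat W(z)$, and the $\vartheta_j$) preserve $\hat W(z)$ and cannot create the top valuations; the descent $\partial_x^{\lceil k/d\rceil}$ applied to $z^k$ from outside $\hat W(z)$ can, and it is the missing ingredient.
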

\begin{proof} The proof is constructive. Define for $k \geq 1$ the rational functions
\begin{equation} \label{eq:defhatpsi}
\hat{\psi}_k(z) = \partial_{x(z)}^{\lceil k/d \rceil} \bigg(\frac{z^{k}}{k^{\lceil k/d \rceil}}\bigg).
\end{equation}
For $k \geq 1$, the function $z \mapsto z^k$ has a pole of order $k$ at $z = \infty$, and each application of $\partial_{x(z)}$ reduces this order by $d$. First note that the functions $\hat{\psi}_k(z)$ belong to $\hat{W}(z)$; in fact, we have applied the operator $\partial_{x(z)}$ just enough times for $\hat{\psi}_{k}$ to be regular at $z = \infty$, and the oddness (with respect to the local involution of the spectral curve) of the principal part near $\alpha \in \mathfrak{a}$ follows from the one for $z^{k}$. Now, observe that the operator $\partial_{x(z)} = z(1 - szQ'(z))^{-1}\partial_{z}$ preserves the order of rational functions at $z = 0$. This can immediately be seen from the expansion at $z \rightarrow 0$ of
\[
\frac{z}{1 - szQ'(z)} = z + O(z^2).
\]
In particular, we have $\hat{\psi}_k(z) = z^{k} + O(z^{k + 1})$ when $z \rightarrow 0$, which clearly implies that the functions $(\hat{\psi}_k)_{k \geq 1}$ are linearly independent. Let us now prove that $(\hat{\psi}_k)_{k \geq 1}$ span $\hat{W}(z)$, and hence that they form a basis. We do so by an inductive argument on the application of the operator $\partial_{x(z)}$.
Consider the filtration on $\hat{W}(z)$ introduced in~\Cref{def:filtration}
\[
\hat W_0(z) \subset \hat W_1(z) \subset \cdots \subset \hat W(z),
\]
where the subspace $\hat W_f(z)$ has dimension $fd$.
In fact we are going to show that elements $\hat{\psi}_k(z)$ for $k \in \{1,\ldots, fd\}$ span $\hat W_f(z)$ inductively on $f$. Let us start with $f=0$. For $k \in \{1,\ldots,d\}$, the $\hat{\phi}_0^{k}(z)$ are expressed in terms of the $\hat{\psi}_k(z)$ by
\[
 \hat{\phi}_0^{k}(z) = \partial_{x(z)} \hat{\phi}_{-1}^{k}(z) = \partial_{x(z)} z^k = k \, \hat{\psi}_k(z), 
 \]
 and therefore $\hat W_0(z)$ is spanned by the $\hat{\psi}_k(z)$.
Let us now deal with $f=1$ by considering the first application of $\partial_{x(z)}$ to $z^k$. Compute
\[
\partial_{x(z)}(z^{k}) = \frac{z \partial_z}{1 - s z Q'(z)} z^k = \frac{ k z^k}{1 - s z Q'(z)}.
\] 
For $k > d$ the degree of the polynomial at the numerator is bigger than the degree of the polynomial at the denominator (which is equal to $d$). Therefore, applying Euclidean division between polynomials, we obtain that
\[
\partial_{x(z)}(z^{k}) = \frac{R(z)}{1 - s z Q'(z)} + P_{k - d}(z).
\] 
for a polynomial $R(z)$ of degree up to $d$ and a polynomial $P_{k - d}$ of degree $k - d$ (in fact, the operator $\partial_{x(z)}$ decreases the degree by $d$). Now, as $\hat{\phi}_0^{l}(z) = l \frac{z^l}{1 - s z Q'(z)}$ for $l \in \{1, \dots, d\}$, we can express $R(z)(1-szQ'(z))^{-1}$ as a linear combination of $\hat{\phi}_0^{l}(z)$, and we obtain
\[
\partial_{x(z)}(z^{k}) = \sum_{l = 1}^{d} c_{l}\,\hat{\phi}_0^{l}(z) + P_{k - d}(z),
\] 
for some scalars $c_1,\ldots,c_d$, which cannot all vanish. If we apply $\partial_{x(z)}$ to the equation above for $d+1 \leq k \leq 2d$, the left-hand side gives a multiple of $\hat \psi_k$, whereas the right-hand side gives a linear combination of elements $\hat \phi_{1}^l$ plus the $x$ derivative of a polynomial of degree up to $d$, which as observed before is a linear combination of elements $\hat \phi_{0}^l$, so $\hat \psi_{d+1}(z),\ldots, \hat \psi_{2d}(z) \in \hat W_1(z)$. The dimension count shows that $\hat \psi_k(z)$ for $k \in \{1,\ldots, 2d\}$ in fact generate $\hat W_1(z)$. 
By successive applications of $\partial_{x(z)}$, this proves inductively that $(\hat{\psi}_k)_{k \geq 1}$ spans $\hat{W}(z)$, hence is a basis. To see that the vector space $W(\mu)$ admits a basis $(\psi_k)_{k \geq 1}$ such that $\psi_{k}(\mu) = \delta_{k,\mu}$ for any $\mu \in \{1,\ldots,k\}$, we simply apply the linear isomorphism of~\Cref{LemmaV} and note that $X(z) = z + O(z^2)$ implies that $\hat{\psi}_{k}(z) = O(X^{k})$.
\end{proof}

\begin{proof}[Proof of~\Cref{lem:symmetry}]
We can write $F(\mu_1, \mu_2, \ldots, \mu_n)$ in the basis provided by~\Cref{newblem}:
\[
	F(\mu_1, \mu_2, \ldots, \mu_n) = \sum_{k=1}^{fd} b_k(\mu_2, \mu_3, \ldots, \mu_n)\,\psi_k(\mu_1),
\]
where $b_k \,\colon (\mathbb Z_{>0})^{n-1} \to \mathbb K$ is a function mapping the tuple $(\mu_2,\ldots,\mu_{n})$ to the corresponding coefficient. To show that $F(\mu_1, \mu_2, \ldots, \mu_n) \in W(\mu_1 ) \otimes W(\mu_2)$ for any $\mu_3,\ldots,\mu_n \in \mathbb{Z}_{> 0}$, it is necessary to show that $b_k(\mu_2, \mu_3, \ldots, \mu_n) \in W(\mu_2)$ for all $\mu_3,\ldots,\mu_n \in \mathbb{Z}_{> 0}$ and $k \in \{1,\ldots,fd\}$. 

Evaluating $F$ at $\mu_1 \in \{1,\ldots,fd\}$ results in the following system of equations.
\begin{equation*}
\begin{split}
    F(1, \mu_2, \ldots, \mu_n) &= \sum_{k=1}^{fd} \psi_k(1)\,b_k(\mu_2, \ldots, \mu_n) \\
    F(2, \mu_2, \ldots, \mu_n) &= \sum_{k=1}^{fd} \psi_k(2)\, b_k(\mu_2,\ldots, \mu_n) \\
    &\hspace{5pt} \vdots \\
    F(fd, \mu_2, \ldots, \mu_n)  &= \sum_{k=1}^{fd} \psi_k(fd)\,b_k(\mu_2,\ldots, \mu_n).
\end{split}
\end{equation*}
We use the symmetry of $F(\mu_1, \mu_2, \ldots, \mu_n)$ in $\mu_1, \mu_2, \ldots, \mu_n$ to write $F(\mu_1, \mu_2, \ldots, \mu_n) = F(\mu_2, \ldots, \mu_n, \mu_1)$, and find the following equivalent system of linear equations.
\begin{equation*}
\begin{split}
	F(\mu_2, \ldots, \mu_n, 1) &= \sum_{k=1}^{fd} \psi_k(\mu_2)\, b_k(\mu_3, \ldots, \mu_n, 1) \\
    F(\mu_2, \ldots, \mu_n, 2) &= \sum_{k=1}^{fd} \psi_k(\mu_2)\,b_k(\mu_3, \ldots, \mu_n, 2) \\
    &\hspace{5pt} \vdots \\
	F(\mu_2, \ldots, \mu_n, fd) &= \sum_{k=1}^{fd} \psi_k(\mu_2)\, b_k(\mu_3, \ldots, \mu_n, fd)
\end{split}
\end{equation*}
Equating the right-hand sides of the two systems above yields
\begin{equation*}
\begin{split}
	\sum_{k=1}^{fd} \psi_k(1)\,b_k(\mu_2, \ldots, \mu_n)  &= \sum_{k=1}^{fd} \psi_k(\mu_2)\, b_k(\mu_3, \ldots, \mu_n, 1)\\
	\sum_{k=1}^{fd} \psi_k(2)\,b_k(\mu_2, \ldots, \mu_n)  &= \sum_{k=1}^{fd} \psi_k(\mu_2)\,b_k(\mu_3, \ldots, \mu_n, 2)\\
	&\hspace{5pt} \vdots \\
	\sum_{k=1}^{fd} \psi_k(fd)\,b_k(\mu_2, \ldots, \mu_n)  &= \sum_{k=1}^{fd} \psi_k(\mu_2)\,b_k(\mu_3, \ldots, \mu_n, fd).
\end{split}
\end{equation*}
We consider it as a linear system of equations for the unknowns $b_k(\mu_2,\ldots,\mu_n)$ with coefficients in $\mathbb{K}$ on the left-hand side, where the right-hand sides are elements of $W(\mu_2)$ for any fixed $\mu_3,\ldots,\mu_n$. By construction of the basis in~\Cref{newblem}, the system is upper-triangular, and therefore for any $\mu_3,\ldots,\mu_n$ it has a unique solution, which must also be in $W(\mu_2)$. We repeat this argument with $\mu_3,\ldots,\mu_n$ successively to conclude that $b_k(\mu_2,\ldots,\mu_n) \in \otimes_{i = 2}^n W(\mu_i)$ and thus that $F(\mu_1,\ldots,\mu_n) \in \otimes_{i = 1}^n W(\mu_i)$.
\end{proof}

\section{Double Hurwitz numbers via the semi-infinite wedge} \label{sec:DHviainfinitewedge}

\subsection{Semi-infinite wedge formalism} \label{subsec:infinitewedge}

We now present the basic tools of the semi-infinite wedge formalism. As this is now a common feature in Hurwitz theory, one can find many good introductions to it in the literature. Given this proviso, our exposition will be brief and we refer the reader to~\cite{oko-pan06} for more details.

Let $V$ be the $\mathbb{C}$-vector space with basis $\{\underline{s} \mid s \in \mathbb{Z} + \frac{1}{2} \}$. The semi-infinite wedge space, denoted $\mathcal{V} = \Lambda ^ \frac{\infty}{2} V$, has a basis defined by
\[ 
v_S := \{\underline{s_1} \wedge \underline{s_2} \wedge \underline{s_3} \wedge \cdots \mid s_1 > s_2 > s_3 > \cdots \}, 
\]
where $S = \{s_1 > s_2 > \cdots \} \subset \mathbb{Z} + \frac{1}{2}$ is such that the sets
\[
S_+ = S \setminus \left(\mathbb{Z}_{\le 0} - \tfrac{1}{2} \right) \quad \text{and} \quad S_- = \left( \mathbb{Z}_{\le 0} - \tfrac{1}{2}\right) \setminus S
\]
are finite. We equip $\mathcal{V}$ with the inner product $\left( \cdot, \cdot\right)$ for which the above basis elements are orthonormal.

There exists a unique $c \in \mathbb{Z}$ such that $s_k + k - 1/2 = c$ for $k$ sufficiently large; this constant $c$ is called the \emph{charge}. The charge-zero subspace, denoted $\mathcal{V}_0 \subset \mathcal{V}$, is spanned by semi-infinite wedge products of the form 
\[
\underline{\lambda_1 - \tfrac{1}{2}} \wedge \underline{\lambda_2 - \tfrac{3}{2}} \wedge \underline{\lambda_3 - \tfrac{5}{2}} \wedge \cdots,
\]
indexed by partitions $\lambda \in \mathscr{P}$. The basis element in $\mathcal{V}_0$ corresponding to the empty partition,
\[
v_{\emptyset} = \underline{-\tfrac{1}{2}} \wedge \underline{-\tfrac{3}{2}} \wedge \underline{-\tfrac{5}{2}} \wedge \cdots,
\]
is called the \emph{vacuum vector} and plays a special role. Similarly, the dual of the vacuum vector with respect to the inner product $\left( \cdot, \cdot \right)$ is called the \emph{covacuum vector}. 

We also define the following operators that will be used in the rest of the paper.
\begin{definition}
For $k \in \mathbb{Z} + \tfrac{1}{2}$, the \emph{fermionic operator} $\psi_k$ is defined by
\[
\psi_k v_S = \underline{k} \wedge v_{S}.
\]
The operator $\psi_k^*$ is defined to be the adjoint of $\psi_k$ with respect to the inner product. The normally ordered product is defined by
\[
: \psi_i \psi_j^*: \ = \begin{cases} 
	\psi_i \psi_j^*, & \text{if } j > 0, \\
	-\psi_j^* \psi_i, & \text{if } j < 0. 
	\end{cases}
\]
\end{definition}

\begin{definition} 
For a non-negative integer $n$, define the operator
\[
\mathcal{F}_n := \sum_{k\in \mathbb{Z} + \frac{1}{2}} \frac{k^n}{n!} :\psi_k \psi_k^* :. 
\]
The operator $\mathcal{F}_1$ is called the \emph{energy operator}. We say that an operator $\mathcal{O}$ acting on $\mathcal{V}_0$ has energy $c \in \mathbb{Z}$ if 
\[
[\mathcal{O}, \mathcal{F}_1] = c \, \mathcal{O}. 
\]
The operators $:\psi_i \psi_j^* :$ have energy $j - i$, while the operators $\mathcal{F}_n$ have energy zero for all $n$.
\end{definition}

We introduce the functions
\begin{equation*}
\begin{split}
\varsigma(z) & := e^{z/2} - e^{-z/2} = 2 \sinh(z/2) = z + \frac{z^3}{24} + \frac{z^5}{1920} + O(z^7), \\
\mathcal{S}(z) & := \frac{\varsigma(z)}{z} = \frac{\sinh(z/2)}{z/2} = 1 + \frac{z^2}{24} + \frac{z^4}{1920} + O(z^6),
\end{split}
\end{equation*}
and record the first terms of the following series expansion, which will subsequently be useful.
\[
\frac{1}{\varsigma(z)} = \frac{1}{z\mathcal{S}(z)} = \frac{1}{z} - \frac{z}{24} + \frac{7z^3}{5760} + O(z^5)
\]

\begin{definition}
For $n \in \mathbb{Z}$ and a formal variable $z$, we define the operator
\[
\mathcal{E}_n(z) := \sum_{k \in \mathbb{Z} + \frac{1}{2}} e^{z(k - \frac{n}{2})} :\psi_{k-n} \psi_k^*: + \frac{\delta_{n,0}}{\varsigma(z)}.
\]
A specialisation of this operator for $n \neq 0$ and $z=0$ defines the \emph{bosonic operator} 
\[
\alpha_n := \mathcal{E}_n(0) = \sum_{k \in \mathbb{Z} + \frac{1}{2}} \, :\psi_{k-n} \psi_k^*:.
\]
These operators satisfy commutation relations
\begin{equation}
\label{Ecomut} [\mathcal{E}_a(z), \mathcal{E}_b(w)] = \varsigma(aw - bz) \, \mathcal{E}_{a+b}(z+w),
\end{equation}
and 
\[
	[\alpha_m, \alpha_n ] = m \delta_{m + n,0}.
\]
\end{definition}

The operators $\mathcal{E}_n(z)$ and $\alpha_n$ have energy $n$. Operators with positive energy annihilate the vacuum, while operators with negative energy annihilate the covacuum.

\subsection{Connected and disconnected correlators}\label{sec:inclexcl}

\begin{definition}
Let $\mathcal{O}_1,\ldots,\mathcal{O}_{n}$ be operators acting on $\mathcal{V}_0$. The \emph{vacuum expectation} or \emph{disconnected correlator} is defined to be
\[
\langle \mathcal{O}_1 \cdots \mathcal{O}_{n} \rangle^{\bullet} := \left( v_{\emptyset}, \mathcal{O}_1\cdots \mathcal{O}_{n} v_{\emptyset} \right)\!.
\]
The \emph{connected correlator} is defined by means of an inclusion-exclusion formula,
\begin{equation}
\label{inclexclO} \langle \mathcal{O}_1\cdots\mathcal{O}_{n} \rangle^{\circ} := \sum_{M \vdash \{1,\ldots,n\}} (-1)^{|M| - 1}(|M| - 1)! \prod_{i = 1}^{|M|} \langle \vec{\mathcal{O}}_{M_i} \rangle^{\bullet},
\end{equation}
where if $I \subseteq \{1,\ldots,n\}$ has the elements $i_1 < \cdots < i_k$, we write $\vec{\mathcal{O}}_{I} = \mathcal{O}_{i_1}\cdots \mathcal{O}_{i_k}$.
\end{definition}
Equivalently, we have
\[
\langle \mathcal{O}_{1}\cdots \mathcal{O}_{n} \rangle^{\bullet} = \sum_{M \vdash \{1,\ldots,n\}} \prod_{i = 1}^{|M|} \langle \vec{\mathcal{O}}_{M_i} \rangle^{\circ}.
\]

We apply the same distinction to the Hurwitz generating series. First, define the generating series
\[
\HHall^{\circ}(\mu_1,\ldots,\mu_n) := \sum_{g \geq 0} \HH_{g,n}(\mu_1,\ldots,\mu_n) \in \mathbb{Q}[q_1,\ldots,q_d][[s]],
\]
which enumerates connected branched covers of~\Cref{doubledef} for all genera. The total power of $s$ in the contribution of a given branched cover is $s^{m}$, where $m$ is the number of simple branch points. We will also need
\begin{equation}
\label{contodis}
\HHall^{\bullet}(\mu_1,\ldots,\mu_n) := \sum_{M \vdash \{1,\ldots,n\}} \prod_{i = 1}^{|M|} \HHall^{\circ}(\vec{\mu}_{M_i}),
\end{equation}
which enumerates possibly disconnected branched covers; we drop the connectedness assumption in~\Cref{doubledef} and declare that the weight is multiplicative under disjoint union. The relation between the connected and disconnected generating series is again an inclusion-exclusion formula:
\begin{equation}
\label{distocon}
\HHall^{\circ}(\mu_1,\ldots,\mu_n) = \sum_{M \vdash \{1,\ldots,n\}} (-1)^{|M| - 1}(|M| - 1)!\,\prod_{i = 1}^{|M|} \HHall^{\bullet}(\vec{\mu}_{M_i}).
\end{equation}

\subsection{Double Hurwitz numbers as vacuum expectation}
\label{Svacc}

We begin with the following classical formula.
\begin{theorem}[\cite{oko-pan06}] \label{thm:DHinIW1} For $n,\mu_1,\ldots,\mu_n$ positive integers, we have
\begin{equation} \label{eq:DHinIW1}
\HHall^{\bullet}(\mu_1, \ldots, \mu_n) = \bigg\langle \exp \Big( \sum_{j = 1}^{d} \frac{q_j}{j} \alpha_j \Big) \, \exp(s{\mathcal F}_2) \, \prod_{i=1}^{n} \frac{\alpha_{-\mu_i}}{\mu_i} \bigg\rangle^{\bullet}.
\end{equation}
\end{theorem}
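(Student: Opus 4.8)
The plan is to pass through the representation theory of the symmetric group. By the monodromy correspondence recalled in the introduction (Riemann existence theorem), a possibly-disconnected degree $N$ cover with profile $\mu$ over $\infty$, profile $\lambda$ over $0$ (a partition of $N = |\mu|$ with parts at most $d$), and $m$ simple branch points is the same datum as a tuple $(\sigma_0,\tau_1,\ldots,\tau_m,\sigma_\infty)$ with $\sigma_0\tau_1\cdots\tau_m\sigma_\infty = \mathrm{id}$ in $\mathfrak{S}_N$, where $\sigma_0$ has cycle type $\lambda$, $\sigma_\infty$ has cycle type $\mu$, and the $\tau_i$ are transpositions, taken up to simultaneous conjugation. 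Summing $1/|\Aut{f}|$ over isomorphism classes is then the same as dividing the number of labelled tuples by $N!$. Since summing over the genus $g$ in $\HHall^{\bullet}$ amounts to summing over the number $m$ of simple branch points (the two being related by Riemann--Hurwitz once the profiles are fixed), I would first write
\[
\HHall^{\bullet}(\mu_1,\ldots,\mu_n) = \sum_{\lambda} \vec{q}_{\lambda} \sum_{m \geq 0} \frac{s^m}{m!}\,\frac{1}{N!}\,\#\{(\sigma_0,\tau_1,\ldots,\tau_m,\sigma_\infty)\},
\]
with the understanding that the cycles of $\sigma_\infty$ are labelled by $p_1,\ldots,p_n$ (the preimages over $\infty$ are labelled in \Cref{doubledef}) whereas those of $\sigma_0$ are not, which is exactly what the weight $\vec{q}_{\lambda}$ records.

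The second step is Frobenius's formula, which turns the count of factorizations of the identity into a sum over irreducible characters $\chi^{\nu}$ of $\mathfrak{S}_N$ indexed by $\nu \vdash N$. Each of the $m$ transpositions contributes a factor of the normalized central character of the transposition class, and by the classical Jucys--Murphy computation this equals the content sum $\mathrm{ct}(\nu) := \sum_{\square \in \nu} c(\square)$. Summing over $m$ with weight $s^m/m!$ then produces $e^{s\,\mathrm{ct}(\nu)}$, and summing over $\lambda$ with weight $\vec{q}_{\lambda}$ produces a specialisation of the Schur function at the $q_j$. Collecting the factors $|C_\lambda| = N!/z_\lambda$ and $|C_\mu| = N!/z_\mu$ and accounting for the labelled profile over $\infty$ (which removes the $|\Aut{\mu}|$ that would otherwise sit inside $z_\mu$, leaving $1/\prod_i \mu_i$), I expect to arrive at a manifestly symmetric double character sum of the shape $\sum_{\nu} \chi^{\nu}(\mu)\,e^{s\,\mathrm{ct}(\nu)} \frac{1}{\prod_i \mu_i} \sum_\lambda \frac{\vec{q}_{\lambda}}{z_\lambda}\chi^{\nu}(\lambda)$.

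The final step is to recognise this sum as the stated vacuum expectation, using three standard facts about the charge-zero semi-infinite wedge. First, $\mathcal{F}_2$ is diagonal in the basis $(v_\nu)_{\nu \in \mathscr{P}}$ with eigenvalue exactly $\mathrm{ct}(\nu)$, via the short computation $\tfrac12\sum_i[(\nu_i - i + \tfrac12)^2 - (-i + \tfrac12)^2] = \mathrm{ct}(\nu)$, so that $\exp(s\mathcal{F}_2)$ reproduces the transposition factor. Second, under the boson--fermion correspondence $v_\nu \leftrightarrow s_\nu$ and $\alpha_{-k} \leftrightarrow p_k$, whence $\prod_i \alpha_{-\mu_i}\,v_{\emptyset} = \sum_\nu \chi^{\nu}(\mu)\,v_\nu$; dividing by $\prod_i \mu_i$ supplies the normalisation in $\prod_i \alpha_{-\mu_i}/\mu_i$ and encodes the profile over $\infty$. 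Third, expanding $\exp(\sum_j \tfrac{q_j}{j}\alpha_j) = \sum_\lambda \tfrac{\vec{q}_{\lambda}}{z_\lambda}\,\alpha_\lambda$ with $\alpha_\lambda = \prod_i \alpha_{\lambda_i}$ (positive indices) and using $\alpha_j^{\dagger} = \alpha_{-j}$ gives $\langle v_{\emptyset}|\exp(\sum_j \tfrac{q_j}{j}\alpha_j) = \sum_\lambda \tfrac{\vec{q}_{\lambda}}{z_\lambda}\sum_\nu \chi^{\nu}(\lambda)\,\langle v_\nu|$, encoding the profile over $0$. Pairing bra and ket through the diagonal $\exp(s\mathcal{F}_2)$ and invoking orthonormality of the $v_\nu$ reproduces precisely the character sum of the previous step.

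The main obstacle, and where I would spend the most care, is the bookkeeping of normalisation constants rather than any conceptual difficulty: matching $1/|\Aut{f}|$ with $1/N!$, tracking $|C_\lambda|, |C_\mu|, z_\lambda, z_\mu$ through Frobenius's formula, and above all respecting the asymmetry of \Cref{doubledef} between the labelled profile over $\infty$ (contributing $\prod_i \alpha_{-\mu_i}/\mu_i$, with no automorphism factor) and the unlabelled profile over $0$ (contributing the symmetric exponential $\exp(\sum_j \tfrac{q_j}{j}\alpha_j)$). It is exactly this asymmetry that explains why the ket side carries the normalisation $1/\mu_i$ while the bra side carries the $1/z_\lambda$ hidden inside the exponential.
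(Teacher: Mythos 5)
Your proof is correct and is exactly the classical argument behind this statement, which the paper does not reprove but quotes from \cite{oko-pan06} (and sketches in its introduction): the Burnside/Frobenius character count of factorisations, with the transposition class contributing the content sum so that the sum over $m$ produces $e^{s\,\mathrm{ct}(\nu)}$, matched to the vacuum expectation via the boson--fermion correspondence, the $\mathcal{F}_2$-eigenvalue $f_2(\nu)=\mathrm{ct}(\nu)$, and the expansion $\exp\big(\sum_j \tfrac{q_j}{j}\alpha_j\big)=\sum_\lambda \tfrac{\vec{q}_\lambda}{z_\lambda}\alpha_\lambda$. Your normalisation bookkeeping is also right, in particular that $z_\mu = |\Aut{\mu}|\prod_i \mu_i$, so labelling the preimages of $\infty$ strips the $|\Aut{\mu}|$ and leaves the factor $1/\prod_i \mu_i$ appearing in $\prod_i \alpha_{-\mu_i}/\mu_i$.
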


We rewrite it in a way that will be convenient for the proof of the polynomiality structure.

\begin{proposition} \label{thm:DHinIW2}
For any positive integers $n,\mu_1,\ldots,\mu_n$, we have the identity
\begin{equation} \label{eq:DHinIW2}
\HHall^{\bullet}(\mu_1,\ldots,\mu_n) = \big\langle \mathcal{C}(\mu_1) \, \mathcal{C}(\mu_2) \, \cdots \, \mathcal{C}(\mu_n) \big\rangle^{\bullet},
\end{equation}
where
\[
\mathcal{C}(\mu) := \frac{1}{\mu}\sum_{i \in \mathbb{Z}} \left[ \sum_{\lambda \vdash \mu - i} \frac{\vec{q}_{\lambda} (\mu s)^{\ell(\lambda)}}{|\Aut{\lambda}|} \prod_{k=1}^{\ell(\lambda)} \mathcal{S}\big(\mu \lambda_k s\big) \right] \mathcal{E}_{-i}(\mu s),
\]
and $\vec{q}_{\lambda} := q_{\lambda_1} q_{\lambda_2} \cdots q_{\lambda_{\ell(\lambda)}}$ for $\lambda = (\lambda_1, \lambda_2, \ldots, \lambda_{\ell(\lambda)})$. 
\end{proposition}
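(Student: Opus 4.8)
The plan is to start from the vacuum-expectation formula of \Cref{thm:DHinIW1} and convert it into the claimed product form in two moves: (i) absorb the operator $\exp(s\mathcal{F}_2)$ by conjugation, and (ii) commute the operator $A := \exp\big(\sum_{j=1}^{d}\tfrac{q_j}{j}\alpha_j\big)$ to the right onto the vacuum, where in passing it ``dresses'' each creation operator into $\mathcal{C}(\mu_i)$. Since both \Cref{thm:DHinIW1} and the statement concern the \emph{disconnected} correlator $\HHall^{\bullet}$, no inclusion--exclusion is needed, and it suffices to manipulate the single vacuum expectation $\tfrac{1}{\mu_1\cdots\mu_n}\big\langle A\,e^{s\mathcal{F}_2}\,\alpha_{-\mu_1}\cdots\alpha_{-\mu_n}\big\rangle^{\bullet}$ into $\big\langle \mathcal{C}(\mu_1)\cdots\mathcal{C}(\mu_n)\big\rangle^{\bullet}$.

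First I would conjugate $e^{s\mathcal{F}_2}$ rightwards through the product of creation operators. Writing $\alpha_{-\mu_i}=\mathcal{E}_{-\mu_i}(0)$ and inserting $e^{-s\mathcal{F}_2}e^{s\mathcal{F}_2}$ between consecutive factors, everything reduces to the single-operator identity $e^{s\mathcal{F}_2}\mathcal{E}_a(z)e^{-s\mathcal{F}_2}=\mathcal{E}_a(z-as)$. This is a one-line computation: conjugation by $e^{s\mathcal{F}_2}$ multiplies $:\psi_{k-a}\psi_k^*:$ by $e^{s((k-a)^2-k^2)/2}$, and collecting the exponent turns $e^{z(k-a/2)}$ into $e^{(z-as)(k-a/2)}$. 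Since $\mathcal{F}_2 v_\emptyset=0$, the trailing $e^{s\mathcal{F}_2}$ acts as the identity on $v_\emptyset$, so taking $a=-\mu_i$ and $z=0$ the correlator becomes $\tfrac{1}{\mu_1\cdots\mu_n}\big\langle A\,\mathcal{E}_{-\mu_1}(\mu_1 s)\cdots\mathcal{E}_{-\mu_n}(\mu_n s)\big\rangle^{\bullet}$, which already exhibits the $z$-argument $\mu_i s$ appearing inside $\mathcal{C}(\mu_i)$.

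The second and main step is to commute $A$ to the right. Because the $\alpha_j$ with $j\geq 1$ mutually commute and satisfy $\alpha_j v_\emptyset=0$, every $\alpha_j$ in the expansion of $A$ must be absorbed into a commutator with one of the $\mathcal{E}$-factors; any term in which an $\alpha_j$ survives to the vacuum vanishes. Using $\alpha_j=\mathcal{E}_j(0)$ together with \eqref{Ecomut}, each absorption acts on a single slot via
\[
[\alpha_j,\mathcal{E}_b(\mu_i s)]=\varsigma(j\mu_i s)\,\mathcal{E}_{j+b}(\mu_i s),
\]
raising that slot's index by $j$ and producing a factor $\varsigma(j\mu_i s)$. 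The crucial feature is that this factor depends only on $j$ and $\mu_i$, and \emph{not} on the current index $b$ --- precisely because $\alpha_j$ has vanishing $z$-argument, so $\varsigma(j\,\mu_i s-b\cdot 0)=\varsigma(j\mu_i s)$. This path-independence makes the contribution of each configuration factorise over the $n$ slots, with slot $i$ receiving a sub-multiset of $\alpha_j$'s forming some partition $\lambda^{(i)}$, contributing $\prod_k\varsigma(\mu_i\lambda^{(i)}_k s)$ and turning $\mathcal{E}_{-\mu_i}(\mu_i s)$ into $\mathcal{E}_{-(\mu_i-|\lambda^{(i)}|)}(\mu_i s)$.

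It then remains to assemble the coefficients, and this bookkeeping is where I expect the main obstacle to lie. Expanding $A=\prod_j\exp(\tfrac{q_j}{j}\alpha_j)$ and distributing the copies of each $\alpha_j$ among the slots by a multinomial count, slot $i$ collects $\tfrac{\vec{q}_{\lambda^{(i)}}}{|\Aut{\lambda^{(i)}}|\prod_k\lambda^{(i)}_k}$ from $A$, times $\prod_k\varsigma(\mu_i\lambda^{(i)}_k s)=(\mu_i s)^{\ell(\lambda^{(i)})}\big(\prod_k\lambda^{(i)}_k\big)\prod_k\mathcal{S}(\mu_i\lambda^{(i)}_k s)$ from the commutators; the $\prod_k\lambda^{(i)}_k$ cancel, leaving exactly the summand coefficient of $\mathcal{C}(\mu_i)$. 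Matching $\mathcal{E}_{-(\mu_i-|\lambda^{(i)}|)}(\mu_i s)$ with the term of $\mathcal{C}(\mu_i)$ whose summation index equals $\mu_i-|\lambda^{(i)}|$ gives $A\,\prod_i\mathcal{E}_{-\mu_i}(\mu_i s)\,v_\emptyset=\prod_i\big(\mu_i\,\mathcal{C}(\mu_i)\big)v_\emptyset$, whence the claim after dividing by $\mu_1\cdots\mu_n$. The delicate points to make rigorous are the multinomial redistribution (identical operators distributed over slots, independently of ordering) and the verification that when a slot index passes through $0$ the central term of $\mathcal{E}_0(\mu_i s)$ is already correctly produced by \eqref{Ecomut} and does not spoil the path-independence.
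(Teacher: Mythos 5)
Your proposal is correct and takes essentially the same route as the paper's proof: starting from the vacuum expectation of \Cref{thm:DHinIW1}, first turning each $\alpha_{-\mu_i}$ into $\mathcal{E}_{-\mu_i}(\mu_i s)$ by conjugation with $e^{s\mathcal{F}_2}$, and then exploiting the fact that the factor $\varsigma(j\mu_i s)$ in \eqref{Ecomut} is independent of the running index $b$ to resum the action of $\exp\big(\sum_{j}\tfrac{q_j}{j}\alpha_j\big)$ into the partition sum defining $\mathcal{C}(\mu_i)$, with the product $\prod_j \frac{(q_j/j)^{p_j}}{p_j!}$ reorganising into $\vec{q}_{\lambda}/\big(|\Aut{\lambda}|\prod_k \lambda_k\big)$ exactly as you describe. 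The only difference is presentational --- the paper inserts $A^{-1}A$ and $e^{-s\mathcal{F}_2}e^{s\mathcal{F}_2}$ between factors and conjugates slot by slot via Hadamard's lemma \eqref{hadamlem} instead of your global multinomial redistribution --- and both of the subtleties you flag do work out, the central term of $\mathcal{E}_0$ being harmless precisely because it is built into \eqref{Ecomut}.
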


\begin{proof}
We begin with \eqref{eq:DHinIW1} and, observing that $\exp(-s\mathcal{F}_2)$ and $\exp\big( -\sum_{j = 1}^{d} \frac{\alpha_j q_j}{j} \big)$ fix the vacuum vector, rewrite it as
\[
\HHall^{\bullet}(\mu_1,\ldots,\mu_n) = \frac{1}{\prod_{i = 1}^n \mu_i} \bigg\langle \prod_{i=1}^{n} \exp \Big( \sum_{j=1}^{d} \frac{q_j}{j} \alpha_j \Big) \, \exp(s{\mathcal F}_2) \, \alpha_{-\mu_i} \, \exp(-s{\mathcal F}_2) \exp \Big( - \sum_{j=1}^{d} \frac{q_j}{j} \alpha_j \Big) \bigg\rangle^{\bullet}.
\]
To compute the conjugations, we iteratively apply Hadamard's lemma,
\begin{equation}
\label{hadamlem} e^A Be^{-A} = B + \sum_{k \geq 1} \frac{1}{k!} [A, [A, \ldots, [A, B] \cdots ]],
\end{equation}
where there are $k$ commutators in the $k$th summand, and use the commutation relation \eqref{Ecomut} for the $\mathcal{E}$-operators.

We first claim that
\begin{equation}
\label{claim12} e^{s\mathcal{F}_2} \alpha_{-\mu} e^{-s\mathcal{F}_2} = \mathcal{E}_{-\mu}(\mu s).
\end{equation}
This can be justified by observing that $\alpha_{-\mu} = \mathcal{E}_{-\mu}(0)$ for $\mu > 0$, $\mathcal{F}_{2} = [z^2]\,\mathcal{E}_{0}(z)$, and using Hadamard's lemma \eqref{hadamlem} and the commutation relation \eqref{Ecomut} for the $\mathcal{E}$-operators. Another way to derive \eqref{claim12} goes by remarking that
$v_{\lambda}$ is an eigenvector for $\mathcal{F}_2$. The eigenvalue $f_2(\lambda)$ is the sum of the contents of the Young diagram given by the partition $\lambda$, where the content of the box in column $j$ and row $i$ is $j-i$. In this case, for any $\lambda \in \mathscr{P}$,
\begin{align*}
e^{s\mathcal{F}_2} \alpha_{-\mu} e^{-s\mathcal{F}_2} v_{\lambda} &= e^{-sf_2(\lambda)} e^{s\mathcal{F}_2} \alpha_{-\mu} v_{\lambda} = e^{-sf_2(\lambda)} e^{s\mathcal{F}_2} \sum_{\lambda^{+\mu}} v_{\lambda^{+\mu}} \\
&= \sum_{\lambda^{+\mu}} e^{s(f_2(\lambda^{+\mu})-f_2(\lambda))} v_{\lambda^{+\mu}} = \sum_{k \in \mathbb{Z}+ \frac{1}{2}} e^{s\mu(k+\frac{\mu}{2})}:\psi_{k+\mu}\psi_k^*: v_{\lambda} \\
&= \mathcal{E}_{-\mu}(\mu s)v_{\lambda},
\end{align*}
where $\lambda^{+\mu}$ is $\lambda$ with an added $\mu$-ribbon. This in fact explains the relevance of the $\mathcal{E}$-operators.

The next conjugation gives
\begin{equation*}
\begin{split}
\exp\Big(\frac{\alpha_d q_d}{d } \Big) \EE_{-\mu}(\mu s) \exp\Big(- \frac{\alpha_d q_d}{d } \Big) & = \sum_{p_d \geq 0} \frac{\varsigma(\mu d s)^{p_d} q_d^{p_d}}{ p_d! \,d^{p_d}} \EE_{-\mu + dp_d}(\mu s) \\
& = \sum_{p_d \geq 0} \frac{(\mu q_d s)^{p_d} \mathcal{S}(\mu d s)^{p_d}}{p_d!} \EE_{-\mu + dp_d}(\mu s).
\end{split}
\end{equation*}
Subsequent conjugations therefore result in
\begin{equation*}
\begin{split}
& \quad \exp\Big( \sum_{j = 1}^{d} \frac{q_j}{j} \alpha_j \Big) \, \EE_{-\mu}(\mu s)\, \exp \Big( - \sum_{j=1}^{d} \frac{q_j}{j} \alpha_j \Big) \\
&= \sum_{p_1 \geq 0} \frac{(\mu q_1 s)^{p_1} \mathcal{S}(\mu s)^{p_1}}{p_1!} \sum_{p_2 \geq 0} \frac{(\mu q_2 s)^{p_2} \mathcal{S}(2\mu s)^{p_2}}{p_2!} \cdots \sum_{p_d \geq 0} \frac{(\mu q_d s)^{p_d} \mathcal{S}(d\mu s)^{p_d}}{p_d!} \  \EE_{-\mu + p_1 + 2p_2 + \cdots + dp_d} (\mu s)\\
&= \sum_{\lambda \in \mathscr{P}} \frac{ \vec{q}_{\lambda} (\mu s)^{\ell(\lambda)}}{|\Aut{\lambda}|} \prod_{k=1}^{\ell(\lambda)} \mathcal{S}(\mu \lambda_k s) \ \EE_{-\mu + |\lambda|} (\mu s),
\end{split}
\end{equation*}
where the tuple $(p_1,\ldots,p_d)$ has been realised by the partition $\lambda = (1^{p_1}\cdots d^{p_d})$ and recall the definition $|{\rm Aut}(\lambda)| = p_1!\cdots p_d!$. Letting $-i = |\lambda| - \mu$ yields the result.
\end{proof}

\section{Polynomiality results} \label{sec:poly}

The aim of this section is to prove the polynomiality~\Cref{thm:poly}. The proof is divided into four parts, each being carried out in a subsection of its own.
\begin{enumerate}
	\item We begin with the vacuum expectation~\eqref{eq:DHinIW2} for $\HHall^{\bullet}(\mu_1,\ldots, \mu_n)$ and consider the dependence on $\mu_1$. This yields an expression involving a sum over partitions of size $\mu_1 + a$ for a positive integer $a$. We then use a ``peeling lemma'', \Cref{lem:peel}, to reduce the expression to a sum over partitions of size $\mu_1 - r$ for $r \in \{1,2, \ldots, d\}$ (\Cref{lem:DHbehavmu1}). 
	\item The use of the peeling lemma in step (1) results in a rational expression with distinct factors of the form $\frac{\mu_1}{\mu_1 + k}$ for $k$ a non-negative integer. We study the residues of $\HHall^{\bullet}(\mu_1,\ldots,\mu_n)$ at $\mu_1 = -k$ for $k \geq 0$ in Lemmata~\ref{lem:residueofBop} and~\ref{lem:poleatzero}.
	\item We then show in~\Cref{thm:polyDH} that $\HHall^{\circ}(\mu_1,\ldots,\mu_n)$ is of the form
	\[
    \HHall^{\circ}(\mu_1,\ldots,\mu_n) = \sum_{r=1}^{d} \sum_{\lambda \vdash \mu_1-r} \frac{\vec{q}_{\lambda} (\mu_1 s)^{\ell(\lambda)}}{|\Aut{\lambda}|} \sum_{g \geq 0} P_{g}^{r;\mu_2, \ldots, \mu_n} (\mu_1),
	\]
	where, for $2g - 2 + n > 0$ and any fixed $r,\mu_2,\ldots,\mu_n$, the function $P_{g}^{r;\mu_2,\ldots,\mu_n}$ is a polynomial of $\mu_1$ whose degree is uniformly bounded in terms of $(g,n)$.
\item We eventually extract the genus $g$ part and use~\Cref{lem:symmetry} to conclude the proof.
\end{enumerate}

\subsection{Dependence on $\mu_1$} \label{subsec:mu1dependence}

Let us first study the dependence of $\HHall^{\bullet}(\mu_1, \ldots, \mu_n)$ on $\mu_1$, fixing $\mu_2, \mu_3, \ldots, \mu_n$ to be positive integers. As detailed in the outline above, the dependence on $\mu_1$ yields an expression involving a sum over partitions of size $\mu_1 + a$, whereas we want to reduce it to a sum over partitions of size $\mu_1 - r$. To this end, we will make use of the following lemma. 

\begin{lemma}[Peeling lemma] \label{lem:peel}
For any $\mu,a \in \mathbb{Z}$ with $\mu + a > 0$ we have
\begin{equation*} \label{eq:peel}
\sum_{\lambda \vdash \mu + a} \frac{\vec{q}_{\lambda} (\mu s)^{\ell(\lambda)}}{|\Aut{\lambda}|} \prod_{k=1}^{\ell(\lambda)} \mathcal{S}(\mu \lambda_k s) = \frac{\mu}{\mu + a} \sum_{r=1}^{d} rq_rs\,\mathcal{S}(\mu r s) \sum_{\lambda \vdash \mu + a - r} \frac{\vec{q}_{\lambda} (\mu s)^{\ell(\lambda)}}{|\Aut{\lambda}|} \prod_{k=1}^{\ell(\lambda)} \mathcal{S}(\mu \lambda_k s). 
\end{equation*}
\end{lemma}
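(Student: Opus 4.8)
The plan is to encode the left-hand side in a single-variable generating series and recognise it as an exponential. First I would fix $\mu$ and $s$ and, for an integer $N \geq 0$, set
\[
F(N) := \sum_{\lambda \vdash N} \frac{\vec{q}_{\lambda}\,(\mu s)^{\ell(\lambda)}}{|\Aut{\lambda}|} \prod_{k=1}^{\ell(\lambda)} \mathcal{S}(\mu \lambda_k s),
\]
so that the identity to be proved reads $F(\mu + a) = \frac{\mu}{\mu+a} \sum_{r=1}^{d} r q_r s\,\mathcal{S}(\mu r s)\,F(\mu + a - r)$. The main device is the formal generating series $G(z) := \sum_{N \geq 0} F(N)\,z^N$ in an auxiliary variable $z$, whose coefficients are exactly the quantities appearing on both sides of the claim.

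The key step, and the combinatorial heart of the argument, is the exponential factorisation of $G$. Writing each partition as $\lambda = (1^{p_1} 2^{p_2} \cdots d^{p_d})$ with $p_r \geq 0$, and recalling that $\ell(\lambda) = \sum_r p_r$, $|\Aut{\lambda}| = \prod_r p_r!$, $\vec{q}_{\lambda} = \prod_r q_r^{p_r}$ and $|\lambda| = \sum_r r p_r$, the summand defining $G$ factorises as a product over part sizes $r$. Summing each $p_r$ independently over $\mathbb{Z}_{\geq 0}$ then gives
\[
G(z) = \prod_{r=1}^{d} \exp\!\big(\mu s\, q_r\, \mathcal{S}(\mu r s)\, z^r\big) = \exp\!\Big( \mu s \sum_{r=1}^{d} q_r\, \mathcal{S}(\mu r s)\, z^r \Big).
\]
All these manipulations are purely formal in $z$; the inner sum over $r$ is finite because $q_j = 0$ for $j > d$, so no convergence issue arises and each $\mathcal{S}(\mu r s)$ may be treated as a scalar coefficient.

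With the factorisation in hand, the rest is routine. Setting $H(z) := \mu s \sum_{r=1}^{d} q_r\,\mathcal{S}(\mu r s)\,z^r$, I would differentiate $G = e^{H}$ to obtain $G'(z) = H'(z)\,G(z)$, where $H'(z) = \mu s \sum_{r=1}^{d} r q_r\,\mathcal{S}(\mu r s)\,z^{r-1}$. Extracting the coefficient of $z^{N-1}$ from both sides produces $N\,F(N)$ on the left and $\mu s \sum_{r=1}^{d} r q_r\,\mathcal{S}(\mu r s)\,F(N-r)$ on the right. Specialising to $N = \mu + a$ and dividing by $\mu + a$ --- which is nonzero precisely by the hypothesis $\mu + a > 0$ --- yields the claimed identity. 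There is no serious obstacle here: the only point needing care is the bookkeeping in the exponential factorisation, and the role of the hypothesis $\mu + a > 0$ is exactly to license the final division.
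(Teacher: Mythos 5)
Your proof is correct and follows essentially the same route as the paper: both arguments recognise the partition sum as the coefficient extraction from the exponential generating series $e^{\mu s Q(z)}$ and derive the recursion by differentiating in $z$. The only cosmetic difference is that you absorb the rescaling $q_j \mapsto q_j\,\mathcal{S}(\mu j s)$ into the weights from the outset and differentiate $G = e^{H}$ directly, whereas the paper first proves the identity without the $\mathcal{S}$-factors (via $z\partial_z e^{\mu s Q(z)} = \big(\sum_j j q_j \partial_{q_j}\big) e^{\mu s Q(z)}$) and applies the rescaling at the very end.
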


\begin{proof}

Recall that $Q(z) = \sum_{j = 1}^d q_jz^j$ and observe that 
\[
z\partial_{z}e^{\mu sQ(z)} = \bigg(\sum_{j=1}^{d} j q_j \partial_{q_j}\bigg) e^{\mu sQ(z)}.
\]

Let us extract the coefficient of $z^{\mu + a}$ on both sides. On the left-hand side, we find
\begin{equation*}
\begin{split}
    [z^{\mu + a}]\,z\partial_{z} e^{\mu sQ(z)} &= [z^{\mu + a}]\,z\partial_{z} \left[\,\sum_{\lambda \in \mathscr{P}} \frac{(\mu s)^{\ell(\lambda)} q_{\lambda_1} q_{\lambda_2} \cdots q_{\lambda_{\ell(\lambda)}} \, z^{|\lambda|}}{|\Aut{\lambda}|}  \right]  \\
    & = \sum_{\lambda \vdash \mu + a} \frac{(\mu s)^{\ell(\lambda)} \vec{q}_{\lambda}\, (\mu + a)}{|\Aut{\lambda}|}.
\end{split}
\end{equation*}
while on the right-hand side, we obtain
\begin{equation*}
\begin{split}
    [z^{\mu + a}]\bigg( \sum_{j=1}^{d} jq_j \partial_{q_j}\bigg) e^{\mu sQ(z)}  &= [z^{\mu + a}]\,\mu s \bigg(\sum_{j = 1}^{d} jq_jz^{j} \bigg) \sum_{\lambda \in \mathscr{P}} \frac{(\mu s)^{\ell(\lambda)} \vec{q}_{\lambda} z^{|\lambda|} } {|\Aut{\lambda}|} \\
    &= \mu \sum_{r=1}^{d} rq_rs \sum_{\lambda \vdash \mu + a - r} \frac{(\mu s)^{\ell(\lambda)}\,\vec{q}_{\lambda}}{|\Aut{\lambda}|}. 
\end{split}
\end{equation*} 
This yields 
\[
(\mu + a) \sum_{\lambda \vdash \mu + a} \frac{\vec{q}_{\lambda} (\mu s)^{\ell(\lambda)}}{|\Aut{\lambda}|} = \mu \sum_{r=1}^{d} rq_rs \sum_{\lambda \vdash \mu + a - r} \frac{\vec{q}_{\lambda} (\mu s)^{\ell(\lambda)}}{|\Aut{\lambda}|}.
\]
We apply the rescaling $q_{j} \mapsto q_{j}\mathcal{S}(\mu j s)$ and divide by $\mu + a$ on both sides to obtain the desired result.
\end{proof}

The following sets are also required, to be utilised in~\Cref{lem:DHbehavmu1}. 

\begin{definition} \label{def:admissible}
 Define $\mathcal{P}_{d}$ to be the set of finite (possibly empty) sequences $P = (p_1,\ldots,p_{\ell(P)})$ such that $1 \leq p_i \leq d$ for all $i \in \{1,\ldots,\ell(P)\}$. Fix $r \in \{1,\ldots,d\}$ and let $\mathcal{P}_{d,r}$ be the set of all non-empty finite sequences $P \in \mathcal{P}_{d}$ such that $p_{\ell(P)} \geq r$. For a sequence $P \in \mathcal{P}_{d}$, we denote the sum of its terms by $e(P)$. We also denote $\mathcal{P}_{d}(e) \subset \mathcal{P}_{d}$ the subset consisting of sequences with $e(P) = e$.

\end{definition}

\begin{lemma} \label{lem:DHbehavmu1}
Fix positive integers $n,\mu_2,\ldots,\mu_n$. The double Hurwitz generating series can be written as
\begin{equation} \label{eq:DHbehavmu1}
	\HHall^{\bullet}(\mu_1,\ldots,\mu_n) = \sum_{r=1}^{d} \sum_{\lambda \vdash \mu_1 - r} \left[ \frac{\vec{q}_{\lambda} (\mu_1 s)^{\ell(\lambda)}}{|\Aut{\lambda}|} \prod_{k=1}^{\ell(\lambda)} \mathcal{S}(\mu_1 \lambda_k s) \right] \big\langle \mathcal{B}_r(\mu_1)\,\mathcal{C}(\mu_2) \cdots \mathcal{C}(\mu_n)\big\rangle^{\bullet},
\end{equation}
where
\[
\mathcal{B}_r(\mu) := \frac{1}{\mu} \sum_{P \in \mathcal{P}_{d,r}} \left[ \prod_{i=1}^{\ell(P)} \frac{\mu}{\mu - r + \sum_{j=i}^{\ell(P)} p_i} \, p_i q_{p_i}s\, \mathcal{S}(\mu p_i s)\right] \EE_{-r + e(P)}(\mu s).
\]
\end{lemma}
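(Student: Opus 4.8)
The plan is to begin with the vacuum expectation $\HHall^{\bullet}(\mu_1,\ldots,\mu_n) = \langle \mathcal{C}(\mu_1)\cdots \mathcal{C}(\mu_n)\rangle^{\bullet}$ supplied by~\Cref{thm:DHinIW2} and to rewrite only the leftmost factor $\mathcal{C}(\mu_1)$, leaving $\mathcal{C}(\mu_2)\cdots \mathcal{C}(\mu_n)$ untouched. Abbreviating the bracketed coefficient as $c(N) := \sum_{\lambda \vdash N} \frac{\vec{q}_{\lambda}(\mu_1 s)^{\ell(\lambda)}}{|\Aut{\lambda}|}\prod_{k=1}^{\ell(\lambda)}\mathcal{S}(\mu_1\lambda_k s)$ and setting $N = \mu_1 - i$, one has $\mathcal{C}(\mu_1) = \frac{1}{\mu_1}\sum_{N \geq 0} c(N)\,\mathcal{E}_{N - \mu_1}(\mu_1 s)$.

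First I would discard the terms that cannot contribute. Since $\mathcal{C}(\mu_1)$ sits at the far left of the correlator and operators of negative energy annihilate the covacuum, every summand with $N - \mu_1 < 0$ drops out; only the terms with $N \geq \mu_1$, carrying operators $\mathcal{E}_{N - \mu_1}(\mu_1 s)$ of non-negative energy, survive in $\langle \mathcal{C}(\mu_1)\cdots\rangle^{\bullet}$.

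Next I would apply the Peeling~\Cref{lem:peel} repeatedly to each surviving $c(N)$ with $N \geq \mu_1$, expanding a branch while the current size is $\geq \mu_1$ and stopping it the instant the size first drops below $\mu_1$. As every peeled part lies in $\{1,\ldots,d\}$, this first descent lands in the window $\{\mu_1 - d,\ldots,\mu_1 - 1\}$, that is at $\mu_1 - r$ with $r \in \{1,\ldots,d\}$; the recursion terminates because the size strictly decreases at each step. A completed branch records the peeled parts as a sequence $P = (p_1,\ldots,p_{\ell(P)})$, and the requirement that every size preceding the final peel be $\geq \mu_1$ reduces, since these sizes equal $\mu_1 - r + \sum_{j=k}^{\ell(P)} p_j \geq \mu_1 - r + p_{\ell(P)}$, to the single condition $p_{\ell(P)} \geq r$ --- precisely the defining property of $\mathcal{P}_{d,r}$. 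The $k$-th peeling contributes the factor $\frac{\mu_1}{\mu_1 - r + \sum_{j=k}^{\ell(P)} p_j}\,p_k q_{p_k} s\,\mathcal{S}(\mu_1 p_k s)$, whereas the residual energy $N - \mu_1 = e(P) - r$ matches the operator $\mathcal{E}_{-r + e(P)}(\mu_1 s)$.

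Finally I would reparametrize the double sum $\sum_{N \geq \mu_1}\sum_{P}$ by the pair $(r,P)$ with $P \in \mathcal{P}_{d,r}$, observing that $N = \mu_1 - r + e(P)$ is then determined, so that a sequence $P$ with $p_{\ell(P)} \geq r$ for several admissible $r$ corresponds to distinct starting sizes $N$ and no double counting occurs. Pulling $c(\mu_1 - r)$ out in front recovers $\sum_{r=1}^{d} c(\mu_1 - r)\,\mathcal{B}_r(\mu_1)$ as the contributing part of $\mathcal{C}(\mu_1)$, and substituting into the correlator gives the stated identity, with the prefactor displayed as $\sum_{\lambda \vdash \mu_1 - r}$. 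The main obstacle is exactly this bookkeeping: one must verify that the ``first passage below $\mu_1$'' expansion is a genuine bijection onto $\bigsqcup_{r=1}^{d} \mathcal{P}_{d,r}$ and that the iterated denominators telescope into $\mu_1 - r + \sum_{j=k}^{\ell(P)} p_j$ rather than into partial sums anchored at the original $N$; isolating that the last part alone must satisfy $p_{\ell(P)} \geq r$ is what forces the precise index set $\mathcal{P}_{d,r}$.
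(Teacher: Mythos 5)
Your proposal is correct and follows essentially the same route as the paper's proof: starting from the correlator of \Cref{thm:DHinIW2}, discarding the negative-energy terms of $\mathcal{C}(\mu_1)$ against the covacuum, and iterating the peeling lemma (\Cref{lem:peel}) with a first-passage-below-$\mu_1$ stopping rule whose two defining inequalities yield exactly the index set $\mathcal{P}_{d,r}$ and the denominators $\mu_1 - r + \sum_{j=k}^{\ell(P)} p_j$. Your explicit bijection bookkeeping between pairs $(r,P)$ and starting sizes $N = \mu_1 - r + e(P)$ is just a slightly more detailed rendering of the paper's observation that summing over $P \in \mathcal{P}_{d,r}$ replaces the sum over $a$.
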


\begin{proof}
We begin with expression \eqref{eq:DHinIW2} for the double Hurwitz generating series given in~\Cref{thm:DHinIW2}:
\[
\HHall^{\bullet}(\mu_1,\ldots,\mu_n) = \big\langle \mathcal{C}(\mu_1)\,\mathcal{C}(\mu_2) \cdots \mathcal{C}(\mu_n) \big\rangle^{\bullet},
\]
where
\[
\mathcal{C}(\mu) = \frac{1}{\mu}\sum_{i \in \mathbb{Z}} \left[ \sum_{\lambda \vdash \mu - i} \frac{\vec{q}_{\lambda} (\mu s)^{\ell(\lambda)}}{|\Aut{\lambda}|} \prod_{k=1}^{\ell(\lambda)} \mathcal{S}\big(\mu \lambda_k s\big) \right] \mathcal{E}_{-i}(\mu s).
\]
We observe that, for the vacuum expectation to return a non-zero value, the energies of the $\mathcal{E}$-operators must sum to zero; that is, $i_1 + i_2 + \cdots + i_n = 0$. Hence, $i_1 = -i_2 - i_3 - \cdots - i_n$. Further, given that $\mathcal{E}_{-i_1}(\mu_1 s)$ is acting on the covacuum, $i_1$ has an upper bound $i_1 \leq 0$. Finally, for each $j \in \{1,\ldots, n\}$, the relation $-i_j = - \mu_j + |\lambda^{(j)}|$ gives a bound $-i_j \geq -\mu_j$, and thus $i_1 \geq -\mu_2 - \mu_3 - \cdots - \mu_n$. Given that $\mu_2, \mu_3, \ldots, \mu_n$ are fixed positive integers, this establishes a lower bound for $i_1$. Despite the fact that the sum over $a = -i_1$ is finite, we write
\[ 
	\HHall^{\bullet}(\mu_1, \ldots, \mu_n) = \frac{1}{\mu_1} \sum_{a\ge0}\left[ \sum_{\lambda \vdash \mu_1 + a} \frac{\vec{q}_{\lambda} (\mu_1 s)^{\ell(\lambda)}}{|\Aut{\lambda}|} \prod_{k=1}^{\ell(\lambda)} \mathcal{S}(\mu_1 \lambda_k s) \right] \Big\langle \mathcal{E}_{a}(\mu_1 s) \prod_{j=2}^{n} \mathcal{C}(\mu_j) \Big\rangle^{\bullet}.
\]
We now apply~\Cref{lem:peel} repeatedly to decrease $\mu_1+a$ to $\mu_1 - r$ for some $r \in \{1,\ldots,d\}$. The first application turns the term in the brackets to 
\[
	 \sum_{p_1 = 1}^{d} \frac{\mu_1}{\mu_1 + a} \, p_1 q_{p_1}s\, \mathcal{S}(\mu_1 p_1 s) \left[ \sum_{\lambda \vdash \mu_1 + a - p_1} \frac{\vec{q}_{\lambda} (\mu_1 s)^{\ell(\lambda)}}{|\Aut{\lambda}|} \prod_{k=1}^{\ell(\lambda)} \mathcal{S}(\mu_1 \lambda_k s) \right].
\]
Apply~\Cref{lem:peel} again, this time with the shift $a \mapsto a - p_1$ and leave summands corresponding to negative $a - p_1$ unchanged. Thus the second application will include terms of the form
\[
	\frac{1}{\mu_1} \, \frac{\mu_1}{(\mu_1 + a)} \, \frac{\mu_1}{(\mu_1 + a - p_1)} \, p_1 q_{p_1}s\,\mathcal{S}(\mu_1 p_1 s) \, p_2 q_{p_2}s\,\mathcal{S}(\mu_1 p_2 s) \left[\sum_{\lambda \vdash \mu_1 + a - p_1 - p_2} \frac{\vec{q}_{\lambda} (\mu_1 s)^{\ell(\lambda)}}{|\Aut{\lambda}|} \prod_{k=1}^{\ell(\lambda)} \mathcal{S}(\mu_1 \lambda_k s)\right].
\]
We repeat this process until $a - p_1 - p_2 - \cdots - p_{\ell(P)}$ is negative for all terms in the summation. The set of all possible sequences $(p_1, p_2, \ldots, p_{\ell(P)})$ that arise in this way for all possible $a$ is exactly the set $\mathcal{P}_{d,r}$ in~\Cref{def:admissible} (and summing over this set replaces the sum over $a$). The condition that each $p_i$ satisfies $1 \leq p_i \leq d$ ensures that the peeling process will terminate after a finite number of iterations. The condition $p_{\ell(P)} \geq r$ is implied by the stopping condition of the algorithm, which is given by the two inequalities
\begin{align*}
 a - p_1 - p_2 - \cdots - p_{\ell(P)-1} &\geq 0, \\
 a - p_1 - p_2 - \cdots - p_{\ell(P)-1} - p_{\ell(P)} &< 0.
\end{align*}
This concludes the proof of the lemma. 
\end{proof}

\subsection{Calculating the residue} \label{subsec:calculatingresidue}
Observe that in the expression for $\HHall^{\bullet}(\mu_1, \ldots,\mu_n)$ in~\Cref{lem:DHbehavmu1}, for fixed $\mu_2,\ldots,\mu_n$ the constraints on the set of possible energies dictate that the operator $\mathcal{B}_r(\mu_1)$ can be replaced by finite linear combination of $\EE$ operators whose coefficients are power series in $s$ without changing the value of the vacuum expectation. Further, for each fixed power of $s$, its coefficient is a rational function in $\mu_1$ (as well as in the parameters $q_1,\ldots,q_d$ which remain spectators here). Therefore, the notion of poles of $\mathcal{B}_r(\mu_1)$ is well-defined. We first note that the factors in the term 
\[
\prod_{i=1}^{\ell(P)} \frac{\mu_1}{\mu_1 - r + \sum_{l=i}^{\ell(P)} p_l}
\]
create at most simple poles when $\mu_1$ hits negative integers. To ultimately show that $\HH_{g,n}(\mu_1,\ldots, \mu_n)$ satisfies a polynomiality structure, we study the residue of $\HHall^{\bullet}(\mu_1, \ldots, \mu_n)$ at $\mu_1 = -b$ for positive integers $b$; the outcome is presented in~\Cref{lem:residueofBop} below.

A pole at $\mu_1 = 0$ can only occur in the summand corresponding to the sequence $P = (p_1 = r)$; in this case the factor $1/\mu_1$ in $\mathcal{B}_r(\mu_1, s)$ introduces a simple pole at $\mu_1 = 0$, and further, as this term must involve an $\EE$-operator with zero energy, the evaluation of $\EE_0(\mu_1 s)$ on the covacuum makes it a second order pole at $\mu_1 = 0$. However, it can be shown that, for $n\geq 2$, terms of this form get cancelled out when passing from disconnected to connected correlators; this is the content of ~\Cref{lem:poleatzero}.

\begin{lemma} \label{lem:residueofBop}
Fix $n \geq 2$ and positive integers $\mu_2, \ldots, \mu_n$. For all $b > 0$ and $r \in \{1, 2, \ldots, d\}$, we have
\begin{equation*}
\begin{split}
	& \quad \Res_{\mu_1 = -b} {\rm d}\mu_1\,\Big\langle \mathcal{B}_{r}(\mu_1) \prod_{j=2}^{n} \mathcal{C}(\mu_j) \Big\rangle^{\bullet} \\
	& = M_r(b) \bigg\langle \exp\Big(\sum_{j=1}^{d} \frac{\alpha_j q_j}{j} \Big) \exp(s\mathcal{F}_2) \alpha_{b} \exp(- s\mathcal{F}_2) \exp\Big( -\sum_{j=1}^{d} \frac{\alpha_j q_j}{j} \Big) \prod_{j=2}^{n} \mathcal{C}(\mu_j) \bigg\rangle^{\bullet},
\end{split}
\end{equation*}
where 
\[
M_r(b) := \sum_{P' \in \mathcal{P}_{d,r}} (-b)^{\ell(P')-1} \frac{ \prod_{i=1}^{\ell(P')} p'_i q_{p'_i}s\,\mathcal{S}(b p'_i s) }{\prod_{i=2}^{\ell(P')} (-b - r + \sum_{l=i}^{\ell(P')} p'_l)}.
\]
\end{lemma}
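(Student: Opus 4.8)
The plan is to compute the residue directly from the explicit form of $\mathcal{B}_r(\mu_1)$ in~\Cref{lem:DHbehavmu1} and then reorganise the outcome as a scalar times the conjugated creation operator appearing on the right-hand side. Writing $s_i = \sum_{j=i}^{\ell(P)} p_j$ for the suffix sums of $P = (p_1,\ldots,p_{\ell(P)}) \in \mathcal{P}_{d,r}$, the only source of $\mu_1$-poles in the $P$-summand is the rational prefactor $\mu_1^{\ell(P)-1}\big/\prod_{i=1}^{\ell(P)}(\mu_1 - r + s_i)$: the factors $\mathcal{S}(\mu_1 p_i s)$ are entire, and $\EE_{-r+e(P)}(\mu_1 s)$ is holomorphic at $\mu_1=-b$ for $b>0$ (the $1/\varsigma(\mu_1 s)$ term present when $e(P)=r$ is singular only at $\mu_1=0$). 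Since the $s_i$ are strictly decreasing in $i$, at most one denominator factor vanishes at $\mu_1=-b$, so the pole is simple and occurs precisely for those $P$ admitting a unique index $i_0$ with $s_{i_0}=r+b$.

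First I would record the operator identity behind the right-hand side. Running the Hadamard computation of~\Cref{thm:DHinIW2} with $\alpha_b$ (for $b>0$) in place of $\alpha_{-\mu}$, using $e^{s\mathcal{F}_2}\alpha_b e^{-s\mathcal{F}_2} = \EE_b(-bs)$ together with~\eqref{Ecomut}, gives
\[
\exp\Big(\sum_{j=1}^d \tfrac{\alpha_j q_j}{j}\Big)\exp(s\mathcal{F}_2)\,\alpha_b\,\exp(-s\mathcal{F}_2)\exp\Big(-\sum_{j=1}^d \tfrac{\alpha_j q_j}{j}\Big) = \sum_{\lambda} \frac{(-bs)^{\ell(\lambda)}\vec{q}_{\lambda}}{|\Aut{\lambda}|}\prod_{k=1}^{\ell(\lambda)}\mathcal{S}(b\lambda_k s)\,\EE_{b + |\lambda|}(-bs).
\]
It then suffices to establish the operator identity $\Res_{\mu_1=-b}\mathrm{d}\mu_1\,\mathcal{B}_r(\mu_1) = M_r(b)\cdot(\text{this operator})$, since inserting it into the disconnected correlator against $\prod_{j=2}^n \mathcal{C}(\mu_j)$ yields the claim.

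To prove the operator identity I would split each contributing $P$ at its index $i_0$ into a head $(p_1,\ldots,p_{i_0-1}) \in \mathcal{P}_d$ and a tail $(p_{i_0},\ldots,p_{\ell(P)}) \in \mathcal{P}_{d,r}$ with $e(\mathrm{tail}) = s_{i_0} = r+b$; this is a bijection onto such pairs. Taking the residue and using $-b-r = -s_{i_0}$, every surviving denominator factor becomes a difference of suffix sums and factorises: the tail factors $\prod_{i > i_0}(s_i - s_{i_0})$ reproduce exactly the summand of $M_r(b)$ indexed by the tail, while the head factors $\prod_{i < i_0}(s_i - s_{i_0}) = \prod_{i<i_0} t_i$, with $t_i = \sum_{j=i}^{i_0-1} p_j$ the suffix sums internal to the head, remain; the surviving operator is $\EE_{-r+e(P)}(-bs) = \EE_{b + e(\mathrm{head})}(-bs)$. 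Thus $M_r(b)$ emerges as the tail sum, the tails being exactly the sequences in $\mathcal{P}_{d,r}$ of total size $r+b$ (so the denominators there never vanish).

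The crux, and the step I expect to be the main obstacle, is the resummation of the head. Grouping heads by the partition $\lambda$ obtained by reordering their parts, the order-independent data factor out as $(-bs)^{\ell(\lambda)}\vec{q}_{\lambda}\big(\prod_k \lambda_k\big)\prod_k \mathcal{S}(b\lambda_k s)$, leaving the order-dependent weight $\prod_i 1/t_i$. Summing over the $\ell(\lambda)!/|\Aut{\lambda}|$ distinct orderings and invoking the classical partial-fraction identity
\[
\sum_{\sigma \in \mathfrak{S}_{\ell}} \prod_{i=1}^{\ell} \frac{1}{a_{\sigma(i)} + a_{\sigma(i+1)} + \cdots + a_{\sigma(\ell)}} = \prod_{i=1}^{\ell} \frac{1}{a_i}
\]
with $a_k = \lambda_k$ collapses $\sum_{\text{orderings}} \prod_i 1/t_i$ to $1/(|\Aut{\lambda}|\prod_k \lambda_k)$. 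Hence the head sum over a fixed $\lambda$ equals $\frac{(-bs)^{\ell(\lambda)}\vec{q}_{\lambda}}{|\Aut{\lambda}|}\prod_k \mathcal{S}(b\lambda_k s)$, which is precisely the $\lambda$-coefficient of the conjugated operator above. Summing over $\lambda$ reconstitutes that operator and factors out $M_r(b)$, completing the proof. The difficulty is thus purely combinatorial: recognising the suffix-sum structure left behind by the residue and matching it, via the partial-fraction identity, to the unordered partition sum produced by the Hadamard conjugation.
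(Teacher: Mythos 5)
Your proposal is correct, and its skeleton coincides with the paper's proof: both observe that the pole at $\mu_1=-b$ is simple and comes only from the rational prefactor, both split each contributing sequence $P$ at the unique index $i_0$ whose suffix sum equals $r+b$ into a tail in $\mathcal{P}_{d,r}$ (producing $M_r(b)$) and a head in $\mathcal{P}_{d}$, and both compute the conjugated operator $e^{\sum_j \alpha_j q_j/j}e^{s\mathcal{F}_2}\alpha_b e^{-s\mathcal{F}_2}e^{-\sum_j \alpha_j q_j/j}$ by the Hadamard technique of \Cref{thm:DHinIW2}, obtaining the partition sum $\sum_{\lambda}\frac{(-bs)^{\ell(\lambda)}\vec{q}_{\lambda}}{|\Aut{\lambda}|}\prod_k \mathcal{S}(b\lambda_k s)\,\EE_{b+|\lambda|}(-bs)$. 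Where you genuinely deviate is in the final identification of the head sum with this partition sum: the paper goes from partitions to sequences, applying the peeling lemma (\Cref{lem:peel}) iteratively to the Hadamard expansion (together with a reversal relabelling) to regenerate exactly the head weights with prefix-sum denominators $\prod_j \big(\sum_{l\leq j} p''_l\big)^{-1}$; you go the opposite way, resumming the ordered head sequences into partitions via the classical symmetrization identity $\sum_{\sigma\in\mathfrak{S}_{\ell}}\prod_{i}\big(a_{\sigma(i)}+\cdots+a_{\sigma(\ell)}\big)^{-1}=\prod_{i}a_i^{-1}$, which collapses the order-dependent suffix-sum denominators to $1/\big(|\Aut{\lambda}|\prod_k \lambda_k\big)$ in one shot. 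The two mechanisms are equivalent --- iterated peeling is in effect an inductive proof of that identity in the present weights --- but yours isolates the combinatorial content in a single closed-form identity and dispenses with the relabelling step, while the paper's version buys uniformity by reusing the one lemma that drives all of \Cref{sec:poly}. One further point in your favour: you explicitly restrict the tails to $e(P')=r+b$, which is exactly what the residue produces and which keeps the denominators in $M_r(b)$ nonvanishing; this constraint appears in the paper's intermediate display (under the sum $b+r=\sum_{l=k}^{\ell(P)}p_l$) but is suppressed in its final formula and in the statement's definition of $M_r(b)$, which should be read with that restriction understood.
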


\begin{proof}
As observed at the start of this subsection, $\HHall^{\bullet}(\mu_1, \ldots, \mu_n)$ has at most simple poles at $\mu_1 = -b$ for positive integers $b$. Hence, calculating the residue at $-b$ by multiplying $\big\langle \mathcal{B}_{r}(\mu_1) \prod_{j=2}^n \mathcal{C}(\mu_j) \big\rangle^{\bullet}$ by $(\mu_1+b)$ then setting $\mu_1=-b$, we find
\begin{equation*}
\begin{split}
& \quad	 \Res_{\mu_1 = -b} {\rm d}\mu_1\,\Big\langle \mathcal{B}_{r}(\mu_1) \prod_{j=2}^{n} \mathcal{C}(\mu_j) \Big\rangle^{\bullet} \\
& = \lim_{\mu_1 = -b} \left( \frac{\mu_1 + b}{\mu_1} \sum_{P \in \mathcal{P}_{d,r}} \left[ \prod_{i=1}^{\ell(P)} \frac{\mu_1}{\mu_1 - r + \sum_{l=i}^{\ell(P)} p_l} \, p_i q_{p_i}s\,\mathcal{S}(\mu_1 p_i s)\right]\,\Big \langle \EE_{-r + e(P)}(\mu_1 s) \prod_{j=2}^{n} \mathcal{C}(\mu_j) \Big\rangle^{\bullet} \right).
\end{split}
\end{equation*}
This is non-zero only when $b = -r + p_k + p_{k+1} + \cdots + p_{\ell(P)}$ for some $k \in \{ 1, 2, \ldots, \ell(P)\}$. Cancelling this factor with $(\mu_1 + b)$, substituting in $\mu_1 = -b$ in the remaining expression, and noting that $\mathcal{S}$ is an even function yields
\begin{multline*}
\Res_{\mu_1 = -b} \dd\mu_1\,\Big\langle \mathcal{B}_{r}(\mu_1) \prod_{j=2}^{n} \mathcal{C}(\mu_j) \Big\rangle^{\bullet}
	= \sum_{\substack{P \in \mathcal{P}_{d,r} \\ b+r=\sum_{l=k}^{\ell(P)} p_l}} \!\!\!\!\!\! (-b)^{\ell(P)-1} \prod_{i=1}^{k-1} \frac{p_i q_{p_i}s\,\mathcal{S}(bp_i s)}{-b - r + \sum_{l=i}^{\ell(P)} p_l} \cdot p_k q_{p_k}s\,\mathcal{S}(b p_k s) \\
\prod_{i=k+1}^{\ell(P)} \frac{p_i q_{p_i}s\,\mathcal{S}(b p_i s)}{-b - r + \sum_{l=i}^{\ell(P)} p_l} \Big \langle \EE_{-r + e(P)}(-b s) \prod_{j=2}^{n} \mathcal{C}(\mu_j) \Big\rangle^{\bullet}.
	\end{multline*}
Let us rewrite the denominators of the fractions in the first product using the condition on $b$; that is, $b = - r + p_k + p_{k+1} + \cdots + p_{\ell(P)}$. We split $P = (p_1, p_2, \ldots, p_{\ell(P)})$ into two sequences $P' := (p_k, p_{k+1}, \ldots, p_{\ell(P)}) = (p'_1, p'_2, \ldots, p'_{\ell(P')}) $ and $P'' := (p_{k-1}, p_{k-2}, \ldots, p_1) = (p''_1, p''_2, \ldots, p''_{\ell(P'')})$ so that the sequence $P$ is the concatenation of $P''$ and $P'$. We notice that $P' \in \mathcal{P}_{d,r}$, while $P'' \in \mathcal{P}_{d}$, and $-r + e(P) = b + e(P'')$. Therefore,
\begin{equation*}
\begin{split}
& \quad 	\Res_{\mu_1 = -b} {\rm d}\mu_1\,\Big\langle \mathcal{B}_{r}(\mu_1) \prod_{j=2}^{n} \mathcal{C}(\mu_j) \Big\rangle^{\bullet} \\
& = \left[ \sum_{P' \in \mathcal{P}_{d,r}} (-b)^{\ell(P')-1} \frac{ \prod_{i=1}^{\ell(P')} p'_i q_{p'_i}s\,\mathcal{S}(b p'_i s) }{\prod_{i=2}^{\ell(P')} -b - r + \sum_{l=i}^{\ell(P')} p'_l} \right] \\
& \quad \quad \times \sum_{P'' \in \mathcal{P}_{d}} (-b)^{\ell(P'')} \prod_{i=1}^{\ell(P'')} \frac{p''_i q_{p''_i}s\,\mathcal{S}(b p''_i s)}{ \sum_{l=1}^{i} p''_l} \Big \langle \EE_{b + e(P'')}(-b s) \prod_{j=2}^{n} \mathcal{C}(\mu_j) \Big\rangle^{\bullet}.
\end{split}
\end{equation*}
On the other hand, we use the same technique as in~\Cref{thm:DHinIW2} for $b \geq 1$ to calculate
\begin{equation*}
\begin{split}
& \quad \exp\Big(\sum_{j = 1}^{d} \frac{\alpha_j q_j}{j} \Big) \exp(s\mathcal{F}_2) \alpha_{b} \exp(- s\mathcal{F}_2) \exp\Big( -\sum_{j=1}^{d} \frac{\alpha_j q_j}{j} \Big) \\
& = \sum_{\lambda \in \mathscr{P}} \left[ \frac{\vec{q}_{\lambda} (-b s)^{\ell(\lambda)} }{|\Aut{\lambda}|} \prod_{j=1}^{\ell(\lambda)} \mathcal{S}(b\lambda_j s)\right] \EE_{b+|\lambda|} (-bs) \\
&= \sum_{i \geq b} \left[\sum_{\lambda \vdash -b+i} \frac{\vec{q}_{\lambda} (-bs)^{\ell(\lambda)} }{|\Aut{\lambda}|} \prod_{j=1}^{\ell(\lambda)} \mathcal{S}(b\lambda_j s)\right] \EE_{i} (-bs).
\end{split}
\end{equation*}
We now apply~\Cref{lem:peel} iteratively to reduce $-b + i$ to 0. Writing separately for the moment the term $i = b$, the first application with $(\mu,a) = (-b,i)$ gives
\begin{multline*}
\sum_{i > b} \left[\sum_{\lambda \vdash -b+i} \frac{\vec{q}_{\lambda} (-b s)^{\ell(\lambda)} }{|\Aut{\lambda}|} \prod_{j=1}^{\ell(\lambda)} \mathcal{S}(b\lambda_j s)\right] \EE_{i} (-bs) \\
 = \sum_{i > b} \left[\sum_{p_1=1}^d \frac{-b}{-b+i} \, p_1 q_{p_1}s\,\mathcal{S}(b p_1 s) \sum_{\lambda \vdash -b+i-p_1} \frac{\vec{q}_{\lambda} (-bs)^{\ell(\lambda)} }{|\Aut{\lambda}|} \prod_{j=1}^{\ell(\lambda)} \mathcal{S}(b\lambda_j s)\right] \EE_{i} (-bs).
\end{multline*}
Repeating the process as many times as necessary yields a sum over the sequences $P = (p_1, p_2, \ldots, p_{\ell(P)})$ such that $p_1 + p_2 + \cdots + p_{\ell(P)} = -b + i$ for all possible $i > b$. We can take into account the missing term $i = b$ by allowing $P$ to be the empty sequence. As per~\Cref{def:admissible}, all possible such sequences comprise the set $\mathcal{P}_{d}(-b+i)$. Thus, applying the process above iteratively gives
\begin{equation*}
\begin{split}
& \quad 	\sum_{i \geq b} \sum_{\lambda \vdash -b+i} \frac{\vec{q}_{\lambda} (-bs)^{\ell(\lambda)} }{|\Aut{\lambda}|} \prod_{j=1}^{\ell(\lambda)} \mathcal{S}(b\lambda_j s) \EE_{i} (-bs) \\
	&= \sum_{i \geq b} \sum_{P \in \mathcal{P}_{d}(-b+i)} \frac{-b}{-b+i} \frac{-b}{-b+i-p_1} \cdots \frac{-b}{-b+i-p_1-\cdots-p_{\ell(P)-1}} \prod_{j=1}^{\ell(P)} p_j q_{p_j}s\,\mathcal{S}(b p_j s) \\
	& \qquad \qquad \left[ \sum_{ \lambda= \emptyset} \frac{\vec{q}_{\lambda} (-bs)^{\ell(\lambda)}}{|\Aut{\lambda}|} \prod_{j=1}^{\ell(\lambda)} \mathcal{S}(b \lambda_j s) \right] \ \EE_{i}(-bs) \\
	&= \sum_{P\in \mathcal{P}_{d}} (-b)^{\ell(P)} \prod_{j=1}^{\ell(P)} \frac{ p_j q_{p_j}s\,\mathcal{S}(b p_j s)}{\sum_{l=1}^{i} p_l } \EE_{b+e(P)}(-bs),
\end{split} 
\end{equation*}
where the last line uses the relabelling $(p_1, p_2, \ldots, p_{\ell(P)}) \mapsto (p_{\ell(P)}, p_{\ell(P)-1}, \ldots, p_2, p_1)$.
\end{proof}

\begin{remark} From the proof, we note that the result of~\Cref{lem:residueofBop} holds true also when replacing the product of the $\mathcal{C}$-operators with a general semi-infinite wedge operator $\mathcal{O}$ not depending on $\mu_1$. In particular, substituting in $\mathcal{O} = 1$ shows that~\Cref{lem:residueofBop} is also valid for $n=1$.
\end{remark}

It remains to consider the double pole at $\mu_1 = 0$, which is treated by the following lemma.

\begin{lemma} \label{lem:poleatzero}
Fix $n \geq 2$ and positive integers $\mu_2, \mu_3, \ldots, \mu_n$. For any $r \in \{1, 2, \ldots, d\}$ we have
\[
	 \Res_{\mu_1 = 0} \dd \mu_1\,\mu_1 \Big\langle \mathcal{B}_{r}(\mu_1) \prod_{j=2}^{n} \mathcal{C}(\mu_j) \Big\rangle^{\bullet} = \Res_{\mu_1=0} \dd\mu_1\, \mu_1 \Big\langle \mathcal{B}_r(\mu_1) \Big\rangle^{\bullet} \, \Big\langle \prod_{j=2}^{n} \mathcal{C}(\mu_j) \Big\rangle^{\bullet}.
\]
Hence, applying the inclusion-exclusion formula,
\[
	\Res_{\mu_1 = 0} \dd\mu_1\,\mu_1 \Big\langle \mathcal{B}_{r}(\mu_1) \prod_{j=2}^{n} \mathcal{C}(\mu_j) \Big\rangle^{\circ} = 0.
\]
\end{lemma}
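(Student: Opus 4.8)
The plan is to isolate, inside the operator $\mathcal{B}_r(\mu_1)$ of \Cref{lem:DHbehavmu1}, the unique contribution responsible for the double pole at $\mu_1=0$, show that it factors off the correlator exactly, and then feed this into inclusion--exclusion. Recall that
\[
\mathcal{B}_r(\mu_1) = \frac{1}{\mu_1}\sum_{P\in\mathcal{P}_{d,r}}\left[\prod_{i=1}^{\ell(P)}\frac{\mu_1}{\mu_1-r+\sum_{l\geq i}p_l}\,p_iq_{p_i}s\,\mathcal{S}(\mu_1 p_i s)\right]\EE_{-r+e(P)}(\mu_1 s),
\]
where (working order by order in $s$, so that each coefficient is rational in $\mu_1$) an $\EE$-operator of energy $0$ occurs precisely when $e(P)=r$. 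Since $P\in\mathcal{P}_{d,r}$ forces $p_{\ell(P)}\geq r$ and all parts are positive, $e(P)=r$ holds only for the length-one sequence $P=(r)$, whose coefficient is $\mu_1^{-1}rq_rs\,\mathcal{S}(\mu_1 rs)$. First I would split $\mathcal{B}_r(\mu_1)$ into this $P=(r)$ term plus the remaining sum over sequences with $e(P)\neq r$.

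The crucial observation is that $\langle v_\emptyset|\,\EE_0(z)=\varsigma(z)^{-1}\langle v_\emptyset|$: by definition of $\EE_0(z)$, its operatorial part $\sum_k e^{zk}:\psi_k\psi_k^*:$ annihilates the covacuum (for $k>0$ one has $\langle v_\emptyset|\psi_k=0$, and for $k<0$ one has $\langle v_\emptyset|\psi_k^*=0$), leaving only the scalar $\varsigma(z)^{-1}$. Hence for any operator $\mathcal{O}$ independent of $\mu_1$ we get $\big\langle\EE_0(\mu_1 s)\,\mathcal{O}\big\rangle^{\bullet}=\varsigma(\mu_1 s)^{-1}\langle\mathcal{O}\rangle^{\bullet}$, so the $P=(r)$ term contributes exactly $\mu_1^{-1}\varsigma(\mu_1 s)^{-1}rq_rs\,\mathcal{S}(\mu_1 rs)\,\langle\mathcal{O}\rangle^{\bullet}$, a double pole with $\Res_{\mu_1=0}\dd\mu_1\,\mu_1(\cdots)=rq_r\,\langle\mathcal{O}\rangle^{\bullet}$. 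For the terms with $e(P)\neq r$ I would show they are regular at $\mu_1=0$: the scalar prefactor is regular there (the overall $\mu_1^{-1}$ is cancelled by the numerators, a denominator $\mu_1-r+\sum_{l\geq i}p_l$ can vanish at $\mu_1=0$ only for $i=\ell(P)$ with $p_{\ell(P)}=r$, and the resulting simple pole is absorbed by $\mu_1^{\ell(P)-1}$), while $\langle\EE_{-r+e(P)}(\mu_1 s)\,\mathcal{O}\rangle^{\bullet}$ is regular at $\mu_1=0$ because, by the commutation relation \eqref{Ecomut}, every $\EE$-operator produced while reducing the correlator carries an argument $(\mu_1+\sum_{j\in J}\mu_j)s$ with $J$ nonempty, so any singular factor $\varsigma(\cdot)^{-1}$ is evaluated at a nonzero argument when $\mu_1=0$.

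Assembling these gives the first identity: with $\mathcal{O}=\prod_{j=2}^n\mathcal{C}(\mu_j)$ one finds $\Res_{\mu_1=0}\dd\mu_1\,\mu_1\langle\mathcal{B}_r(\mu_1)\prod_j\mathcal{C}(\mu_j)\rangle^{\bullet}=rq_r\langle\prod_j\mathcal{C}(\mu_j)\rangle^{\bullet}$, while taking $\mathcal{O}=1$ yields $\langle\mathcal{B}_r(\mu_1)\rangle^{\bullet}=\mu_1^{-1}\varsigma(\mu_1 s)^{-1}rq_rs\,\mathcal{S}(\mu_1 rs)$ and hence $\Res_{\mu_1=0}\dd\mu_1\,\mu_1\langle\mathcal{B}_r(\mu_1)\rangle^{\bullet}=rq_r$; since $\langle\prod_j\mathcal{C}(\mu_j)\rangle^{\bullet}$ is $\mu_1$-independent, comparing the two establishes the stated factorisation.

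For the connected statement I would insert this factorisation into \eqref{inclexclO} with $\mathcal{O}_1=\mathcal{B}_r(\mu_1)$ and $\mathcal{O}_j=\mathcal{C}(\mu_j)$. Only the block containing the index $1$ depends on $\mu_1$; applying $\Res_{\mu_1=0}\dd\mu_1\,\mu_1$ and using the factorisation on that block replaces its contribution by $rq_r$ times the disconnected correlator of the remaining $\mathcal{C}(\mu_j)$'s in it. The result is $rq_r$ times the connected correlator built from the moments $A_S:=\langle\prod_{j\in S\setminus\{1\}}\mathcal{C}(\mu_j)\rangle^{\bullet}$, which satisfy $A_{\{1\}}=1$ and $A_{\{1\}\cup T}=A_T$ for $T\subseteq\{2,\ldots,n\}$; that is, the marked index $1$ decouples from the rest. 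By the standard vanishing of mixed cumulants (equivalently, Möbius inversion on the partition lattice), this connected correlator is $0$ for $n\geq 2$, which gives the claim. The main obstacle is the regularity analysis of the $e(P)\neq r$ terms at $\mu_1=0$: one must control both the rational prefactors and the potential $\varsigma(\cdot)^{-1}$ singularities coming from the operator algebra, and it is precisely the additive shift of arguments in \eqref{Ecomut} that rules the latter out.
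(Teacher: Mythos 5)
Your proof is correct and takes essentially the same route as the paper's: you isolate the unique energy-zero summand $P=(r)$ of $\mathcal{B}_r(\mu_1)$, use $\langle \mathcal{E}_0(z)\,\mathcal{O}\rangle^{\bullet} = \varsigma(z)^{-1}\langle \mathcal{O}\rangle^{\bullet}$ to factor the coefficient of the double pole, and then kill the connected correlator through inclusion--exclusion. Your explicit regularity check of the $e(P)\neq r$ terms merely fills in what the paper asserts at the start of \Cref{subsec:calculatingresidue}, and your appeal to the vanishing of mixed cumulants for a decoupled index is precisely the paper's pairwise cancellation between the $N=\emptyset$ and $N=M'_i$ terms.
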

\begin{proof}
As stated at the start of this section, a pole at zero can only occur in the summand of $\mathcal{B}_{r}(\mu_1)$ corresponding to $P = (p_1 = r)$, whose contribution is equal to
\[
r q_rs\,\mathcal{S}(\mu_1 r s)\,\frac{\EE_0(\mu_1 s)}{\mu_1}.
\]
Observe that the expression has no residue at zero, because $\mathcal{S}$ is an even function. We compute the coefficient of the double pole as
\begin{equation}
\label{slngfug}\begin{split}
\Res_{\mu_1 = 0} \dd\mu_1\,\mu_1 \Big\langle \mathcal{B}_{r}(\mu_1) \prod_{j=2}^{n} \mathcal{C}(\mu_j) \Big\rangle^{\bullet} &= \Res_{\mu_1 = 0} \dd\mu_1\,r q_rs\,\mathcal{S}(\mu_1 r s) \Big\langle \EE_0(\mu_1 s) \prod_{j=2}^{n} \mathcal{C}(\mu_j) \Big\rangle^{\bullet} \\
&= \Res_{\mu_1 = 0} \dd\mu_1\,r q_rs\,\mathcal{S}(\mu_1 r s) \Big\langle \EE_0(\mu_1 s) \Big\rangle^{\bullet} \Big\langle \prod_{j=2}^{n} \mathcal{C}(\mu_j) \Big\rangle^{\bullet} \\
&= \Res_{\mu_1=0}\dd\mu_1\,\mu_1 \Big\langle \mathcal{B}_r(\mu_1) \Big\rangle^{\bullet} \, \Big\langle \prod_{j=2}^{n} \mathcal{C}(\mu_j) \Big\rangle^{\bullet},
\end{split}
\end{equation}
where we used the fact that for any operator $\mathcal{O}$ we have $\langle \mathcal{E}_0(z) \, \mathcal{O}\rangle^{\bullet}= \varsigma^{-1}(z) \langle \mathcal{O}\rangle^{\bullet}$. This proves the first sentence of the statement. Now applying the residue to the inclusion-exclusion formula \eqref{inclexclO} and using \eqref{slngfug}, 
\begin{multline*}
\qquad \Res_{\mu_1 = 0} \dd \mu_1\,\mu_1\,\Big\langle \mathcal{B}_{r}(\mu_1) \prod_{j = 2}^n \mathcal{C}(\mu_j)\Big\rangle^{\circ} \\
= \Res_{\mu_1 = 0} \dd\mu_1\,\mu_1 \Bigg(\sum_{\substack{N \subseteq \{2,\ldots,n\} \\ M \vdash \{2,\ldots,n\} \setminus N}} (-1)^{|M|}|M|!\,\langle \mathcal{B}_{r}(\mu_1)\rangle^{\bullet}\langle \mathcal{C}(\vec{\mu}_{N})\rangle^{\bullet} \prod_{i = 1}^{|M|} \langle \mathcal{C}(\vec{\mu}_{M_i}) \rangle^{\bullet}\Bigg). \qquad
\end{multline*}
This can be rewritten as a sum over $M' \vdash \{2,\ldots,n\}$ of products of the form $\prod_{i = 1}^{M'} \langle \mathcal{C}(\vec{\mu}_{M'_i}) \rangle$ with $M' = M \cup \{N\}$. Such terms arise in two ways: either by $N = \emptyset$, in which case it arises with a factor $(-1)^{|M'|}|M'|!$; or from $N = M_i'$ for some $i \in \{1,\ldots,|M'|\}$ and in this case it comes with a factor $(-1)^{|M'| - 1}(|M'| - 1)!$. Therefore, all terms cancel in the sum as soon as $n \geq 2$.
\end{proof}

\subsection{Polynomiality for $\mu_1$}

Let us first deal with the extraction of the power of $s$ from the $\mathcal{S}$ functions. If $G \in \mathscr{R}$ is a symmetric function and $\lambda \in \mathscr{P}$ a partition, we will use the evaluation at finitely many variables $G(\lambda) := {\rm ev}_{\ell(\lambda)}G(\lambda_1,\ldots,\lambda_{\ell(\lambda)})$.
\begin{lemma} \label{lem:coeffofS}
For any $a \geq 0$ and $r \in \{1,\ldots,d\}$, there exists $G_{a} \in \mathscr{R}$ such that
\[
\sum_{\lambda \vdash \mu-r} \frac{\vec{q}_{\lambda} (\mu s)^{\ell(\lambda)}}{|\Aut{\lambda}|}\,[s^{2a}] \bigg(\prod_{k=1}^{\ell(\lambda)} \mathcal{S}(\mu \lambda_k s)\bigg) = \sum_{\lambda \vdash \mu-r} \frac{\vec{q}_{\lambda} (\mu s)^{\ell(\lambda)}\,\mu^{2a}}{|\Aut{\lambda}|}\,G_{a}(\lambda).
\]
Further, we have for any $c \geq 0$
\[
\bigg(\mu \mapsto \sum_{\lambda \vdash \mu-r} \frac{\vec{q}_{\lambda} (\mu s)^{\ell(\lambda)}\,\mu^{c}}{|\Aut{\lambda}|}\,G_{a}(\lambda) \bigg)\in W_{c + a}(\mu).
\]
\end{lemma}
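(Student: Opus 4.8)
The plan is to treat the two assertions separately: construct $G_a$ explicitly and extract $\mu^{2a}$ for the first, then read off multiplication by $G_a$ as a composition of the filtration-raising operators of~\Cref{lem:dmodq} for the second.

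For the first assertion, recall that $\mathcal{S}$ is even, so I write $\mathcal{S}(z) = \sum_{t \geq 0} c_t z^{2t}$ with $c_0 = 1$ and $c_t \in \mathbb{Q}$. Then $\prod_{k=1}^{\ell(\lambda)} \mathcal{S}(\lambda_k s)$ is an even power series in $s$, and I define $G_a$ by its $s^{2a}$-coefficient,
\[
G_a(\lambda) := [s^{2a}] \prod_{k=1}^{\ell(\lambda)} \mathcal{S}(\lambda_k s) = \sum_{\substack{t_1,\ldots,t_{\ell(\lambda)} \geq 0 \\ t_1 + \cdots + t_{\ell(\lambda)} = a}} \prod_{k=1}^{\ell(\lambda)} c_{t_k}\,\lambda_k^{2t_k},
\]
which is manifestly symmetric in $\lambda_1,\ldots,\lambda_{\ell(\lambda)}$ and homogeneous of degree $2a$. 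Because $c_0 = 1$, setting a variable to zero recovers the same expression in one fewer variable, so these glue to a well-defined element $G_a \in \mathscr{R}$. Replacing $s$ by $\mu s$ scales the degree-$2a$ coefficient by $\mu^{2a}$, giving $[s^{2a}] \prod_k \mathcal{S}(\mu \lambda_k s) = \mu^{2a} G_a(\lambda)$. Substituting this into the left-hand side of the first displayed identity and pulling $\mu^{2a}$ outside the $\lambda$-summand produces the right-hand side at once.

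For the second assertion I first record the power-sum decomposition of $G_a$. Since $\mathcal{S}(0) = 1$ and $\mathcal{S}$ is even, $\log \mathcal{S}(z) = \sum_{t \geq 1} b_t z^{2t}$ with $b_t \in \mathbb{Q}$, and using $\sum_k \lambda_k^{2t} = \mathfrak{p}_{2t}(\lambda)$ I obtain
\[
\prod_{k=1}^{\ell(\lambda)} \mathcal{S}(\lambda_k s) = \exp\bigg( \sum_{t \geq 1} b_t\,s^{2t}\,\mathfrak{p}_{2t}(\lambda) \bigg).
\]
Extracting $[s^{2a}]$ shows that $G_a$ is a $\mathbb{Q}$-linear combination of products $\mathfrak{p}_{2t_1}\cdots\mathfrak{p}_{2t_m}$ with $t_1,\ldots,t_m \geq 1$ and $t_1 + \cdots + t_m = a$; in particular each monomial uses at most $m \leq a$ (even) power sums. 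Next I realize multiplication by a power sum as a differential operator: for $t \geq 1$ set $D_{2t} := \sum_{j=1}^d j^{2t}\,q_j \partial_{q_j} \in \mathbb{C}[q_1 \partial_{q_1},\ldots,q_d\partial_{q_d}]$. Since $q_j \partial_{q_j}\,\vec{q}_{\lambda} = p_j(\lambda)\,\vec{q}_{\lambda}$, where $p_j(\lambda)$ counts the parts of $\lambda$ equal to $j$, while $(\mu s)^{\ell(\lambda)}$, $\mu^c$ and $|\Aut{\lambda}|^{-1}$ carry no $q$-dependence, the operator $D_{2t}$ acts on any sequence $\mu \mapsto \sum_{\lambda \vdash \mu - r} \frac{\vec{q}_{\lambda}(\mu s)^{\ell(\lambda)}\mu^c}{|\Aut{\lambda}|} F(\lambda)$ by inserting $\sum_j j^{2t} p_j(\lambda) = \mathfrak{p}_{2t}(\lambda)$ into the summand. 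Starting from $\tfrac{1}{r}\phi^r_c(\mu) = \sum_{\lambda \vdash \mu - r} \frac{\vec{q}_{\lambda}(\mu s)^{\ell(\lambda)}\mu^c}{|\Aut{\lambda}|} \in W_c(\mu)$ and applying the same $\mathbb{Q}$-linear combination of compositions $D_{2t_1}\cdots D_{2t_m}$ that expresses $G_a$ therefore yields precisely $\mu \mapsto \sum_{\lambda \vdash \mu - r} \frac{\vec{q}_{\lambda}(\mu s)^{\ell(\lambda)}\mu^c}{|\Aut{\lambda}|} G_a(\lambda)$. By~\Cref{lem:dmodq} each $D_{2t}$ raises the filtration by at most one, so a composition of $m \leq a$ of them sends $W_c(\mu)$ into $W_{c+m}(\mu) \subseteq W_{c+a}(\mu)$; summing over monomials keeps the result in $W_{c+a}(\mu)$, as claimed.

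The main obstacle is the sharp filtration bound $c+a$ in the second assertion. This is what forces one to pass to the power-sum basis and to notice that only even power sums of total half-weight $a$ occur, so that no monomial consumes more than $a$ of the filtration-raising operators; a cruder realization of $G_a$ (for instance via its monomial-symmetric expansion) would overshoot the bound. A secondary point requiring care is the identification underlying~\Cref{lem:dmodq}, namely that the module operator $q_j\partial_{q_j}$ on $W(\mu)$ is differentiation of the $\mathbb{K}$-valued coefficient at fixed $\mu$, which is what legitimises reading off $\mathfrak{p}_{2t}(\lambda)$ as the multiplier produced by $D_{2t}$.
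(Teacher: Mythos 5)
Your proof is correct and takes essentially the same route as the paper: you realise multiplication by $G_a(\lambda)$ via the operators $\sum_{j=1}^{d} j^{m}\,q_j\partial_{q_j}$, which insert power sums $\mathfrak{p}_{m}(\lambda)$ into the summand, and you invoke~\Cref{lem:dmodq} to control the filtration shift, starting from $\phi^r_c \in W_c(\mu)$. The only (harmless) difference is how the degree-$a$ bound is obtained --- you expand $\log \mathcal{S}$ to see that $G_a$ is a combination of products of at most $a$ even power sums, whereas the paper observes directly that $[s^{2a}]\,\mathcal{S}(\mu j s)^{p_j}$ is a polynomial of degree at most $a$ in $p_j$, so that $G_a$ is realised by a degree-$a$ polynomial in these operators --- and the two estimates are equivalent.
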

\begin{proof}
First, the fact that extracting the coefficient of $s^{2a}$ from the product $\prod_{k=1}^{\ell(\lambda)} \mathcal{S}(\mu \lambda_k s)$ yields a symmetric function of $\lambda$ is clear by expanding each function $\mathcal{S}$ as a power series in $s$. 
For $m > 0$, let us define the operator
\[
\hat{\mathfrak{p}}_{m} := \sum_{j = 1}^{d} j^{m} q_j\partial_{q_j}.
\]
For $r \in \{1,\ldots,d\}$, the action of $\hat{\mathfrak{p}}_{m}$ on the elements $\phi_{0}^{r} \in W(\mu)$ yields
\[
\hat{\mathfrak{p}}_{m} \phi_0^{r}(\mu) = r \sum_{\lambda \vdash \mu - r} \frac{(\mu s)^{\ell(\lambda)}\,\vec{q}_{\lambda}}{|{\rm Aut}(\lambda)|}\,\mathfrak{p}_{m}(\lambda),
\]
and therefore,
\begin{equation}
\label{hatpmm}\hat{\mathfrak{p}}_{m_1}\cdots \hat{\mathfrak{p}}_{m_a} \phi_0^{r}(\mu) = r \sum_{\lambda \vdash \mu - r} \frac{(\mu s)^{\ell(\lambda)}\,\vec{q}_{\lambda}}{|{\rm Aut}(\lambda)|}\,\prod_{i = 1}^{a} \mathfrak{p}_{m_i}(\lambda).
\end{equation}
The derivations $q_j\partial_{q_j}$ preserve the vector space $W(\mu)$, move elements at most once up in the filtration (\Cref{lem:dmodq}), and commute with $\partial_{x}$. Furthermore, $\phi_0^{1},\ldots,\phi_0^d$ generate $W(\mu)$ as a $\mathbb{C}[\partial_{x}]$-module; hence, we deduce that both sides of \eqref{hatpmm} belong to $W_{a}(\mu)$.

In order to prove the last part of the lemma, we note, that the application of the operator $q_j \partial_{q_j}$ multiplies each monomial of the form
\[
\frac{(\mu s)^{\ell(\lambda)}\,\vec{q}_{\lambda}}{|{\rm Aut}(\lambda)|}
\]
by the number $p_j$ of length $j$ parts of $\lambda$. For each $j \geq 1$, the numbers of parts $p_j$ appears as the power of the factor $\mathcal S(\mu j s)$ in the expression
\[
\sum_{\lambda \vdash \mu - r} \frac{(\mu s)^{\ell(\lambda)}\,\vec{q}_{\lambda}}{|{\rm Aut}(\lambda)|} \prod_{k=1}^{\ell(\lambda)} \mathcal{S}(\mu \lambda_k s).
\]
For the proof, it is enough to note that $[s^{2a}]\, \mathcal{S}(\mu j s)^{p_j} \in \mathbb{Q}_{a}[p_j]$. So in order to compute 
\[
\sum_{\lambda \vdash \mu-r} \frac{\vec{q}_{\lambda} (\mu s)^{\ell(\lambda)}\,\mu^{c}}{|\Aut{\lambda}|}\,G_{a}(\lambda),
\] 
we have to apply a degree $a$ polynomial in the operators $\hat{\mathfrak{p}}_1,\hat{\mathfrak{p}}_2,\ldots$ to 
\[ 
\sum_{\lambda \vdash \mu-r} \frac{\vec{q}_{\lambda} (\mu s)^{\ell(\lambda)}\,\mu^{c}}{|\Aut{\lambda}|},
\]
meaning that the resulting sequence belongs to $W_{c + a}(\mu)$, due to the properties of operators $q_j\partial_{q_j}$ discussed in~\Cref{lem:dmodq}.
\end{proof}

We are now equipped to prove that $DH_{g,n}(\mu_1, \ldots,\mu_n)$ satisfies the polynomiality structure for $\mu_1$. Specifically, we prove the following theorem.
\begin{proposition} \label{thm:polyDH}
Fix $n \geq 1$ and positive integers $\mu_2,\ldots,\mu_n$. The connected double Hurwitz generating series can be written
\begin{equation}
\label{anequation} \HHall^{\circ}(\mu_1, \ldots, \mu_n) = \sum_{r=1}^{d} \left[\sum_{\lambda \vdash \mu_1 - r} \frac{\vec{q}_{\lambda} (\mu_1 s)^{\ell(\lambda)}}{|\Aut{\lambda}|} \right] \sum_{g \geq 0} P_{g}^{r;\mu_2, \ldots, \mu_n}(\mu_1),
\end{equation}
where for any $g,r$ such that $2g - 2 + n > 0$, the function $P_{g}^{r;\mu_2, \ldots, \mu_n}$ is an element of $\, \mathbb K_{3g - 3 + n}[\mu_1]$.
\end{proposition}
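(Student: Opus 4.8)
The plan is to isolate the dependence on $\mu_1$ via~\Cref{lem:DHbehavmu1} and then pass to connected correlators. Writing
\[
A_r(\mu_1) := \sum_{\lambda \vdash \mu_1 - r} \frac{\vec{q}_{\lambda}(\mu_1 s)^{\ell(\lambda)}}{|\Aut{\lambda}|} \prod_{k=1}^{\ell(\lambda)} \mathcal{S}(\mu_1 \lambda_k s)
\]
for the scalar prefactor appearing in~\Cref{lem:DHbehavmu1}, I would first observe that this factor depends only on $\mu_1$ and $r$. Since~\Cref{lem:DHbehavmu1} applies verbatim to every disconnected sub-correlator in which the index $1$ occurs, one may factor $A_r(\mu_1)$ out of the block containing $1$ in the inclusion--exclusion formula~\eqref{distocon}, obtaining
\[
\HHall^{\circ}(\mu_1,\ldots,\mu_n) = \sum_{r=1}^{d} A_r(\mu_1)\, V_r^{\circ}(\mu_1), \qquad V_r^{\circ}(\mu_1) := \big\langle \mathcal{B}_r(\mu_1)\,\mathcal{C}(\mu_2) \cdots \mathcal{C}(\mu_n)\big\rangle^{\circ}.
\]
It then suffices to understand $V_r^{\circ}(\mu_1)$ as a function of $\mu_1$ for fixed $\mu_2,\ldots,\mu_n$ and to feed the outcome through~\Cref{lem:coeffofS}.

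Next I would analyse the analytic structure of $V_r^{\circ}(\mu_1)$. For each fixed power of $s$ it is a rational function of $\mu_1$, whose only poles come from the factors $\mu_1/(\mu_1 - r + \sum_{l \geq i} p_l)$ inside $\mathcal{B}_r(\mu_1)$; these produce simple poles at negative integers $\mu_1 = -b$ and, through the summand $P = (r)$, a double pole at $\mu_1 = 0$. The energy bound $-i_1 \leq \mu_2 + \cdots + \mu_n$ from the proof of~\Cref{lem:DHbehavmu1} shows that only finitely many $b$ occur. The double pole at $\mu_1 = 0$ cancels upon passing to the connected correlator by~\Cref{lem:poleatzero}, and there is no simple pole at $0$ since $\mathcal{S}$ is even. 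I would therefore decompose $V_r^{\circ}(\mu_1) = P_r(\mu_1) + \sum_{b \geq 1} c_{r,b}/(\mu_1 + b)$, with $P_r$ a polynomial in $\mu_1$ and the residues $c_{r,b}$ computed by~\Cref{lem:residueofBop}. The latter expresses $c_{r,b} = M_r(b)\, D_b$, where $D_b := \big\langle \mathcal{A}_b\, \mathcal{C}(\mu_2)\cdots\mathcal{C}(\mu_n)\big\rangle^{\circ}$ with $\mathcal{A}_b = \exp(\sum_j \tfrac{\alpha_j q_j}{j}) \exp(s\mathcal{F}_2)\,\alpha_b\, \exp(-s\mathcal{F}_2)\exp(-\sum_j \tfrac{\alpha_j q_j}{j})$, and crucially $D_b$ does not depend on $r$.

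Throughout I would argue genus by genus, using the grading $\deg s = 1$, $\deg q_j = -1$ under which the genus-$g$ part of $\HHall^{\circ}$ is the homogeneous component of degree $2g - 2 + n$; in each such degree all the sums above are finite. The polynomial part is then under control: $A_r(\mu_1)P_r(\mu_1)$ contributes to $W(\mu_1)$ because~\Cref{lem:coeffofS} converts the $\mathcal{S}$-factors in $A_r$ into elements of the filtered space $W_{\bullet}(\mu_1)$, and $W(\mu_1)$ is stable under multiplication by $\mu_1$. The crux is the singular part $\sum_{r}\sum_{b} A_r(\mu_1)\,c_{r,b}/(\mu_1 + b)$. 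The subtlety is that $A_r(\mu_1)/(\mu_1 + b)$ is \emph{not} itself in $W(\mu_1)$ --- its generating series in $X(z) = e^{x(z)}$ fails to be rational --- so one must show that, after summing over $r$ and over the finitely many poles, the entire singular contribution reorganises into $W(\mu_1)$. For $n = 1$ this is transparent: by the remark following~\Cref{lem:residueofBop} all residues are $M_r(b)\langle \mathcal{A}_b\rangle^{\bullet}$, and $\alpha_b$ annihilates the vacuum for $b > 0$, so $V_r^{\circ}$ reduces to $P_r$ plus the unstable $\mu_1^{-2}$ term, which is the excluded $(0,1)$-contribution. For $n \geq 2$ I would exploit the $r$-independence of $D_b$ and match the combinatorics of $M_r(b)$ against that of $A_r$ through the peeling identity~\Cref{lem:peel}, in order to prove that the singular terms combine to an element of $W(\mu_1)$. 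I expect this reorganisation to be the main obstacle, since it is precisely here that the combinatorics special to \emph{double} Hurwitz numbers --- with an unbounded number of parts over $0$ --- becomes genuinely involved.

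Finally, the degree bound would follow by propagating the genus weight through~\Cref{lem:coeffofS}: each power of $s^2$ harvested from a factor $\mathcal{S}(\mu_1 \lambda_k s)$ is realised by one of the operators $\hat{\mathfrak{p}}_m = \sum_j j^m q_j \partial_{q_j}$ of~\Cref{lem:dmodq} and hence raises the filtration level of $W(\mu_1)$ by one, so that the genus-$g$ contribution lands in $W_{3g - 3 + n}(\mu_1)$. Since the prefactor $\sum_{\lambda \vdash \mu_1 - r} \vec{q}_{\lambda}(\mu_1 s)^{\ell(\lambda)}/|\Aut{\lambda}|$ equals $\phi_0^r(\mu_1)/r$ and $\phi_m^r(\mu_1) = \mu_1^m \phi_0^r(\mu_1)$, this membership is equivalent to the assertion that $P_g^{r;\mu_2,\ldots,\mu_n}$ is a polynomial in $\mu_1$ of degree at most $3g - 3 + n$, which would yield the claimed polynomiality in $\mu_1$.
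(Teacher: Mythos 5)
Your setup reproduces the paper's scaffolding --- factoring the prefactor $A_r(\mu_1)$ through the inclusion--exclusion, the pole bookkeeping for $\mathcal{B}_r(\mu_1)$, the residues via \Cref{lem:residueofBop}, the cancellation of the double pole at $\mu_1=0$ via \Cref{lem:poleatzero}, and the degree bound via \Cref{lem:coeffofS} --- but it stops exactly where the proof begins. The quantities $D_b$ that you leave abstract can and should be evaluated: commuting $\alpha_b$ through $\prod_j \alpha_{-\mu_j}/\mu_j$ gives $\Res_{\mu_1=-b}\big\langle \mathcal{B}_r(\mu_1)\prod_{j\geq2}\mathcal{C}(\mu_j)\big\rangle^{\bullet} = M_r(b)\sum_{k\geq2}\delta_{b,\mu_k}\big\langle\prod_{j\neq k}\mathcal{C}(\mu_j)\big\rangle^{\bullet}$, a \emph{disconnected} product, and the same block-counting that proves \Cref{lem:poleatzero} shows these cancel in the connected correlator whenever $n\geq 3$. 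So for $n\geq3$ there is no ``singular part to reorganise'' at all --- the connected correlator is already pole-free and polynomiality is immediate --- while for $n=2$ one gets $D_b=\delta_{b,\mu_2}$, so the pole at $\mu_1=-\mu_2$ genuinely survives in $\langle\mathcal{B}_r(\mu_1)\mathcal{C}(\mu_2)\rangle^{\circ}$ with residue $M_r(\mu_2)\neq 0$. In particular your proposed mechanism --- exploiting the $r$-independence of $D_b$ and ``matching the combinatorics of $M_r(b)$ against $A_r$ through the peeling identity'' --- cannot work as stated: there is nothing to match for $n\geq3$, and for $n=2$ no summation over $r$ or $b$ makes a non-vanishing simple pole combine with polynomials into the pole-free space $W(\mu_1)$.

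The genuine content of the paper's proof is precisely the $n=2$ analysis you defer, and it proceeds quite differently from your plan: one writes the two-point disconnected correlator explicitly via the energy constraint as in \eqref{eq:neq2}, isolates the $a=\mu_2$ summand as the only possible source of the pole, and observes that extracting any positive power of $s$ from the ratio $\mathcal{S}(\mu_2(\mu_1+\mu_2)s)/\mathcal{S}((\mu_1+\mu_2)s)$ produces factors $(\mu_1+\mu_2)^{2m}$ that kill it. The remaining danger sits in $f_g(\mu_1,\mu_2)$, where extracting $[s^{2g}]$ shifts the partition-size constraint from $\mu_1+\mu_2$ to $\mu_1+\mu_2-b$ with $1\leq b\leq g$, so that peeling only introduces denominators $\mu_1+\mu_2-b$ and never a pole at $\mu_1=-\mu_2$; the terms with $\mu_2-b<-d$ are then absorbed using the auxiliary basis $\hat{\psi}_k$ of \Cref{newblem} together with the degree count $2g+b\geq\lceil(b-\mu_2)/d\rceil$, and a final homogeneity argument (eigenfunctions of $s\partial_s-\sum_j q_j\partial_{q_j}$ with eigenvalue $2g$) rules out any residual poles for $g>0$. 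None of these ingredients --- the genus-graded shift of the partition size, the $\hat{\psi}_k$ basis with its degree count, the eigenfunction argument --- appear in your proposal, and you acknowledge the reorganisation as ``the main obstacle'' you expect rather than resolve. As it stands, the proposal is a correct map of the problem with the central step missing.
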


\begin{proof}
It is convenient for us to consider the cases $n=1$, $n=2$ and $n \geq 3$ separately. 

\emph{Case: $n = 1$.} In this case, we expect a double pole at $\mu=0$ in the genus 0 part and no other singularity. In fact, we are going to show that
\[
\HHall^{\circ}(\mu) = \sum_{r=1}^{d} \Bigg[ \sum_{\lambda \vdash \mu - r} \frac{\vec{q}_{\lambda} (\mu s)^{\ell(\lambda)}}{|\Aut{\lambda}|} \Bigg] \Bigg( \frac{r q_r}{\mu^2} + \sum_{g \geq 1} P_{g}^{r}(\mu) \Bigg),
\]
where $P_{g}^{r}$ is a polynomial in $\mathbb K[\mu]$ of degree at most $3g - 2$, whose coefficients are homogeneous polynomials in $\mathbb Q(q_1,\ldots,q_d)[s]$ of degree $2g$ for all $g \geq 1$. 

When $n=1$, the connected and disconnected double Hurwitz numbers coincide, in which case~\Cref{thm:DHinIW2} implies that
\[
\HHall^{\bullet}(\mu) = \HHall^{\circ}(\mu) = \frac{1}{\mu} \sum_{i \in \mathbb{Z}} \left[\sum_{\lambda \vdash \mu - i} \frac{\vec{q}_{\lambda} (\mu s)^{\ell(\lambda)}}{|\Aut{\lambda}|} \prod_{k=1}^{\ell(\lambda)} \mathcal{S}(\mu \lambda_k s)\right] \big\langle \mathcal{E}_{-i}(\mu s) \big\rangle^{\bullet}.
\]
Only the summand corresponding to $i=0$ contributes nontrivially, and in this case $\langle \EE_0(z) \rangle^{\bullet} = \varsigma^{-1}(z)$. We apply the peeling lemma (\Cref{lem:peel}) once with $a = 0$ to obtain
\begin{equation*}
\begin{split}
\HHall^{\circ}(\mu) &= \frac{1}{\mu} \sum_{r=1}^{d} \left[ \sum_{\lambda \vdash \mu - r} \frac{\vec{q}_{\lambda} (\mu s)^{\ell(\lambda)}}{|\Aut{\lambda}|} \prod_{k=1}^{\ell(\lambda)} \mathcal{S}(\mu \lambda_k s) \right] r q_{r}s\,\mathcal{S}( \mu r s) \frac{1}{\varsigma(\mu s)} \\
&= \sum_{r=1}^{d} \left[ \sum_{\lambda \vdash \mu - r} \frac{\vec{q}_{\lambda} (\mu s)^{\ell(\lambda)}}{|\Aut{\lambda}|} \prod_{k=1}^{\ell(\lambda)} \mathcal{S}(\mu \lambda_k s)\right] \frac{r q_{r}s\,\mathcal{S}( \mu r s)}{\mu^2 s\, \mathcal{S}(\mu s)}.
\end{split}  
\end{equation*}
Since the degree in $q$ of the term corresponding to $\lambda$ is $\ell(\lambda) + 1$, keeping only the monomials $s^{2g - 1 + (\ell(\lambda) + 1)}$ in this sum yields $\HH_{g,1}(\mu)$. This reads:
\begin{align*}
\HH_{g,1}^{\circ}(\mu) &= \sum_{r=1}^{d} s^{2g}\,\frac{rq_r}{\mu^2} \left[\sum_{\lambda \vdash \mu - r} \frac{\vec{q}_{\lambda} (\mu s)^{\ell(\lambda)}}{|\Aut{\lambda}|}\right]\,[s^{2g}] \bigg(\prod_{k=1}^{\ell(\lambda)} \mathcal{S}(\mu \lambda_k s) \cdot \frac{\mathcal{S}(\mu r s)}{\mathcal{S}(\mu s)}\bigg) \\
&= \sum_{\substack{a,b \geq 0 \\ a + b = g}} \sum_{r=1}^{d} s^{2g}\,\frac{rq_r}{\mu^2} \left[\sum_{\lambda \vdash \mu - r} \frac{\vec{q}_{\lambda} (\mu s)^{\ell(\lambda)}}{|\Aut{\lambda}|}\right]\,[s^{2a}] \bigg( \prod_{k=1}^{\ell(\lambda)} \mathcal{S}(\mu \lambda_k s)\bigg)\cdot [s^{2b}]\,\frac{\mathcal{S}( \mu r s)}{\mathcal{S}(\mu s)}.
\end{align*}
The case $g=0$ corresponds to choosing the constant from each $\mathcal{S}$ term and yields the expected expression
\[
P_{0}^{r}(\mu) = \frac{r q_r }{\mu^2}.
\]
When $g \geq 1$, we apply the first statement of~\Cref{lem:coeffofS} and find
\begin{multline*}
\sum_{\substack{a,b \geq 0 \\ a + b = g}} \sum_{r=1}^{d} s^{2g}\,\frac{rq_r}{\mu^2} \left[\sum_{\lambda \vdash \mu - r} \frac{\vec{q}_{\lambda} (\mu s)^{\ell(\lambda)}}{|\Aut{\lambda}|}\right]\,[s^{2a}] \bigg(\prod_{k=1}^{\ell(\lambda)} \mathcal{S}(\mu \lambda_k s) \bigg) \cdot [s^{2b}]\,\frac{\mathcal{S}( \mu r s)}{\mathcal{S}(\mu s)} \\
= \sum_{\substack{a,b \geq 0 \\ a + b = g }} \sum_{r=1}^{d} s^{2g}\, rq_r\,c_b(r) \left[ \sum_{\lambda \vdash \mu - r} \frac{\vec{q}_{\lambda} (\mu s)^{\ell(\lambda)}\,\mu^{2g - 2}}{|\Aut{\lambda}|} \,G_a(\lambda) \right],
\end{multline*}
where $c_b(r)$ is a degree $2b$ polynomial in the variable $r$ and $G_a \in \mathscr{R}$ comes from~\Cref{lem:coeffofS}. The application of the second statement of~\Cref{lem:coeffofS} with $c = 2g - 2$ concludes the proof of the case $n=1$. 

\emph{Case: $n=2$.} 
The expression given in~\Cref{lem:DHbehavmu1} in the case of $n=2$ is
\[
\HHall^{\bullet}(\mu_1, \mu_2) = \sum_{r=1}^{d} \left[\sum_{\lambda \vdash \mu_1 - r} \frac{\vec{q}_{\lambda} (\mu_1 s)^{\ell(\lambda)}}{|\Aut{\lambda}|} \prod_{k=1}^{\ell(\lambda)} \mathcal{S}(\mu_1 \lambda_k s) \right] \big \langle \mathcal{B}_r(\mu_1) \, \mathcal{C}(\mu_2) \big\rangle^{\bullet}.
\]
Let us analyse the possible structure of the poles. \Cref{lem:residueofBop} gives for the poles at negative integers
\begin{equation}\label{eq:resneq2}
\begin{split}
& \quad 	 \Res_{\mu_1 = -b} \dd\mu_1\,\big\langle \mathcal{B}_{r}(\mu_1)\,\mathcal{C}(\mu_2) \big\rangle^{\bullet} \\
&= M_r(b) \bigg\langle \exp\Big(\sum_{j=1}^{d} \frac{\alpha_j q_j}{j} \Big) \exp(s\mathcal{F}_2) \alpha_{b} \exp(- s\mathcal{F}_2) \exp\Big( -\sum_{j=1}^{d} \frac{\alpha_j q_j}{j} \Big) \mathcal{C}(\mu_2) \bigg\rangle^{\bullet} \\
&= M_r(b) \bigg\langle \exp\Big(\sum_{j=1}^{d} \frac{\alpha_j q_j}{j} \Big) \exp(s\mathcal{F}_2) \alpha_{b} \frac{\alpha_{-\mu_2}}{\mu_2} \bigg\rangle^{\bullet} \\
&= M_r(b) \delta_{b, \mu_2},
\end{split}
\end{equation} 
where the last line uses the commutation $[\alpha_{b},\alpha_{-\mu_2}] = \mu_2\delta_{b,\mu_2}$, the fact that $\alpha_{b}$ annihilates the vacuum since $b > 0$, and the fact that $\exp\big(\sum_{j = 1}^d \alpha_j q_j/j\big)$ and $e^{s\mathcal{F}_2}$ leave the vacuum invariant. This shows that for any $r \in \{1,\ldots,d\}$ the residue is only non-zero when $\mu_1 = -\mu_2$. Let us consider the expression for double Hurwitz numbers as given by~\Cref{thm:DHinIW2} and note that in the case $n=2$ the constraint on the energies is $i_1 = -i_2 = a$ with $ 0 \leq a \leq \mu_2$. Thus,
\begin{multline}
\HHall^{\bullet}(\mu_1,\mu_2) = \frac{1}{\mu_1\, \mu_2} \sum_{a = 0}^{\mu_2} \Bigg( \sum_{\lambda^{(1)} \vdash \mu_1 +a } \!\!\!\! \frac{\vec{q}_{\lambda^{(1)}} (\mu_1 s)^{\ell(\lambda^{(1)})}}{|\Aut{\lambda^{(1)}}|} \prod_{k=1}^{\ell(\lambda^{(1)})} \mathcal{S}(\mu_1 \lambda^{(1)}_k s) \Bigg) \\ \Bigg( \sum_{\lambda^{(2)} \vdash \mu_2 - a} \!\!\!\! \frac{\vec{q}_{\lambda^{(2)}} (\mu_2 s)^{\ell(\lambda^{(2)})}}{|\Aut{\lambda^{(2)}}|} \prod_{k=1}^{\ell(\lambda^{(2)})} \mathcal{S}(\mu_2 \lambda^{(2)}_k s) \Bigg)
\Big\langle \mathcal{E}_{a}(\mu_1 s) \mathcal{E}_{-a}(\mu_2 s) \Big\rangle^{\bullet}.
\end{multline}
The inclusion-exclusion formula in the case of $n=2$ is
\[
\langle \mathcal{E}_a(z_1) \mathcal{E}_{-a}(z_2) \rangle^{\circ} = \langle \mathcal{E}_a(z_1) \mathcal{E}_{-a}(z_2) \rangle^{\bullet} - \langle \mathcal{E}_a(z_1)\rangle^{\bullet} \, \langle \mathcal{E}_{-a}(z_1) \rangle^{\bullet}.
\]
Hence, because $\langle \mathcal{E}_a(z) \rangle^{\bullet}$ vanishes unless $a=0$, one can pass to the connected generating series by excluding the $a=0$ term from the summand. It produces the expression
\begin{equation}
\label{eq:neq2}
\begin{split}
& \quad \HHall^{\circ}(\mu_1,\mu_2) \\
& = \frac{1}{\mu_1\, \mu_2} \sum_{a = 1}^{\mu_2} \Bigg( \sum_{\lambda^{(1)} \vdash \mu_1 +a } \!\!\!\!\frac{\vec{q}_{\lambda^{(1)}} (\mu_1 s)^{\ell(\lambda^{(1)})}}{|\Aut{\lambda^{(1)}}|} \prod_{k=1}^{\ell(\lambda^{(1)})} \mathcal{S}(\mu_1 \lambda^{(1)}_k s) \Bigg) \, \Bigg( \sum_{\lambda^{(2)} \vdash \mu_2 - a} \!\!\!\!\frac{\vec{q}_{\lambda^{(2)}} (\mu_2 s)^{\ell(\lambda^{(2)})}}{|\Aut{\lambda^{(2)}}|} \prod_{k=1}^{\ell(\lambda^{(2)})} \mathcal{S}(\mu_2 \lambda^{(2)}_k s) \Bigg) \\
& \qquad \qquad \qquad \times \frac{a\mathcal S (a(\mu_1 + \mu_2)s)}{\mathcal S ((\mu_1 + \mu_2)s)}.
\end{split}
\end{equation}
We first analyse the excluded term. The inclusion-exclusion formula implies that the excluded term is a product of two copies of $n = 1$ generating series
\[ \HHall^{\circ}(\mu_1) \, \HHall^{\circ}(\mu_2). \]
We have already studied the structure of the poles of this expression while treating the $n = 1$ case. From this, we know that for any $r \in \{1,\ldots,d\}$, its only possible poles are located at $\mu_1 = 0$. But due to~\Cref{lem:poleatzero} we know, that the pole at $\mu_1 = 0$ of the connected generating series is removable for any $n \geq 2$. So, even after the removal of the disconnected part, the only possible pole of $\big \langle \mathcal{B}_r(\mu_1) \, \mathcal{C}(\mu_2) \big\rangle^{\circ}$ is located at $\mu_1 = -\mu_2$.

Recall that applying~\Cref{lem:peel} to an expression that is being summed over $\lambda \vdash \mu_1 + a$ introduces a factor of $\mu_1/(\mu_1 + a)$. We then note that a pole at $\mu_1 = -\mu_2$ can only arise when applying~\Cref{lem:peel} to an expression including a sum over $\lambda \vdash \mu_1 + \mu_2$, as this would yield a factor of $\mu_1/(\mu_1 + \mu_2)$ and no other application of the peeling lemma would (nor is that pole present without applying the peeling lemma). Observe that a sum of this form corresponds to the $a=\mu_2$ term in the sum over $a$. Hence, for the study of the pole at $\mu_1 = -\mu_2$ it suffices to consider the $a=\mu_2$ summand in the sum. Since the sum over $\lambda^{(2)} \vdash 0$ is then equal to $1$, we get
\begin{equation*}
\begin{split}
& \quad\frac 1{\mu_1\mu_2} \sum_{\lambda \vdash \mu_1 + \mu_2} \frac{\vec{q}_{\lambda} (\mu_1 s)^{\ell(\lambda)}}{|\Aut{\lambda}|} \prod_{k=1}^{\ell(\lambda)} \mathcal{S}(\mu_1 \lambda_k s) \Big\langle \EE_{\mu_2}(\mu_1 s) \EE_{-\mu_2}(\mu_2 s) \Big \rangle^{\circ} \\
& = \frac 1{\mu_1} \sum_{\lambda \vdash \mu_1 + \mu_2} \frac{\vec{q}_{\lambda} (\mu_1 s)^{\ell(\lambda)}}{|\Aut{\lambda}|} \prod_{k=1}^{\ell(\lambda)} \mathcal{S}(\mu_1 \lambda_k s) \cdot \frac{\mathcal{S}(\mu_2 (\mu_1 + \mu_2)s)}{\mathcal{S}((\mu_1 + \mu_2)s)}.
\end{split}
\end{equation*}
The expansion of the ratio of the $\mathcal{S}$-functions reads
\[
\frac{\mathcal{S}(\mu_2 (\mu_1 + \mu_2)s)}{\mathcal{S}((\mu_1 + \mu_2)s)} = 1 + \frac{(\mu_2^2-1)}{24} (\mu_1 + \mu_2)^2 s^2 + O((\mu_1 +\mu_2)^4s^4),
\]
and, extracting the coefficient of any positive power of $s$ from this term would include a factor of $(\mu_1 + \mu_2)^m$ for $m$ even and $m \geq 2$, which annihilates any simple pole at $\mu_1 = -\mu_2$. Therefore, we focus on
\[
f_{g}(\mu_1,\mu_2) := \frac 1{\mu_1} \sum_{\lambda \vdash \mu_1 + \mu_2} \frac{\vec{q}_{\lambda} (\mu_1 s)^{\ell(\lambda)}}{|\Aut{\lambda}|}\,[s^{2g}] \bigg(\prod_{k=1}^{\ell(\lambda)} \mathcal{S}(\mu_1 \lambda_k s)\bigg),
\]
for $g \geq 1$. We write
\[
\frac 1{\mu_1} \sum_{\lambda \vdash \mu_1 + \mu_2} \frac{\vec{q}_{\lambda} (\mu_1 s)^{\ell(\lambda)}}{|\Aut{\lambda}|} \prod_{k=1}^{\ell(\lambda)} \mathcal{S}(\mu_1 \lambda_k s) =\frac 1{\mu_1} \sum_{\substack{p_1,\ldots,p_d \geq 0 \\ \sum_{j = 1}^{d} jp_j = \mu_1 + \mu_2}} \prod_{j=1}^{d} (q_j \mu_1 s)^{p_j} \frac{\mathcal{S}(\mu_1 j s)^{p_j}}{p_j!},
\]
recalling that $\lambda = (1^{p_1}2^{p_2} \ldots d^{p_d})$. If we denote $\mathcal{S}(z) := \sum_{m \geq 0} S_{2m} z^{2m}$, we have
\begin{equation}
\frac{\mathcal{S}(z)^{p}}{p!} = \sum_{i \geq 0} z^{2i} \sum_{\substack{m_0,\ldots,m_{2i} \geq 0 \\ \sum_{a = 0}^{i} m_{2l} = p}} \frac{\prod_{a = 0}^{i} S_{2a}^{m_{2a}}}{m_0!m_2!\cdots m_{2i}!}
\end{equation}
for $ S_{2a}\in \mathbb Q$. Hence,
\begin{equation*}
\begin{split} 
f_{g}(\mu_1,\mu_2) = \frac 1{\mu_1}\sum_{\substack{h_1,\ldots,h_d \geq 0 \\ \sum_{j = 1}^d h_j = g}} \,\,\,\sum_{\substack{m^{(j)}\,:\,\{0,2,\ldots,2h_j\} \rightarrow \mathbb{N} \\ \sum_{a,j} m_{2a}^{(j)} = \mu_1 + \mu_2}}\,\,\, \prod_{j = 1}^{d} \left((\mu_1 j s)^{2h_j} \prod_{a = 0}^{h_j} (q_j \mu_1 s)^{m_{2a}^{(j)}}\,\frac{S_{2a}^{m_{2a}^{(j)}}}{m_{2a}^{(j)}!}\right).
\end{split}
\end{equation*}

Since $S_0 = 1$, we encode separately $m_{0}^{(j)}$ into a partition $\lambda' = (1^{m_0^{(1)}}\cdots d^{m_0^{(d)}})$, and we write $b = \mu_1 + \mu_2 - |\lambda'|$. By definition, we have $|{\rm Aut}(\lambda')| = \prod_{j = 1}^d m_0^{(j)}!$. Take into account the constraints in the sum and the fact that for $g > 0$, we have $b \in \{1,\ldots,g\}$. We can then rewrite 
\begin{equation}\label{eq:fgfinal}
\begin{split}
f_{g}(\mu_1,\mu_2) & = \frac 1{\mu_1} \sum_{b = 1}^{g} \left[\sum_{\lambda' \vdash \mu_1 + \mu_2 - b} \frac{\vec{q}_{\lambda'} (\mu_1 s)^{\ell(\lambda')}}{|{\rm Aut}(\lambda)|}\right] \\
& \qquad \times \sum_{\substack{h_1,\ldots,h_d \geq 0 \\ \sum_{j = 1}^d h_j = g}}\,\,\,\sum_{\substack{m^{(j)}\,\,:\,\,\{2,4,\ldots,2h_j\} \rightarrow \mathbb{N} \\
\nonumber
 \sum_{a,j} m_{2a}^{(j)} = b}}\,\,\,\prod_{j = 1}^{d} \left((\mu_1 j s)^{2h_j} \prod_{a = 1}^{h_j} (q_j \mu_1 s)^{m_{2a}^{(j)}}\,\frac{S_{2a}^{m_{2a}^{(j)}}}{m_{2a}^{(j)}!}\right),
\end{split}
\end{equation}
and the second line is an element of $\mathbb K_{2g + b}[\mu_1]$ 
Applying~\Cref{lem:peel} to $f_g(\mu_1,\mu_2)$ now will only introduce rational terms with denominators $\mu_1 + \mu_2 - b$ for $b > 0$; hence, this process cannot introduce a pole at $\mu_1 = -\mu_2$. 

Look closely on the expression we have for $f_g$. Its terms can be split into three groups:
\begin{itemize}
\item Terms with $\mu_2 - b \ge 0$. For these terms, we can apply~\Cref{lem:peel} in order to present them as a linear combination of elements of $W(\mu_1)$.
\item Terms with $b \in \{-d,-(d - 1),\ldots,-1\}$. These terms already yields elements of $W(\mu_1)$ and can be left intact.
\item Terms with $\mu_2 - b < -d$. Let us observe that these terms arise as expansion coefficients (in the variable $X(z)$) of the application of an element of $\mathbb K[\partial_x]$ to $z^{b-\mu_2}$. Note, however, that the degree of this differential operator to apply is $2g + b$. As $2g + b\geq 2b \geq \lceil \frac{b}{d} \rceil \geq \lceil \frac{b - \mu_2}{d} \rceil $, it can be expressed as a linear combination of elements $\hat \psi_k(z)$ as in Equation~\eqref{eq:defhatpsi}. So by Lemma~\ref{LemmaV}, the sequences of their expansion coefficients belong to $W(\mu_1)$.
\end{itemize}

From the analysis above we conclude that the terms of $f_g$, together with the remaining terms from the sum over $a$ in~\Cref{eq:neq2}, after the application of the peeling lemma,~\Cref{lem:peel}, give $DH_{g,2}$ for fixed $\mu_2$ in the form
\[
DH_{g,2}(\mu_1,\mu_2) =\sum_{r=1}^{d} \left[\sum_{\lambda \vdash \mu_1 - r} \frac{\vec{q}_{\lambda} (\mu_1 s)^{\ell(\lambda)}}{|\Aut{\lambda}|} \right] P_{g}^{r;\mu_2}(\mu_1),
\]
where a priori $P_{g}^{r,\mu_2}(\mu_1)$ are rational functions in $\mu_1$. We now are left to show that $P_{g}^{r,\mu_2}(\mu_1)$ are polynomials in $\mu_1$ for $g>0$. We know by homogeneity that $P_{g}^{r,\mu_2}$ is an eigenfunction of the operator $s\partial_s - \sum_{j = 1}^{d} q_j\partial_{q_j}$ with eigenvalue $2g$. The residues of $P_{g}^{r,\mu_2}(\mu_1)\,\dd\mu_1$ are also the eigenfunctions of the above mentioned operator with the same eigenvalue. It means that no poles outside $\mu_1 = -\mu_2$ can be introduced, because otherwise,~\Cref{eq:resneq2} would imply that these poles have to compensate each other, since any of their linear combination is an eigenvector with non-zero eigenvalue for $g > 0$. However, we have already discussed that the only possible pole of this rational function is at $\mu_1 = -\mu_2$. It means that for $g > 0$ it is in fact polynomial. Thus for any $\mu_2$ the genus $g>0$ part of the generating function is an element of $W(\mu_1)$. We can also see from~\Cref{eq:neq2}, exploiting the ideas from the proof of~\Cref{lem:coeffofS}, that for any fixed $\mu_2$ the polynomial $P^{r;\mu_2}_{g}(\mu_1)$ is of degree at most $3g-1$ in the variable $\mu_1$. This concludes the case $n=2$.

\emph{Case: $n\geq 3$.} Begin with the expression given in~\Cref{lem:DHbehavmu1},
\[
	\HHall^{\bullet}(\mu_1, \mu_2, \ldots, \mu_n) = \sum_{r=1}^{d} \left[\sum_{\lambda \vdash \mu_1 - r} \frac{\vec{q}_{\lambda} (\mu_1 s)^{\ell(\lambda)}}{|\Aut{\lambda}|} \prod_{k=1}^{\ell(\lambda)} \mathcal{S}(\mu_1 \lambda_k s) \right] \Big\langle \mathcal{B}_r(\mu_1) \prod_{j = 2}^n \mathcal{C}(\mu_j) \Big\rangle^{\bullet}.
\]
As in the $n=2$ case,~\Cref{lem:poleatzero} gives that any poles at zero are cancelled by passing to the connected generating series. We use~\Cref{lem:residueofBop} to calculate the residue at negative integers. That is,
\begin{equation*}
\begin{split}
& \quad	 \Res_{\mu_1 = -b} \dd\mu_1\,\Big\langle \mathcal{B}_{r}(\mu_1) \prod_{j=2}^{n} \mathcal{C}(\mu_j) \Big\rangle^{\bullet} \\
&= M_r(b) \Big\langle \exp\Big(\sum_{j=1}^{d} \frac{\alpha_j q_j}{j} \Big) \exp(s\mathcal{F}_2) \alpha_{b} \exp(- s\mathcal{F}_2) \exp\Big( -\sum_{j=1}^{d} \frac{\alpha_j q_j}{j} \Big) \prod_{j=2}^{n} \mathcal{C}(\mu_j) \Big\rangle^{\bullet} \\
&= M_r(b) \Big\langle \exp\Big(\sum_{j=1}^{d} \frac{\alpha_j q_j}{j} \Big) \exp(s\mathcal{F}_2) \, \alpha_{b} \frac{\alpha_{-\mu_2}}{\mu_2} \frac{\alpha_{-\mu_3}}{\mu_3} \cdots \frac{\alpha_{-\mu_n}}{\mu_n} \Big\rangle^{\bullet}.
\end{split}
\end{equation*}
Commuting $\alpha_b$ to the right, the relation $[\alpha_b, \alpha_{-\mu_k}] = b\delta_{b,\mu_k}$ implies that the residue vanishes except possibly if $b$ is equal to one of the fixed positive integers $\mu_k$ for some $k \in \{2, \ldots, n\}$. In this case, the residue reads
\[
\delta_{b,\mu_k} M_r(\mu_k) \Big\langle \exp\Big(\sum_{j=1}^{d} \frac{\alpha_j q_j}{j} \Big) \exp(s\mathcal{F}_2) \, \frac{\alpha_{-\mu_2}}{\mu_2} \cdots \widehat{\alpha}_{-\mu_k} \cdots \frac{\alpha_{-\mu_n}}{\mu_n} \Big\rangle^{\bullet} = \delta_{b,\mu_k} M_r(\mu_k) \Big\langle \prod_{\substack{j=2 \\ j \ne k}}^{n} \mathcal{C}(\mu_j) \Big\rangle^{\bullet}.
\]
By the same mechanism shown in the proof of~\Cref{lem:poleatzero}, this simple pole at $\mu_1 = -\mu_k$ will simplify in the inclusion-exclusion formula when passing to the connected generating series. Therefore, we can write 
\[
\HHall^{\circ}(\mu_1, \ldots, \mu_n) = \sum_{r=1}^{d} \left[\sum_{\lambda \vdash \mu_1 - r} \frac{\vec{q}_{\lambda} (\mu_1 s)^{\ell(\lambda)}}{|\Aut{\lambda}|}  \right] \sum_{g \geq 0} P_{g}^{r;\mu_2, \ldots, \mu_n}(\mu_1),
\] 
where $P_{g}^{r;\mu_2,\ldots,\mu_n}$ is a polynomial function of $\mu_1$, of degree at most $3g - 3 + n$. The hypothesis then follows from~\Cref{lem:coeffofS}.
\end{proof}

\subsection{End of the proof}

Keeping only the monomials $s^{2g-2+n + \deg(q)}$ in \eqref{anequation} implies that for any fixed $\mu_2,\ldots,\mu_n \in \mathbb Z_{>0}$ we have $\HH_{g,n}(\mu_1, \ldots, \mu_n) \in W_{f}(\mu_1)$ (\Cref{thm:polyDH}) for $f$ that can be chosen to depend only on $(g,n)$. It remains to invoke the symmetry of $\HH_{g,n}(\mu_1, \ldots, \mu_n)$ in $\mu_1, \ldots, \mu_n$ and apply~\Cref{lem:symmetry}: this shows that $\HH_{g,n}(\mu_1, \ldots, \mu_n) \in W(\mu_1) \otimes W(\mu_2) \otimes \cdots \otimes W(\mu_n)$.

\section{An ELSV-like formula for double Hurwitz numbers} \label{sec:ELSV}

\subsection{The ELSV formula for orbifold Hurwitz numbers}
\label{Chiodorev}

Recall that the $d$-orbifold Hurwitz numbers arise as a special case of double Hurwitz numbers via the subsitutions $q_1 = \cdots = q_{d - 1} = 0$ and $q_{d} = s = 1$. We denote $d$-orbifold Hurwitz numbers by
\[
H_{g,n}^{[d]}(\mu_1,\ldots,\mu_n) := \left. DH_{g,n}(\mu_1,\ldots,\mu_n) \right|_{q_1 = \cdots = q_{d - 1} = 0, q_{d} = s = 1}.
\]
They can be expressed via intersection theory in three different ways, as we now review.

The first formula is due to Johnson, Pandharipande and Tseng, and involves the moduli space $\overline{\mathcal M}_{g; a_1, \ldots, a_n}({\mathcal B} {\mathbb Z}_d)$ of maps from genus $g$ curves with $n$ marked points to the classifying space ${\mathcal B} {\mathbb Z}_d$, where the data $a_1, \ldots, a_n \in \mathbb{Z}_d \cong \{0, 1, 2, \ldots d-1\}$ encode the monodromy at the marked points~\cite{joh-pan-tse11}. There is a proper forgetful morphism $\epsilon: \overline{\mathcal M}_{g; a_1, \ldots, a_n}({\mathcal B} {\mathbb Z}_d) \to \overline{\mathcal M}_{g,n}$ that retains only the stabilisation of the domain curve. The Hodge bundle $\Lambda \to \overline{\mathcal M}_{g; a_1, \ldots, a_n}({\mathcal B} {\mathbb Z}_d)$ naturally carries a representation of $\mathbb{Z}_d$ and we denote by $\Lambda^U$ the subbundle corresponding to the summand on which the multiplicative unit of $\mathbb{Z}_d$ acts as multiplication by $e^{2\pi{\rm i}/d}$. For each $k \geq 0$, we let
\[
\lambda_k^U := c_k(\Lambda^{U}) \in H^{2k}\big(\overline{\mathcal M}_{g; a_1, \ldots, a_n}({\mathcal B} {\mathbb Z}_d); \mathbb{Q}\big).
\]
By specialising~\cite[Theorem 1]{joh-pan-tse11} to the case $\gamma = \emptyset$, one obtains the following ELSV formula for orbifold Hurwitz numbers, where we write $\overline{\mu}_i$ for the image of $\mu_i$ in $\mathbb{Z}_d$.
\begin{equation}
\label{orbELSV1}H_{g,n}^{[d]}(\mu_1, \ldots, \mu_n) = d^{2g-2+n+ \sum_{i = 1}^n \mu_i/d} \prod_{i=1}^n \frac{(\mu_i / d)^{\lfloor \mu_i / d \rfloor}}{\lfloor \mu_i / d \rfloor !} \int_{\overline{\mathcal M}_{g; -\overline{\mu}}({\mathcal B} {\mathbb Z}_d)} \frac{\sum_{k=0}^g (-1)^k \lambda_k^U}{\prod_{i=1}^n \big(1 - \frac{\mu_i}{d} \epsilon^* \psi_i\big)}
\end{equation}

A second formula for orbifold Hurwitz numbers involves the moduli space $\overline{\mathcal M}_{g; a_1, \ldots, a_n}^{r,s}$ of twisted $r$-spin curves. Points of this moduli space parametrise line bundles $L \to (C; p_1, \ldots, p_n)$ over genus $g$ curves with $n$ marked points together with an isomorphism $L^{\otimes r} \simeq \omega^{\otimes s}_{C} \left( \sum_{i=1}^n (s-a_i) p_i \right)$ where $\omega_{C}$ is the dualising sheaf. A comparison of degrees leads to the fact that such an isomorphism --- and hence, such a moduli space --- exists only if
\begin{equation} \label{eq:acondition}
s(2g-2+n) - \sum_{i=1}^n a_i = 0 \pmod{r}.
\end{equation}
There is a proper forgetful morphism $\tilde{\epsilon}: \overline{\mathcal M}_{g; a_1, \ldots, a_n}^{r,s} \to \overline{\mathcal M}_{g,n}$ that retains only the information of the base curve. Starting from the universal $r$th root $\pi: {\mathcal L} \to \overline{\mathcal M}_{g; a_1, \ldots, a_n}^{r,s}$, the {\em Chiodo class} is defined as the total Chern class
\[
{\rm C}_{g; a_1, \ldots, a_n}^{r,s} = c(-R^{\bullet}\pi_*\mathcal{L}) = \exp \bigg[ \sum_{m \geq 1} (-1)^m (m-1)! \, \mathrm{ch}_m(R^\bullet \pi_*{\mathcal L}) \bigg] \in H^*(\overline{\mathcal M}_{g; a_1, \ldots, a_n}^{r,s}; \mathbb{Q}),
\]
and was computed explicitly by Chiodo~\cite{chi08}. Specialising the results of~\cite{lew-pop-sha-zvo17} to the case $(r, s) = (d, d)$, we have
\begin{equation}
\label{orbELSV2}H_{g,n}^{[d]}(\mu_1, \ldots, \mu_n) = d^{2g-2+n+ \sum_{i = 1}^{n} \mu_i/d} \prod_{i=1}^n \frac{(\mu_i / d)^{\lfloor \mu_i / d \rfloor}}{\lfloor \mu_i / d \rfloor !} \int_{\overline{\mathcal M}_{g; -\overline{\mu}}^{d,d}} \frac{{\rm C}_{g;-\overline{\mu}}^{d,d}}{\prod_{i=1}^n \big(1 - \frac{\mu_i}{d} \tilde{\epsilon}^*\psi_i\big)}.
\end{equation}

A third formula can be derived from either \eqref{orbELSV1} or \eqref{orbELSV2} by pushforward to $\overline{\mathcal M}_{g,n}$, resulting in
\begin{equation} \label{eq:orbELSV}
H_{g,n}^{[d]}(\mu_1, \ldots, \mu_n) = d^{2g-2+n+ \sum_{i = 1}^n \mu_i/d} \prod_{i=1}^n \frac{(\mu_i / d)^{\lfloor \mu_i / d \rfloor}}{\lfloor \mu_i / d \rfloor !} \int_{\overline{\mathcal M}_{g,n}} \frac{\Omega_{g;-\overline{\mu}}^{[d]}}{\prod_{i=1}^n \big(1 - \frac{\mu_i}{d} \psi_i\big)},
\end{equation}
where we have defined the class
\[
\Omega_{g;a_1,\ldots,a_n}^{[d]} := \tilde{\epsilon}_* {\rm C}_{g; a_1,\ldots,a_n}^{d,d} = \epsilon_*\bigg(\sum_{k=0}^g (-1)^k \lambda_k^U\bigg) \in H^*(\overline{\mathcal M}_{g,n}; \mathbb{Q}).
\]
The second equality here is a consequence of the work of the fourth-named author, Popolitov, Shadrin and Zvonkine~\cite{lew-pop-sha-zvo17}. From the relation of~\Cref{eq:acondition}, we see that ${\rm C}_{g;a_1,\ldots,a_n}^{d,d}$ exists only if $\sum_{i = 1}^n a_i = 0 \pmod{d}$. When this condition fails to hold, we set $\Omega_{g;a_1,\ldots,a_n}^{[d]} = 0$ by convention. The class $\Omega_{g;a_1,\ldots,a_n}^{[d]}$ is tautological and was explicitly expressed in terms of $\psi$-classes and $\kappa$-classes as a sum over stable graphs by Janda, Pandharipande, Pixton and Zvonkine in their work on double ramification cycles~\cite[Corollary 4]{jan-pan-pix-zvo17}.

\subsection{Topological recursion for orbifold Hurwitz numbers}
\label{sec:orbTR}
The spectral curve $\mathscr{S}^{[d]}$ relevant for orbifold Hurwitz numbers has domain $\mathscr{C}^{[d]} = \mathbb{C}^*$ and
\begin{equation} \label{eq:orbspectralcurve}
x_{[d]}(z) = \ln z - z^d,\qquad y_{[d]}(z) = z^d,\qquad\omega_{0,2}^{[d]}(z_1, z_2) = \frac{{\mathrm d} z_1\otimes {\mathrm d}z_2}{(z_1-z_2)^2},
\end{equation}
and we denote by $\omega_{g,n}^{[d]}(z_1, \ldots, z_n)$ the correlation differentials that the topological recursion associates to it. We will also use the free energies
\begin{equation}
\label{Fgnddd} F_{g,n}^{[d]}(z_1, \ldots, z_n) = \int_0^{z_n} \cdots \int_0^{z_1} \omega_{g,n}^{[d]}.
\end{equation}
and the exponentiated variable
\[
X_{[d]}(z) := \exp\big(x_{[d]}(z)\big) = z \exp(-z^d).
\]

\begin{theorem}[\cite{do-lei-nor16, bou-her-liu-mul14}] \label{thm:orbifold}
For $2g - 2 + n > 0$, we have the all-order series expansion of the free energies when $z_i \rightarrow 0$,
\[
F_{g,n}^{[d]}(z_1, \ldots, z_n) \sim \sum_{\mu_1, \ldots, \mu_n \geq 1} H_{g,n}^{[d]}(\mu_1, \ldots, \mu_n) \prod_{i=1}^n \big(X_{[d]}(z_i)\big)^{\mu_i}.
\]
\end{theorem}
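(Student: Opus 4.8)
The plan is to obtain \Cref{thm:orbifold} as a direct specialisation of the main topological recursion result \Cref{thm:TR}, since the orbifold data is nothing but the double Hurwitz data evaluated at particular parameters. Concretely, setting $q_1 = \cdots = q_{d-1} = 0$ and $q_d = s = 1$ gives $Q(z) = z^d$, so that the spectral curve \eqref{spdouble} becomes exactly the orbifold spectral curve \eqref{eq:orbspectralcurve}; in particular $x = x_{[d]}$, $y = y_{[d]}$ and the bidifferential $\omega_{0,2}$ agree, whence the correlation differentials produced by topological recursion coincide, $\omega_{g,n} = \omega_{g,n}^{[d]}$ for all $(g,n)$. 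Under the same specialisation, the double Hurwitz numbers reduce to the orbifold Hurwitz numbers, $\HH_{g,n}(\mu_1,\ldots,\mu_n) = H_{g,n}^{[d]}(\mu_1,\ldots,\mu_n)$, as recorded in \Cref{Chiodorev}.

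With these identifications in hand, I would invoke \Cref{thm:TR}. For $2g - 2 + n > 0$ the Kronecker factor $\delta_{g,0}\delta_{n,2}$ in \eqref{omefgngun} vanishes, so after specialisation the statement reads
\[
\omega_{g,n}^{[d]}(z_1, \ldots, z_n) \sim \sum_{\mu_1, \ldots, \mu_n \geq 1} H_{g,n}^{[d]}(\mu_1, \ldots, \mu_n) \bigotimes_{i=1}^n {\rm d}\big((X_{[d]}(z_i))^{\mu_i}\big),
\]
using that $e^{\mu_i x_{[d]}(z_i)} = (X_{[d]}(z_i))^{\mu_i}$. It then remains to pass from the correlation differentials to the free energies. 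By the very definition \eqref{Fgnddd}, $F_{g,n}^{[d]}$ is the iterated primitive of $\omega_{g,n}^{[d]}$ based at $z_i = 0$, so I would integrate the displayed expansion variable by variable from $0$ to $z_i$. Since $X_{[d]}(z) = z\exp(-z^d) \to 0$ as $z \to 0$, the primitive of ${\rm d}\big((X_{[d]})^{\mu_i}\big)$ that vanishes at the origin is simply $(X_{[d]}(z_i))^{\mu_i}$, so no boundary constants survive and the iterated integral produces exactly $\sum H_{g,n}^{[d]} \prod_i (X_{[d]}(z_i))^{\mu_i}$, which is the claim.

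The only genuinely delicate point is the interchange of the (formal, all-order) asymptotic expansion with term-by-term integration and the vanishing of the boundary terms at $z_i = 0$; this is unproblematic here precisely because each monomial $(X_{[d]})^{\mu}$ with $\mu \geq 1$ vanishes at the origin, so the passage $\omega_{g,n}^{[d]} \mapsto F_{g,n}^{[d]}$ matches the passage $\bigotimes_i {\rm d}\big((X_{[d]})^{\mu_i}\big) \mapsto \prod_i (X_{[d]})^{\mu_i}$ monomial by monomial. I would also stress that this argument is free of circularity: \Cref{thm:TR} is established from the polynomiality \Cref{thm:poly} via the cut-and-join analysis of \cite{do-kar18}, independently of the orbifold case, so specialising it back is legitimate. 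Alternatively — and this is how the result originally appeared — \Cref{thm:orbifold} can be proved directly by symmetrising the orbifold cut-and-join equation against the local involution of the spectral curve \eqref{eq:orbspectralcurve}, exactly as carried out in \cite{do-lei-nor16, bou-her-liu-mul14}.
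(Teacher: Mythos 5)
Your proposal is correct, but it takes a different route from the paper: the paper offers no internal proof of \Cref{thm:orbifold} at all, simply citing \cite{do-lei-nor16, bou-her-liu-mul14}, where the result is established directly for the orbifold spectral curve \eqref{eq:orbspectralcurve} by symmetrising the orbifold cut-and-join equation against the local involutions --- the very strategy the present paper later adapts to the double case. You instead derive the orbifold statement internally by specialising \Cref{thm:TR} at $q_1 = \cdots = q_{d-1} = 0$, $q_d = s = 1$, and your circularity check is the essential point and is accurate: \Cref{thm:TR} rests on \Cref{thm:poly} (proved in \Cref{sec:poly} by semi-infinite wedge analysis) together with the cut-and-join symmetrisation of \cite{do-kar18}, neither of which invokes the orbifold case, so using \Cref{thm:TR} in \Cref{sec:flow} to prove \Cref{thm:ELSV} remains sound under your derivation. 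Two small points you rely on implicitly and could make explicit: first, the specialisation satisfies $q_d s \neq 0$, and for $Q(z) = z^d$ the polynomial $1 - szQ'(z) = 1 - dz^d$ has simple roots, so the plain Chekhov--Eynard--Orantin recursion applies and \Cref{cor:higher} is not even needed; second, your term-by-term integration is legitimate because $0 \notin \mathfrak{a}$ (indeed $1 - dz^d = 1$ at $z = 0$), so $\omega_{g,n}^{[d]}$ is holomorphic near $z_i = 0$ and its expansion in the local coordinate $X_{[d]}(z) = z e^{-z^d}$ is a convergent power series whose primitive vanishing at the origin is obtained monomial by monomial, exactly as you say. What your route buys is a self-contained paper at the cost of invoking the full strength of \Cref{thm:poly}; what the cited route buys is logical independence and historical priority --- the double Hurwitz conjecture of \cite{do-kar18} was modelled on the orbifold case, so the paper's citation also reflects the actual order of discovery.
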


The previous theorem allows us to express $F_{g,n}^{[d]}(z_1, \ldots, z_n)$ in terms of the basis of rational functions already introduced in~\Cref{defnugfngu} that we here specialise to $\mathscr{S}^{[d]}$. Namely, for $a \in \{1,\ldots,d\}$ we set $\hat{\phi}_{-1}^{a,[d]}(z) = z^a$ and for $m \geq -1$,
\begin{equation}
\label{phidorb} \hat{\phi}_{m + 1}^{a,[d]}(z) = \partial_{x_{[d]}} \hat{\phi}_{m}^{a,[d]}(z) = X_{[d]}\partial_{X_{[d]}} \hat{\phi}^{a,[d]}_{m}(z).
\end{equation}
Further, we recall the series expansion from~\Cref{lem:expmoins1} when $z \rightarrow 0$,
\begin{equation}
\label{thedexp} a^{-1}\hat{\phi}_{m}^{a,[d]}(z) \sim \sum_{k \geq 0} \frac{(kd + a)^{k + m}}{k!} \big(X_{[d]}(z)\big)^{kd + a}.
\end{equation}

Substituting~\Cref{eq:orbELSV} for the orbifold Hurwitz numbers into~\Cref{thm:orbifold} and invoking \eqref{thedexp}, we obtain \begin{equation} \label{eq:orbifoldelsv}
F_{g,n}^{[d]}(z_1, \ldots, z_n) = d^{2g-2+n} \sum_{\substack{1 \leq a_1, \ldots, a_n \leq d \\ m_1, \ldots, m_n \geq 0}} d^{\sum_{i = 1}^n (a_i/d - m_i)} \left( \int_{\overline{\mathcal M}_{g,n}} \Omega_{g;d-a}^{[d]} \prod_{i=1}^n \psi_i^{m_i} \right) \prod_{i=1}^n a_i^{-1}\hat{\phi}_{m_i}^{a_i,[d]}(z_i),
\end{equation}
where we adopt the shorthand $d-a = (d-a_1, \ldots, d-a_n)$. Observe that in fact, the summation only ranges over $a_1, \ldots, a_n$ whose sum is a multiple of $d$ and $m_1, \ldots, m_n$ whose sum is at most $3g-3+n$. The former of these two conditions is due to~\Cref{eq:acondition}, while the latter is due to cohomological degree considerations. Equation~\eqref{eq:orbifoldelsv} will be the starting point for the proof of Theorem~\ref{thm:ELSV} in the next subsection.

\subsection{Deformation to the double Hurwitz numbers} \label{sec:flow}

We now assume $d \geq 2$. In order to treat the double Hurwitz spectral curves as deformations of the spectral curve for orbifold Hurwitz numbers, let $\hat{Q}(z) = q_1z + \cdots + q_{d-1}z^{d-1}$ and consider the 1-parameter family ${\mathscr S}^t$ of spectral curves with domain $\mathscr{C}^t = \mathbb{C}^*$ and
\[
x_t(z) = \ln z - s(q_dz^d + t\hat{Q}(z)),\qquad y_t(z) = q_dz^d + t \hat{Q}(z), \qquad \omega_{0,2}^t(z_1, z_2) = \frac{{\mathrm d} z_1\otimes {\mathrm d}z_2}{(z_1-z_2)^2}.
\]
We assume that $q_1, \ldots, q_d, s \in \mathbb{C}$, with $sq_d \neq 0$. Denote by $\omega_{g,n}^t(z_1, \ldots, z_n)$ the correlation differentials arising from the topological recursion applied to ${\mathscr S}^t$ and define the free energies as
\[
F_{g,n}^t(z_1, \ldots, z_n) = \int_0^{z_n} \cdots \int_0^{z_1} \omega_{g,n}^t.
\]
At $t=0$, setting $q_d = 1$ recovers the orbifold Hurwitz spectral curve ${\mathscr S}^{[d]}$, while at $t = 1$, we recover the double Hurwitz spectral curve $\mathscr{S}$ described in Equation~\eqref{spdouble}. Consistently with previous notations, we drop the superscript $t$ in the case $t = 1$.

\Cref{thm:TR} establishes the fact that $\omega_{g,n} = \omega_{g,n}^{t = 1}$ is a generating series for double Hurwitz numbers. Eynard provides a general formula that expresses the correlation differentials associated to a spectral curve in terms of intersection theory on the moduli space of ``coloured'' stable curves, in which components of the curve are coloured by the branch points of the spectral curve~\cite{eyn14}. Although one could in theory apply this result to the spectral curve $\mathscr{S}$ directly, the result would not be entirely satisfactory, since a direct relation to the intersection theory on $\overline{\mathcal M}_{g,n}$ is preferable. Our strategy will instead start from~\Cref{eq:orbifoldelsv} and use the deformation parameter $t$ as a way to interpolate between the case of ${\mathscr S}^{[d]}$ previously understood in~\cite{lew-pop-sha-zvo17} and~${\mathscr S}^t$.

If we use the map
\[
\begin{array}{rcl} \mathscr{C}^{t = 0} & \longrightarrow & \mathscr{C}^{[d]} \\
z & \longmapsto & \tilde{z} = (q_ds)^{1/d}z
\end{array}
\]
to identify their respective domains, the spectral curves $\mathscr{S}^{t = 0}$ and ${\mathscr S}^{[d]}$ are related by the rescalings
\[
x_{0} = x_{[d]} - \frac{\ln(q_ds)}{d}, \qquad y_{0} = \frac{y_{[d]}}{s} \qquad \omega_{0,2}^{0} = \omega_{0,2}^{[d]},
\]
while the corresponding specialisations of the rational functions of~\Cref{defnugfngu} are related by
\[
\hat{\phi}_{m}^{a,0} = (q_ds)^{-a/d}\, \hat{\phi}_{m}^{a,[d]}.
\]
The homogeneity property of topological recursion under rescaling yields the relations
\begin{equation}
\label{compar0d} \omega_{g,n}^0 = s^{2g-2+n}\,\omega_{g,n}^{[d]} \qquad \text{and} \qquad
F_{g,n}^0  = s^{2g-2+n} \, F_{g,n}^{[d]}.
\end{equation}

If $s$ is chosen small enough relative to $q_1, \ldots, q_d$, the branch points of the deformed curve remain simple, so the family ${\mathscr S}^t$ is smooth for $t$ in a neighbourhood $\mathcal{T}$ of $[0,1]$. Therefore, $F_{g,n}^t(z_1,\ldots,z_n)$ is an analytic function of $t \in {\mathcal T}$ and we can compute its value at $t = 1$ by the Taylor series
\begin{equation} \label{eq:phiseries}
F_{g,n}(z_1',\ldots,z_n') = F_{g,n}^{t = 1}(z_1', \ldots, z_n') = \sum_{\ell \geq 0} \frac{1}{\ell!} \big(\partial_{t}^{\ell} F_{g,n}^t(z_1, \ldots, z_n)\big)\big|_{t = 0},
\end{equation}
where the $t$-derivatives are computed while keeping $x_t(z_i) = x_{1}(z_i')$ fixed.

A result of Eynard and Orantin~\cite[Theorem 5.1]{eyn-ora07} allows one to compute these derivatives with respect to $t$. One should first find an analytic function $f(w)$ satisfying
\[
\big(\partial_{t} y_t(z)\big)\mathrm{d}x_t(z) -\big(\partial_{t}x_t(z)\big)\mathrm{d}y_t(z) = -\mathop{\mathrm{Res}}_{w=\infty} \omega_{0,2}^t(z, w) \, f(w),
\]
where the $t$-derivatives are here computed at fixed $z$. It is easy to see that one can take
\[
f(w) = \sum_{j=1}^{d-1} \frac{q_j}{j} w^j.
\]
The result of Eynard and Orantin then implies that\footnote{Beware that \cite{eyn-ora07} has an opposite sign convention in the residue formula defining $\omega_{g,n}$, which for us introduces the sign in \eqref{vareq}.}
\begin{equation}
\label{vareq}
\partial_{t}\omega_{g,n}^t(z_1, \ldots, z_n) = -\mathop{\mathrm{Res}}_{z = \infty} \omega_{g,n+1}^t(z_1, \ldots, z_n,z)\,f(z),
\end{equation}
where the $t$-derivative is taken for fixed $x_t(z_i)$. This feature leads to a complication when one wants to compute the higher derivatives: they will also hit the variable of $f(z)$ because we have to work at fixed $x_t(z)$.

By successive applications of the chain rule, we obtain for $\ell \geq 0$
\begin{equation}
\label{onelll}
\begin{split} \frac{\partial_{t}^{\ell}\omega_{g,n}^{t}(z_1,\ldots,z_n)}{\ell!} & = \sum_{k = 1}^{\ell}  \sum_{\substack{l_1,\ldots,l_k \geq 0 \\ k + \sum_{\kappa} l_{\kappa} = \ell}} \frac{(-1)^{k}}{(l_{k} + 1)(l_k + l_{k - 1} + 2)\cdots (l_{k} + \cdots + l_{1} + k)} \\
& \qquad \times \mathop{\mathrm{Res}}_{w_1 = \infty} \cdots \mathop{\mathrm{Res}}_{w_{k} = \infty} \bigg( \prod_{\kappa = 1}^{k} \frac{\partial_{t}^{l_{\kappa}}f(w_{\kappa})}{l_{\kappa}!}\bigg)\,\omega_{g,n + k}^{t}(z_1,\ldots,z_n,w_{1},\ldots,w_{k}),  
\end{split}
\end{equation}
where the $t$-derivatives are taken for fixed $x_t(z_i)$ and  $x_t(w_{\kappa})$. Since the second line is invariant under permutations of the $l_{\kappa}$, we can symmetrise this formula using, for $a_1,\ldots,a_{k} > 0$,
\begin{equation*}
\begin{split}
\frac{1}{k!} \sum_{\pi \in \mathfrak{S}_{k}} \frac{1}{a_{\pi(1)}(a_{\pi(1)} + a_{\pi(2)})\cdots (a_{\pi(1)} + \cdots + a_{\pi(k)})} & = \sum_{\pi \in \mathfrak{S}_{k}} \int_{\substack{0 < y_1 < \cdots < y_k \\ \sum_{\kappa} a_{\pi(\kappa)} y_{\kappa} \leq 1}} \prod_{\kappa = 1}^{k} \dd y_{\kappa} \\
& = \int_{\substack{y_{1},\ldots,y_k > 0 \\ \sum_{\kappa} a_{\kappa}y_{\kappa} \leq 1}} \prod_{\kappa = 1}^{k} \dd y_{\kappa} = \frac{1}{k!\prod_{\kappa= 1}^{k} a_{\kappa}}.
\end{split}
\end{equation*}
Using this relation with $a_\kappa = l_\kappa + 1$, we obtain
\[
\frac{\partial_{t}^{\ell}\omega_{g,n}^{t}(z_1,\ldots,z_n)}{\ell!} = \sum_{k= 1}^{\ell} \frac{(-1)^{k}}{k!} \sum_{\substack{l_1,\ldots,l_k \geq 0 \\ k + \sum_{\kappa} l_{\kappa} = \ell}} \mathop{\mathrm{Res}}_{w_1 = \infty} \cdots \mathop{{\rm Res}}_{w_k = \infty} \bigg(\prod_{\kappa = 1}^{k} \frac{\partial_{t}^{l_{\kappa}} f(w_{\kappa})}{(l_{\kappa} + 1)!}\bigg)\,\omega_{g,n + k}^{t}(z_1,\ldots,z_n,w_1,\ldots,w_k).
\]

We eventually want to set $t = 0$ so that the right-hand side involves $\omega_{g,n + k}^0$ and is related to $\omega_{g,n + k}^{[d]}$ by \eqref{compar0d}. Introducing 
\[
f^{(l)}(z) := \partial_{t}^{l}f(z)\big|_{t = 0},
\]
where the derivative is taken at $x_t(z)$ fixed before setting $t = 0$, we deduce after integration by parts that
\begin{equation}
\label{eq:derivatives} \begin{split}
& \quad \frac{1}{\ell!} \big(\partial_{t}^{\ell} F_{g,n}^t(z_1,\ldots,z_n)\big)|_{t = 0} \\
&= \sum_{k= 1}^{\ell} \sum_{\substack{l_1,\ldots,l_{k} \geq 0 \\ k + \sum_{\kappa} l_{\kappa} = \ell}} \frac{1}{k!} \mathop{\mathrm{Res}}_{w_1 = \infty} \cdots \mathop{\mathrm{Res}}_{w_k = \infty} F_{g,n+k}^0(z_1,\ldots,z_n,w_1, \ldots, w_k) \bigotimes_{\kappa=1}^k \mathrm{d}\bigg(\frac{f^{(l_{\kappa})}(w_{\kappa})}{(l_{\kappa} + 1)!}\bigg)  \\
&= \sum_{k = 1}^{\ell} \sum_{\substack{l_1,\ldots,l_{k} \geq 0 \\ k + \sum_{\kappa} l_{\kappa} = \ell}} \frac{s^{2g-2+n+k}}{k!} \mathop{\mathrm{Res}}_{w_1 = \infty} \cdots \mathop{\mathrm{Res}}_{w_k = \infty} F_{g,n+k}^{[d]}(\tilde{z}_1,\ldots,\tilde{z}_n, \tilde{w}_1, \ldots, \tilde{w}_k) \bigotimes_{\kappa=1}^k \mathrm{d}\bigg(\frac{f^{(l_{\kappa})}(w_{\kappa})}{(l_{\kappa} + 1)!}\bigg).
\end{split}
\end{equation} 
 where we recall that $\tilde{z} = (q_ds)^{1/d}z$ and we impose the same relation between $w$ and $\tilde{w}$. Since $F_{g,n+k}^{[d]}$ can be decomposed on the basis $a^{-1}\hat{\phi}_{m}^{a,[d]}(\tilde{z})$ via~\Cref{eq:orbifoldelsv}, we need to compute
\begin{equation}
\label{Tamlel} T_{a,m}^{(l)} := \mathop{\mathrm{Res}}_{z=\infty} a^{-1}\hat{\phi}_{m}^{a,[d]}(\tilde{z})\,\mathrm{d}\big(f^{(l)}(z)\big).
\end{equation}
Since $\hat{\phi}_{m}^{a,[d]}(\tilde{z}) = O(1)$ when $z \rightarrow \infty$, we only need the expansion of $f^{(l)}(z)$  up to $O(1)$ when $z \rightarrow \infty$. This is achieved by the next lemmata.

\begin{lemma}
\label{Qlemma} We have
\[
f^{(l)}(z) = \sum_{j = 1}^{d - 1} \frac{\mathcal{Q}_{j}^{(l)}}{d-j}\,z^{d - j} + O(1),\qquad z \rightarrow \infty
\]
with
\[
\frac{\mathcal{Q}_{j}^{(l)}}{(l + 1)!} = \frac{(-1)^{l}}{q_{d}^{l}}\,[1 - j/d]_{l} \sum_{\substack{\rho \in \mathscr{P}_{d - 1} \\ \ell(\rho) = l + 1 \\ |\rho| = j}} \frac{\vec{q}_{\check{\rho}}}{|{\rm Aut}(\rho)|}.
\]
Here, $\mathscr{P}_{d - 1}$ is the set of partitions whose parts are at most $d - 1$ and we adopt the notations introduced in the statement of \cref{thm:ELSV}. In particular, $\mathcal{Q}_{j}^{(l)} = 0$ for $l > j$.
\end{lemma}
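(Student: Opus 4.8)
The plan is to recognise the quantities $f^{(l)}(z)$ as the Taylor coefficients in $t$ of $f$ evaluated along the flow $t\mapsto z(t)$ that keeps $x_t(z(t))$ fixed, and then to compute them by Lagrange inversion. Write $x_0(\zeta)=\ln\zeta - sq_d\zeta^d$ for the value at $t=0$. The constraint $x_t(z(t))=x_0(z)$ reads $st\,\hat{Q}(z(t)) = x_0(z(t))-x_0(z)$, so with $\zeta=z(t)$ the parameter $\tau:=st$ is the analytic function $\Theta(\zeta):=\big(x_0(\zeta)-x_0(z)\big)/\hat{Q}(\zeta)$, which vanishes simply at $\zeta=z$. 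Since $\sum_{l\ge0}\frac{t^l}{l!}f^{(l)}(z)=f(z(t))$ and $f'(\zeta)=\hat{Q}(\zeta)/\zeta$, Lagrange--B\"urmann inversion applied to $\tau=\Theta(\zeta)$ gives, for $l\ge1$, the closed residue formula
\[
f^{(l)}(z) = s^l\,(l-1)!\,\Res_{\zeta=z}\frac{\hat{Q}(\zeta)^{l+1}}{\zeta\,\big(x_0(\zeta)-x_0(z)\big)^l}\,\dd\zeta .
\]
The case $l=0$ is just $f^{(0)}=f$, which already matches the claim once one sets $k=d-j$.

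Next I would extract the part of $f^{(l)}(z)$ that grows as $z\to\infty$. Rescaling $\zeta=z\xi$ moves the pole to $\xi=1$ and produces $x_0(z\xi)-x_0(z)=\ln\xi - sq_d z^d(\xi^d-1)$. Expanding
\[
\frac{1}{\big(\ln\xi - sq_d z^d(\xi^d-1)\big)^l} = \frac{(-1)^l}{(sq_d)^l z^{ld}}\sum_{p\ge0}\binom{l+p-1}{p}\frac{(\ln\xi)^p}{(sq_d)^p z^{dp}\,(\xi^d-1)^{l+p}}
\]
and using that $\hat{Q}(z\xi)^{l+1}$ has $z$-degree at most $(l+1)(d-1)$, a power count shows that every term with $p\ge1$ contributes only powers $z^{\le -(l+1)}$ after taking $\Res_{\xi=1}$, so only the $p=0$ term survives in the positive-power part. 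Writing $\hat{Q}(z\xi)^{l+1}=\sum_M A_M z^M\xi^M$ with $A_M=[y^M]\hat{Q}(y)^{l+1}$, this yields
\[
\big[f^{(l)}(z)\big]_+ = \frac{(-1)^l(l-1)!}{q_d^l}\sum_{M>ld} A_M\,R_M\,z^{M-ld}, \qquad R_M:=\Res_{\xi=1}\frac{\xi^{M-1}}{(\xi^d-1)^l}\,\dd\xi .
\]

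The residue $R_M$ is then evaluated in closed form by the substitution $v=\xi^d$, giving $R_M=\frac{1}{d(l-1)!}\prod_{i=1}^{l-1}(M/d-i)$; for $M=ld+k$ this equals $\frac{[k/d]_l}{k\,(l-1)!}$. Finally, setting $k=d-j$ (so that $M=(l+1)d-j$ and $[k/d]_l=[1-j/d]_l$) and recognising the identity
\[
\sum_{\substack{\rho\in\mathscr{P}_{d-1}\\ \ell(\rho)=l+1,\ |\rho|=j}}\frac{\vec{q}_{\check{\rho}}}{|\Aut{\rho}|} = [\,y^{(l+1)d-j}\,]\,\frac{\hat{Q}(y)^{l+1}}{(l+1)!} = \frac{A_{(l+1)d-j}}{(l+1)!}
\]
(obtained by reading off $\hat{Q}^{l+1}/(l+1)!$ monomial by monomial, with $c_i=d-\rho_i$) assembles the coefficient of $z^{d-j}$ into exactly $\mathcal{Q}_j^{(l)}/(d-j)$ with $\mathcal{Q}_j^{(l)}$ as stated. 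The vanishing $\mathcal{Q}_j^{(l)}=0$ for $l\ge j$ (in particular for $l>j$) follows since $(l+1)d-j$ then exceeds the top degree $(l+1)(d-1)$ of $\hat{Q}^{l+1}$, forcing $A_{(l+1)d-j}=0$.

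I expect the main obstacle to be the extraction step: justifying rigorously that the logarithmic corrections (the $p\ge1$ terms, coming from the $\ln\xi$ in $x_0(z\xi)-x_0(z)$) do not affect the positive-power part. This is precisely what leaves the clean factor $(\xi^d-1)^{-l}$ and what produces the Pochhammer factor $[1-j/d]_l$. Setting up the Lagrange--B\"urmann inversion compatibly with the fixed-$x$ constraint, and tracking the Jacobian of $\zeta=z\xi$ in the residue, also requires care, but once the power count isolating $p=0$ is in place the remaining computation is the elementary residue $R_M$ above.
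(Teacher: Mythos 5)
Your proposal is correct --- every step checks out --- and it reaches the formula by a genuinely different organisation than the paper's proof, even though both ultimately rest on Lagrange inversion applied to the same implicit relation $x_t(Z_t(z)) = x_0(z)$. The paper inverts in the space variable at $z = \infty$: it disposes of the logarithm once and for all at the level of the defining equation (since $\ln(Z_t(z)/z) = O(z^{-1})$, the relation truncates to the algebraic equation $Z_t(z)^d + \tfrac{t}{q_d}\hat{Q}(Z_t(z)) = z^d + O(1)$), then extracts $\sum_{l \geq 0} \mathcal{Q}_j^{(l)} t^l/l! = -\Res_{z=\infty} (d-j)\, z^{-(d-j+1)} f(Z_t(z))\,\dd z$ by substituting $\xi = Z_t(z)$, the Pochhammer $[1 - j/d]_l$ emerging from the binomial series for $\big(1 + \sum_b t q_{d-b} q_d^{-1} \xi^{-b}\big)^{-(d-j)/d}$. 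You instead invert in the deformation time $t$ at fixed $z$, which yields the exact residue representation $f^{(l)}(z) = s^l (l-1)!\,\Res_{\zeta = z} \hat{Q}(\zeta)^{l+1}\, \zeta^{-1} (x_0(\zeta)-x_0(z))^{-l}\, \dd\zeta$ \emph{before} any truncation, and you handle the logarithm only afterwards via the power count $d - 1 - l - dp \leq -(l+1)$ for $p \geq 1$; your Pochhammer comes instead from the elementary residue $R_{ld+k} = \tfrac{1}{d}\binom{(ld+k)/d - 1}{l-1} = [k/d]_l/(k\,(l-1)!)$. I verified the Lagrange--B\"urmann constant (the prefactor $s^l$ cancelling the $(sq_d)^{-l}$ of the $p=0$ term, leaving $q_d^{-l}$), the evaluation of $R_M$ under $v = \xi^d$, and the repackaging $A_{(l+1)d-j}/(l+1)! = \sum_{\rho} \vec{q}_{\check{\rho}}/|\Aut{\rho}|$ via $c_i = d - \rho_i$; all are correct, and your vanishing for $l \geq j$ is a mild sharpening of the stated $l > j$, equally visible in the paper's formula since $j$ admits no partition into $l+1$ positive parts when $l+1 > j$. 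The analytic point you flag is the only loose end, and it is routine: on a fixed circle $|\xi - 1| = \epsilon$ the ratio $\ln\xi/(\xi^d-1)$ is holomorphic and bounded, so for $|z|$ large your geometric expansion converges uniformly on the contour and termwise residue extraction is legitimate; likewise, the simple zero of $\Theta$ at $\zeta = z$ only requires $z$ away from the finitely many zeroes of $\dd x_0$ and of $\hat{Q}$, which is harmless for asymptotics at $z = \infty$ (and $\hat{Q} \equiv 0$ is trivial on both sides). What your route buys is a closed finite-$z$ formula for each $f^{(l)}$, in the spirit of the Eynard--Orantin variational calculus already used in this section; what the paper's route buys is a shorter computation with no convergence discussion, at the price of justifying the truncation up front.
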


\begin{proof}
It is easy to see that there exists a unique $Z_t(z) \in z + \mathbb{K}[[z^{-1}, t]]$ determined by $x_0(z) = x_t(Z_t(z))$, that is
\[
\ln Z_t(z) - s\bigg(q_dZ_t(z)^{d} + \sum_{j = 1}^{d - 1} tq_{j} Z_t(z)^{j}\bigg) = \ln z - sq_dz^{d}.
\]
We would like to compute the series
\begin{equation}
\label{thefunZ} \sum_{l \geq 0} \frac{f^{(l)}(z)\,t^{l}}{l!} = \sum_{j = 1}^{d - 1} \frac{q_j Z_t(z)^{j}}{j}
\end{equation}
only up to $O(1)$ when $z \rightarrow \infty$.  For this, it is enough to know $Z_t(z)$ up to $O(z^{1 - d})$, and since $\ln(Z_t(z)/z) = O(z^{-1})$, this expansion is in fact determined by the solution of
\begin{equation}
\label{thefunZT} Z_t(z)^{d} + \sum_{j = 1}^{d - 1} \frac{tq_j}{q_d}\,Z_t(z)^{j} = z^{d} + O(1).
\end{equation}

Let us call $\mathcal{Q}_{j}^{(l)}$ the coefficient of $\frac{z^{d - j}}{d - j}\cdot \frac{t^{l}}{l!}$ in \eqref{thefunZ}, for $j \in \{1,\ldots,d - 1\}$ and $l \geq 0$. We compute by Lagrange inversion
\begin{equation*}
\begin{split}
\sum_{l \geq 0} \frac{\mathcal{Q}_{j}^{(l)}}{l!}\,t^{l} & = -\mathop{\mathrm{Res}}_{z = \infty} \frac{(d - j)\dd z}{z^{d - j + 1}} \bigg(\sum_{a = 1}^{d - 1} \frac{q_a\,Z_t(z)^{a}}{a}\bigg) \\
& = - \sum_{a = 1}^{d - 1} q_a \mathop{\mathrm{Res}}_{z = \infty} z^{-(d - j)}Z_t(z)^{a - 1}\dd Z_t(z) \\
& = - \sum_{a = 1}^{d - 1} q_{d - a} \mathop{\mathrm{Res}}_{\xi = \infty} \xi^{-(d - j)}\bigg(1 + \sum_{b = 1}^{d - 1} \frac{tq_{d - b}}{q_d}\,\xi^{-b}\bigg)^{-(d - j)/d} \xi^{d - a - 1}\dd \xi \\
& =\sum_{a = 1}^{d - 1} q_{d - a}\,[\xi^{a - j}] \bigg(1 + \sum_{b = 1}^{d - 1} \frac{tq_{d - b}}{q_d} \xi^{-b}\bigg)^{-(d - j)/d} \\
& =  \sum_{a = 1}^{d - 1} q_{d - a} \sum_{\substack{p_1,\ldots,p_{d - 1} \geq 0 \\ \sum_{i} ip_i = j - a}} \frac{(-t/q_d)^{\sum_{i} p_i}}{p_1!\cdots p_{d - 1}!} \big[1 - j/d\big]_{\sum_{i} p_i} \prod_{i = 1}^{d - 1} q_{d - i}^{p_i}.
\end{split}
\end{equation*}

We then replace the sum over $(a,p_1,\ldots,p_{d - 1})$ by a sum over the partition $\rho = (1^{\overline{p}_1} \cdots (d - 1)^{\overline{p}_{d - 1}}) \in \mathscr{P}_{d - 1}$ with $\overline{p}_{i} = p_i + \delta_{i,a}$. Note that $d - \rho_i$ are the parts of the complement partition $\check{\rho}$. 
\begin{equation*}
\begin{split}
\sum_{l \geq 0} \frac{\mathcal{Q}_{j}^{(l)}}{l!}\,t^{l} & = \sum_{\substack{\rho \in \mathscr{P}_{d - 1} \\ |\rho| = j}} (-t/q_d)^{\ell(\rho) - 1}\,\cdot \big[1 - |\rho|/d\big]_{\ell(\rho) - 1}\,\frac{\vec{q}_{\check{\rho}}}{|{\rm Aut}(\rho)|} \bigg(\sum_{i = 1}^{d - 1} \overline{p}_i\bigg) \\
& = \sum_{\substack{\rho \in \mathscr{P}_{d - 1} \\ |\rho| = j}} (-t/q_d)^{\ell(\rho) - 1}\,\big[1 - |\rho|/d\big]_{\ell(\rho) - 1}\frac{\ell(\rho)\,\vec{q}_{\check{\rho}}}{|{\rm Aut}(\rho)|} .
\end{split} 
\end{equation*}
Extracting the coefficient of $t^{l}/l!$ restricts the sum to $\ell(\rho) = l + 1$ and yields the announced result. 
\end{proof}

\begin{lemma}
\label{Tlemma} For $a \in \{1,\ldots,d\}$ and $m,l \geq 0$, we have
\[
T_{a,m}^{(l)} = \delta_{m,0} \cdot \begin{cases}
\dfrac{\mathcal{Q}^{(l)}_{a}}{d (q_ds)^{(d-a)/d}}, & \text{if } a \in \{1,\ldots,d - 1\}, \\
\,\,\,0, & \text{if } a = d.
\end{cases}
\]
\end{lemma}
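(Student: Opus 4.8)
The plan is to compute $T_{a,m}^{(l)}$ directly from its definition \eqref{Tamlel} as a residue at infinity, reading off the relevant coefficients from the asymptotic expansions of the two factors. First I would record the behaviour of $\hat\phi_m^{a,[d]}$ near $z=\infty$. Since $\partial_{x_{[d]}} = \frac{z}{1-dz^d}\partial_z$ sends a monomial $z^p$ to $\frac{p z^p}{1-dz^d} = -\frac{p}{d}z^{p-d} + O(z^{p-2d})$, an induction on $m$ using \eqref{phidorb} shows that $\hat\phi_m^{a,[d]}(z)$ is regular at $z=\infty$ with an expansion
\[
\hat\phi_m^{a,[d]}(z) = \sum_{k\ge 0} c_{a,m,k}\,z^{a-(m+1)d-kd},
\]
all of whose exponents are congruent to $a$ modulo $d$ and bounded above by $a-(m+1)d$. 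In particular $\hat\phi_0^{a,[d]}(z) = \frac{a z^a}{1-dz^d} = -\frac{a}{d}z^{a-d}+O(z^{a-2d})$, so the leading coefficient is $c_{a,0,0} = -\tfrac{a}{d}$. Replacing $z$ by $\tilde z = (q_ds)^{1/d}z$ leaves each exponent of $z$ unchanged and merely rescales the $k$-th coefficient by $(q_ds)^{(a-(m+1)d-kd)/d}$.

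Next I would feed in \cref{Qlemma}, which gives $\mathrm{d}\big(f^{(l)}(z)\big) = \sum_{j=1}^{d-1}\mathcal{Q}_j^{(l)} z^{d-j-1}\,\mathrm{d}z + O(z^{-2})\,\mathrm{d}z$ as $z\to\infty$, the exponents $d-j-1$ of the polynomial part ranging over $\{0,1,\ldots,d-2\}$. The residue at infinity extracts (up to sign) the coefficient of $z^{-1}\mathrm{d}z$, so a term of $a^{-1}\hat\phi_m^{a,[d]}(\tilde z)$ of exponent $a-(m+1)d-kd$ pairs with the $j$-th term of $\mathrm{d}f^{(l)}$ exactly when $a-(m+1)d-kd + d-j-1 = -1$, i.e. $a-j = (m+k)d$. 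Since $a\in\{1,\ldots,d\}$ and $j\in\{1,\ldots,d-1\}$ force $a-j\in\{-(d-2),\ldots,d-1\}$, the only solution with $m,k\ge 0$ is $m+k=0$ and $a=j$, which in particular requires $a\le d-1$. A short bookkeeping check shows that the $O(z^{-2})\mathrm{d}z$ remainder and the subleading $\hat\phi$-terms (those with $k\ge 1$ or $m\ge 1$) always produce total exponent $\le -2$, hence never contribute.

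Finally I would evaluate the surviving term $m=k=0$, $j=a$ for $a\in\{1,\ldots,d-1\}$: the product of $a^{-1}c_{a,0,0}(q_ds)^{(a-d)/d}z^{a-d} = -\tfrac{1}{d}(q_ds)^{-(d-a)/d}z^{a-d}$ with $\mathcal{Q}_a^{(l)}z^{d-a-1}\mathrm{d}z$ equals $-\tfrac1d(q_ds)^{-(d-a)/d}\mathcal{Q}_a^{(l)}\,z^{-1}\mathrm{d}z$, and applying $\Res_{z=\infty}(z^{-1}\mathrm{d}z)=-1$ gives $T_{a,0}^{(l)} = \dfrac{\mathcal{Q}_a^{(l)}}{d(q_ds)^{(d-a)/d}}$. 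For $a=d$ the congruence $a-j=(m+k)d$ has no admissible solution, so $T_{d,m}^{(l)}=0$, while the vanishing for $m\ge 1$ has already been established in all cases; together these produce the global factor $\delta_{m,0}$ and complete the proof. The only delicate points are maintaining the correct sign convention for the residue at infinity — the same one used in the proof of \cref{Qlemma} — and checking that no subleading terms sneak into the $z^{-1}$ coefficient; both reduce to the elementary degree-and-congruence count above, so I do not anticipate a substantive obstacle.
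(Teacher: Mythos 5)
Your proposal is correct and takes essentially the same route as the paper: control the behaviour of $\hat{\phi}_m^{a,[d]}$ at $z = \infty$ by induction on $m$ (the paper via the closed rational form $\tilde{z}^a p_{a,m}(\tilde{z}^d)/(1 - d\tilde{z}^d)^{2m+1}$, you via the Laurent exponents $a - (m+1)d - kd$, which are equivalent), then pair against the expansion of $\mathrm{d}f^{(l)}$ from \cref{Qlemma} and extract the residue at infinity. Your degree-and-congruence count cleanly reproduces both the paper's vanishing $T_{a,m}^{(l)} = 0$ for $m \geq 1$ (and for $a = d$) and its evaluation $T_{a,0}^{(l)} = -\sum_{j=1}^{d-1} \mathcal{Q}_j^{(l)}\,[z^{-(d-j)}]\,\frac{(q_ds)^{a/d}z^a}{1 - dq_ds z^d}$, with matching sign conventions and the correct final constant.
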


\begin{proof}
To compute \eqref{Tamlel}, one first checks by induction that
\[
\hat{\phi}_m^{a,[d]}(\tilde{z}) = \frac{\tilde{z}^ap_{a,m}(\tilde{z}^{d})}{(1 - d\tilde{z}^{d})^{2m + 1}}
\]
for some polynomial $p_{a,m}$ that has degree $m$ if $a \in \{1,\ldots,d - 1\}$ and degree $m - 1$ if $a = d$. It follows that
\begin{equation}
\label{eq:Rbk}
T_{a,m}^{(l)} = 0\qquad {\rm for}\,\,m \geq 1.
\end{equation}
Equation~\eqref{phidorb} allows us to directly calculate $T_{a,0}^{(l)}$ and yields
\begin{equation}
\label{eq:Rb0}
T_{a,0}^{(l)} = - \sum_{j = 1}^{d - 1} \mathcal{Q}_{j}^{(l)} \,[z^{-(d - j)}]\,\frac{(q_d s)^{a/d} z^{a}}{1 - dq_ds z^{d}},
\end{equation}
Evaluating this gives the claim.
\end{proof}

We now substitute~\Cref{eq:derivatives} into the Taylor series of~\Cref{eq:phiseries} and use the expression for $F_{g,n+k}^{[d]}$ of ~\Cref{eq:orbifoldelsv}. The result is a finite sum as $\mathcal{Q}_{j}^{(l)} = 0$ for $l > j$:
\begin{equation*}
\begin{split}
F_{g,n}(z_1, \ldots, z_n) & = (ds)^{2g-2+n} \sum_{\substack{1 \leq a_1, \ldots, a_n \leq d \\ m_1, \ldots, m_n \geq 0}} \sum_{k \geq 0} \frac{1}{k!} \sum_{\substack{1 \leq b_1, \ldots, b_k \leq d-1 \\ l_1,\ldots,l_k \geq 0}}   d^{\sum_{i = 1}^n (a_i/d - m_i)} (ds)^{k} \\
& \quad \times \left( \int_{\overline{\mathcal M}_{g,n+k}} \Omega_{g;d-a,d-b}^{[d]} \prod_{i=1}^n \psi_i^{m_i} \right) \prod_{\kappa=1}^k \frac{T_{b_{\kappa},0}^{(l_{\kappa})}}{(l_{\kappa} + 1)!}  \prod_{i=1}^n  a_i^{-1}\hat{\phi}_{m_i}^{a_i,[d]}(\tilde{z}_i),
\end{split}
\end{equation*}
where we recall the identification
\begin{equation} \label{idprimed}
X(z_i) = (q_ds)^{-1/d}X_{[d]}(\tilde{z}_i).
\end{equation}
Inserting Lemma~\ref{Tlemma}, performing the substitution $b_i \mapsto d - b_i$ and rearranging yields the following equation.
\begin{equation} \label{eq:phiMgn}
\begin{split}
F_{g,n}(z_1, \ldots, z_n) & = (ds)^{2g-2+n} \sum_{\substack{1 \leq a_1, \ldots, a_n \leq d \\ m_1, \ldots, m_n \geq 0}} d^{\sum_{i = 1}^n (a_i/d -m_i)} \prod_{i=1}^n a_i^{-1}\hat{\phi}_{m_i}^{a_i,[d]}(\tilde{z}_i) \\
& \times \sum_{k \geq 0} \frac{1}{k!} \sum_{\substack{1 \leq b_1,\ldots,b_k \leq d - 1 \\ l_1,\ldots,l_k \geq 0}} \frac{(ds)^{\sum_{\kappa = 1}^{k} b_{\kappa}/d}}{q_{d}^{\sum_{\kappa = 1}^{k} (d - b_{\kappa})/d}}  \left( \int_{\overline{\mathcal M}_{g,n+k}} \Omega_{g;d-a,d - b}^{[d]} \prod_{i=1}^n \psi_i^{m_i} \right)\prod_{\kappa=1}^k \frac{\mathcal{Q}_{b_{\kappa}}^{(l_{\kappa})}}{(l_{\kappa} + 1)!}.
\end{split} 
\end{equation}

The double Hurwitz numbers are encoded in the series expansion of \eqref{eq:phiMgn} in the variable $X(z_i)$ at $z_i \rightarrow 0$, see~\Cref{thm:TR}. Taking into account the series expansion \eqref{thedexp} for $\hat{\phi}^{a,[d]}_m(\tilde{z}_i)$ in the variable $X_{[d]}(\tilde{z}_i)$ and the identification \eqref{idprimed}, we find 
\begin{equation}
\label{thefinalf}\begin{split}
&DH_{g,n}(\mu_1,\ldots,\mu_n) = (ds)^{2g - 2 + n} \prod_{i = 1}^n \frac{(\mu_i/d)^{\lfloor \mu_i /d \rfloor}}{\lfloor \mu_i /d \rfloor !} \\
& \times \!\!\!\! \sum_{\substack{k \geq 0 \\ l_1,\ldots,l_{k} \geq 0 \\ 1 \leq b_1,\ldots,b_{k} \leq d - 1}} \!\!\!\!\! \frac{1}{k!} (ds)^{\sum \mu_i/d + \sum b_{\kappa}/d} q_d^{\sum \mu_i/d - \sum (d - b_{\kappa})/d}
 \prod_{\kappa = 1}^{k} \frac{\mathcal{Q}_{b_{
\kappa}}^{(l_{\kappa})}}{(l_{\kappa} + 1)!} \bigg(\int_{\overline{\mathcal{M}}_{g,n+k}} \frac{\Omega^{[d]}_{g;-\overline{\mu},d - b}}{\prod_{i = 1}^n \big(1 - \frac{\mu_i}{d}\psi_i\big)} \bigg). 
\end{split}  
\end{equation}
Next, we substitute the decomposition from Lemma~\ref{Qlemma}, using the involution $\rho \mapsto \check{\rho}$ in the summation
\begin{equation}
\frac{\mathcal{Q}_{b}^{(l)}}{(l + 1)!} = \sum_{\substack{\rho \in \mathscr{P}_{d - 1} \\ |\check{\rho}| = b \\ \ell(\rho) = l + 1}} \frac{(-1)^{\ell(\rho) - 1}}{q_{d}^{-(\ell(\rho) - 1)}}\,\big[1 - |\check{\rho}|/d]_{\ell(\rho) - 1}\,\frac{\vec{q}_{\rho}}{|{\rm Aut}(\rho)|},
\end{equation}  
and collect the coefficient of a given monomial $q_1^{p_1}\cdots q_d^{p_d} = \vec{q}_{\lambda'} q_{d}^{p_d}$ that can occur for $p_1,\ldots,p_{d - 1} \geq 0$ and $p_{d} \in \mathbb{Z}$. The (possibly empty) partition $\lambda' = (1^{p_1}\cdots (d - 1)^{p_{d - 1}}) \in \mathscr{P}_{d - 1}$ is built from the concatenation of partitions $\rho^{(\kappa)}$ coming from the factors labelled by $\kappa \in \{1,\ldots,k\}$. We say that $\lambda = (1^{p_1}\cdots d^{p_d})$ where $p_d$ is allowed to be negative is an ``extended partition'' and we denote the set of such extended partitions by $\overline{\mathscr{P}}_d$. We also extend the usual notions in the obvious way
\[
\ell(\lambda) = \sum_{a = 1}^{d} p_a,\qquad |\lambda| = \sum_{a = 1}^{d} ap_a,\qquad \vec{q}_{\lambda} = \prod_{a = 1}^d q_a^{p_a}.
\]
In terms of the indices appearing in \eqref{thefinalf}, the extended partition $\lambda$ has the following characteristics
\begin{equation}
\label{charho}\begin{split}
p_{d} & = \sum_{i = 1}^n \frac{\mu_i}{d} - \sum_{\kappa = 1}^{k} \frac{d(l_{\kappa} + 1) - b_{\kappa}}{d}  = \frac{|\mu| - |\lambda'|}{d}, \\
\ell(\lambda) & = \sum_{\kappa = 1}^k (l_{\kappa} + 1) + p_{d} =  \sum_{i = 1}^n \frac{\mu_i}{d} + \sum_{\kappa = 1}^{k} \frac{b_{\kappa}}{d}, \\
|\lambda| & = |\mu|.
\end{split}
\end{equation}
The fact that $p_d$ is an integer comes from the vanishing of the Chiodo class in \eqref{thefinalf} when $\sum_{i = 1}^n \mu_i + \sum_{\kappa = 1}^{k} b_{\kappa}$ is not divisible by $d$. We deduce from \eqref{charho} the identification of the factor $(ds)^{2g - 2 + n + \ell(\lambda)}$, and we can write the result in a form closer to Theorem~\ref{thm:ELSV}, that is
\begin{equation*}
\label{thefinalfbis} 
\begin{split} 
DH_{g,n}(\mu_1,\ldots,\mu_n) & = \sum_{\substack{\lambda \in \overline{\mathscr{P}}_{d} \\ |\lambda| = |\mu|}} (ds)^{2g - 2 + n + \ell(\lambda)}\,\vec{q}_{\lambda}  \prod_{i = 1}^n \frac{(\mu_i/d)^{\lfloor \mu_i/d \rfloor}}{\lfloor \mu_i/d \rfloor !} \\ 
& \quad  \times \Bigg( \sum_{k = 0}^{\ell(\lambda')} \frac{(-1)^{\ell(\lambda') - k}}{k!} \sum_{\substack{\boldsymbol{\rho} \in (\mathscr{P}_{d - 1})^{k} \\ \sqcup_{\kappa} \rho^{(\kappa)} = \lambda' \\ |\check{\rho}^{(\kappa)}| \leq d - 1}}  \prod_{\kappa = 1}^{k} \frac{\big[\frac{d - |\check{\rho}^{(\kappa)}|}{d}\big]_{\ell(\rho^{(\kappa)}) - 1}}{|{\rm Aut}(\rho^{(\kappa)})|}  \int_{\overline{\mathcal{M}}_{g,n + k}} \frac{\Omega^{[d]}_{g;-\overline{\mu},d - |\check{\boldsymbol{\rho}}|}}{\prod_{i = 1}^n \big(1 - \frac{\mu_i}{d}\psi_i\big)} \Bigg).
\end{split}
\end{equation*} 
Here, we sum over tuples of partitions $\boldsymbol{\rho} = \big(\rho^{(1)},\ldots,\rho^{(k)}\big)$ whose concatenation $\sqcup_{\kappa} \rho^{(\kappa)}$ is $\lambda'$, and whose respective complement sizes $|\check{\rho}^{(\kappa)}|$ belong to $\{1,\ldots,d - 1\}$. We have denoted $d - |\check{\boldsymbol{\rho}}| = \big(d - |\check{\rho}^{(1)}|,\ldots,d - |\check{\rho}^{(k)}|\big)$. If $\lambda' = \emptyset$, there remains only the term $k = 0$ in the parentheses and it is equal to $1$. When $\lambda' \neq \emptyset$, the sum rather starts at $k \geq 1$. As the ``complement'' operation is compatible with the concatenation, we can equally write
\begin{equation*}
\begin{split} 
DH_{g,n}(\mu_1,\ldots,\mu_n) & = \sum_{\substack{\lambda \in \overline{\mathscr{P}}_{d} \\ |\lambda| = |\mu|}} (ds)^{2g - 2 + n + \ell(\lambda)}\,\vec{q}_{\lambda} \prod_{i = 1}^n \frac{(\mu_i/d)^{\lfloor \mu_i/d \rfloor}}{\lfloor \mu_i/d \rfloor !} \\ 
& \quad \times \Bigg(\sum_{k= 0}^{\ell(\lambda')} \frac{(-1)^{\ell(\lambda') - k}}{k!} \sum_{\substack{\boldsymbol{\rho} \in (\tilde{\mathscr{P}}_{d - 1})^{k} \\ \sqcup_{\kappa} \rho^{(\kappa)} = \check{\lambda'}}} \prod_{\kappa = 1}^{k} \frac{\big[\frac{d - |\rho^{(\kappa)}|}{d}\big]_{\ell(\rho^{(\kappa)}) - 1}}{|{\rm Aut}(\rho^{(\kappa)})|} \int_{\overline{\mathcal{M}}_{g,n + k}} \frac{\Omega^{[d]}_{g;-\overline{\mu},d - |\boldsymbol{\rho}|}}{\prod_{i = 1}^n \big(1 - \frac{\mu_i}{d}\psi_i\big)} \Bigg),
\end{split}
\end{equation*} 
where $\tilde{\mathscr{P}}_{d - 1}$ is the set of partitions of total size $\leq d - 1$.

One should think of the first term of the sum as $k = \ell(\lambda')$. Up to ordering which is killed by $|{\rm Aut}(\lambda')|/k!$ we must have $\rho^{(\kappa)} = \check{\lambda'}_{\kappa}$ and thus it is equal to
\[
\frac{1}{|{\rm Aut}(\lambda')|} \int_{\overline{\mathcal{M}}_{g,n + \ell(\lambda')}} \frac{\Omega^{[d]}_{g;-\overline{\mu},\lambda'}}{\prod_{i = 1}^n \big(1 - \frac{\mu_i}{d}\psi_i\big)}.
\]
The following lemma says that the ``boundedness assumption'' from \cite{joh-pan-tse11} is precisely the condition under which this is the only surviving term.

\begin{lemma}
If $\lambda' \in \mathscr{P}_{d - 1}$ is such that $\max_{i \neq j} (\lambda_i + \lambda_j) \leq d$, all terms in the parentheses are zero except for the $k = \ell(\lambda')$ term.
\end{lemma}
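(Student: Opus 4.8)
The plan is to show that the boundedness hypothesis makes the index set of the inner sum empty for every $k < \ell(\lambda')$, so that only the top term $k = \ell(\lambda')$ can survive; the whole argument is a counting statement about how the parts of $\check{\lambda'}$ may be distributed among the components $\rho^{(1)},\ldots,\rho^{(k)}$. First I would record the parts of $\check{\lambda'}$: since $\lambda' \in \mathscr{P}_{d-1}$, its complement $\check{\lambda'}$ has exactly $\ell(\lambda')$ parts, namely $d-\lambda'_1,\ldots,d-\lambda'_{\ell(\lambda')}$, each lying in $\{1,\ldots,d-1\}$. The key observation is that any two of these parts, indexed by $i \neq j$, sum to
\[
(d-\lambda'_i)+(d-\lambda'_j) = 2d-(\lambda'_i+\lambda'_j) \geq 2d-d = d,
\]
where the inequality is exactly the boundedness assumption $\max_{i\neq j}(\lambda'_i+\lambda'_j)\leq d$. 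Because boundedness ranges over \emph{distinct} indices $i\neq j$, it also covers pairs of equal parts of $\lambda'$, so repeated parts of $\check{\lambda'}$ cause no difficulty.

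Next I would exploit the size constraint $|\rho^{(\kappa)}|\leq d-1$ built into $\tilde{\mathscr{P}}_{d-1}$. In the sum defining the parenthesised expression the components $\rho^{(\kappa)}$ are nonempty --- each arises from a factor $\mathcal{Q}^{(l_\kappa)}_{b_\kappa}$ with $\ell(\rho^{(\kappa)}) = l_\kappa + 1 \geq 1$ in~\Cref{Qlemma}, and the complement operation used to pass to the form $\sqcup_\kappa \rho^{(\kappa)} = \check{\lambda'}$ preserves nonemptiness --- and their concatenation is $\check{\lambda'}$, so each $\rho^{(\kappa)}$ is a nonempty sub-multiset of the parts of $\check{\lambda'}$. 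If some $\rho^{(\kappa)}$ contained two or more parts of $\check{\lambda'}$, then by the displayed inequality we would have $|\rho^{(\kappa)}| \geq d$, contradicting $|\rho^{(\kappa)}|\leq d-1$. Hence every $\rho^{(\kappa)}$ consists of exactly one part of $\check{\lambda'}$.

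Finally, since the $\ell(\lambda')$ parts of $\check{\lambda'}$ must be distributed among the $k$ nonempty singleton components, the value $k = \ell(\lambda')$ is forced. Consequently the index set $\{\boldsymbol{\rho}\in(\tilde{\mathscr{P}}_{d-1})^{k} : \sqcup_\kappa \rho^{(\kappa)} = \check{\lambda'}\}$ is empty whenever $k < \ell(\lambda')$, so each of those terms vanishes and only the $k=\ell(\lambda')$ term --- which equals \eqref{ksumsimp} --- remains. I expect no serious obstacle here; the only points needing care are that boundedness, being stated over distinct indices, correctly handles repeated parts of $\lambda'$, and that the components $\rho^{(\kappa)}$ are genuinely nonempty so that the pigeonhole step yields $k=\ell(\lambda')$ rather than a smaller value padded by empty pieces.
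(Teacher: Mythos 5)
Your proof is correct and follows essentially the same route as the paper: both rest on the observation that boundedness $\max_{i \neq j}(\lambda'_i + \lambda'_j) \leq d$ is equivalent to any two parts of $\check{\lambda'}$ summing to at least $d$, which together with $|\rho^{(\kappa)}| \leq d-1$ forces every component $\rho^{(\kappa)}$ to be a singleton, so that a pigeonhole count kills all terms with $k < \ell(\lambda')$. The paper phrases this contrapositively (for $k \leq \ell(\lambda')-1$ some $\rho^{(\kappa)}$ has at least two parts, hence $|\rho^{(\kappa)}| \geq d$, which is disallowed), but the content is identical, and your extra care about repeated parts and nonempty components is sound though not strictly needed.
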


\begin{proof}
Any summand with $k \leq \ell(\lambda') - 1$ has at least one $\kappa \in \{1,\ldots,k\}$ such that $\ell(\rho^{(\kappa)}) \geq 2$. The condition is equivalent to $\min_{i \neq j} (\check{\lambda'_i} + \check{\lambda'_j}) \geq d$. Therefore $|\rho^{(\kappa)}| \geq d$ but these are not allowed in the sum.
\end{proof}

From their definition, the double Hurwitz numbers for fixed $\mu_1,\ldots,\mu_n$ are polynomials in $q_1,\ldots,q_d$ and the power of $s$ associated with a monomial of degree $\ell$ in the $q$-variables must be $2g - 2 + n + \ell$. In \eqref{thefinalf}, we see that the power of $s$ is as desired but there could be terms with negative powers of $q_d$, so the corresponding coefficient must vanish. This gives us the following linear relation between Chiodo integrals. For any $n,\ell \geq 1$ and any partitions $\mu$ of length $n$ and $\lambda'$ of length $\ell$, we have
\[
|\lambda'| \geq d + \sum_{i = 1}^n \mu_i \quad \Longrightarrow \quad \sum_{k = 1}^{\ell(\lambda')} \frac{(-1)^{\ell(\lambda') - k}}{k!} \sum_{\substack{\boldsymbol{\rho} \in (\tilde{\mathscr{P}}_{d - 1})^{k} \\ \sqcup_{\kappa} \rho^{(\kappa)} = \check{\lambda'}}} \prod_{\kappa = 1}^{k} \frac{\big[\frac{d - |\rho^{(\kappa)}|}{d}\big]_{\ell(\rho^{(\kappa)}) - 1}}{|{\rm Aut}(\rho^{(\kappa)})|} \int_{\overline{\mathcal{M}}_{g,n + k}} \frac{\Omega^{[d]}_{g;-\overline{\mu},d - |\boldsymbol{\rho}|}}{\prod_{i = 1}^{n} \big(1 - \frac{\mu_i}{d}\psi_i\big)} = 0.
\]
Setting $\eta = \check{\lambda'}$ gives the vanishing result of Theorem~\ref{thm:vanishC}. This also completes the proof of the ELSV-like formula of Theorem~\ref{thm:ELSV}.

\appendix

\section{Generating series for \texorpdfstring{$DH_{0,2}$}{DH02}} \label{app02}

The following result is proved in~\cite{do-kar18} from the cut-and-join equation. We propose an alternative proof solely based on the evaluation of the vacuum expectation in the semi-infinite wedge, appearing in~\Cref{eq:neq2}.

\begin{theorem}
We have the all-order series expansion near $z_i \rightarrow 0$,
\[
\sum_{\mu_1,\mu_2 \geq 1} \HH_{0,2}(\mu_1,\mu_2) X_1^{\mu_1} X_2^{\mu_2} \sim \ln \bigg(\frac{z_1 - z_2}{z_1z_2}\bigg) - \ln \bigg(\frac{X_1 - X_2}{X_1X_2}\bigg),
\]
where $X_i = X(z_i) = e^{x(z_i)} = z_ie^{-sQ(z_i)}$.
\end{theorem}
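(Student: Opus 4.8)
The plan is to start from the genus-expanded connected correlator \eqref{eq:neq2} and extract its genus-zero part directly, then sum the resulting generating series in closed form by Lagrange inversion. First I would recall that in $\HHall^{\circ}(\mu_1,\mu_2)=\sum_{g\ge 0}\HH_{g,2}(\mu_1,\mu_2)$ the genus is recorded by the discrepancy between the power of $s$ and the degree in the $q$-variables: a monomial $\vec q_{\lambda}$ of $q$-degree $\ell$ carries $s^{2g-2+n+\ell}$, so for $n=2$ the genus-zero part is exactly the locus where the $s$-power equals the $q$-degree. In \eqref{eq:neq2} the factors $(\mu_i s)^{\ell(\lambda^{(i)})}$ already saturate this balance, while each $\mathcal{S}$-factor contributes strictly positive even powers of $s$ carrying no $q$; hence $\HH_{0,2}$ is obtained by replacing every $\mathcal{S}(\cdot)$ by its constant term $1$ (so that $\tfrac{a\,\mathcal{S}(a(\mu_1+\mu_2)s)}{\mathcal{S}((\mu_1+\mu_2)s)}\mapsto a$). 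Using $\sum_{\lambda\vdash N}\frac{\vec q_{\lambda}(\mu s)^{\ell(\lambda)}}{|\Aut{\lambda}|}=[t^N]e^{\mu sQ(t)}$, this yields the closed form
\[
\HH_{0,2}(\mu_1,\mu_2)=\frac{1}{\mu_1\mu_2}\sum_{a\ge 1}a\,[w^{\mu_1+a}]e^{\mu_1 sQ(w)}\,[v^{\mu_2-a}]e^{\mu_2 sQ(v)}.
\]

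Next I would assemble $F:=\sum_{\mu_1,\mu_2\ge 1}\HH_{0,2}(\mu_1,\mu_2)X_1^{\mu_1}X_2^{\mu_2}$ and factor it as $F=\sum_{a\ge 1}a\,P_a(X_1)\,R_a(X_2)$, where $P_a$ and $R_a$ are the two single-variable sums. The key tool is the Lagrange inversion identity behind \Cref{lem:expmoins1}, namely $[X^{\mu}]z^{m}=\frac{m}{\mu}[z^{\mu-m}]e^{\mu sQ(z)}$ for $z=z(X)$ the inverse of $X(z)=ze^{-sQ(z)}$ and any nonzero integer $m$. Taking $m=a$ gives $R_a(X_2)=\sum_{\mu_2\ge 1}\frac{X_2^{\mu_2}}{\mu_2}[v^{\mu_2-a}]e^{\mu_2 sQ(v)}=\frac{z_2^{a}}{a}$ (no constant term, since $a\ge 1$), while $m=-a$ gives $P_a(X_1)=-\frac1a\big(z_1^{-a}\big)_{>0}$, where $(\,\cdot\,)_{>0}$ denotes the part with strictly positive powers of $X_1$ in the Laurent expansion and $z_i=z(X_i)$. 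Thus
\[
F=-\sum_{a\ge 1}\frac{1}{a}\,z_2^{\,a}\,\big(z_1^{-a}\big)_{>0}=\Big(-\sum_{a\ge 1}\frac{1}{a}\,(z_2/z_1)^{a}\Big)_{>0}=\big(\ln(1-z_2/z_1)\big)_{>0},
\]
the projection onto positive $X_1$-powers commuting with multiplication by the $X_1$-independent series $z_2^{a}$.

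It then remains to identify this positive part with the stated right-hand side $G:=\ln\frac{z_1-z_2}{z_1z_2}-\ln\frac{X_1-X_2}{X_1X_2}$. Using $\ln X_i=\ln z_i-sQ(z_i)$ one rewrites $G=\ln\frac{z_1-z_2}{X_1-X_2}-sQ(z_1)-sQ(z_2)$, and a direct manipulation of logarithms gives the decomposition
\[
\ln(1-z_2/z_1)=G+\ln(1-X_2/X_1)+sQ(z_2).
\]
The three summands lie in complementary graded pieces for the degree in $X_1$: the term $G$ is a genuine power series with $G=O(X_1)$ (indeed $G\to sQ(z_2)-sQ(z_2)=0$ as $X_1\to 0$), the term $sQ(z_2)$ is independent of $X_1$, and $\ln(1-X_2/X_1)=-\sum_{k\ge 1}\frac1k X_2^{k}X_1^{-k}$ carries only strictly negative $X_1$-powers. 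Hence the strictly-positive-$X_1$ part of $\ln(1-z_2/z_1)$ is exactly $G$, so $F=G$, which is the claim. By symmetry the same argument in $X_2$ confirms that $G$ is a genuine double series supported on $\mu_1,\mu_2\ge 1$.

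The routine steps are the degree bookkeeping and the Lagrange inversion, both already implicit in \Cref{lem:expmoins1,lem:peel}. The step demanding the most care, and which I regard as the main obstacle, is the manipulation of $\ln(1-z_2/z_1)$ as a formal Laurent series in $X_1$ with coefficients in $\mathbb{C}[[X_2]]$: one must verify that only finitely many $a$ contribute to each coefficient $[X_1^{\mu_1}X_2^{\mu_2}]$ (which holds since $z_2^{a}=O(X_2^{a})$ forces $a\le\mu_2$), so that the rearrangements and the projection $(\,\cdot\,)_{>0}$ are legitimate, and that the graded decomposition above is the unique splitting by sign of the $X_1$-exponent.
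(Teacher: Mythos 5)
Your proposal is correct and follows essentially the same route as the paper's proof in Appendix~\ref{app02}: extract the genus-zero part of \eqref{eq:neq2}, use the (negative-exponent extension of the) Lagrange inversion behind \Cref{lem:expmoins1} to recognise $DH_{0,2}(\mu_1,\mu_2) = -[X_1^{\mu_1}X_2^{\mu_2}]\sum_{a\ge 1} z_1^{-a}z_2^{a}/a$, and then match this with the claimed logarithm --- your decomposition $\ln(1-z_2/z_1)=G+\ln(1-X_2/X_1)+sQ(z_2)$ is exactly the paper's expansion \eqref{eq:02exp} rearranged, with your grading-by-sign-of-$X_1$-exponent argument playing the role of the paper's coefficient-by-coefficient checks for non-positive $\mu_i$. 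Your only additions are justifications the paper leaves implicit (the $s$-versus-$q$-degree bookkeeping that isolates genus zero, and the validity of the Lagrange inversion identity for $m=-a$), both of which are sound.
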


\begin{proof}
We start with~\Cref{eq:neq2}, which implies for genus 0:
\[
DH_{0,2}(\mu_1,\mu_2) = \frac{1}{\mu_1\, \mu_2} \sum_{a = 1}^{\mu_2} a\Bigg( \sum_{\lambda^{(1)} \vdash \mu_1 +a } \frac{ (\mu_1 s)^{\ell(\lambda^{(1)})}\vec{q}_{\lambda^{(1)}}}{|\Aut{\lambda^{(1)}}|}\Bigg) \, \Bigg( \sum_{\lambda^{(2)} \vdash \mu_2 - a} \frac{(\mu_2 s)^{\ell(\lambda^{(2)})}\vec{q}_{\lambda^{(2)}}}{|\Aut{\lambda^{(2)}}|} \Bigg).
\]
Invoking the expressions for the expansion coefficients found in the proof of~\Cref{lem:expmoins1}, we obtain the alternative formula
\[
DH_{0,2}(\mu_1,\mu_2) = - [X_1^{\mu_1}X_2^{\mu_2}]\left( \sum_{a \geq 1} \frac{z_1^{-a}z_2^a}{a} \right).
\]
Note that, for any fixed $\mu_2$, only finitely many terms have to be considered. For $|z_2| < |z_1|$ and $|X_2| < |X_1|$, we have the expansion
\begin{equation}\label{eq:02exp}
\ln {\left( \frac {z_1 - z_2}{z_1z_2} \right)} - \ln {\left( \frac {X_1 - X_2}{X_1X_2} \right)} \sim \\
-\ln{z_2} - \sum_{a \geq 1} \frac{z_1^{-a}z_2^a}{a} + \ln{X_2} + \sum_{a \geq 1} \frac {X_1^{-a}X_2^a}{a}.
\end{equation}
Recalling that $z^{-a} = X^{-a} + O(X^{-a+1})$ when $z \rightarrow 0$, it is immediate to see that for $\mu_1,\mu_2 > 0$
\[
[X_1^{\mu_1} X_2^{\mu_2}]\left( \ln {\left( \frac {z_1 - z_2}{z_1z_2} \right)} - \ln {\left( \frac {X_1 - X_2}{X_1X_2} \right)} \right) = -[X_1^{\mu_1} X_2^{\mu_2}]\left(\sum_{a \geq 1} \frac{z_1^{-a}z_2^a}{a}\right).
\]
We now need to check that if either of the integers $\mu_i$ is non-positive, we have
\[
[X_1^{\mu_1} X_2^{\mu_2}]\left\{ \ln {\left( \frac {z_1 - z_2}{z_1z_2} \right)} - \ln {\left( \frac {X_1 - X_2}{X_1X_2} \right)} \right\} = 0.
\]
As the expression is symmetric in the $\mu_i$, it is enough to show it for non-positive $\mu_2$ and any $\mu_1 \in \mathbb{Z}$. If $\mu_2 < 0$, for any $\mu_1 \in \mathbb{Z}$, we indeed have that
\[
[X_1^{\mu_1} X_2^{\mu_2}]\left(
-\ln{z_2} - \sum_{a \geq 1} \frac{z_1^{-a}z_2^a}{a} + \ln{X_2} + \sum_{a \geq 1} \frac {X_1^{-a}X_2^a}{a}\right) = 0.
\]
The same can be checked for $\mu_2 = 0$ and any $\mu_1 \neq 0$, while if both $\mu_1 = \mu_2 = 0$, we have
\begin{multline*}
[X_1^{0} X_2^{0}]\left(-\ln{z_2} - \sum_{a \geq 1} \frac{z_1^{-a}z_2^a}{a} + \ln{X_2} + \sum_{a \geq 1} \frac {X_1^{-a}X_2^a}{a} \right) \\ = [X_1^{0} X_2^{0}]\big( -\ln{z_2} + \ln{X_2}\big) = - [X_1^{0} X_2^{0}] \ln\big(1 + O(X_2)\big) = 0. \tag*{\qedhere}
\end{multline*}
\end{proof}

\section{Numerics for \texorpdfstring{$DH_{1,1}$}{DH11}}

Our main results can be checked by computation and we provide some of the evidence here. In particular, we independently computed double Hurwitz numbers of the form $DH_{1,1}(\mu)$ for $d = 3$ in four different ways:
\begin{itemize}
\item from the semi-infinite wedge formalism (\cref{thm:DHinIW1});
\item from the topological recursion (\cref{thm:TR}), leading to
\begin{equation} \label{1n1n1}
\begin{split}
\omega_{1,1}(z) & = \frac{z\,\dd z}{24(1 - q_1z - 2q_2z^2 - 3q_3z^3)^4}\Big( -81q_3^3z^7 - 216q_2q_3^2z^6 -3q_3(81-q_1q_3+32q_2^2)z^5 \\ 
& \quad + (-144q_1q_2q_3-16q_2^3+540q_3^2)z^4 + (-3q_1^2q_3-32q_1q_2^2+360q_2q_3)z^3 + \\
& \quad 8q_2(-q_1^2 + 10q_2)z^2 + (-q_1^3+24q_1q_2+72q_3)z + 4(q_1^2+3q_2) \Big) \\ 
& = -\frac{q_{1}}{24}\,\dd\hat{\phi}_0^1(z) - \frac{q_{2}}{24}\,\dd \hat{\phi}_{0}^2(z) - \frac{q_{3}}{24}\,\dd\hat{\phi}_{0}^3 + \frac{q_{1}}{24}\,\dd\hat{\phi}_{1}^{1}(z) + \frac{q_{2}}{12}\,\dd\hat{\phi}_{1}^{2}(z) + \frac{q_{3}}{8}\,\dd\hat{\phi}_{1}^3(z);
\end{split} 
\end{equation}
\item from the cut-and-join equation, which was solved for $DH_{1,1}$ for any $d$ in \cite[Formula (12)]{do-kar18}; and
\item from intersection theory of Chiodo classes (\cref{thm:ELSV}) using the SageMath package \textsc{admcycles}~\cite{admcycles}.\footnote{Alessandro Giacchetto and the third-named author have also conducted extensive numerical checks on the reliability of the SageMath package \textsc{admcycles} for integrals of Chiodo classes. This involved the computation of $r$-spin $q$-orbifold Hurwitz numbers independently via topological recursion and the package itself, by exploiting the $qr$-ELSV formula \cite{kra-lew-pop-sha19} now proved in \cite{bor-kra-lew-pop-sha20,dun-kra-pop-sha19-2}. The \textsc{admcycles} package was adapted by Johannes Schmitt to calculate intersection numbers involving Chiodo classes.}
\end{itemize}

The table below shows the coefficients of $\vec{q}_\lambda$ appearing in $DH_{1,1}(\mu)$ for $d = 3$ and $\mu \leq 7$, along with the corresponding intersection-theoretic expressions produced by \cref{thm:ELSV}. The cases in which a linear combination of more than one integral of Chiodo classes appears are exactly the ones that are covered by our Theorem~\ref{thm:ELSV}, but not by the formula of Johnson, Pandharipande and Tseng~\cite{joh-pan-tse11}. The three entries in the table marked by ($\ast$) could not be computed using \textsc{admcycles} in a reasonable amount of time, but are covered by the aforementioned formula of Johnson, Pandharipande and Tseng, thus providing an independent verification.

\begin{center}
\begin{tabular}{ccccc} \toprule
$DH_{1,1}(2)$ & \multirow{2}{*}{} $\![q_1^2]$ {\rule{0pt}{4ex}}{\rule[-1.8ex]{0pt}{0pt}}& $\frac{1}{12}$ & $\frac{27}{2} \int_{\overline{\mathcal{M}}_{1,3}} \frac{\Omega_{1;1,1,1}^{[3]}}{1 - 2 \psi_1/3}$ \\
& $[q_2]$ {\rule{0pt}{4ex}}{\rule[-1.8ex]{0pt}{0pt}}& $\frac{1}{4}$ & $9 \int_{\overline{\mathcal{M}}_{1,2}} \frac{\Omega^{[3]}_{1;1,2}}{1 - 2\psi_1/3}$ \\ \midrule
$DH_{1,1}(3)$ & \multirow{3}{*}{} $\![q_1^3]$ {\rule{0pt}{4ex}}{\rule[-1.8ex]{0pt}{0pt}}& $\frac{3}{8}$ & $\frac{27}{2} \int_{\overline{\mathcal{M}}_{1,4}} \frac{\Omega^{[3]}_{1;0,1,1,1}}{1 - \psi_1}$ \\
& $[q_1q_2]$ {\rule{0pt}{4ex}}{\rule[-1.8ex]{0pt}{0pt}}& $\frac{3}{2}$ & $27 \int_{\overline{\mathcal{M}}_{1,3}} \frac{\Omega_{1;0,1,2}^{[3]}}{1 - \psi_1}$ \\
& $[q_3]$ {\rule{0pt}{4ex}}{\rule[-1.8ex]{0pt}{0pt}}& $1$ & $3 \int_{\overline{\mathcal{M}}_{1,1}} \frac{\Omega_{1;0}^{[3]}}{1 - \psi_1}$ \\ \midrule
$DH_{1,1}(4)$ & \multirow{4}{*}{} $\![q_1^4]$ {\rule{0pt}{4ex}}{\rule[-1.8ex]{0pt}{0pt}}& $\frac{4}{3}$ & $\frac{27}{2} \int_{\overline{\mathcal{M}}_{1,5}} \frac{\Omega^{[3]}_{1;2,1,1,1,1}}{1 - 4\psi_1/3}$ \\
& $[q_1^2q_2]$ {\rule{0pt}{4ex}}{\rule[-1.8ex]{0pt}{0pt}}& $\frac{20}{3}$ & $54 \int_{\overline{\mathcal{M}}_{1,4}} \frac{\Omega^{[3]}_{1;2,1,1,2}}{1 - 4\psi_1/3}$ \\
& $[q_2^2]$ {\rule{0pt}{4ex}}{\rule[-1.8ex]{0pt}{0pt}}& $\frac{7}{3}$ & $18 \int_{\overline{\mathcal{M}}_{1,3}} \frac{\Omega^{[3]}_{1;2,2,2}}{1 - 4\psi_1/3} - 6 \int_{\overline{\mathcal{M}}_{1,2}} \frac{\Omega^{[3]}_{1;2,1}}{1 - 4\psi_1/3}$ \\
& $[q_1q_3]$  {\rule{0pt}{4ex}}{\rule[-1.8ex]{0pt}{0pt}}& $6$ & $36 \int_{\overline{\mathcal{M}}_{1,2}} \frac{\Omega^{[3]}_{1;2,1}}{1 - 4\psi_1/3}$ \\ \midrule
$DH_{1,1}(5)$ & \multirow{5}{*}{} $\! [q_1^5]$ {\rule{0pt}{4ex}}{\rule[-1.8ex]{0pt}{0pt}}& $\frac{625}{144}$ & $\frac{81}{8} \int_{\overline{\mathcal{M}}_{1,6}} \frac{\Omega^{[3]}_{1;1,1,1,1,1,1}}{1 - 5\psi_1/3}$ \\
& $[q_1^3q_2]$ & $\frac{625}{24}$ & $\frac{135}{2} \int_{\overline{\mathcal{M}}_{1,5}} \frac{\Omega^{[3]}_{1;1,1,1,1,2}}{1 - 5\psi_1/3}$ \\
& $[q_1q_2^2]$ {\rule{0pt}{4ex}}{\rule[-1.8ex]{0pt}{0pt}}& $\frac{125}{6}$ & $\frac{135}{2} \int_{\overline{\mathcal{M}}_{1,4}} \frac{\Omega^{[3]}_{1;1,1,2,2}}{1 - 5\psi_1/3} - \frac{45}{2} \int_{\overline{\mathcal{M}}_{1,3}} \frac{\Omega^{[3]}_{1;1,1,1}}{1 - 5\psi_1/3}$ \\
& $[q_1^2q_3]$ {\rule{0pt}{4ex}}{\rule[-1.8ex]{0pt}{0pt}}& $\frac{625}{24}$ & $\frac{135}{2} \int_{\overline{\mathcal{M}}_{1,3}} \frac{\Omega_{1;1,1,1}^{[3]}}{1 - 5\psi_1/3}$ \\
& $[q_2q_3]$ {\rule{0pt}{4ex}}{\rule[-1.8ex]{0pt}{0pt}}& $\frac{25}{2}$ & $45 \int_{\overline{\mathcal{M}}_{1,2}} \frac{\Omega^{[3]}_{1;1,2}}{1 - 5\psi_1/3}$ \\ \bottomrule
\end{tabular}
\end{center}

\begin{center}
\begin{tabular}{ccccc} \toprule
$DH_{1,1}(6)$ & \multirow{7}{*}{} $\!\!\! [q_1^6]$ {\rule{0pt}{4ex}}{\rule[-1.8ex]{0pt}{0pt}}& $\frac{27}{2}$ & $\frac{243}{40} \int_{\overline{\mathcal{M}}_{1,7}} \frac{\Omega^{[3]}_{1;0,1,1,1,1,1,1}}{1 - 2\psi_1}$ ~ ($\ast$) \\
& $[q_1^4q_2]$ {\rule{0pt}{4ex}}{\rule[-1.8ex]{0pt}{0pt}}& $\frac{189}{2}$ & $\frac{243}{4} \int_{\overline{\mathcal{M}}_{1,6}} \frac{\Omega^{[3]}_{1;0,1,1,1,1,2}}{1 - 2\psi_1}$ \\
& $[q_1^2q_2^2]$ {\rule{0pt}{4ex}}{\rule[-1.8ex]{0pt}{0pt}}& $\frac{243}{2}$ & $\frac{243}{2} \int_{\overline{\mathcal{M}}_{1,5}} \frac{\Omega^{[3]}_{1;0,1,1,2,2}}{1 - 2\psi_1} - \frac{81}{2} \int_{\overline{\mathcal{M}}_{1,4}} \frac{\Omega^{[3]}_{1;0,1,1,1}}{1 - 2\psi_1}$ \\
& $[q_2^3]$ {\rule{0pt}{4ex}}{\rule[-1.8ex]{0pt}{0pt}}& $\frac{33}{2}$ & $27 \int_{\overline{\mathcal{M}}_{1,4}} \frac{\Omega^{[3]}_{1;0,2,2,2}}{1 - 2\psi_1} - 27 \int_{\overline{\mathcal{M}}_{1,3}} \frac{\Omega^{[3]}_{1;0,1,2}}{1 - 2\psi_1}$ \\
& $[q_1^3q_3]$ {\rule{0pt}{4ex}}{\rule[-1.8ex]{0pt}{0pt}}& $99$ & $81 \int_{\overline{\mathcal{M}}_{1,4}} \frac{\Omega^{[3]}_{1;0,1,1,1}}{1 - 2\psi_1}$ \\
& $[q_1q_2q_3]$ {\rule{0pt}{4ex}}{\rule[-1.8ex]{0pt}{0pt}}& $117$ & $162 \int_{\overline{\mathcal{M}}_{1,3}} \frac{\Omega_{1;0,1,2}^{[3]}}{1 - 2\psi_1}$ \\
& $[q_3^2]$ {\rule{0pt}{4ex}}{\rule[-1.8ex]{0pt}{0pt}}& $\frac{51}{4}$ & $54 \int_{\overline{\mathcal{M}}_{1,1}} \frac{\Omega^{[3]}_{1;0}}{1 - 2\psi_1}$ \\ \midrule
$DH_{1,1}(7)$ & \multirow{8}{*}{} $[q_1^7]$ {\rule{0pt}{4ex}}{\rule[-1.8ex]{0pt}{0pt}}& $\frac{117649}{2880}$ & $\frac{567}{160} \int_{\overline{\mathcal{M}}_{1,8}} \frac{\Omega^{[3]}_{1;2,1,1,1,1,1,1,1}}{1 - 7\psi_1/3}$ ~ ($\ast$) \\ 
& $[q_1^5q_2]$ {\rule{0pt}{4ex}}{\rule[-1.8ex]{0pt}{0pt}}& $\frac{117649}{360}$ & $\frac{3969}{80} \int_{\overline{\mathcal{M}}_{1,7}} \frac{\Omega^{[3]}_{1;2,1,1,1,1,1,2}}{1 - 7\psi_1/3}$ ~ ($\ast$) \\
& $[q_1^3q_2^2]$ {\rule{0pt}{4ex}}{\rule[-1.8ex]{0pt}{0pt}}& $\frac{84035}{144}$ & $\frac{1323}{8} \int_{\overline{\mathcal{M}}_{1,6}} \frac{\Omega^{[3]}_{1;2,1,1,1,2,2}}{1 - 7\psi_1/3} - \frac{441}{8} \int_{\overline{\mathcal{M}}_{1,5}} \frac{\Omega^{[3]}_{1;2,1,1,1,1}}{1 - 7\psi_1/3}$ \\
& $[q_1q_2^3]$ {\rule{0pt}{4ex}}{\rule[-1.8ex]{0pt}{0pt}}& $\frac{2401}{12}$ & $\frac{441}{4} \int_{\overline{\mathcal{M}}_{1,5}} \frac{\Omega^{[3]}_{1;2,1,2,2,2}}{1 - 7\psi_1/3} - \frac{441}{4} \int_{\overline{\mathcal{M}}_{1,4}} \frac{\Omega^{[3]}_{1;2,1,1,2,2}}{1 - 7\psi_1/3}$ \\
& $[q_1^4q_3]$ {\rule{0pt}{4ex}}{\rule[-1.8ex]{0pt}{0pt}}& $\frac{16807}{48}$ & $\frac{1323}{16} \int_{\overline{\mathcal{M}}_{1,5}} \frac{\Omega^{[3]}_{1;2,1,1,1,1}}{1 - 7\psi_1/3}$ \\
& $[q_1^2q_2q_3]$ {\rule{0pt}{4ex}}{\rule[-1.8ex]{0pt}{0pt}}& $\frac{16807}{24}$ & $\frac{1323}{4} \int_{\overline{\mathcal{M}}_{1,4}} \frac{\Omega^{[3]}_{1;2,1,1,2}}{1 - 7\psi_1/3}$ \\
& $[q_2^2q_3]$ {\rule{0pt}{4ex}}{\rule[-1.8ex]{0pt}{0pt}}& $\frac{343}{3}$ & $\frac{441}{4} \int_{\overline{\mathcal{M}}_{1,3}} \frac{\Omega^{[3]}_{1;2,2,2}}{1 - 7\psi_1/3} - \frac{147}{4} \int_{\overline{\mathcal{M}}_{1,2}} \frac{\Omega^{[3]}_{1;2,1}}{1 - 7\psi_1/3}$ \\
& $[q_1q_3^2]$ {\rule{0pt}{4ex}}{\rule[-1.8ex]{0pt}{0pt}}& $\frac{1029}{8}$ & $\frac{441}{2} \int_{\overline{\mathcal{M}}_{1,2}} \frac{\Omega^{[3]}_{1;2,1}}{1 - 7\psi_1/3}$ \\ \bottomrule
\end{tabular}
\end{center}

\section{Examples of relations for Chiodo integrals} \label{appC}

We illustrate the practical use of the vanishing of Theorem~\ref{thm:vanishC} with partitions $\eta$ of lengths $\ell = 2$ or $3$.

 \subsection{$\ell = 2$} 
 
If $\ell=2$, we only have the summands $k=1$ and $k=2$. For $k=1$, we must have $\rho^{(1)} = \eta$ (let us assume that $|\eta| < d$, otherwise the term $k=1$ vanishes) of length $\ell(\eta) = 2$, so the Pochhammer symbol gives a factor of $(d - |\eta|)/d$. The $k=2$ term has summands $\rho^{(1)} = \eta_1$ and $\rho^{(2)} = \eta_2$, and an additional term for $\rho^{(1)} = \eta_2$ and $\rho^{(2)} = \eta_1$ if $\eta_1 \neq \eta_2$. This possibility produces the same result and simplifies against the automorphisms of $\rho^{(1)}$ in the denominator of the $k=1$ term. We get an identity when $|\eta|$ is unstable, i.e. $|\mu| + |\eta| = dK$ with $K \leq \ell(\eta) - 1 = 1$. This forces $K = 1$, and we then obtain
\begin{equation}
\frac{d - \eta_1 - \eta_2}{d } \int_{\overline{\mathcal{M}}_{g,n+1}} \frac{\Omega^{[d]}_{g;-\overline{\mu},d - \eta_1 - \eta_2}}{\prod_{i = 1}^n \big(1 - \frac{\mu_i}{d}\psi_i\big)}
=
\int_{\overline{\mathcal{M}}_{g,n+2}} \frac{\Omega^{[d]}_{g;-\overline{\mu},d - \eta_1, d - \eta_2}}{\prod_{i = 1}^n \big(1 - \frac{\mu_i}{d}\psi_i\big)}.
\end{equation}
For instance, for $(g,n) = (1,1)$, $\mu = (3)$, $\eta= (2,1)$ and $d=6$, we have
\begin{equation}
\frac 1 2 \int_{\overline{\mathcal{M}}_{1,2}} \frac{\Omega^{[6]}_{1;3,3}}{\big(1 - \frac{\psi_1}{2} \big)}
=
\int_{\overline{\mathcal{M}}_{1,3}} \frac{\Omega^{[6]}_{1;3,4,5}}{\big(1 - \frac{\psi_1}{2} \big)}.
\end{equation}
The integrals can be computed with the SageMath package \textsc{admcycles} to be $1/36$ and $1/72$, respectively, numerically confirming the identity in this case.

\subsection{$\ell = 3$} 

 If $\ell = 3$, we only have the summands $k = 1,2,3$. The summand $k=1$ only occurs when $|\eta| < d$: in that case $\rho^{(1)} = \eta$ and it has length $3$, so the Pochhammer symbol yields a factor $\frac{d - |\eta|}{d}\cdot \frac{2d - |\eta|}{d}$. The $k=2$ term has summands for $\rho^{(1)}$ and $\rho^{(2)}$ of lengths one and two or vice versa, counting all distinct possibilities. The $k=3$ term has all three $\rho^{(\kappa)}$ of length one and hence, no Pochhammer symbols, permuted in all possible distinct ways. We get an identity when $\eta$ is unstable, that is $|\mu| + |\eta| = Kd$ with $K \leq \ell(\eta) - 1$, hence $K = 1$ or $2$. In these cases, the identity reads
\begin{multline} \label{the3rel}
\frac{1}{|\Aut{\eta}|} \!\int_{\overline{\mathcal{M}}_{g,n+3}} \frac{\Omega^{[d]}_{g;-\overline{\mu},d - \eta_1, d - \eta_2, d - \eta_3}}{\prod_{i = 1}^n \big(1 - \frac{\mu_i}{d}\psi_i\big)} +
\frac{\mathbf{1}_{|\eta| < d}}{|\Aut{\eta}|}\,\frac{d - |\eta|}{d }\,\frac{2d - |\eta|}{d}\! \int_{\overline{\mathcal{M}}_{g,n+1}} \frac{\Omega^{[d]}_{g;-\overline{\mu},d - |\eta|}}{\prod_{i = 1}^n \big(1 - \frac{\mu_i}{d}\psi_i\big)} \\
=
\sum_{j = 1}^3 \frac{\mathbf{1}_{|\eta| - \eta_j < d}}{|{\rm Aut}(\eta \setminus \{\eta_j\})|}\,\frac{d - |\eta| + \eta_j}{d}\,
\int_{\overline{\mathcal{M}}_{g,n+2}} \!\!\! \frac{\Omega^{[d]}_{g;-\overline{\mu},d - \eta_j, d - |\eta| + \eta_j}}{\prod_{i = 1}^n \big(1 - \frac{\mu_i}{d}\psi_i\big)}, 
\end{multline}
where $|{\rm Aut}(\eta\setminus\{\eta_j\})|$ is $2$ if $\eta \setminus \{\eta_j\}$ contains two parts of same size, and $1$ otherwise.

For instance, let us consider $(g,n) = (1,1)$, $\mu = (2)$, $\eta= (1,2,3)$ and $d=4$. Notice that we have $|\mu| + |\eta| = 2d$, and the $k = 1$ term as well as the $k = 2$, $j = 2,3$ terms vanish. So we obtain the relation
\begin{equation}
\int_{\overline{\mathcal{M}}_{1,4}} \frac{\Omega^{[4]}_{1;2,1, 2,3}}{ \big(1 - \frac{\psi_1}{2}\big)}
=
\frac{1}{4} \int_{\overline{\mathcal{M}}_{1,3}} \!\! \frac{\Omega^{[4]}_{1;2,1,1}}{ \big(1 - \frac{\psi_1}{2} \big)}.
\end{equation}
The integrals can be computed with the SageMath package \textsc{admcycles} to be $1/1536$ and $1/384$, respectively, numerically confirming the identity in this case.

Let us now look at a case without vanishing summands. For instance, let us consider $(g,n) = (1,1)$, $\mu = (2)$, $\eta= (1,2,2)$ and $d=7$, which gives $|\mu| + |\eta| = d$. So we obtain the relation
\begin{equation}
\frac{1}{2} \!\int_{\overline{\mathcal{M}}_{1,4}} \frac{\Omega^{[7]}_{1;5,5,5,6}}{ \big(1 - \frac{2}{7}\psi_1 \big)}
+
\frac{1}{2}\!\cdot\!\frac 2 7 \!\cdot\! \frac 9 7 \! \int_{\overline{\mathcal{M}}_{1,2}} \frac{\Omega^{[7]}_{1;5,2}}{ \big(1 - \frac{2}{7}\psi_1 \big)}
=
\frac 3 7 \! \cdot\! \frac 1 2 \! \int_{\overline{\mathcal{M}}_{1,3}} \frac{\Omega^{[7]}_{1;5,6,3}}{ \big(1 - \frac{2}{7}\psi_1 \big)}
+ 
\frac 4 7\! \int_{\overline{\mathcal{M}}_{1,3}} \!\! \frac{\Omega^{[7]}_{1;5,5,4}}{ \big(1 - \frac{2}{7}\psi_1 \big)}.
\end{equation}
The integrals can be computed with the SageMath package \textsc{admcycles} to be $1/2401$, $1/196$, $1/686$ and $1/686$, respectively, numerically confirming the identity in this case.

Finally, let us illustrate~\Cref{equate}. When $K = 1$, the right-hand side of \eqref{the3rel} is unstable, so we can use the $\ell = 2$ identity with the partition $(\eta_j,|\eta| - \eta_j)$ to find
\begin{multline} \label{eq:59}
\int_{\overline{\mathcal{M}}_{g,n + 3}} \frac{\Omega^{[d]}_{g;-\overline{\mu},d - \eta_1,d - \eta_2,d - \eta_3}}{\prod_{i = 1}^n \big(1 - \frac{\mu_i}{d}\psi_i\big)} \\
= \frac{d - |\eta|}{d}\bigg(\frac{|\eta| - 2d}{d} + \sum_{j = 1}^3 \frac{|{\rm Aut}(\eta)|(1 + \delta_{\eta_j,|\eta| - \eta_j})}{|{\rm Aut}(\eta\setminus\{\eta_j\})|} \frac{d - |\eta| + \eta_j}{d}\bigg) \int_{\overline{\mathcal{M}}_{g,n + 1}} \frac{\Omega^{[d]}_{g;-\overline{\mu},d - (\eta_1 + \eta_2 + \eta_3)}}{\prod_{i = 1}^n \big(1 - \frac{\mu_i}{d}\psi_i\big)}.
\end{multline}
When all automorphism factors on the right-hand side are $1$, the coefficient of the Chiodo integral is $\big(\tfrac{d - |\eta|}{d}\big)^2$, but in general it is more complicated.

\bibliographystyle{amsplain}
\bibliography{BibliHur2}

\end{document}